\documentclass[a4paper, 11 pt]{article}
\usepackage{a4wide, amsmath, amssymb, mathtools, yfonts}
\usepackage{comment}
\usepackage{tikz}
\usepackage{tikz-cd}
\usepackage[all]{xy}
\usepackage[utf8]{inputenc}
\usepackage{amsthm, mathrsfs}
\usepackage[english]{babel}
\usepackage{hyperref}
\usepackage{authblk}
\usepackage[OT2,T1]{fontenc}
\DeclareSymbolFont{cyrletters}{OT2}{wncyr}{m}{n}
\DeclareMathSymbol{\Sha}{\mathalpha}{cyrletters}{"58}

\numberwithin{equation}{section}
\newtheorem{lemma}{Lemma}[section]
\newtheorem{theorem}[lemma]{Theorem}
\newtheorem{proposition}[lemma]{Proposition}

\newtheorem{conjecture}{Conjecture}
\newtheorem{heuristic}[lemma]{Heuristic}

\theoremstyle{definition}
\newtheorem{mydef}[lemma]{Definition}
\newtheorem{example}[lemma]{Example}

\newtheorem{remark}[lemma]{Remark}

\newcommand{\Aff}{\mathbb{A}}
\newcommand{\Spec}{\textup{Spec}}

\newcommand{\Z}{\mathbb{Z}}
\newcommand{\Q}{\mathbb{Q}}
\newcommand{\C}{\mathbb{C}}
\newcommand{\QQ}{\Q}
\newcommand{\FFF}{\mathbb{F}}
\newcommand{\Qbar}{\overline{\Q}}
\newcommand{\R}{\mathbb{R}}
\newcommand{\FF}{\mathbb{F}}

\newcommand{\Hom}{\mathrm{Hom}}
\newcommand{\Frob}{\textup{Frob}}

\newcommand\Gal{\mathrm{Gal}}

\newcommand{\Sel}{\textup{Sel}}
\newcommand{\Vplac}{\mathscr{V}}

\newcommand{\mfp}{\mathfrak{p}}

\newcommand{\mfq}{\mathfrak{q}}

\newcommand{\mfB}{\mathfrak{B}}
\newcommand{\ovp}{\overline{\mfp}}
\newcommand{\ovQQ}{\Qbar}

\newcommand{\symb}[2]{\left[#1,\, #2\right]}
\newcommand{\isoarrow}{\xrightarrow{\,\,\sim\,\,}}
\newcommand{\res}{\textup{res}}

\newcommand{\msL}{\mathscr{L}}

\newcommand{\Ssm}{S_{\textup{sm}}}
\newcommand{\Smed}{S_{\textup{med}}}
\newcommand{\Slg}{S_{\textup{lg}}}
\newcommand{\ovq}{\overline{\mfq}}

\newcommand{\mcA}{\mathcal{A}}
\newcommand{\mcX}{\mathcal{X}}
\newcommand{\mcM}{\mathcal{M}}
\newcommand{\mcO}{\mathcal{O}}

\newcommand{\mcB}{\mathcal{B}}

\newcommand{\mbb}{\mathbf{b}}

\newcommand{\mcT}{\mathcal{T}}
\newcommand{\mcTbnd}{\mathcal{T}_{\textup{bnd}}}
\title{\sffamily \bfseries \textbf{Tamagawa ratios and unbounded Selmer moments}}

\author[1]{Peter Koymans\thanks{Mathematisch Instituut, Universiteit Utrecht, Postbus 80.010, 3508 TA Utrecht, The Netherlands, p.h.koymans@uu.nl}}
\author[2]{Alexander Smith\thanks{Department of Mathematics, 2033 Sheridan Road, Evanston, IL 60208, asmith@northwestern.edu}}
\affil[1]{Utrecht University}
\affil[2]{Northwestern University}

\begin{document}
\maketitle

\begin{abstract}
We develop a framework to predict whether a family of Selmer groups has average size that is bounded or unbounded. Applying this framework to certain geometric families of abelian varieties over $\QQ$, we give a conjectural characterization of which such families have $\ell$-Selmer groups of unbounded average size for a given prime $\ell$. In the case that the $\ell$-torsion Galois module is constant across the family, we show that our characterization is correct.

The key tool of our technique is the Greenberg--Wiles' formula, which expresses the ratio of the sizes of a Selmer group and the corresponding dual Selmer group as a product of local factors. This formula gives a purely local lower bound for the size of a Selmer group that we conjecture is close to sharp most of the time.
\end{abstract}

\section{Introduction}
\subsection{Selmer groups in families of elliptic curves}
Given polynomials $a_1, a_2, a_3, a_4, a_6$ in the polynomial ring $\Z[u_1, \dots, u_n]$, we may consider the equation
\begin{equation}
    \label{eq:generic_ell_fib}
\mcA: y^2 + a_1xy + a_3y = x^3 + a_2x^2 + a_4x + a_6.
\end{equation}
We will assume this equation defines an elliptic curve over $\QQ(u_1, \dots, u_n)$. Taking the discriminant of this elliptic curve defines a polynomial $\Delta$ in $\QQ[u_1, \dots, u_n]$ such that, for any tuple of integers $\mathbf{b} = (b_1, \dots, b_n)$, if $\Delta(b_1, \dots, b_n)$ is nonzero, then
\[\mcA_{\mbb} : y^2 + a_1(\mbb)xy + a_3(\mbb)y = x^3 + a_2(\mbb)x^2 + a_4(\mbb)x + a_6(\mbb)\]
defines an elliptic curve over $\QQ$. Fixing positive integers $\gamma_1, \dots, \gamma_n$, we define a height function
\[h(b_1, \dots, b_n) = \max\big(|b_1|^{\gamma_1}, \dots, |b_n|^{\gamma_n}\big)\]
and use this height to define a family
\[\mcA_{\le H }= \big\{\mcA_{\mbb}\,:\,\, \mbb \in \Z^n \,\text{ with }\,\Delta(\mbb) \ne 0 \,\text{ and }\, h(\mbb) \le H\big\}\]
for any positive $H$. This defines a geometric family of elliptic curves.

A major goal of arithmetic statistics is to understand how Selmer groups are distributed in geometric families of elliptic curves. For the family
\[\mcA: y^2 = x^3 + u_1x + u_2\]
with height $h(b_1, b_2) = \max(|b_1|^3, |b_2|^2)$, this goal is encapsulated by the Poonen--Rains conjectures \cite{PR} and their generalizations \cite{BKLPR}. For a given prime $\ell$, Poonen and Rains  give a conjectural distribution for the groups $\Sel_{\ell}\, \mcA_{(b_1, b_2)}$. They also conjecture values for the moments of these Selmer groups; for any nonnegative integer $\kappa$, they conjecture
\begin{equation}
\label{eq:PR}
\lim_{H \to \infty} \frac{1}{\#\mcA_{\le H}} \sum_{E \in \mcA_{\le H}} \left(\#\Sel_{\ell}\, E\right)^{\kappa} = \prod_{i = 1}^{\kappa} (\ell^i + 1).
\end{equation}
Up to slight modifications to the height function $h$, this identity is known to hold for $\ell = 2, 3, 5$ and $\kappa = 1$ from work of Bhargava and Shankar \cite{BS1, BS2, BS4}, and the right hand side is known to be an upper bound for the left hand side when $\ell = 2$ and $\kappa = 2$ by work of Bhargava--Shankar--Swaminathan \cite{BSS}. Although not strictly falling under the purview of \eqref{eq:PR} as stated above, there is also work of Bhargava and Shankar \cite{BS3} on the average size of the $4$-Selmer group.

Given any rational numbers $a_4, a_6$ such that $x^3 + a_4x + a_6$ is irreducible in $\Q[x]$, the Poonen--Rains heuristic is known to describe the $2$-Selmer groups in the quadratic twist family of curves
\[\mcA: y^2 = x^3 +u_1^2a_4x + u_1^3a_6;\]
in particular, \eqref{eq:PR} is known to hold for any nonnegative integer $\kappa$ and $\ell = 2$ \cite{Smi22b}. With a modification to account for the impact of $2$-torsion on the $2$-Selmer rank, these conjectures are also known to describe the $2$-Selmer groups in quadratic twist families of elliptic curves with full rational $2$-torsion and no cyclic $4$-isogeny \cite{HB, Kane}.

But, in other quadratic twist families, the behavior is very different. This was first observed by Xiong and Zaharescu, who noted that the average $2$-Selmer rank in the family $\mcA: y^2 = x^3 + u_1^2x$ is unbounded in \cite{XZCongruent}. This unboundedness result was later extended to other quadratic twist families \cite{Feng Xiong,  Klagsbrun, Xiong, XZ}. The work of Xiong and Zaharescu appeared a couple years after \cite{YuII}, where Yu showed that, for the family
\[
\mcA: y^2  = x(x -1 )(x-u_1),
\]
we have the unboundedness result
\[
\frac{1}{\# \mcA_{\le H}}\sum_{E \in \mcA_{\le H}} \# \Sel_2 \, E \asymp \sqrt{\log H}.
\]
See Example \ref{ex:YuII} for more on Yu's work. Since this work, unbounded Selmer group moments have been found in many natural geometric families \cite{ABS, Chan, Chan Hanselman Li, Chan Verzobio, KLOTama, KLO, Phillips}.

\subsection{Our conjecture and results}
All these unboundedness results can be explained by \emph{Tamagawa ratios}, a term introduced by Klagsbrun in his work on $2$-Selmer groups in quadratic twist families \cite{Klagsbrun} which has roots in work of Cassels \cite{Cassels}. Given a rational degree $\ell$-isogeny $\varphi: E \to E_0$ of elliptic curves over $\QQ$ with dual isogeny $\varphi_0: E_0 \to E$, we have an approximate relation
\begin{equation}
    \label{eq:Tama_approx}
\frac{\# \Sel_{\varphi} \, E}{\# \Sel_{\varphi_0}\, E_0} \approx \prod_p \frac{c_p(E_0)}{c_p(E)}
\end{equation}
of the sizes of the isogeny Selmer groups, where $c_p(E)$ denotes the Tamagawa number of $E$ at $p$. The product of the ratio of Tamagawa numbers then yields a purely local lower bound for the size of the isogeny Selmer group $\Sel_{\varphi}\, E$, which in turn yields a  purely local lower bound for the Selmer group $\Sel_{\ell}\, E$. 

The approximate relationship \eqref{eq:Tama_approx} is an instance of the Greenberg--Wiles' formula, which applies to more general families of Galois modules decorated with local conditions; see Definition \ref{defn:decorated_modules} for details. The central slogan of this paper, encapsulated in Heuristic \ref{heur:main} and Conjecture \ref{conj:AV}, is that large Selmer group moments in ``natural'' families of decorated Galois modules over $\QQ$ can always be explained by the Greenberg--Wiles' formula. For geometric families of elliptic curves, this conjecture takes the following form: 
\begin{conjecture}
\label{conj:ell}
Take $\mcA$ to be the family of elliptic curves \eqref{eq:generic_ell_fib}, and choose a prime $\ell$. Take $K = \QQ(u_1, \dots, u_n)$, so \eqref{eq:generic_ell_fib} defines an elliptic curve $\mcA_K$ over $K$. For each $K$-subgroup scheme $T$ of $\mcA_K[\ell]$, define the function $\beta(T, \text{--}) : \R_{\ge 0} \to \R$ as in Definition \ref{defn:Tate_alg}. Take $\beta: \R_{\ge 0} \to \R_{\ge 0}$ to be the function defined by
\[\beta(\kappa) = \max_{T} \beta(T, \kappa).\]
Then, for all $\kappa \ge 0$,
\[\frac{1}{\#\mcA_{\le H}}\sum_{E \in \mcA_{\le H}} (\# \Sel_{\ell}\, E)^{\kappa} \asymp (\log H)^{\beta(\kappa)}.\]
\end{conjecture}

Methods from the geometry of numbers give many interesting cases of this conjecture. Besides the papers mentioned above, we also mention work of Bhargava and Ho for Selmer groups in families with certain marked rational points \cite{BH} and work of Bhargava, Elkies, and Shnidman in the family $y^2 = x^3 +  u_1$ \cite{BES}. The families considered in these papers have $\beta(\kappa) = 0$ for all $\kappa$.

This conjecture is also known for $\ell = 2$ in a number of quadratic twist families. For this paper, the two most salient prior results are the following:

\begin{example}[{Koymans--Pagano--Sofos \cite{KPS}}]
\label{ex:KPS}
Given a nonzero polynomial $P \in \Q[u_1, \dots, u_n]$ and distinct rational numbers $r_1, r_2, r_3$,  Conjecture \ref{conj:ell} holds for the $2$-Selmer groups of the family 
\[\mcA/\Q(u_1, \dots, u_n) : y^2 = (x - r_1P)(x- r_2P)(x-r_3 P)\]
with height $h(b_1, \dots, b_n) = \max |b_i|$.

Like work of Kane \cite{Kane}, Swinnerton-Dyer \cite{SD}, and Heath-Brown \cite{HB} before it, this result takes advantage of the fact that the $2$-Selmer group of a quadratic twist of an elliptic curve with full rational $2$-torsion can be calculated in terms of a matrix of Legendre symbols of the primes dividing the twist. The novelty of this result is its use of sieve methods, which are used to control most but not all of this matrix; specifically, the entries invoking the very largest primes dividing the twist are left uncontrolled. The use of sieve methods precludes determining the exact size of the Selmer moments, but is strong enough to prove Conjecture \ref{conj:ell} for this family.
\end{example}

Another general result for quadratic twist families is the following:

\begin{example}[{Smith \cite{Smi22b}}]
\label{ex:Smith}
Given integers $a_4, a_6$ satisfying $4a_4^3 + 27a_6^2 \ne 0$, Conjecture \ref{conj:ell} holds for the $2$-Selmer groups of the family 
\[
\mcA: y^2 = x^3 + u_1^2a_4 x + u_1^3 a_6.
\]
Work on this example starts by generalizing the matrix-of-Legendre-symbols approach of \cite{Kane, SD} to handle elliptic curves of other torsion types. 
\end{example}

By combining the sieve techniques of Example \ref{ex:KPS} with the generalized Legendre symbol matrices from Example \ref{ex:Smith}, we are able to prove  Conjecture \ref{conj:ell} in the case that $\mcA_K[\ell]$ is constant throughout the family.

\begin{theorem}
\label{tGeom1}
Take $\mcA$ to be the family of elliptic curves \eqref{eq:generic_ell_fib}, and choose a prime $\ell$. Take $K = \QQ(u_1, \dots, u_n)$. Suppose there is a number field $L$ such that every point in $\mcA_K[\ell]$ is rational over $L(u_1, \dots, u_n)$.

Then Conjecture \ref{conj:ell} holds for the family $\mcA$ and the prime $\ell$.
\end{theorem}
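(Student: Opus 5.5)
We will prove the two bounds separately: a lower bound coming purely from local data via the Greenberg--Wiles' formula, and a matching upper bound from a sieve-based analysis of a generalized Legendre symbol matrix.

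\textbf{Reduction to a twist-type family.} Since every point of $\mcA_K[\ell]$ is rational over $L(u_1, \dots, u_n)$, the Galois module $\mcA_{\mbb}[\ell]$ is, for all $\mbb$ outside a thin set, a fixed module $M$ on which $G_\QQ$ acts through $\Gal(L/\QQ)$. For each $\mbb$, the local conditions defining $\Sel_\ell\, \mcA_{\mbb}$ are then determined as follows. At primes $p$ outside a fixed finite set $S_0$ (the primes ramified in $L$, the primes dividing $\ell$, and the bad primes of the generic model), the local condition is unramified unless $p \mid \Delta(\mbb)$. At such $p$ it depends only on the reduction type, which is controlled by the factorization of $\Delta$ over $K$, on the valuation $v_p(\Delta(\mbb))$, and on a residue class of $\mbb$. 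We will partition $\mbb$ by congruence classes modulo a large integer built from $S_0$ and by the irreducible factors of $\Delta$ each prime divides. For each $K$-subgroup $T \subseteq M$, the Greenberg--Wiles' formula applied to $T$, its Cartier dual, and the induced local conditions gives
\[\#\Sel_\ell\, \mcA_{\mbb} \ \ge\ \#\Sel(T) \ \ge\ \prod_p \tau_p(T, \mbb)\]
up to a bounded factor, where $\tau_p$ is a local Tamagawa-type ratio.

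\textbf{Lower bound.} Averaging $\prod_p \tau_p(T,\mbb)^\kappa$ over $h(\mbb) \le H$ is a multiplicative-function average over values of the polynomial $\Delta$. A Selberg--Delange or Lang--Weil type computation shows that the average is $\asymp (\log H)^{\beta(T,\kappa)}$; this is exactly the quantity that Definition \ref{defn:Tate_alg} is designed to record, namely the density of Frobenius classes in the splitting of the factors of $\Delta$, weighted by $\tau^\kappa$. For the lower bound we only need a positive proportion of $\mbb$ with the typical prime factorization, and maximizing over $T$ gives $\gg (\log H)^{\beta(\kappa)}$.

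\textbf{Upper bound.} For $\mbb$ with $\omega(\Delta(\mbb))$ near its typical value, we will write $\Sel_\ell\, \mcA_{\mbb}$ as the kernel of a generalized Legendre symbol matrix, in the style of Example \ref{ex:Smith}. The rows and columns are indexed by the primes $p \mid \Delta(\mbb)$ and the $\Gal(L/\QQ)$-isotypic pieces of $M$, and the entries are Artin/Rédei symbols of $p$ in extensions cut out by $M$ and by the other primes. Following Example \ref{ex:KPS}, we split the prime divisors into small and medium primes, which are controlled by large sieve and bilinear equidistribution estimates, and the few largest primes, which are left uncontrolled. The uncontrolled primes contribute only $O(1)$ to the rank. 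For the controlled part, equidistribution of the symbols shows that the $\kappa$-th moment of the kernel is bounded by a sum over subspaces, and each subspace corresponds to a Galois-stable $T$ whose contribution is $\ll \prod_p \tau_p(T,\mbb)^\kappa$ times a constant. Summing over the finitely many $T$ and averaging over $\mbb$ gives $\ll (\log H)^{\beta(\kappa)}$. The atypical $\mbb$, those with too many prime factors, are handled by crude bounds of the form $\#\Sel_\ell \le \ell^{C\omega(\Delta(\mbb))}$ combined with Hölder's inequality and the rarity of such $\mbb$.

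\textbf{Main obstacle.} We expect the main obstacle to be the upper bound's equidistribution step. The multivariable polynomial setting means the primes dividing $\Delta(\mbb)$ are not independent as they are for squarefree twists. We will first try to parametrize $\mbb$ by the factorization of $\Delta(\mbb)$ via fibering over the largest prime, and then apply the large sieve to the remaining variables, as in \cite{KPS}.
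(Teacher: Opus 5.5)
Your outline names the right ingredients (the Greenberg--Wiles lower bound, a sieve in the style of Example~\ref{ex:KPS}, and generalized symbols in the style of Example~\ref{ex:Smith}). As written, though, it has a genuine gap, and it shows up already in the lower bound.

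\textbf{Lower bound.} When $\beta(\kappa)>0$, the average of $\prod_p\tau_p(T,\mbb)^\kappa$ is carried by a density-zero set of $\mbb$: those with atypically many primes where $\tau_p=\ell$. Write $\omega_{\pm1}=\sum_i\omega_{\pm1}(Z_i)$. A typical $\mbb$ has about $\omega_1\log\log H$ primes with $\tau_p=\ell$ and about $\omega_{-1}\log\log H$ primes with $\tau_p=\ell^{-1}$. So its value is $(\log H)^{\kappa\log\ell\,(\omega_1-\omega_{-1})+o(1)}$. Since $\ell^{\pm\kappa}-1>\pm\kappa\log\ell$, this is strictly smaller than $(\log H)^{\beta(T,\kappa)}$. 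For example, if $\omega_1=\omega_{-1}>0$ the typical value is $(\log H)^{o(1)}$ while $\beta(T,\kappa)>0$. Hence a positive proportion of typical $\mbb$ cannot give the lower bound. You need a lower bound for a multiplicative function averaged over the whole family; the paper gets this from the lower-bound sieve Theorem~\ref{tSieve2} together with Propositions~\ref{prop:beta} and~\ref{prop:Tate_alg}.

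Both sieves also need a level of distribution for the local data. Saying the condition ``depends on a residue class of $\mbb$'' gives none unless the modulus is controlled. The paper proves this in Propositions~\ref{prop:geom_usable} and~\ref{prop:geom_nonconstant}, using Denef--Pas quantifier elimination, Lang--Weil and Lemma~\ref{lem:consistent}. The local condition at $p$ is a function of $\mbb \bmod p^2$ outside a proportion $O(p^{-2})$ of classes. Each weak equivalence class has density $f(\Frob\,p)p^{-1}+O(p^{-3/2})$.

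\textbf{Upper bound.} The same phenomenon breaks it. Suppose you run the structured analysis only on $\mbb$ with $\omega(\Delta(\mbb))$ near its typical value and use $\ell^{C\omega(\Delta(\mbb))}$ elsewhere. Then you discard exactly the $\mbb$ that carry $(\log H)^{\beta(\kappa)}$, and on those the crude bound overshoots by a power of $\log H$. Symbol equidistribution must instead be proved for the family tilted by $\mcT(M_{\mbb},T)^\kappa$. This is why Theorem~\ref{thm:main} carries the weight $\tilde g^\nu$, applied with $\tilde g\propto\mcT(M_x,T)$ as in Remark~\ref{rmk:two_mult_weights}.

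Your claim that each subspace ``corresponds to a Galois-stable $T$ whose contribution is $\ll\prod_p\tau_p(T,\mbb)^\kappa$'' is the whole of Theorem~\ref{thm:charsum}. It needs ideas you have not supplied:
\begin{itemize}
\item gridding (Proposition~\ref{prop:gridding}), to create the product structure the bilinear large sieve (Proposition~\ref{pLargeSieve}) requires;
\item the count of $m\in\mathscr{M}$ whose ramification data generate $T(-1)$, set against the saving $\prod_s|R_s\cap T^\perp|/|R_s|$, which produces $\mcT_v(T)$ (Lemmas~\ref{lFormula}, \ref{lPullBack}), together with a proof that the non-cancelling characters are few;
\item a separate Chebotarev argument, using the ``no moderately rare local conditions'' hypothesis, to bound the relaxed Selmer group $\Sel(M,(\msL'_{xv})_v)$ that the symbol sums cannot see (Lemmas~\ref{lPigeon}, \ref{lChebSmall}, Proposition~\ref{pCheb}).
\end{itemize}

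\textbf{Your stated main obstacle.} The paper does not handle it by fibering over the largest prime. Theorem~\ref{tSieve} uses effective equidistribution and the submultiplicativity $S(nm,\mathcal D\times\mathcal E)\le C_1^{\omega(m)}S(n,\mathcal D)$. It replaces the sum over $\mbb$ by a sum over squarefree $d\le H^{c/3}$ weighted by $\prod_{p\mid d}\mu_p(\msL_p)$, in which the primes are independent. Note also that the discarded large primes are not $O(1)$ in number once $d$ stays below the level of distribution. There can be up to $3(s+1)/c$ of them, and the sum over $s$ is controlled by the smooth-number estimate Lemma~\ref{lNT}.
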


In this theorem, as $\mbb$ changes, the Galois module $\mcA_{\mbb}[\ell]$ is fixed, but the local conditions that decorate it change. The most general result of this paper, Theorem \ref{thm:main}, proves the analogue of Conjecture \ref{conj:ell} in families of decorated Galois modules where the underlying module over $\Q$ is fixed. To make this work, we assume that the local conditions are \emph{effectively equidistributed}; see Definition \ref{defn:effective-constant} for details.

To closely approximate the distribution of the $\ell$-Selmer groups in $\mcA_{\le H}$, we would need to have a handle on the local conditions of an elliptic curve drawn from this family at all of its places of bad reduction. This is possible in some cases, as in Example \ref{ex:Smith}, but is usually infeasible. Instead, we follow the approach of Example \ref{ex:KPS} and ignore the primes of bad reduction above a certain threshold relative to $H$. As $E$ varies up to height $H$, the product of primes of bad reduction below this threshold has a distribution approximately given by a calculable multiplicative function. These results are proved by employing a general form of a sieve originally considered by Erd\H{o}s \cite{Erdos} and later considered by Shiu \cite{Shiu}, Wolke \cite{Wolke} and Nair--Tenenbuam \cite{NT}.

Our next step is to consider the set of objects in this family whose product of bad primes below the threshold lies in a set of the form
\[\{p_1 \dots p_k\,:\,\, p_i \in X_i \text{ for all } i\},\]
where the $X_i$ are judiciously chosen sets of primes. This step is known as \emph{gridding}, and is a modification of a similar step in \cite{Smi22b}.

With the grids set up, the final step in the proof of Theorem \ref{thm:main} is to estimate the character sums that give the average size of the Selmer groups over these grids. This again uses techniques from \cite{Smi22b}, but with complications due to the more general setup of Theorem \ref{thm:main}.

Once the proof of Theorem \ref{thm:main} is finished, the main work that remains is to show that it implies our results for elliptic curves and abelian varieties, including Theorem \ref{tGeom1} and its generalization Theorem \ref{thm:AV}. This means that we need to show that the families of decorated modules in these theorems have local conditions that are effectively equidistributed. This step, which is trivial in the quadratic twist case, is much more complicated in full generality. We end up relying on a quantifier elimination result of Denef for $p$-adic fields \cite{Denef, Denef2} and its uniform generalization due to Pas \cite{PasCrelle}. With this input from model theory, we are able to translate our questions about local conditions into solvable problems in algebraic geometry.

\subsection{Other applications of Theorem \ref{thm:main}}
The setup of Theorem \ref{thm:main} is very general. We go through a number of applications of this result in Section \ref{sExa}; we highlight some of these here, starting with a generalization of Examples \ref{ex:KPS} and \ref{ex:Smith}.

\begin{theorem}[Example \ref{ex:abelian_twist}]
\label{thm:quadtwist}
Choose a number field $F$ and an abelian variety $A/F$. Given $d$ in $F^{\times}$, take $A^d$ to be the quadratic twist of $A$ corresponding to $F(\sqrt{d})/F$.

Choose a nonconstant polynomial $P$ in $F[u_1, \dots, u_n]$. Given a tuple $\mbb = (b_1, \dots, b_n)$ of integers, define $h(\mbb) = \max_i |b_i|$. Then there is $C > 0$ such that, for all $\kappa > 0$, we have
\begin{equation}
\label{eq:quadtwist_moments}
\frac{1}{(2H)^n} \sum_{\substack{\mbb \in \Z^n \\ h(\mbb) \le H \textup{ and } P(\mbb) \ne 0}} \exp\left(\kappa \cdot \textup{rank}\big(A^{P(\mbb)}/F\big) \right) \le \exp \exp(C \kappa)
\end{equation}
for all sufficiently large $H$ in terms of $\kappa$.

In particular, there is some $c > 0$ so that, for all $r \ge 1$,
\begin{equation}
    \label{eq:quadtwist_ranks}
\limsup_{H \to \infty} \frac{\#\big\{\mbb\in \Z^n \,:\,\, h(\mbb) \le H,\,\, P(\mbb) \ne 0, \, \textup{ and } \, \textup{rank}\left(A^{P(\mbb)}\right) \ge r\big\}}{(2H)^n} \le r^{-cr}.
\end{equation}
\end{theorem}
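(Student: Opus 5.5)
We would deduce this from Theorem \ref{thm:main}, applied to the $2$-Selmer groups, after reducing to a fixed Galois module. The rank of $A^{P(\mbb)}$ is at most $\dim_{\FF_2}\Sel_2 A^{P(\mbb)}$. Moreover $A^d[2] = A[2]$ as Galois modules, since the quadratic character acts trivially on $2$-torsion. So the family $\{\Sel_2 A^{P(\mbb)}\}$ is a family of decorated Galois modules with fixed underlying module $A[2]$ over $F$; restriction of scalars converts this to a module over $\QQ$ if Theorem \ref{thm:main} is stated only over $\QQ$. Only the local conditions vary, and they depend only on the class of $P(\mbb)$ in $F_v^\times/F_v^{\times 2}$ at each place $v$.

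\textbf{Equidistribution.} The first step is to check that these local conditions are effectively equidistributed in the sense of Definition \ref{defn:effective-constant}. At primes $v$ of good reduction for $A$ away from $2$, the local condition of $A^d$ is unramified if $v(d)$ is even. If $v(d)$ is odd, the local condition is determined by $A[2]$ and the ramified quadratic character, up to the unit class, which is controlled by a Legendre-type symbol of $d/\pi_v^{v(d)}$. The distribution of $(\mbb \bmod p^k)$ and of the residue symbols of $P(\mbb)$ at its prime divisors then follows from Lang--Weil counts on the hypersurfaces $P \equiv 0 \pmod{p}$. For squarefree-type divisors we use the Erd\H{o}s--Shiu sieve bounds quoted above, uniform in the modulus. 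This is much simpler than the general Denef--Pas route, since quadratic twists only see $P(\mbb)$ modulo squares.

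\textbf{Moment bound.} Theorem \ref{thm:main} gives $(\log H)^{\beta(\kappa')}$ asymptotics for Selmer moments $\mathbb{E}(\#\Sel_2)^{\kappa'}$. The statement asks instead for a bound uniform in $\kappa$ of the form $\exp\exp(C\kappa)$, with no $\log H$ growth. Two inputs are needed here.

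\begin{itemize}
\item First, we must show that $\beta(\kappa')=0$ in this family. The Greenberg--Wiles ratio compares $\Sel$ for $T\subseteq A[2]$ against the dual Selmer group of the Cartier dual. A twist by a ramified quadratic character acts symmetrically on each $T$ and on its dual, so the Tamagawa ratio is bounded on average. We would verify via Definition \ref{defn:Tate_alg} that $\beta(T,\kappa)\le 0$ for every $T$, using the fact that the local condition at primes dividing $P(\mbb)$ to odd order has the same size for $T$ and for its dual.
\item Second, we need the implicit constant to be explicit in $\kappa$. The grid method of \cite{Smi22b} bounds the $\kappa$-th moment of $\#\Sel_2$ by a count of $\kappa$-tuples of Selmer elements. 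This count is at most $2^{O(\kappa^2)}$, in line with the Poonen--Rains value $\prod_{i\le \kappa}(2^i+1)$. Since $\exp(\kappa\,\mathrm{rank})\le (\#\Sel_2)^{\kappa/\log 2}$, the bound $2^{O(\kappa^2)}$ translates into $\exp(O(\kappa^2))\le \exp\exp(C\kappa)$.
\end{itemize}

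The main obstacle is this uniformity: Theorem \ref{thm:main} as stated may only give $\asymp$ with constants depending on $\kappa$. We would revisit its proof to show that the sieve losses (from ignoring large primes) enlarge the Selmer rank by at most $O(1)$ per ignored prime in expectation. We would then check that the character-sum main terms carry constants like $2^{O(\kappa^2)}$ or $\exp(e^{O(\kappa)})$. Allowing double exponential growth leaves room for crude bounds there.

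\textbf{Rank tail.} Finally, \eqref{eq:quadtwist_ranks} follows by Markov's inequality: the proportion with rank $\ge r$ is at most $e^{-\kappa r}\exp\exp(C\kappa)$. Choosing $\kappa=C^{-1}\log(r/\log r)$ makes $\exp\exp(C\kappa)=r/\log r$ and gives $\exp(-c\, r\log r)=r^{-cr}$ for suitable $c>0$ and all large $r$. Small $r$ are absorbed by shrinking $c$, using only that the probability is at most $1$ for $r=1$ and that small-$r$ cases hold trivially after adjusting $c$.
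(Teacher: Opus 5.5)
\paragraph{Where the argument breaks.} Your first bullet fails: it is not true that $\beta(T,\kappa)\le 0$ for every $T\subseteq A[2]$. This failure is exactly why the theorem is stated for ranks and not for $\#\Sel_2$.

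Take $F=\QQ$, $P=u$, and $A=E$ an elliptic curve with a unique rational $2$-isogeny $\phi\colon E\to E_0$, and set $T=\ker\phi$. Let $p$ be an odd prime of good reduction for $E$ that exactly divides $b$. The local condition $T$ inherits from $E^{b}[2]$ is $\delta_\phi(E_0^{b}(\QQ_p))$, so
\[
\mcT_p(T)=\frac{c_p(E_0^{b})}{c_p(E^{b})}=\frac{\#E_0(\QQ_p)[2]}{\#E(\QQ_p)[2]},
\]
and this equals $2$ for a positive proportion of such $p$.

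The symmetry you invoke does not exist. $T$ and $T^{\vee}$ can even be isomorphic Galois modules, but the local conditions on $T^{\vee}$ are the ones coming from the isogenous curve $E_0^{b}$, not from $E^{b}$, and they have a different size. Case 3 of Definition \ref{defn:Tate_alg} is precisely this situation. The paper's quadratic-twist example gives $\beta(\kappa)=\tfrac12(2^\kappa-1)$ or $\tfrac14(2^\kappa+2^{-\kappa}-2)$, both positive for $\kappa>0$, consistent with the unbounded $2$-Selmer averages found by Xiong--Zaharescu and Klagsbrun. Passing to $\QQ$ by restriction of scalars only adds further $G_{\QQ}$-submodules of $\mathrm{Ind}_{F/\QQ}A[2]$. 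So Theorem \ref{thm:main} applied to $\Sel_2 A^{P(\mbb)}$ yields at best a bound of order $(\log H)^{\beta(\kappa)}$, which is unbounded.

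\paragraph{The missing idea.} You need to use that rank is an isogeny invariant while the Tamagawa bound is not; this is the paper's proof.

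By Faltings, the Weil restriction of the generic twist $\widetilde{A}$ has only finitely many isogenous abelian varieties $B_1,\dots,B_k$ over $\QQ(u_1,\dots,u_n)$. Each $B_i[2]$ is constant, because the twist acts by $\pm1$ on $2$-power torsion over some $L(u_1,\dots,u_n)$. So Theorem \ref{thm:AV} gives
\[
\frac{1}{\#\mcX'_H}\sum_{x\in\mcX'_H}\max_i\bigl(\#\Sel_2\mcB_{ix}/\mcTbnd(\mcB_{ix}[2])\bigr)^{\kappa}\le\exp\exp(C\kappa).
\]
For each $x$, choose $i$ maximizing $\prod_{p\text{ bad}}\#\mcB_{ix}(\QQ_p)[2]$. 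Every $T\subseteq\mcB_{ix}[2]$ satisfies $\mcB_{ix}/T\cong\mcB_{jx}$ for some $j$, and
\[
\mcT(T)\asymp\prod_p\frac{\#\mcB_{jx}(\QQ_p)[2]}{\#\mcB_{ix}(\QQ_p)[2]}\ll 1,
\]
so this favored member has $\mcTbnd\ll1$ pointwise. Hence the $\kappa$-th moment of $\min_i\#\Sel_2\mcB_{ix}$ is at most $\exp\exp(C\kappa)$. Since $\min_i\#\Sel_2\mcB_{ix}\ge 2^{\mathrm{rank}(A^{P(x)}/F)}$, this gives \eqref{eq:quadtwist_moments}.

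\paragraph{The uniformity in $\kappa$.} This is not an obstacle. Theorem \ref{thm:main} and Theorem \ref{thm:AV} already bound $\#\Sel/\mcTbnd$ with constant $\exp\exp(C\kappa)$ and $C$ independent of $\kappa$, so you do not need to revisit the sieve or the character sums. The paper also remarks that its sieve methods give doubly exponential dependence on $\kappa$, so you should not expect $2^{O(\kappa^2)}$ from them.
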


In this theorem, as in Theorem \ref{tGeom1}, the relevant local conditions are shown to be effectively equidistributed using geometric methods. But this is not a requirement. For a $3$-isogeny $\phi: A \rightarrow A'$ and for $t \in \Q^\times$, denote by $\phi_t: A^t \rightarrow (A')^t$ the $3$-isogeny attached to the quadratic twist. We prove the following: 

\begin{theorem}
\label{t3Selmer}
Let $\phi: A \rightarrow A'$ be a $3$-isogeny of abelian varieties over $\Q$ and assume that $A[2]$ is irreducible as a $G_\Q$-module. Then there is $C > 1$ such that for all $H \geq 10$
$$
\sum_{\substack{|t| \leq H \\ t \textup{ sqf.}}} \# \Sel_{2\phi_t} \, A^t \leq CH.
$$
\end{theorem}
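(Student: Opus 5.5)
We would deduce Theorem \ref{t3Selmer} from the general Theorem \ref{thm:main}, applied to the family of decorated Galois modules $\Sel_{2\phi_t}$ over squarefree $t$. The kernel of $2\phi_t$ on $A^t$ is $A^t[2] \oplus \ker\phi_t$. Since $2$ and $3$ are coprime, $\Sel_{2\phi_t}\, A^t \cong \Sel_2\, A^t \times \Sel_{\phi_t}\, A^t$. As $t$ varies, the underlying module is the fixed module $A[2]\oplus\ker\phi$ twisted by the quadratic character $\chi_t$.

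The first step is to remove the twisting. On $A[2]$ the twist acts trivially, because $-1\equiv 1 \bmod 2$. On $\ker\phi$ it is a twist by $\chi_t$, which becomes trivial over $\Q(\sqrt t)$. To land in the fixed-module setting of Theorem \ref{thm:main}, we would work with the module $M = A[2]\oplus \ker\phi$ over $\Q$. The quadratic character $\chi_t$ enters only through the local conditions at primes $p\mid t$: the twisted module $\ker\phi\otimes\chi_t$ is unramified-equivalent to $\ker\phi$ away from $t$. A cleaner route is to factor the sum. By Cauchy--Schwarz, or by Hölder with the moments supplied by Theorem \ref{thm:main}, it suffices to bound averages of $(\#\Sel_2 A^t)^2$ and $(\#\Sel_{\phi_t}A^t)^2$ separately, each in terms of $(\log H)^{\beta}$.

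The key point is to show the relevant exponent $\beta$ vanishes for each part. For the $2$-part, the module $A[2]$ is irreducible, so its only Galois submodules are $0$ and $A[2]$, and self-duality makes the Tamagawa ratio trivial. Thus the $\beta(T,\kappa)$ from Definition \ref{defn:Tate_alg} are $0$ for all $T$, and Theorem \ref{thm:main} gives a bounded average of $\#\Sel_2 A^t$. This is consistent with results like Example \ref{ex:Smith}.

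For the $\phi$-part, the Greenberg--Wiles formula gives $\#\Sel_{\phi_t}/\#\Sel_{\hat\phi_t}$ as a product of local Tamagawa-type ratios at $p\mid t$. These ratios can be large, so the average of $\#\Sel_{\phi_t}$ alone may be unbounded, like $(\log H)^{\beta}$ with $\beta>0$. This is where the product structure matters. I would instead view the combined module $M$ and check that its maximal $\beta(\cdot,1)$ is still governed by the submodules. The expected unboundedness comes only from submodules containing $\ker\phi$, and the resulting exponent must be compared against the density of squarefree $t$ with prescribed local behaviour. I expect the local factors at $p\mid t$ for $3$-isogenies of twists to be balanced on average: the set of $p$ where $c_p$ ratios are $3$ versus $1/3$ is governed by whether $\ker\phi\otimes\chi$ is locally trivial. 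This should make $\beta(\kappa)$ zero at $\kappa=1$ for the product, though not for higher $\kappa$.

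The main obstacle is verifying the hypotheses of Theorem \ref{thm:main}: the local conditions must be effectively equidistributed (Definition \ref{defn:effective-constant}) for this twist family without geometric input. For quadratic twists this should be elementary. At $p\nmid 6N_A$ with $p\mid t$, the local condition for $\Sel_{2\phi_t}$ is determined by the reduction of $A$ at $p$ and the Frobenius class of $p$ in $\Q(A[2],\ker\phi)/\Q$. Equidistribution then follows from Chebotarev with an effective error term. Computing $\beta(T,1)$ explicitly for each of the finitely many submodules $T$ and confirming it is $0$ is the step I would carry out first. If it fails for $\kappa=1$, I would instead pair $\Sel_{\phi_t}$ with its dual $\Sel_{\hat\phi_t}$ via Poitou--Tate to cancel the Tamagawa ratio.
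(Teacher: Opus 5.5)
Your proposal has a genuine gap. Your first step is sound: since $\gcd(2,3)=1$ we have $\Sel_{2\phi_t}A^t\cong \Sel_2A^t\times\Sel_{\phi_t}A^t$, and since $A^t[2]=A[2]$ is irreducible, $\mcTbnd(A^t[2])=1$. The problem is what comes next.

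\textbf{Why your reduction fails.} Theorem \ref{thm:main} applies only to a \emph{fixed} $G_\Q$-module. Here $A^t[\phi_t]\cong\mathbb{F}_3(\chi_{dt})$ changes with $t$, and for a module of odd order a quadratic twist cannot be traded for a change of local conditions. That trick works for $A^t[2]=A[2]$ but not for $\ker\phi_t$. So neither the combined module $A[2]\oplus\ker\phi$ nor any moment bound for $\#\Sel_{\phi_t}A^t$ comes out of Theorem \ref{thm:main}.

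Your Cauchy--Schwarz/H\"older route would then need $\sum_{|t|\le H}(\#\Sel_{\phi_t}A^t)^q\ll H$ for some $q>1$. Since $\Sel_{\phi_t}A^t$ essentially counts cubic fields with resolvent $\Q(\sqrt{dt})$, this is a higher moment of $3$-torsion in class groups of quadratic fields, which is not known. Only the first moment is available.

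Your diagnosis of the $\phi$-part is also off. For $p\mid t$ with $p\notin\Vplac_0$, the character $\chi_{dt}$ is ramified at $p$, so $H^1(G_p,\mathbb{F}_3(\chi_{dt}))=0$. The local condition and $H^0$ are both trivial, so the local Tamagawa factor is $1$. There is no ``$3$ versus $1/3$'' balancing to verify, and the average of $\#\Sel_{\phi_t}A^t$ is in fact bounded. The obstruction is non-constancy of the module, not any exponent $\beta$.

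\textbf{The missing idea.} Absorb $\Sel_{\phi_t}$ into the index set instead of the module. Take
\[
X=\{(t,\psi): t\text{ squarefree},\ \psi\in\Sel_{\phi_t}A^t\},
\]
with $M_{(t,\psi)}=A[2]$ carrying the $2$-Selmer conditions of $A^t$. Then
\[
\sum_t\#\Sel_{2\phi_t}A^t=\sum_{x\in X_H}\#\Sel M_x .
\]
Apply Theorem \ref{thm:main} with $\kappa=1$ and $\tilde g=1$, and use $\mcTbnd=1$. This gives a bound $\ll\#X_H=\sum_t\#\Sel_{\phi_t}A^t\ll H$. Only the first moment of the $\phi$-Selmer group is ever used.

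The real work is verifying condition (1) of Definition \ref{defn:effective-constant} for this $\#\Sel_{\phi_t}$-weighted measure. Chebotarev does not give this. You need the following chain:
\begin{itemize}
\item The local vanishing above shows that, after fixing the classes of $t$ at $\Vplac_0$, $\#\Sel_{\phi_t}A^t$ counts cubic fields with resolvent $\Q(\sqrt{dt})$ with prescribed local behaviour at $\Vplac_0$ and at $p\mid Q$.
\item The $2$-Selmer condition at $p\mid Q$ depends only on the class of $t$ in $\Q_p^\times/\Q_p^{\times2}$.
\item The resulting power-saving count with local conditions is \cite[Theorem 1.3]{BTT}.
\end{itemize}
Condition (2) follows from the explicit leading constant in that result. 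Condition (3) follows because, for $p\mid t$ outside $\Vplac_0$, the $2$-Selmer local conditions are determined by the splitting of $p$ in $\Q(A[4])/\Q$.
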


To prove this, we apply Theorem \ref{thm:main} to decorated modules indexed by tuples of the form $(t, \psi)$, where $\psi$ is a $\phi_t$-Selmer element of $A^t$. In this case, the necessary effective equidistribution results are a consequence of work in the geometry of numbers \cite{BKLOS, BTT}. This theorem is proved in Section \ref{s3Selmer}.

An obvious example of an application of our framework outside the context of abelian varieties over number fields is to class groups. In this context, an analogous result to Theorem \ref{thm:quadtwist} is the following:

\begin{theorem}
\label{tClass}
Let $k \in \Z_{\geq 2}$ and let $P(u) \in \Z[u]$ be squarefree. Then there is $C > 0$ such that for all real numbers $H \geq 10$
$$
\sum_{\substack{1 \leq b \leq H \\ P(b) \neq 0}} \# \mathrm{Cl}(\Q(\sqrt{P(b)}))[2^k] \leq CH (\log H)^r,
$$
where $r$ denotes the number of irreducible factors of $P(u)$.
\end{theorem}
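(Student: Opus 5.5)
The plan is to realize $\mathrm{Cl}(\Q(\sqrt{d}))[2^k]$, with $d = P(b)$, as controlled by a Selmer group of a decorated Galois module, and then apply Theorem \ref{thm:main} after verifying effective equidistribution of the local conditions. By genus theory, $\mathrm{Cl}(\Q(\sqrt d))[2]$ has size about $2^{\omega(d)-1}$. Summing this over $b \le H$ gives $\sum_b 2^{\omega(P(b))} \asymp H(\log H)^{r}$ when $P$ has $r$ irreducible factors, by the Erd\H{o}s--Shiu--Nair--Tenenbaum sieve mentioned in the introduction. So the exponent $r$ is exactly the genus-theory contribution, and the task is to show that the higher $2^j$-ranks contribute only a bounded factor on average.

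The first step is to bound the $2^k$-torsion by an iterated Selmer structure. We would bound $\#\mathrm{Cl}[2^k]$ by a product of $k$ successive terms of the form $\#(2^{j-1}\mathrm{Cl}[2^j]/2^j\mathrm{Cl}[2^{j+1}])$. More practically, we would bound it by the number of tuples $(\chi_1,\dots,\chi_k)$, where $\chi_1$ ranges over genus characters and each $\chi_{j+1}$ is a lift of $\chi_j$ to an unramified $\Z/2^{j+1}$-extension. Following the device of Theorem \ref{t3Selmer}, we then index decorated modules by tuples $(b,\chi_1,\dots,\chi_{j})$. For each such tuple, the set of lifts of $\chi_j$ is a torsor under a Selmer group of the fixed module $\mu_2 \cong \Z/2\Z$, with local conditions determined by $b$ and $\chi_j$. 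This keeps the underlying Galois module constant, as Theorem \ref{thm:main} requires, while the local conditions vary.

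The second step is to compute the Tamagawa ratio for each layer. At the genus layer, the Greenberg--Wiles formula gives a ratio of about $2^{\omega(d)}$, which produces the $(\log H)^r$. At higher layers, the local conditions at the ramified primes $p \mid d$ are self-dual up to bounded error, so the ratio is bounded. Theorem \ref{thm:main} then gives average size $O(1)$ for each higher layer, uniformly after weighting by the number of tuples at the previous layer. Multiplying through the $k$ layers yields $CH(\log H)^r$. Because the base $d = P(b)$ need not be squarefree, we would first discard or bound the contribution of $b$ with large square part of $P(b)$ using a trivial class number bound; this contribution is negligible.

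The main obstacle is verifying effective equidistribution (Definition \ref{defn:effective-constant}) of the local conditions for the higher layers. For fixed $b$, the local condition at $p$ depends on the restriction of $\chi_j$ to the decomposition group at $p$, that is, on R\'edei-type symbols; equidistribution here is not automatic. As in Theorem \ref{t3Selmer}, we would first try to derive it from the gridding and character-sum machinery: we would show that the Legendre-symbol data $\left(\frac{p_i}{p_{i'}}\right)$ among small prime factors of $P(b)$ equidistribute along grids, using Siegel--Walfisz-type bounds for primes dividing values of $P$. For $k=2$ this is R\'edei-matrix equidistribution. For larger $k$ we expect that an inductive bound, in which each layer costs only a constant factor, suffices, since only an upper bound is needed.
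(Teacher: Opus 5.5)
Your proposal has a genuine gap in the higher layers. Theorem \ref{thm:main} can only be applied after hypothesis (1) of Definition \ref{defn:effective-constant} is verified. In your family indexed by $(b,\chi_1,\dots,\chi_j)$, the local conditions depend on the restrictions $\chi_j|_{G_p}$ at $p\mid P(b)$, that is, on how $p$ splits in fields cut out by $\chi_j$, and these fields vary with $b$. For $j\ge 2$ this is the governing-field problem for $8$-ranks and beyond. You do not prove it, and Sections \ref{sGrid} and \ref{sChar} do not supply it. That machinery bounds Selmer groups of a fixed module over a fixed starting tuple \emph{assuming} the local conditions are equidistributed; it does not equidistribute class-field-theoretic data attached to $b$. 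Your layer structure is also misdescribed. Two unramified lifts of $\chi_j$ differ by an element of $\Hom(\mathrm{Cl},\Z/2\Z)$, so a nonempty set of lifts is a torsor under the genus group. That group's local conditions depend only on $b$, and its Tamagawa ratio is $\asymp 2^{\omega(P(b))}$, not bounded. What is small on average is the proportion of $\chi_j$ that admit a lift, and that obstruction is exactly what you would need to control. Separately, discarding $b$ with a large square part of $P(b)$ via a class number bound is not free. Even with $\#\mathrm{Cl}[2^k]\ll |P(b)|^{\epsilon}$, you would need a power saving on $\#\{b\le H: p^2\mid P(b) \text{ for some large } p\}$, which is not available for general $P$.

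The missing idea is that only the $4$-rank is needed, taken to a high moment. For a finite abelian $2$-group $A$, multiplication by $2$ gives $\#A[2^k]\le \#A[2]\cdot\#(2A)[2]^{k-1}$. Hence $\#\mathrm{Cl}[2^k]\le 2^{\omega(P(b))}\,\#(2\mathrm{Cl})[2]^{k-1}$. By R\'edei, $\#(2\mathrm{Cl})[2]\le\#\Sel M_b$, where $M_b$ is the constant module $\FF_2$ over $\Q$ with the following local conditions: $\langle\chi_{P(b)}\rangle$ at $p\,\|\,P(b)$; the empty placeholder when $p^2\mid P(b)$; unramified elsewhere; and full at $2$ and $\infty$. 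The placeholder is how the paper absorbs non-squarefree values, through the sieve rather than a squarefree-values theorem. These conditions depend only on $P(b)$ modulo $p^2$, so effective equidistribution is immediate, and $\mcTbnd(M_b)\asymp 1$. The genus factor is carried by the weight: take $\tilde g_p=2^{1/(k-1)}$ on non-unramified conditions and $\kappa=\nu=k-1$ in Theorem \ref{thm:main}. This bounds the sum by $\ll\sum_b 2^{\omega(P(b))}$, and Theorem \ref{tSieve} together with the prime ideal theorem for the irreducible factors of $P$ gives $H(\log H)^r$. Because Theorem \ref{thm:main} controls all moments of a single Selmer layer, it replaces your iteration entirely.
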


We prove this in Section \ref{sClass}. 

The final example we highlight is one of many families of elliptic curves considered in Section \ref{sGeom}. Compared to the others, it is special because of its apparent ``paradox''; the distribution of $3$-Selmer ranks shows superexponential decay, but the expected size of the $3$-Selmer group is unbounded.

\begin{theorem}[Example \ref{ex:full_3tor}]
\label{tGeom2}
Consider the family
\[\mcA: y^2 + 3u_1xy + (u_1^3 + u_2^3)y = x^3.\]
This is the family of rational elliptic curves with $3$-torsion isomorphic to $\Z/3\Z \oplus \mu_3$.

Then there is some $c > 0$ so that, for all $r \ge 1$,
\[\limsup_{H \to \infty} \frac{\# \{E \in \mcA_{\le H}\,:\,\, \dim \, \Sel_3\, E \ge r\}}{\# \mcA_{\le H}} \le r^{-cr}.\]
At the same time,
\[\frac{1}{\# \mcA_{\le H}} \sum_{E \in \mcA_{\le H}} \# \Sel_3\, E \asymp \log H.\]
\end{theorem}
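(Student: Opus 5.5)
Since every curve in the family has $E[3] \cong \Z/3\Z \oplus \mu_3$ as a $G_\Q$-module, the $3$-torsion is constant across the family (rational over $L = \Q(\mu_3)$). So the family is covered by Theorem \ref{tGeom1}, and more precisely by the general machinery behind it (Theorem \ref{thm:main} together with the effective equidistribution of local conditions). The plan is therefore to compute the invariants $\beta(T,\kappa)$ from Definition \ref{defn:Tate_alg} for the $K$-subgroup schemes $T$ of $\mcA_K[3]$. We then read off both statements from the resulting moment asymptotics.

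First, we enumerate the subgroup schemes $T$. These are $0$, the whole $\mcA_K[3]$, and the four order-$3$ subgroups. As Galois modules, the order-$3$ subgroups are $\Z/3\Z$ (the point $(0,0)$), $\mu_3$, and two further lines which are twisted copies. Each line $T$ gives a $3$-isogeny $\varphi_T: E \to E/T$. The Greenberg--Wiles formula \eqref{eq:Tama_approx} then gives $\#\Sel_{\varphi_T} E \gtrsim \prod_p c_p(E/T)/c_p(E)$.

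Next, we compute the local Tamagawa ratios at primes of bad reduction. The discriminant is $\Delta = 27\, u_2^3 (u_1^3+u_2^3)^3$ (up to a harmless factor), so generic bad primes divide $u_2$ or one of the three linear factors of $u_1^3+u_2^3$ over $\Q(\mu_3)$. At each such prime the reduction is multiplicative of type $I_{3k}$. For the isogeny with kernel $\mu_3$ (respectively $\Z/3\Z$), the ratio $c_p(E/T)/c_p(E)$ is $3$, $1/3$ or $1$ according to whether $T$ meets the identity component. These cases are governed by which factor $p$ divides and by splitting conditions modulo $3$.

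We then plug this into the definition of $\beta$. For $\kappa=1$ we expect some line $T$ to have ratio $3$ at a set of primes of density $1/2$ in each of two factor classes, while the dual ratio is $\le 1$. The averaged contribution of $\prod 3^{\omega_T(E)}\cdot 3^{-\kappa\cdot(\cdot)}$ over the family then behaves like $(\log H)^{\beta(T,1)}$ with $\beta(T,1)=1$. This mirrors the computation of $\beta$ from $\sum_T$ density $\times\,(3^\kappa-1)$-type exponents. Theorem \ref{tGeom1} then gives $\frac{1}{\#\mcA_{\le H}}\sum \#\Sel_3 E \asymp \log H$. We must also check that no $T$ gives $\beta(T,1)>1$, including $T=\mcA_K[3]$, where the contributions of $\Z/3\Z$ and $\mu_3$ cancel via the Weil pairing.

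For the tail bound, the key point is that every $T$ has $\beta(T,\kappa)$ growing at most linearly in $\kappa$ with bounded coefficient, rather than exponentially. The reason is that Tamagawa ratios are bounded by $3$ per prime, and the sign-mixing of ratios caps the lower bound. We would extract from the proof of Theorem \ref{thm:main} (rather than the bare $\asymp$) a uniform bound of the form $\mathbb{E}[\#\Sel_3^{\kappa}] \ll_\kappa (\log H)^{\beta(\kappa)}$ restricted to the portion controlled by grids. On the complementary event, where the Tamagawa lower bound is small, we would extract a superexponential tail bound of Poonen--Rains type. The bulk of the curves have $\omega$ near $\log\log H$, but the lower bound $\prod c_p$ ratios concentrate at bounded size after centering. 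So the conditional distribution of $\dim\Sel_3$ has Gaussian-type decay, and this yields $r^{-cr}$, as in \eqref{eq:quadtwist_ranks}.

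The main obstacle is this last step. The $\asymp$ statements alone give only $\Pr(\dim\ge r)\ll 3^{-\kappa r}(\log H)^{\beta(\kappa)}$, which is useless as $H\to\infty$. One must instead use the finer structure inside the proof of Theorem \ref{thm:main}: conditioned on the grid, the Selmer rank exceeds the Tamagawa-predicted rank by a quantity with superexponential tails. One also needs the Tamagawa-predicted rank itself to be bounded with superexponential tails, which follows from local cancellation between $T$ and its Cartier dual.
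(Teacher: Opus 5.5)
Your route to the $\log H$ asymptotic is the paper's (Theorem \ref{thm:AV} with Propositions \ref{prop:beta} and \ref{prop:Tate_alg}). However, the local input you sketch is wrong in a way that matters.

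\textbf{The local data.} Only $\Z/3\Z$ and $\mu_3$ are rational subgroups; the other two lines are swapped by $\Gal(\QQ(\mu_3)/\QQ)$. The bad divisors over $\QQ$ are $u_2$, $u_1+u_2$ and $u_1^2-u_1u_2+u_2^2$. Tate's algorithm gives the following.
\begin{itemize}
\item At $(u_2)$, $E$ is nonsplit I$_3$, and the $\Z/3\Z$- and $\mu_3$-quotients are I$_9$ and I$_1$.
\item At $(u_1+u_2)$, $E$ is split I$_3$, with quotients I$_1$ and I$_9$.
\item At the quadratic factor, $E$ is split I$_3$ and \emph{both} quotients are I$_1$.
\end{itemize}
Hence
\[\beta(\mu_3,\kappa)=3^{\kappa}+\tfrac32\, 3^{-\kappa}-\tfrac52,\qquad \beta(\Z/3\Z,\kappa)=\tfrac12\, 3^{\kappa}+2\cdot 3^{-\kappa}-\tfrac52,\]
which gives $\beta(1)=1$. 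Your picture (ratio $3$ on density-$\tfrac12$ sets in two classes, never below $1$) would instead give $\beta(\mu_3,\kappa)=3^\kappa-1$, so $\beta(1)=2$. Worse, it would make $\mcTbnd(\mcA_x[3])\to\infty$ in probability, contradicting the first claim. The ratios $\tfrac13$ are essential, and they are not a ``Cartier dual cancellation'': at the quadratic factor both isogenies have ratio $\tfrac13$ simultaneously.

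\textbf{The tail bound.} Here there is a genuine gap.
\begin{itemize}
\item Your key point is false: $\beta(\mu_3,\kappa)$ grows like $3^\kappa$, not linearly in $\kappa$.
\item Extracting grid-level structure, plus an unspecified Poonen--Rains tail on a complementary event, is not an argument.
\item What is needed about $\mcTbnd$ is not a superexponential tail. Its $\kappa$-th moments diverge for $\kappa>\log_3\tfrac32$. What is needed is boundedness in probability.
\end{itemize}
What you missed is that the \emph{statement} of Theorem \ref{thm:AV} (Theorem \ref{thm:main} with $\tilde g=1$) already controls the ratio, not just $(\#\Sel_3)^\kappa$:
\[\limsup_H \tfrac{1}{\#\mcX_H'}\sum_x\bigl(\#\Sel_3\,\mcA_x/\mcTbnd(\mcA_x[3])\bigr)^{\kappa}\le\exp\exp(C\kappa).\]
It therefore suffices to show that $\mcTbnd$ is bounded in probability.
\begin{enumerate}
\item Both lines satisfy $\beta(T,0.3)<0$. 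Equivalently, $\log\mcT(M_x,T_x)$ has negative drift, e.g.\ $\partial_\kappa\beta(\mu_3,\kappa)|_{\kappa=0}=-\tfrac12\log 3$.
\item So Proposition \ref{prop:beta} and Markov's inequality give a constant $C$ such that the proportion of $x$ with $\mcTbnd(\mcA_x[3])\ge C$ tends to $0$. The cases $T=0$ and $T=\mcA_x[3]$ contribute bounded values.
\item Off that set, $\dim\Sel_3\ge r$ forces the ratio to be at least $3^r/C$. Markov bounds the proportion of such $x$ by $(C3^{-r})^{\kappa}\exp\exp(C'\kappa)$.
\item Taking $\kappa=c\log r$ yields $r^{-cr}$, exactly as in Example \ref{ex:abelian_twist}.
\end{enumerate}
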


Using our framework, the odd behavior of $3$-Selmer groups seen in this theorem is a consequence of odd behavior of Tamagawa ratios.

\subsection{Overview of the paper}
In Section \ref{sec:heuristic}, we review the Greenberg--Wiles' formula and use it to give the general heuristic underpinning Conjecture \ref{conj:ell}, which is Heuristic \ref{heur:main}. This heuristic concerns the behavior of Selmer groups of families of decorated modules over $\Q$. In this section, we also state our main result, Theorem \ref{thm:main}, which shows that the heuristic is correct in constant module families with effectively equidistributed local conditions.

Over Sections \ref{sSieve}, \ref{sGrid}, and \ref{sChar}, we prove Theorem \ref{thm:main}. This starts in Section \ref{sSieve} with the development of a suitable sieve, both for an upper bound (Theorem \ref{tSieve}) and a lower bound (Theorem \ref{tSieve2}). In Section \ref{sGrid}, we adapt the theory of grids \cite[Section 8]{Smi22b} to this setup. This reduces the Selmer group moments we are bounding to a character sum that we estimate in Section \ref{sChar}. This approach is based on \cite[Sections 5-7]{Smi22b}, but requires several novel arguments as our character sum involves a complicated multiplicative weight coming from the sieve. With the character sums estimated, we then prove Theorem \ref{thm:main} in Section \ref{ssec:proof_main}. 

In Section \ref{sGeometry}, we state a generalization of Theorem \ref{tGeom1} for abelian varieties, Theorem \ref{thm:AV}, and show that it is a consequence of Theorem \ref{thm:main}. This involves proving that the associated family of decorated modules has effectively equidistributed local conditions.

To this point, our main results have shown that sums of Selmer group sizes are of a similar magnitude to the corresponding sums of Tamagawa ratios. In Section \ref{ssec:Tamagawa_AV}, we give tools for estimating sums of Tamagawa ratios, in the process giving a closed form for our estimate of the sums of Selmer group sizes. These tools are particularly concrete in the case where we are looking at a family of elliptic curves, and we prove Theorem \ref{tGeom1} at the end of Section \ref{sGeometry}.

Finally, in Section \ref{sExa}, we use the theory developed throughout the paper to study a plethora of examples, including 8 families of elliptic curves. This section has the proofs of Theorem \ref{tGeom2} and Theorem \ref{thm:quadtwist} as consequences of Theorem \ref{thm:AV}. Finally, we derive Theorem \ref{t3Selmer} and Theorem \ref{tClass} directly from Theorem \ref{thm:main} at the end of this section.

\subsection*{Acknowledgments}
Conjecture \ref{conj:ell} is an attempt to answer a question posed by Manjul Bhargava to the second author in September 2024. We are grateful to him for this question. We would also like to thank Ken Willyard for useful conversations.

The first author gratefully acknowledges the support of the Dutch Research Council (NWO) through the Veni grant ``New methods in arithmetic statistics''. This research was partially conducted during the period the second author served as a Clay Research Fellow.
\section{A general heuristic for Selmer moments}
\label{sec:heuristic}
In this section we explain the heuristic underpinning our conjecture and state our main result in its most general form (see Theorem \ref{thm:main}).

\subsection{Tamagawa ratios}
We begin with the general definition of a Selmer group and Tamagawa ratio.

\begin{mydef}
\label{defn:decorated_modules}
Take $F$ to be a global field, and take $M$ to be a finite $G_F$-module whose order is indivisible by the characteristic of $F$. For each place $v$ of $F$, choose a subgroup
\[
\msL_v \subseteq H^1(G_v, M).
\]
We assume that $\msL_v$ is given by the unramified cohomology group $H^1_{\text{ur}}(G_v, M)$ at all but finitely many places. We refer to the tuple $(M, (\msL_v)_v)$ as a finite $G_F$-module decorated with local conditions, and we usually just refer to this object by $M$.

Given such a module decorated with local conditions, we then define the Selmer group of $M$ by
\[
\Sel\, M :=  \ker\left(H^1(G_F, M) \to \prod_v H^1(G_v, M)/\msL_v\right).
\]
This group is always finite by our assumption that almost every $\msL_v$ is the unramified set of local conditions.

We define the dual $M^{\vee}$ of $M$ as the tuple 
\[
M^{\vee} := \left(M^\ast(1), \,(\msL_v^{\perp})_{v}\right).
\]
Here, $M^\ast(1)$ is the Tate twist of the Pontryagin dual to $M$, and $\msL_v^{\perp}$ is the orthogonal complement to $\msL_v$ under the usual local Tate pairing between $H^1(G_v, M^\ast(1))$ and $H^1(G_v, M)$.

We define the \emph{Tamagawa ratio} for the decorated module $(M, (\msL_v)_v)$ by
\[
\mcT(M): = \frac{\# H^0(G_F, M)}{\# H^0(G_F, M^{\vee})} \cdot \prod_v \mcT_v(M) \quad\text{where}\quad \mcT_v(M):= \frac{\# \msL_v}{\# H^0(G_v, M)}.
\]
The \emph{Greenberg--Wiles' formula} \cite[Theorem 8.7.9]{NSW} then states
\begin{equation}
    \label{eq:Wiles}
\frac{\# \Sel\, M}{\# \Sel\, M^{\vee}} = \mcT(M).
\end{equation}

\begin{remark}
In the case that $M$ is the kernel of some isogeny of elliptic curves over a number field with the usual isogeny Selmer group local conditions, the local terms of the Tamagawa ratio may be written as ratios of Tamagawa numbers, giving the name. This notation is essentially due to Cassels, who also proved the Greenberg--Wiles' formula for $M$ in this special case  \cite[Theorem 1.1]{Cassels}.

Wiles proved the formula in the case $F = \Q$ \cite[Proposition 1.6]{Wiles} by applying an argument of Greenberg \cite[eq.~(22)]{Greenberg}. No modification is needed to Wiles' proof to extend it to other global fields.
\end{remark}

Now, given any $G_F$-submodule $T$ of $M$, we may endow $T$ with the local conditions $\left(\iota^{-1}(\msL_v)\right)_v$, where $\iota$ is the inclusion of $T$ in $M$. We then have an exact sequence
\[
0 \to H^0(G_F, T) \to H^0(G_F, M) \to H^0(G_F, M/T) \to \Sel\, T \to \Sel\, M,
\]
so
\begin{align*}
\# \Sel\, M  &\,\ge\, \# \Sel\, T\cdot \frac{   \# H^0(G_F, M)}{\# H^0(G_F, T) \cdot \# H^0(G_F, M/T)} \\
&\,\ge\,  \mcT(T) \cdot \frac{ \# H^0(G_F, M)}{\# H^0(G_F, T) \cdot \# H^0(G_F, M/T)}\\
&\,=\,  \frac{ \# H^0(G_F, M)}{\# H^0(G_F, T^{\vee}) \cdot \# H^0(G_F, M/T)} \cdot \prod_v \frac{\# \iota^{-1}(\msL_v)}{\# H^0(G_v, T)}.
\end{align*}
Calling this final expression $\mcT(M, T)$, we define the \emph{Tamagawa lower bound} for $M$ by
\[\mcTbnd(M) := \max_{T \subseteq M} \mcT(M, T),\]
where the maximum is taken over all $G_F$-submodules of $M$. 
\end{mydef}
We always have
\begin{equation}
\label{eq:yes_its_a_lower_bound}
\# \Sel\, M \ge \mcTbnd(M).
\end{equation}
Our starting expectation is that, if $M$ varies in a ``natural'' family of $G_{\Q}$-modules, then \eqref{eq:yes_its_a_lower_bound} is not too far from being sharp.

\begin{heuristic}
\label{heur:main}
Choose a ``natural'' infinite family $M_1, M_2, \dots$ of finite $G_{\Q}$-modules decorated with local conditions. We assume that the $M_i$ all have the same cardinality.

Then, for any $\kappa > 0$, we have
$$
\limsup_{n \to \infty}\, \frac{1}{n} \cdot \sum_{i = 1}^n \left(\frac{\# \Sel\, M_i}{\mcTbnd(M_i)}\right)^{\kappa} < \infty
$$
and
$$
\frac{1}{n}\cdot \sum_{i = 1}^n \left(\# \Sel\, M_i\right)^{\kappa} \,\asymp\, \frac{1}{n} \cdot \sum_{i = 1}^n \mcTbnd(M_i)^{\kappa}.
$$
In particular, the terms $(\# \Sel \, M_i)^{\kappa}$ have unbounded average if and only if the terms $\mcTbnd(M_i)^{\kappa}$ have unbounded average.
\end{heuristic}

Outside the cases of this heuristic needed to formulate Conjectures \ref{conj:ell} and \ref{conj:AV}, we will not make any guess for what property should take the place of ``natural'' in this heuristic. It is straightforward to find infinite families where the heuristic is not true.

It may seem odd that we have restricted our attention to modules over $\QQ$. But this is not really a restriction. Given a number field $F$ and a sequence of decorated $G_F$-modules $M_1, M_2, \dots$, we can decorate the induced $G_{\QQ}$-modules $\text{Ind}_{F/\QQ} \,M_1, \text{Ind}_{F/\QQ}\, M_2, \dots$ with local conditions so that we have a natural isomorphism
\[
\Sel_{\QQ}\, \text{Ind}_{F/\QQ} M_i \cong \Sel_F \,M_i
\]
for all $i$; see \cite[Definition 4.2]{MS}.

Furthermore, the modules $\text{Ind}_{F/\QQ} M_i$ can have submodules not corresponding to submodules of $M_i$, and these can affect the Tamagawa lower bound. This can give some natural families over number fields besides $\QQ$ where the heuristic does not hold.

\begin{example}
Take $F/\QQ$ to be an imaginary quadratic field. Given a squarefree product $d$ of rational primes that are inert in $F/\QQ$, take $M_d$ to be the module $(\FFF_2, (\msL_p)_p)$ over $F$, where $\msL_p$ is generated by the image of the quadratic character $\chi_d: G_{F_p} \to \FFF_2$ for $p$ dividing $d$, and where $\msL_p$ is otherwise the unramified local conditions.

Then, over $F$, $\mcTbnd(M_d)$ equals $1$. But $\text{Ind}_{F/\QQ} \,\FFF_2$ contains the submodule $\FFF_2$, and the Tamagawa ratio associated to this decorated module over $\QQ$ is no smaller than $2^{\omega(d) - 1}$. This means that the $\Sel\, M_d$ have unbounded average size compared with their Tamagawa bounds over $F$.
\end{example}

\subsection{Our results for constant modules}
In the special case where the $G_{\QQ}$-structure of our decorated module does not change, and assuming some additional technical hypotheses, we can prove that our heuristic is correct.

\begin{mydef}
Take $M$ to be a finite $G_{\QQ}$-module. Fix a finite set of rational places $\Vplac_0$ containing $\infty$, the places dividing $|M|$, and the places such that $I_p$ has nontrivial action on $M$. Given a rational place $v$, a \emph{local conditions quasi-subgroup} for $M$ will either be the empty set (which will take the role of a placeholder symbol throughout the paper) or a subgroup of $H^1(G_v, M)$. We call a tuple $(M, (\msL_v)_v)$ a \emph{quasi-decorated module} if $\msL_v$ is a local conditions quasi-subgroup for all $v$, and if $\msL_v$ is $H^1_{\text{ur}}(G_v, M)$ for all but finitely many $v$. A \emph{realization} of $(\msL_v)_v$ will be a tuple of local conditions subgroups $(\msL'_v)_v$ such that $\msL'_v = \msL_v$ whenever $\msL_v$ is not the empty set.

We then define
\[
\Sel\left(M, (\msL_v)_v\right) = \Sel\left(M, (\msL'_v)_v\right) \quad \text{with} \quad \msL'_v = 
\begin{cases}
H^1(G_v, M) &\text{ if } \msL_v = \emptyset \text{ or } v \in \Vplac_0 \\ 
\msL_v &\text{ otherwise}
\end{cases}
\]
and
\[
\mcTbnd(M, (\msL_v)_v) = \mcTbnd(M, (\msL'_v)_v) \quad \text{with} \quad \msL'_v = 
\begin{cases}
0 &\text{ if } \msL_v = \emptyset \text{ or } v \in \Vplac_0 \\ 
\msL_v &\text{ otherwise.}
\end{cases}
\]
This is defined so that
\begin{equation}
\label{eq:quasi_realized}
\frac{\#\Sel\, (M, (\msL'_v)_v) }{\mcTbnd\left(M, (\msL'_v)_v\right)} \le \frac{\#\Sel\, (M, (\msL_v)_v) }{\mcTbnd\left(M, (\msL_v)_v\right)}
\end{equation}
for any realization $(\msL'_v)_v$ of $(\msL_v)_v$.
\end{mydef}

\begin{mydef}
\label{defn:effective-constant}
Fix a finite $G_{\Q}$-module $M$. Choose a set $X$, and for every $x \in X$, choose a quasi-decorated module 
\[
M_x = \left(M, (\msL_{xv})_v\right).
\]
Choose a height function $h: X \to \R_{\ge 0}$ such that for all $x \in X$
$$
h(x) \geq \prod_{p \in \Vplac_0 - \{\infty\}} p \cdot \prod_{\substack{p \not \in \Vplac_0 \\ \msL_{xp} \neq H^1_{\text{ur}}(G_p, M)}} p.
$$
Given $H > 1$, take $X_H$ to be the subset of $x \in X$ with $h(x) \leq H$. We assume this set is finite for all $H$.

We then say that $\{M_x\,:\,\, x \in X\}$ is a \emph{constant-module family with effectively equidistributed local conditions} if the following conditions are satisfied:
\begin{enumerate}
\item[(1)] \emph{(Effective equidistribution}, cf.~\cite{El-Baz Loughran Sofos}\emph{)} Given a rational prime $p$, take $P_p$ to be the set of subgroups of $H^1(G_p, M)$ together with the empty set. There is $c > 0$ and, for each $p$ outside $\Vplac_0$, there is a function
\[
\mu_p: P_p \to \R_{\ge 0}
\]
such that, for any $H > 10$, for any squarefree product $Q$ of rational primes outside $\Vplac_0$ satisfying $Q \le H^c$, and for any  $(\msL_p)_{p | Q}$ in $\prod_{p | Q} P_p$, we have
\[
\left|\frac{\# \{x \in X_H\,:\,\, \msL_{xp} = \msL_p \,\text{ for all }\, p \mid Q\}}{\# X_H}  - \prod_{p \mid Q} \mu_p(\msL_p) \right| \le H^{-c}.
\]
We further assume
$$
\sum_p \mu_p(\emptyset) < \infty.
$$

\item[(2)] \emph{(Sparsity of bad primes)} There is $C > 0$ such that, for any prime $p$ outside $\Vplac_0$ and any $\msL_p$ in $P_p$ besides $H^1_{\text{ur}}(G_p, M)$, we have
\[
\mu_p(\msL_p) \le Cp^{-1}.
\]

\item[(3)] \emph{(No moderately rare local conditions)}
Choose a weak equivalence class $\mathcal{L}$ of local conditions, in the sense of Definition \ref{defn:equiv_local} below; this is a collection of pairs of the form $(p, \msL_p)$, where $p$ is a prime and $\msL_p$ is a subgroup of $H^1(G_p, M)$.

Then either
\[
\sum_{(p, \msL_p) \in \mathcal{L}} \mu_p(\msL_p) < \infty
\]
or
\[ 
\sum_{\substack{(p, \msL_p) \in \mathcal{L}\\ p \le H}} \mu_p(\msL_p) \ge -1+  c \log \log H
\]
for some fixed $c > 0$ and all $H > 10$.
\end{enumerate}
For each prime $p$ outside $\Vplac_0$, also fix a function $\tilde{g}_p: P_p \to \R_{\ge 0}$ such that $\tilde{g}_p$ is $1$ on the unramified local conditions. We will assume there is some $C > 1$ such that $\tilde{g}_p(\msL_p)$ is bounded between $C^{-1}$ and $C$ for all $p$ and all $\msL_p$ in $P_p$. We will then take
\[
\tilde{g}((\msL_p)_p) = \prod_{p \not \in \Vplac_0} \tilde{g}_p(\msL_p).
\]
\end{mydef}

\begin{theorem}
\label{thm:main}
Take $\{M_x\,:\,\, x \in X\}$ to be a constant-module family with effectively equidistributed local conditions, and choose $\tilde{g}: \prod' P_p \to \R_{> 0}$ as above. Then there is $C > 0$ depending on the family and on $\tilde{g}$ such that, for all $\kappa \ge 0$,  $H > 100$, and $\nu \in [0, \kappa]$
\[
\sum_{x \in X_H} \left(\frac{\# \Sel\, M_x}{\mcTbnd(M_x)}\right)^{\kappa} \tilde{g}(\msL_x)^{\nu} \,\le\, \exp \exp (C\kappa) \sum_{x \in X_H} \tilde{g}(\msL_x)^{\nu}.
\]
\end{theorem}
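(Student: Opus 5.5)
The plan follows the outline in the overview: a sieve, then gridding, then character sums, in the style of \cite{KPS} and \cite{Smi22b}.

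\textbf{Step 1: truncate the bad primes.} Fix a threshold $t = H^{c'}$ with $c'$ small compared to the constant $c$ of Definition \ref{defn:effective-constant}(1). For each $x \in X_H$, replace $\msL_{xp}$ by the empty set for every prime $p > t$. This gives a quasi-decorated module $M_x^{\le t}$. By \eqref{eq:quasi_realized}, the ratio $\#\Sel\, M_x/\mcTbnd(M_x)$ is bounded by the corresponding ratio for $M_x^{\le t}$, up to a factor coming from the large primes. Enlarging $\msL'_v$ to all of $H^1(G_v,M)$ at each such prime multiplies $\#\Sel$ by at most $\#H^1(G_p,M)$. Collapsing to $0$ changes the lower bound by a bounded factor.

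The number of bad primes above $t$ is $O(1/c')$, since $h(x)\ge \prod p$ over those primes. So the truncation costs a factor $C^{O(1)}$, which is harmless. The weight $\tilde g^\nu$ changes by at most $C^{O(\nu)}$, which is absorbed into $\exp\exp(C\kappa)$ because $\nu\le\kappa$.

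\textbf{Step 2: sieve.} The truncated ratio depends only on the tuple $(\msL_{xp})_{p\le t}$. By effective equidistribution, the count of $x\in X_H$ with a prescribed tuple on $Q=\prod_{p\le t,\,\text{bad}} p$ is approximately $\#X_H\prod\mu_p$. I would apply the Erd\H{o}s--Shiu type sieve, in its upper bound form Theorem \ref{tSieve} and lower bound form Theorem \ref{tSieve2}. These compare $\sum_{x\in X_H} F(\msL_x^{\le t})$ with a multiplicative model
\[
\#X_H\sum_{(\msL_p)_{p\le t}} F\big((\msL_p)\big)\prod_{p\le t}\mu_p(\msL_p).
\]
The sieve handles the range where $Q$ exceeds $H^c$ by bounding it via sparsity, condition (2); the summability of $\mu_p(\emptyset)$ handles the empty conditions. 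The lower bound sieve applied to $\tilde g^\nu$ gives the right-hand side of the claim, so it suffices to prove the model inequality. In the model, $\msL_p$ is drawn independently with law $\mu_p$ and weighted by $\tilde g^\nu$. The weights are absorbed by tilting $\mu_p$ to $\mu_p\tilde g_p^\nu$, which only changes constants uniformly by $C^{\pm\nu}$. This tilting respects conditions (2) and (3) after renormalisation.

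\textbf{Step 3: grids and character sums.} Following Section \ref{sGrid}, split the support by weak equivalence class of local conditions. Condition (3) separates the classes into two kinds:
\begin{itemize}
\item Classes of finite total mass contribute only $O(1)$ primes typically, with exponential tails. These are each absorbed by a bounded factor.
\item Classes of mass at least $c\log\log H$ have many primes. For these we grid: we restrict to products $p_1\cdots p_k$ with $p_i\in X_i$ in well-separated dyadic-type ranges.
\end{itemize}
On a grid, $\#\Sel(M,\msL)^\kappa$ counts $\kappa$-tuples of cocycles. Expanding via Poitou--Tate and the generalized Legendre symbol matrices of \cite{Smi22b}, the count becomes an average of characters in the primes $p_i$. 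The diagonal terms, where the characters are trivial, are exactly those coming from Selmer elements of submodules $T\subseteq M$ that are forced by local data. By \eqref{eq:Wiles} applied to $T$ and $T^\vee$, these terms contribute $\mcTbnd$ times a count of boundedly many subspace configurations. The number of such configurations is bounded by a Gaussian-binomial-type count in $\kappa$, giving $\exp\exp(C\kappa)$ after dividing by $\mcTbnd^\kappa$. The off-diagonal terms are bounded by large sieve and Siegel--Walfisz type estimates, using the spread of the grid.

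\textbf{Main obstacle.} I expect the off-diagonal character sum estimate to be the hardest step. This is the content of Section \ref{sChar}, because the weights $\prod\mu_p\tilde g_p^\nu$ are no longer uniform. I would first try to reduce to the unweighted case by noting that the weights are constant on weak equivalence classes up to bounded factors. Failing that, I would carry the multiplicative weight through the bilinear estimates as a bounded multiplicative function.
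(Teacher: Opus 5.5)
Your outer architecture is the paper's: Theorem \ref{tSieve} applied to $S=(\#\Sel/\mcTbnd)^{\kappa}\,\tilde g^{\nu}$, then Proposition \ref{prop:gridding}, then Theorem \ref{tSieve2} with $S=\tilde g^{\nu}$. You omit the H\"older reduction to $\lceil\kappa\rceil$, which you need because expanding $(\#\Sel\, M_x)^{\kappa}$ into $\kappa$-tuples requires $\kappa\in\Z$. Your Step 1 is harmless but redundant, since Theorem \ref{tSieve} already strips off large prime factors by submultiplicativity. The $\exp\exp(C\kappa)$ comes from that submultiplicativity constant $C^{\kappa}$, in the sieve and in the rare-class lemma; counting submodules of $M^{\kappa}$ only costs $\exp(C\kappa^2)$.

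\textbf{The gap is in Step 3.} Greenberg--Wiles is an identity, $\#\Sel\, T=\mcT(T)\,\#\Sel\, T^{\vee}$. It bounds the non-oscillating contribution by $\mcT(T)$ only if you also control $\#\Sel\, T^{\vee}$ and the Selmer groups of the subquotients of $M/T$. These are not bounded in general: each singleton prime of a grid can contribute a bounded factor, and a good grid may contain $(\log^{(2)}H)^{1/3+1/100}$ of them. A direct count therefore only gives $\mcT(T)\,|M|^{O(|\Ssm|)}$. Put differently, what must be controlled is not the number of non-cancelling characters $r$; that number is bounded (Case 1 in the proof of Proposition \ref{pBilinear}). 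It is the number of candidates $m=(\phi_0,(a_s)_s)$ compatible with the deterministic part of the local conditions.

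The paper accepts the $|M|^{2|\Ssm|}$ loss only on okay grids, via hypothesis \eqref{eq:okay_assu}; this is affordable because okay grids that are not good carry little mass. On good grids it proves Proposition \ref{pCheb}, using the modified conditions $\msL'_x$ and the filtration $T_\circ\subseteq T\subseteq T^\circ\subseteq M$. Lemma \ref{lChebSmall} uses Chebotarev to show that $\Sel(M/T^\circ)$ and $\Sel(T_\circ^\vee)$ are bounded off a negligible set. Lemma \ref{lTamaDichotomy} shows that either $T^\circ/T$ and $T/T_\circ$ carry exactly unramified conditions, or $\mcT(T_\circ)$ or $\mcT(T^\circ)$ exceeds $\mcT(T)$ by a factor $\exp(\tfrac12(\log^{(2)}H)^{5/6})$. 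This is how the bound becomes $\max_U\mcT(U)$ rather than $\mcT(T)$. Nothing in your sketch plays this role.

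This is also where condition (3) of Definition \ref{defn:effective-constant} is really used, not merely to discard rare classes. Via Lemma \ref{lPigeon} and condition 5 of Definition \ref{defn:goodgrid}, it guarantees that every non-rare weak class occurs in at least $(\log^{(2)}H)^{5/6}$ boxes $X_j$ with $\sum_{p\in X_j}g_{k_j}(p)\gg\sum_{p\in X_j}p^{-1}$. That lower bound is what lets a Chebotarev bound on natural density control the $g$-weighted sums.

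Your first idea for the weights fails. The hypotheses give only $\mu_p\le C/p$ and a lower bound on the total mass of a weak class, so $\mu_p$ may vanish on a positive-density set of primes within a class. It is therefore not constant up to bounded factors there. Also, the boxes must be indexed by strong equivalence classes (class, spin, and $(A_{\msL},R_{\msL},\Omega_{\msL})$) for the Selmer condition to become a symbol sum; weak classes serve only for pigeonholing.

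By contrast, the off-diagonal bilinear estimate you flag as the main obstacle is imported essentially unchanged from \cite{Smi22a} (Proposition \ref{pLargeSieve}). The weights enter there mainly through \eqref{egNoVanish} and condition 2 of Definition \ref{defn:goodgrid}.
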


\begin{remark}
With a little more work, our methods could be used to give effective forms for the constant $C$. To simplify the calculations, we have opted for a largely ineffective form for this estimate. However, we will keep track of the impact of $\kappa$ on our estimates.

We note that the bounds above, which are doubly exponential in $\kappa$, grow more quickly than $e^{C\kappa^2}$, which is the general expectation for how these Selmer moments grow with $\kappa$; see e.g.~\cite{Smi22b}. This doubly exponential growth seems to be an unavoidable consequence of our sieve theoretic methods.
\end{remark}

\begin{remark}
\label{rmk:two_mult_weights}
By taking $\tilde{g} = 1$ in this theorem, we get
\[\frac{1}{\#X_H} \sum_{x \in X_H} \left(\frac{\# \Sel\, M_x}{ \mcTbnd(M_x)}\right)^{\kappa}  \,\le\, \exp \exp (C\kappa).\]
For any fixed submodule $T$ of $M$, we may instead take $\tilde{g}(\msL_x)$ to be proportional to the Tamagawa ratio $\mathcal{T}(M_x, T)$. Applying the theorem to every such $T$ and then summing over all submodules $T \subseteq M$ gives
\[\sum_{x \in X_H} \left(\# \Sel\, M_x \right)^{\kappa}  \,\le\, \exp \exp (C\kappa) \sum_{x \in X_H} \mcTbnd(M_x)^{\kappa}.\]
This second application is the reason we have included the extra weight $\tilde{g}$ in Theorem \ref{thm:main}.
\end{remark}

This definition of effectively equidistributed local conditions relies on a notion of weak equivalence of local conditions, which we now define.

\begin{mydef}
\label{defn:equiv_local}
Take $M$ and $\Vplac_0$ as above. Take $m$ to be the exponent of $M$. For a prime $p$ outside $\Vplac_0$, the group $mI_p$ is a normal subgroup of the inertia group $I_p$ and decomposition group $G_p$. The group $G_p/mI_p$ then takes the form $\widehat{\Z} \ltimes \Z/m\Z$, where the action defining the semidirect product depends on $p$ mod $m$. Any subgroup $\msL_p$ of $H^1(G_p, M)$ is the inflation of some subgroup $\msL'_p$ of $H^1(G_p/mI_p, M)$.

Given another prime $q$ outside $\Vplac_0$ and a subgroup $\msL_q$ of $H^1(G_q, M)$ corresponding to the inflation of the subgroup $\msL'_q$ of $H^1(G_q/mI_q, M)$, we say $(p, \msL_p)$ and $(q, \msL_q)$ are \emph{weakly equivalent} if there is a continuous group isomorphism $\iota: G_p/mI_p \isoarrow G_q/mI_q$ and a choice of $\sigma \in G_{\QQ}$ such that
\[
\sigma(\tau m) = \iota(\tau) \sigma(m)\quad\text{for all } \tau \in G_p/mI_p \text{ and } m \in M
\]
and such that the corresponding group change operation
\[
(\iota, \sigma): H^1(G_q/mI_q, M) \isoarrow H^1(G_p/mI_p, M)
\]
identifies $\msL'_q$ with $\msL'_p$.

This splits the tuples $(p, \msL_p)$ with $p$ outside $\Vplac_0$ into finitely many weak equivalence classes.
\end{mydef}
\section{Linearizing the sum}
\label{sSieve}

\begin{mydef}
\label{dSieve}
Let $C_1 > 1$ and $0 < c < 1$ be real numbers. For each prime $p$, we let $\mathcal{C}_p$ be a finite set together with a designated element $u_p \in \mathcal{C}_p$. We assume that $|\mathcal{C}_p| \leq C_1$ for all $p$. We let $\mu$ be a function defined on pairs $(p, a)$ with $a \in \mathcal{C}_p$ such that
$$
\mu(p, a) \leq \frac{C_1}{p}
$$
for all $a \neq u_p$ and such that
$$
\sum_{a \in \mathcal{C}_p} \mu(p, a) = 1.
$$
This allows us to introduce $m(p) = \sum_{a \in \mathcal{C}_p - \{u_p\}} \mu(p, a)$, which we then extend to a multiplicative function supported on squarefree integers.

Let $\mathcal{A}$ be the set of pairs $(n, \mathcal{D})$ consisting of a positive squarefree integer $n$ and a tuple $\mathcal{D} = (a_p)_{p \mid n}$ with $a_p \in \mathcal{C}_p - \{u_p\}$. Sometimes we will abuse notation and implicitly identify a pair $(n, (a_p)_{p \mid n})$ with the pair $(n, (b_p)_p)$, where $b_p = a_p$ for $p \mid n$ and $b_p = u_p$ for $p \nmid n$. Given a pair $\mathbf{t} = (n, (a_p)_p)$, we define $n(\mathbf{t}) = n$ and $\mathbf{t}_p = a_p$.

We let $H > 10$ be a real number and we let $X_H$ be a finite set. We say that a pair of maps $(\lambda, S)$ is Selmer compatible if
\begin{itemize}
\item $\lambda$ and $S$ are maps of type $\lambda: X_H \rightarrow \mathcal{A}$ and $S: \mathcal{A} \rightarrow \mathbb{R}_{\geq 0}$;
\item for all $x \in X_H$, we have that $n(\lambda(x)) \leq H$;
\item the map $S$ satisfies
$$
S(nm, \mathcal{D} \times \mathcal{E}) \leq S(n, \mathcal{D}) C_1^{\omega(m)}
$$
for all $(n, \mathcal{D}), (m, \mathcal{E}) \in \mathcal{A}$;
\item given a squarefree integer $1 \leq Q \leq H^c$, and given a choice of $a_p \in \mathcal{C}_p - \{u_p\}$ for $p \mid Q$, we have
\begin{align}
\label{eLevel}
\left|\frac{\# \{x \in X_H\,:\,\, \lambda(x)_p = a_p \,\text{ for all }\, p \mid Q\}}{\# X_H} \,-\, \prod_{p \mid Q} \mu(p, a_p)\right| \le H^{-c}.
\end{align}
\end{itemize}
 The previous inequality gives the existence of other constants $\kappa > 0$ and $K \geq 1$ satisfying
\begin{align}
\label{eSieveAssu}
\prod_{w \leq p < z} (1 - m(p))^{-1} \leq K \left(\frac{\log z}{\log w}\right)^\kappa 
\end{align}
for all $2C_1^2 \leq w < z$. For later use, we remark that valid choices of $\kappa$ and $K$ are $\kappa = C_1^2$ and $K = C^{C_1^2}$ for some absolute constant $C > 2$.
\end{mydef}

To shorten some notation, we introduce the quantity
\begin{align}
\label{egdD}
\rho(d, (a_p)_{p \mid d}) := \prod_{p \mid d} \mu(p, a_p) \prod_{\substack{p \mid d \\ p > 2C_1^2}} (1 - m(p))^{-1}.
\end{align}
Our next result gives a uniform upper bound, provided that $H$ is sufficiently large. Our proof follows fairly standard arguments available in the literature, see for example \cite{CKPS, Erdos, Henriot, Nair, NT, Shiu, Wolke}. We must redo these arguments as the dependence of the implied constant on $C_1$ is needed to bound Selmer moments uniformly in $\kappa$. Fortunately, we are able to introduce some simplifications compared to the literature as we are only interested in multiplicative functions supported on squarefree integers.

\begin{theorem}
\label{tSieve}
There exists an absolute constant $C > 0$ such that the following holds. Let $S, \lambda$ be Selmer compatible with parameters $C_1 > 1$ and $0 < c < 1$. Set
\begin{gather*}
B(\kappa, K) := K \left(1 + 4 (9\kappa + 1)^\kappa e^{9\kappa} K^{11}\right) \\
S := (1 + 2^\kappa) C_1^{6/c} + \sum_{s = 2}^\infty \frac{(C C_1^3)^{2C_1^3} \cdot C_1^{\frac{3 (s + 1)}{c}} \cdot (s + 1)^\kappa \cdot \exp\left(4 C_1^3 s^{1/2}\right)}{e^{\frac{s \log s}{4}}}.
\end{gather*}
Then we have for all $H > \exp\left((2C_1)^{C/c^2}\right)$
\begin{align}
\label{eXHSum}
\sum_{x \in X_H} S(\lambda(x)) \leq C \cdot B(\kappa, K) S \#X_H \prod_{2C_1^2 < p \leq H^{c/3}} (1 - m(p)) \sum_{\substack{(d, \mathcal{D}) \in \mathcal{A} \\ d \leq H^{c/3}}} S(d, \mathcal{D}) \rho(d, \mathcal{D}).
\end{align}
\end{theorem}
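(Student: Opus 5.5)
We follow the Erd\H{o}s--Shiu--Wolke decomposition. For each $x \in X_H$, write $\lambda(x) = (n, \mathcal{D})$ with $n \le H$, and list the prime factors of $n$ in increasing order, $p_1 < p_2 < \dots < p_r$. Set $z = H^{c/3}$ and choose the largest $j$ with $a := p_1 \cdots p_j \le z$. Write $n = ab$, and let $\mathcal{D}_a, \mathcal{D}_b$ be the restrictions of $\mathcal{D}$. The submultiplicativity hypothesis gives $S(\lambda(x)) \le S(a, \mathcal{D}_a)\, C_1^{\omega(b)}$. We then split into cases according to the size of $P^-(b)$, the least prime factor of $b$:
\begin{itemize}
\item[(i)] $b = 1$, or $P^-(b) > z^{1/2}$;
\item[(ii)] $P^-(b) \le z^{1/2}$.
\end{itemize}
In case (i), since $n \le H$, we have $\omega(b) \le 2\log H/\log z = 6/c$, which produces the factor $C_1^{6/c}$. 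In case (ii), the prime $p_{j+1} \le z^{1/2}$ but $a\,p_{j+1} > z$, so $a > z^{1/2}$ while every prime factor of $a$ is at most $z^{1/2}$. We further dyadically decompose case (ii) by $P^+(a) \in (z^{1/(s+1)}, z^{1/s}]$ for $s \ge 2$. Then $\omega(b) \le (s+1)\log H/\log z$, which gives $C_1^{3(s+1)/c}$.

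Next, fix $(a, \mathcal{D}_a)$ and count the $x$ with $\lambda(x)$ having that $a$-part and no other prime factor below $P^+(a)$ (case (ii)) or below $z^{1/2}$ (case (i)). This is a sifting problem. We apply a fundamental-lemma type upper-bound sieve (a Selberg or Brun upper bound built from \eqref{eLevel}) with level $Q = a \cdot D \le H^c$, $D = z^{1/2}$ or similar. This bounds the count by
\[
\#X_H \prod_{p\mid a}\mu(p,a_p) \prod_{p < y,\, p\nmid a}(1-m(p))
\]
times a constant, plus an error of size $H^{-c}$ times the number of sieve terms, which is negligible once $H > \exp((2C_1)^{C/c^2})$. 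Using \eqref{eSieveAssu}, we extend the product over $p < y$ to $2C_1^2 < p \le z$, at a cost of $K(\log z/\log y)^{\kappa}$. This is $K$ in case (i) and $K(s+1)^\kappa$ in case (ii). The small primes $p \le 2C_1^2$ and the divisors $p \mid a$ are absorbed by the factor $(1-m(p))^{-1}$ in $\rho$. After this step, case (i) is bounded by the right side of \eqref{eXHSum}, with the sum over $d = a \le z$ and constant $(1+2^\kappa)C_1^{6/c}$ (the $2^\kappa$ coming from $y = z^{1/2}$).

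In case (ii), we still need the saving from $a > z^{1/2}$ being $z^{1/s}$-smooth. We use Rankin's trick: insert the factor $(a/z^{1/2})^{\eta}$ with $\eta \approx s\log s/(\log z)$. This multiplies each $\mu(p,a_p)$ by $p^\eta$, and the resulting change to the Euler product is
\[
\exp\Big(\sum_{p\le z^{1/s}} C_1^3 (p^\eta - 1)/p\Big) \le \exp\big(O(C_1^3\, e^{O(s)}\dots)\big).
\]
A careful choice of $\eta$, balancing against the saving $z^{-\eta/2} = e^{-s\log s/4}$, leaves a growth of $\exp(4C_1^3 s^{1/2})$ together with the constant $(CC_1^3)^{2C_1^3}$, which comes from primes below $C_1^3$ where $p^\eta$ is controlled crudely. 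The sieve constants from the fundamental lemma and the passage between the truncated products produce $B(\kappa,K)$; the term $(9\kappa+1)^\kappa e^{9\kappa}K^{11}$ arises when the sieve dimension bound \eqref{eSieveAssu} is fed into the fundamental lemma with dimension $\kappa$. Summing over $s \ge 2$ yields $S$.

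We expect the main obstacle to be the Rankin step in case (ii). There we must keep all dependence on $C_1$ explicit, and the weight $\mu(p,a_p) \le C_1/p$ together with the $C_1^{\omega}$ growth of $S$ forces the choice $\eta \sim s^{1/2}$-scaled in order to get the stated $\exp(4C_1^3 s^{1/2})$ rather than a worse bound. The fundamental lemma with level condition \eqref{eLevel}, by contrast, is standard once the sieve dimension is controlled by \eqref{eSieveAssu}.
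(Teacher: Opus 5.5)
Your overall architecture is the paper's. You use the same split $n=ab$ at $z=H^{c/3}$, the same bounds $\omega(b)\le 6/c$ and $\omega(b)\le 3(s+1)/c$, the same fundamental-lemma step driven by \eqref{eSieveAssu}, and the same extension cost $K(s+1)^{\kappa}$. Indexing case (ii) by $P^+(a)$ rather than by $p_{j+1}=P^-(b)$ is a harmless variant. It also folds the paper's separate case $p_{j+1}\le\log Z$, handled there by Lemma \ref{lSmooth}, into your sum over $s$.

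The genuine gap is the Rankin step, which you correctly flag as the crux but do not resolve. The weight $F(a)=\sum_{\mathcal{D}}S(a,\mathcal{D})\rho(a,\mathcal{D})$ is not multiplicative, because $S$ is only submultiplicative. So there is no Euler product whose local factors get multiplied by $p^{\eta}$, and the factor $\exp\bigl(\sum_p C_1^3(p^\eta-1)/p\bigr)$ is unjustified as written.

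The missing idea is the Nair--Tenenbaum decomposition used in the proof of Lemma \ref{lNT}. Write $a^{\eta}=\sum_{d\mid a}\psi(d)$, with $\psi$ multiplicative and $\psi(p)=p^{\eta}-1\ge 0$. From the hypothesis on $S$, together with $|\mathcal{C}_p|\le C_1$, $\mu(p,\cdot)\le C_1/p$, and $m(p)\le 1/2$ for $p>2C_1^2$, derive $F(de)\le F(e)\,(2C_1^3)^{\omega(d)}/d$ for coprime $d,e$. This gives
\[
\sum_{\substack{a\le z\\ P^+(a)\le y}}F(a)\,a^{\eta}\;\le\;\prod_{p\le y}\Bigl(1+\frac{2C_1^3(p^{\eta}-1)}{p}\Bigr)\sum_{e\le z}F(e).
\]
Here $e=a/d\le z$ because $a\le z$ is built into your decomposition, so you land directly on the truncated sum in \eqref{eXHSum}.

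By contrast, the paper's Lemma \ref{lNT} has no upper cutoff. It must therefore return from $\sum_{P^+(e)\le z}F(e)$ to $\sum_{e\le z}F(e)$ via \eqref{eLoweringTrick} and \eqref{eRearrangeTrick}. That step, not ``primes below $C_1^3$'', is the source of the constant $(CC_1^3)^{2C_1^3}$, and your version does not actually need it.

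Your intermediate bound $\exp(O(C_1^3e^{O(s)}))$ is also wrong: it would not be beaten by $e^{-s\log s/4}$. The correct computation goes as follows:
\begin{enumerate}
\item Take $\eta=s\log s/(2\log z)$, so that $\eta\log p\le\delta:=\tfrac12\log s$ for $p\le z^{1/s}$.
\item Use $e^{x}-1\le (x/\delta)e^{\delta}$ on $[0,\delta]$ together with Mertens to get $\sum_{p\le z^{1/s}}(p^\eta-1)/p\le 2e^{\delta}=2s^{1/2}$.
\item This gives the growth factor $\exp(4C_1^3s^{1/2})$, set against the saving $z^{-\eta/2}=e^{-s\log s/4}$.
\end{enumerate}
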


In certain favorable situations, it is possible to also prove a matching lower bound. This will be done in the next theorem.

\begin{theorem}
\label{tSieve2}
There exists an absolute constant $C > 0$ such that the following holds. Let $S, \lambda$ be Selmer compatible with parameters $C_1 > 1$ and $0 < c < 1$. Moreover, assume that we have, for all pairs $(n, \mathcal{D}), (m, \mathcal{E}) \in \mathcal{A}$ with $\gcd(n, m) = 1$,
\begin{align}
\label{eLowerS}
S(nm, \mathcal{D} \times \mathcal{E}) \geq S(n, \mathcal{D}) C_1^{-\omega(m)}.
\end{align}
Set $v := \frac{c}{27\kappa + 3\log(3K^{10})}$. Then we have for all $H > \exp\left((2C_1)^{C/c^2}\right)$
$$
\sum_{x \in X_H} S(\lambda(x)) \geq \frac{\# X_H}{2 C_1^{\lfloor 1/v \rfloor}} \prod_{2C_1^2 < p \leq H^v} (1 - m(p)) \sum_{\substack{(d, \mathcal{D}) \in \mathcal{A} \\ d \leq H^v}} S(d, \mathcal{D}) \rho(d, \mathcal{D}).
$$
\end{theorem}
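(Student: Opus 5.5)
Since $S \ge 0$, it suffices to bound the sum from below by its restriction to a well-chosen subset of $X_H$. Write $\lambda(x) = (n, \mathcal{D})$ and factor $n = n_1 n_2$, where $n_1$ is the $H^v$-smooth part and $n_2$ the $H^v$-rough part. Each prime dividing $n_2$ exceeds $H^v$, and $n \le H$, so $\omega(n_2) \le \lfloor 1/v \rfloor$. Since $\gcd(n_1, n_2) = 1$, hypothesis \eqref{eLowerS} gives
\[
S(\lambda(x)) \ge C_1^{-\lfloor 1/v\rfloor} S(n_1, \mathcal{D}|_{n_1}).
\]
This accounts for the factor $C_1^{-\lfloor 1/v\rfloor}$. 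It remains to show
\[
\sum_{x} S(n_1(x), \mathcal{D}_1(x)) \ge \tfrac12 \# X_H \prod_{2C_1^2 < p \le H^v}(1 - m(p)) \sum_{d \le H^v} S(d, \mathcal{D})\rho(d, \mathcal{D}).
\]

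To prove this, I would fix $(d, \mathcal{D})$ with $d \le H^v$ and count the $x \in X_H$ whose smooth part equals exactly $(d, \mathcal{D})$. That is, $\lambda(x)_p = a_p$ for $p \mid d$ and $\lambda(x)_p = u_p$ for all other $p \le H^v$. Dropping the other terms, the left side is at least $\sum_{d \le H^v} S(d, \mathcal{D}) \cdot N(d, \mathcal{D})$, where $N(d, \mathcal{D})$ is this count. I would then lower-bound $N(d, \mathcal{D})$ by a fundamental-lemma sieve, either Brun's pure sieve with truncated inclusion–exclusion or a Bonferroni inequality. The sieve runs over primes $p \le H^v$ not dividing $d$, excluding the events $\lambda(x)_p \neq u_p$. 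Each resulting local condition is a congruence-type event at level $Q = d \cdot e \le H^c$, so \eqref{eLevel} applies with error $H^{-c}$ per term.

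The main term is $\#X_H \prod_{p \mid d} \mu(p, a_p) \prod_{p \le H^v,\, p \nmid d}(1 - m(p))$. For $p > 2C_1^2$ this matches $\rho(d, \mathcal{D}) \prod_{2C_1^2 < p \le H^v}(1 - m(p))$. For small primes $p \le 2C_1^2$ there is a discrepancy, which I would handle by absorbing the factors $(1 - m(p))$ and adjusting constants; probably the sieve should only be applied to primes in $(2C_1^2, H^v]$, with the small-prime conditions kept exact since there are boundedly many of them.

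The main obstacle is making the sieve remainder small uniformly in $d$. We need the relative error from truncation to be at most $1/2$, and the accumulated $H^{-c}$ errors, summed over all $d$ with the weights $S(d, \mathcal{D})$, must also be controlled. Here the choice of $v$ matters. The Brun sieve level is $D = H^{c/3}$ (for example) against sifting range $z = H^v$, giving $s = \log D / \log z = c/(3v) = 9\kappa + \log(3K^{10})$. By \eqref{eSieveAssu} the sieve dimension is $\kappa$, and the fundamental lemma error is roughly $K^{10} e^{-s}$ times an $s^\kappa$-type factor, which this choice of $v$ makes at most $1/3$. The $H^{-c}$ error terms number at most $D \cdot H^v \le H^{2c/3}$, which is negligible relative to the main term once $H > \exp((2C_1)^{C/c^2})$, since the main term is at least $H^{-O(C_1 v)}$ times $\#X_H$. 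Combining these gives $N(d, \mathcal{D}) \ge \tfrac12$ times the main term, which yields the theorem.
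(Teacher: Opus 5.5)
This is essentially the paper's proof. The steps match: the $H^v$-smooth/rough split with \eqref{eLowerS} giving the factor $C_1^{-\lfloor 1/v\rfloor}$, restriction to $d\le H^v$, and a lower-bound fundamental lemma. The paper uses \cite[Theorem 6.9]{FI}, whose error $K^{10}e^{9\kappa-s}$ at $z=H^v$, $D=H^{c/3}$ is exactly what the stated $v$ makes equal to $1/3$, and the $H^{-c}$ remainders are negligible for large $H$.

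Two cautions. First, Brun's pure sieve or a Bonferroni truncation cannot reach $z=H^v$ with $v$ fixed: truncating at $\omega(e)\le 2k$ needs $k\gg\log\log z$, hence a level $D\ge z^{\log\log z}$. So you genuinely need the fundamental lemma whose error you quote.

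Second, for $p\le 2C_1^2$, $m(p)$ may equal $1$ and \eqref{eSieveAssu} says nothing. Keeping those conditions exact still leaves a factor $\prod_{p\le 2C_1^2,\,p\nmid d}(1-m(p))$ in the main term, and that factor can vanish. You must instead sum over the small-prime part of $d$ and discard it via \eqref{eLowerS}, at a cost depending on $C_1$. The paper's displayed main term passes over this point in the same way: it keeps only the factors for $2C_1^2<p\le Z$ while the condition $P^-(n(\lambda(x))/d)>Z$ sifts all $p\le Z$.
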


\subsection{Results on multiplicative functions and sieves}
We start with an upper bound on smooth numbers following \cite[Lemma 1]{Shiu}.

\begin{lemma}
\label{lSmooth}
Let $F: \Z_{\geq 1} \rightarrow \R_{\geq 0}$ be a multiplicative function supported on squarefree integers. Assume that there exists a positive real number $c_0$ such that $F(p) \leq c_0/p$ for every prime $p$. Then we have for all real numbers $x, z \geq 2$
$$
\sum_{\substack{z \leq n \leq x \\ p \mid n \Rightarrow p \leq \log x}} F(n) \leq \frac{\exp\left(2c_0 (\log x)^{1/2}\right)}{z^{1/2}}.
$$
\end{lemma}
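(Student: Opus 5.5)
The plan is Rankin's trick, as in \cite[Lemma 1]{Shiu}. Fix a parameter $\sigma \in (0,1]$ to be chosen later. For every $n \ge z$ we have $(n/z)^{\sigma} \ge 1$. Hence, with $y = \log x$,
\[
\sum_{\substack{z \leq n \leq x \\ p \mid n \Rightarrow p \leq y}} F(n) \le z^{-\sigma} \sum_{\substack{n \text{ squarefree} \\ p \mid n \Rightarrow p \le y}} F(n) n^{\sigma} = z^{-\sigma} \prod_{p \le y} \left(1 + F(p) p^{\sigma}\right).
\]
Here we dropped the upper bound $n \le x$ and used that $F$ is multiplicative, nonnegative and supported on squarefree integers, so the full sum factors as an Euler product over $p \le y$.

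Take $\sigma = 1/2$; this matches the factor $z^{-1/2}$ in the claim. Use $1 + t \le e^t$ and $F(p) \le c_0/p$ to get
\[
\prod_{p \le y}\left(1 + F(p) p^{1/2}\right) \le \exp\Big(c_0 \sum_{p \le y} p^{-1/2}\Big).
\]
It then remains to show that $\sum_{p \le y} p^{-1/2} \le 2 y^{1/2} = 2(\log x)^{1/2}$. For this, bound the sum over primes by the sum over all integers: $\sum_{m \le y} m^{-1/2} \le 1 + \int_1^y t^{-1/2}\,dt = 2y^{1/2} - 1 \le 2y^{1/2}$. Since $1$ is not prime, the prime sum is even smaller, and no prime number theorem input is needed.

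The only point needing care is the range $y = \log x < 2$. There the product over primes $p \le y$ is empty and equals $1$, so the bound reads $z^{-1/2} \le \exp(\ldots)z^{-1/2}$, which is trivial. The estimate holds uniformly in $x, z \ge 2$, and no step presents a real obstacle.
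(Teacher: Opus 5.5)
Your proposal is correct and follows the paper's proof essentially step for step: Rankin's trick with exponent $1/2$, the Euler product over $p \le \log x$, the bound $1+t \le e^t$, and comparison of $\sum_{p \le y} p^{-1/2}$ with $\int_1^y t^{-1/2}\,dt$. The only cosmetic difference is that the paper keeps the exponent general until the end and compares $\sum_{2 \le n \le y} n^{c-1}$ with the integral as a right Riemann sum, whereas you fix $\sigma = 1/2$ from the start and include the term $m = 1$.
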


\begin{proof}
Set $y := \log x$, and let $0 < c < 1$ be a real number to be chosen later. By Rankin's trick, we have the estimates
$$
\sum_{\substack{z \leq n \leq x \\ p \mid n \Rightarrow p \leq \log x}} F(n) \leq \frac{1}{z^c} \sum_{\substack{1 \leq n \leq x \\ p \mid n \Rightarrow p \leq y}} F(n) n^c \leq \frac{1}{z^c} \prod_{p \leq y} \left(1 + F(p) p^c\right).
$$
The product may be estimated by
\begin{align*}
\prod_{p \leq y} \left(1 + F(p) p^c\right) &\leq \prod_{p \leq y} \left(1 + c_0 p^{c - 1}\right) \leq \exp\left(\sum_{p \leq y} c_0 p^{c - 1}\right) \\
&\leq \exp\left(\sum_{2 \leq n \leq y} c_0 n^{c - 1}\right) \leq \exp\left(\frac{c_0 y^c}{c} - \frac{c_0}{c}\right) \leq \exp\left(\frac{c_0 y^c}{c}\right)
\end{align*}
by viewing $\sum_{2 \leq n \leq y} n^{c - 1}$ as a right Riemann sum for $\int_1^y t^{c - 1} dt$ with spacing $1$. We pick $c := 1/2$ to end the proof of the lemma.
\end{proof}

\begin{lemma}
\label{lNT}
Let $F: \Z_{\geq 1} \rightarrow \mathbb{R}_{\geq 0}$ and let $C_1 > 1$ be a real number. Suppose that $F$ is supported on squarefree integers and satisfies
\begin{equation}
\label{eFsub}
F(ab) \leq \frac{F(a) C_1^{\omega(b)}}{b}
\end{equation}
for all $a, b \in \Z_{\geq 1}$ with $\gcd(a, b) = 1$. Then there exists an absolute constant $C > 0$ such that for all $x \geq z \geq C$ and all real numbers $\delta > 0$
$$
\sum_{\substack{n \geq x \\ P^+(n) \leq z}} F(n) \leq \frac{(CC_1)^{C_1} \exp\left(2 C_1 e^\delta\right)}{\exp\left(\delta \log x/\log z\right)} \sum_{n \leq z} F(n).
$$
\end{lemma}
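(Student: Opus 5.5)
The plan is to combine Rankin's trick with a "logarithmic derivative" (Chebyshev-type) argument that exploits the submultiplicativity \eqref{eFsub}. Set $\sigma := \delta/\log z$. Since $n \ge x$ implies $(n/x)^{\sigma} \ge 1$, we get
\[
\sum_{\substack{n \ge x \\ P^+(n) \le z}} F(n) \le x^{-\sigma} G, \qquad G := \sum_{P^+(n) \le z} F(n) n^{\sigma}.
\]
Here $x^{-\sigma} = \exp(-\delta \log x/\log z)$ is exactly the decay factor in the statement. So the whole task is to show
\[
G \le (CC_1)^{C_1}\exp\left(2C_1e^{\delta}\right) \sum_{n \le z} F(n).
\]
First I would check that $G$ is finite. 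This follows from \eqref{eFsub}, which gives $F(n) \le F(1)C_1^{\omega(n)}/n$, and the product over primes $p\le z$ converges.

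To compare $G$ with $S(z) := \sum_{n \le z} F(n)$, I would decompose each squarefree $n$ with $P^+(n) \le z$ greedily. Write the prime factors of $n$ in increasing order, $p_1 < \dots < p_k$. Let $a = p_1\cdots p_j$ be the longest initial product with $a \le z$, and put $b = n/a$. Then every prime of $b$ is at least $p_{j+1} > z/a$. Applying \eqref{eFsub} gives
\[
F(n) n^{\sigma} \le F(a)\, a^{\sigma}\, C_1^{\omega(b)} b^{\sigma-1}.
\]
Since $a \le z$, we have $a^{\sigma} \le e^{\delta}$, and this factor should be absorbed. It then remains to bound, uniformly in $a \le z$,
\[
\sum_{b} C_1^{\omega(b)} b^{\sigma - 1},
\]
where the sum runs over $b$ whose prime factors all lie in $(z/a, z]$, together with the extra condition that the smallest prime of $b$ pushes $a$ past $z$.

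The crude bound for this sum is the Euler product $\prod_{z/a < p \le z}(1 + C_1 p^{\sigma-1})$. Using $p^{\sigma} \le e^{\delta}$ and Mertens, this is at most $\exp\bigl(C_1 e^{\delta}\log(\log z/\log(z/a))\bigr)$. This loses when $a$ is close to $z$, so here I would not sum over $a$ trivially. Instead I would split the range of $a$ dyadically in $\log(z/a)$, namely $z/a \in (z^{2^{-i-1}}, z^{2^{-i}}]$. On the $i$-th range, the first prime of $b$ must exceed $z/a$ but also satisfy $a p_{j+1} > z$. I would then bound $\sum_{a} F(a)$ over that range by $S(z)$ and absorb the factor $2^{iC_1e^{\delta}}$ against the constraint. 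Alternatively, I would use the identity $\log n = \sum_{p \mid n}\log p$ together with Mertens in the form $\sum_{p \le z} p^{\sigma-1}\log p \ll e^{\delta}\log z$. This gives $\sum F(n) n^{\sigma}\log n \le C C_1 e^{\delta}\log z \cdot G$, which controls the tail where $\log n \gg C_1e^{\delta}\log z$.

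Gluing the pieces should give a factor of the shape $\exp(2C_1e^{\delta})$ times a polynomial-in-$C_1$ constant, which becomes $(CC_1)^{C_1}$. This constant comes from the primes below an absolute constant $C$ (hence the hypothesis $z \ge C$) and from the ranges where Mertens' estimate is not yet sharp.

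I expect the main obstacle to be this middle step. A naive Euler-product bound introduces a factor $(\log z)^{C_1}$, or $e^{O(C_1\delta e^{\delta})}$, rather than $\exp(2C_1e^{\delta})$. Keeping the dependence on $\delta$ and $C_1$ exactly in the stated form therefore requires using the lower bound on the primes of $b$ efficiently. I would first try the greedy decomposition combined with the $\log n$ weighting, and treat $\sum_{p \le z} C_1 p^{\sigma-1} \le C_1 e^{\delta}(\log\log z + O(1))$ with care only over the ranges where it is genuinely needed.
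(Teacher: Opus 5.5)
Your Rankin step matches the paper's. However, the proposal stops exactly where the real work is: you never establish $G\le (CC_1)^{C_1}\exp(2C_1e^{\delta})\sum_{n\le z}F(n)$, and neither route you sketch reaches it.

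\emph{The greedy split.} In $n=ab$, the $b$-sum is controlled by $\sum C_1p^{\sigma-1}$ over primes in $(\max(z/a,P^+(a)),z]$. The lower endpoint can be as small as about $\log z$ (take $a$ a product of small primes just below $z$), so the loss is a power of $\log z/\log\log z$. The dyadic splitting in $\log(z/a)$ does not recover this. The constraint $ap_{j+1}>z$ is already fully spent in the range of the $b$-primes, and nothing in your argument makes $\sum F(a)$ over $a\in[z^{1-2^{-i}},z]$ small compared with $\sum_{a\le z}F(a)$.

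\emph{The $\log n$ weighting.} This is a valid Chebyshev-type inequality, but it yields $G\le 2e^{A\delta}\sum_{n\le z^{A},\,P^+(n)\le z}F(n)$ with $A\asymp C_1e^{\delta}$. That is the wrong dependence, $\exp(O(C_1\delta e^{\delta}))$. It also still leaves you comparing a sum up to $z^{A}$ with a sum up to $z$, which is the original difficulty.

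You are missing two ideas.

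\emph{First idea.} Do not compare $G$ with $\sum_{n\le z}F(n)$ directly; compare it with the unweighted smooth sum $\sum_{P^+(m)\le z}F(m)$. Write $n^{\sigma}=\sum_{d\mid n}\psi(d)$ with $\psi$ multiplicative and $\psi(p)=p^{\sigma}-1$. Split $n=dm$ and apply \eqref{eFsub} to get $G\le\prod_{p\le z}\bigl(1+C_1(p^{\sigma}-1)/p\bigr)\sum_{P^+(m)\le z}F(m)$. Subtracting $1$ is what removes the $\log\log z$. By convexity, $e^{t}-1\le (t/\delta)e^{\delta}$ for $0\le t\le\delta$, so $\sum_{p\le z}(p^{\sigma}-1)/p\le \frac{e^{\delta}}{\log z}\sum_{p\le z}\frac{\log p}{p}\le 2e^{\delta}$. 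This gives exactly the factor $\exp(2C_1e^{\delta})$.

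\emph{Second idea.} To pass from the smooth sum to $\sum_{m\le z}F(m)$, lower the smoothness parameter to $z^{1/W}$ with $W=2eC_1+\log 2$. This costs $\prod_{z^{1/W}<p\le z}(1+C_1/p)\le (CW)^{C_1}$. Then apply the first bound itself with $\delta=1$, smoothness $z^{1/W}$ and threshold $z$. The part of $\sum_{P^+(m)\le z^{1/W}}F(m)$ with $m>z$ is then at most $\exp(2eC_1-W)=\tfrac12$ of the whole. These sums are finite, since $F$ is supported on squarefrees, so you can rearrange to get $\sum_{P^+(m)\le z^{1/W}}F(m)\le 2\sum_{m\le z}F(m)$. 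This bootstrap is the source of $(CC_1)^{C_1}$, not small primes or Mertens error terms.
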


\begin{proof}
We let $\beta := \delta/\log z > 0$. Then we have
$$
\sum_{\substack{n \geq x \\ P^+(n) \leq z}} F(n) \leq \frac{1}{x^\beta} \sum_{P^+(n) \leq z} F(n) n^\beta.
$$
Let $\psi$ be the multiplicative function with $(\psi \ast 1)(n) = n^\beta$, so $n^\beta = \sum_{d \mid n} \psi(d)$. Therefore we obtain the sum
$$
\sum_{P^+(n) \leq z} F(n) n^\beta = \sum_{P^+(d) \leq z} \sum_{P^+(e) \leq z} F(de) \psi(d).
$$
Since $F$ is supported on squarefrees, we may restrict the sum to pairs $d, e$ with $\gcd(d, e) = 1$. The submultiplicativity assumption on $F$ gives the upper bound
\begin{align}
\label{eUpper1}
\sum_{\substack{n \geq x \\ P^+(n) \leq z}} F(n) \leq \frac{1}{x^\beta} \sum_{P^+(e) \leq z} F(e) \sum_{P^+(d) \leq z} \frac{C_1^{\omega(d)} \psi(d)}{d}.
\end{align}
Then the inner sum equals an Euler product, which we estimate via
$$
\prod_{p \leq z} \left(1 + \frac{C_1 \psi(p)}{p} \right) = \prod_{p \leq z} \left(1 + \frac{C_1 (p^\beta - 1)}{p} \right) \leq \exp\left(C_1 \sum_{p \leq z} \frac{p^\beta - 1}{p}\right).
$$
Our aim is now to understand the inner sum. Observe that the inequality
\begin{equation}
\label{eIneqExp}
e^x - 1 \leq \frac{x}{\delta} e^\delta
\end{equation}
holds for all $0 \leq x \leq \delta$. Indeed, it is readily verified that this inequality holds at $x = 0$ and $x = \delta$, and thus the general inequality follows from the fact that $e^x - 1 - \alpha x$ has positive second derivative for any fixed real number $\alpha \in \R$. Applying \eqref{eIneqExp} with $x = \beta \log p = \frac{\delta \log p}{\log z}$ gives
$$
\sum_{p \leq z} \frac{p^\beta - 1}{p} = \sum_{p \leq z} \frac{e^{\beta \log p} - 1}{p} \le \frac{\beta e^\delta}{\delta} \sum_{p \leq z} \frac{\log p}{p} = \frac{e^\delta}{\log z} \sum_{p \leq z} \frac{\log p}{p} \leq 2e^\delta
$$
for $z$ larger than an absolute constant, where the last inequality follows by Mertens' theorem. Returning to equation \eqref{eUpper1} and recognizing $x^\beta = \exp(\delta \log x/\log z)$, we have thus far shown that
\begin{equation}
\label{eUpper2}
\sum_{\substack{n \geq x \\ P^+(n) \leq z}} F(n) \leq \frac{\exp\left(2 C_1 e^\delta\right)}{\exp\left(\delta \log x/\log z\right)} \sum_{P^+(e) \leq z} F(e).
\end{equation}
We shall eventually apply equation \eqref{eUpper2} twice. For now, we set $W := 2e C_1 + \log(2)$ and continue to estimate the right hand side of equation \eqref{eUpper2} as
\begin{equation}
\label{eLoweringTrick}
\sum_{P^+(e) \leq z} F(e) \leq \prod_{z^{1/W} < p \leq z} \left(1 + \frac{C_1}{p}\right) \sum_{P^+(e) \leq z^{1/W}} F(e) \leq (C W)^{C_1} \sum_{P^+(e) \leq z^{1/W}} F(e)
\end{equation}
for $z$ larger than some absolute constant $C > 0$. Finally, we observe that
$$
\sum_{P^+(e) \leq z^{1/W}} F(e) \leq \sum_{e \leq z} F(e) + \sum_{\substack{e > z \\ P^+(e) \leq z^{1/W}}} F(e) \leq \sum_{e \leq z} F(e) + \frac{1}{2} \sum_{P^+(e) \leq z^{1/W}} F(e),
$$
where we applied equation \eqref{eUpper2} with $\delta = 1$ in our last inequality. Rearranging the above inequality yields
\begin{align}
\label{eRearrangeTrick}
\sum_{P^+(e) \leq z^{1/W}} F(e) \leq 2 \sum_{e \leq z} F(e).
\end{align}
Inserting the two inequalities \eqref{eLoweringTrick} and \eqref{eRearrangeTrick} into \eqref{eUpper2} ends the proof of the lemma.
\end{proof}

We now turn to the relevant sieve result for our arguments. As is traditional in sieving, we write $P(z) := \prod_{p < z} p$.

\begin{theorem}
\label{tComb}
Let $(a_n)_{n \in \Z}$ be a finitely supported sequence of nonnegative real numbers. Let $D \geq z \geq 2$, and let $X$ be a real number. Let $\rho$ be a multiplicative function with $0 \leq \rho(p) < 1$ for all $p$. Define
$$
r_d := \rho(d) X - \sum_{n \equiv 0 \bmod d} a_n
$$
for all squarefree $d < D$. Let $K > 1$ and $\kappa > 0$ be such that
$$
\prod_{w \leq p < z} (1 - \rho(p))^{-1} \leq K\left(\frac{\log z}{\log w}\right)^\kappa
$$
for all $z > w \geq 2$. Then we have
$$
\sum_{\gcd(n, P(z)) = 1} a_n \leq X \left(1 + 4 (9\kappa + 1)^\kappa e^{9\kappa - s} K^{11}\right) \prod_{p < z} (1 - \rho(p)) + \sum_{\substack{d < D \\ d \mid P(z)}} |r_d|,
$$
where $s = \log D/\log z$.
\end{theorem}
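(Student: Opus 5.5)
This is an upper-bound sieve of dimension $\kappa$ (Brun/Rosser–Iwaniec type), with explicit dependence of the main-term constant on $\kappa$ and $K$. I would prove it with a Brun-type combinatorial sieve (the fundamental lemma), keeping track of constants.

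\textbf{Step 1 (upper-bound weights).} Write
\[
\sum_{\gcd(n,P(z))=1} a_n = \sum_n a_n \sum_{d \mid \gcd(n,P(z))} \mu(d).
\]
Replace $\mu(d)$ by upper-bound sieve weights $\lambda_d^+$, supported on $d \mid P(z)$ with $d<D$, satisfying $\sum_{d\mid m}\lambda_d^+ \ge [m=1]$. For concreteness, take Brun's pure weights restricted to $\omega(d)\le 2r$, and iterate over dyadic-type ranges $z_j = z^{\theta^j}$. Equivalently, use the Rosser-type truncation
\[
d = p_1 \cdots p_k \ \text{ with } p_1 > \dots > p_k, \qquad p_1\cdots p_{m}\,p_m^{\beta} < D \ \text{ for odd } m,
\]
with $\beta \approx 9\kappa+1$. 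Inserting $\sum_{n\equiv 0 \bmod d} a_n = \rho(d)X - r_d$ and using $|\lambda_d^+|\le 1$ gives
\[
\sum_{\gcd(n,P(z))=1} a_n \le X\sum_{d}\lambda_d^+\rho(d) + \sum_{d<D,\ d\mid P(z)}|r_d|.
\]
This is the remainder term exactly as stated.

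\textbf{Step 2 (Buchstab identity for the main term).} Use the Buchstab identity to write
\[
\sum_d \lambda_d^+\rho(d) = \prod_{p<z}(1-\rho(p)) + \sum_{\text{truncation points}} \rho(p_1\cdots p_m)\, V(p_m),
\]
where $V(w) = \prod_{p<w}(1-\rho(p))$ and the sum runs over the tuples at which the truncation first fails. The error sum is nonnegative, since the failures occur at odd $m$, which is what makes these weights an upper bound. Bound $V(p_m)$ using the hypothesis:
\[
V(p_m) \le K\left(\frac{\log z}{\log p_m}\right)^{\kappa} V(z).
\]

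\textbf{Step 3 (main obstacle: the error sum).} One must show
\[
\sum_{\text{fail}} \rho(p_1\cdots p_m)\left(\frac{\log z}{\log p_m}\right)^{\kappa} \le 4(9\kappa+1)^\kappa e^{9\kappa-s}K^{10}.
\]
I would do this with Rankin's trick. Insert the factor $\exp\bigl(\epsilon(\log(p_1\cdots p_m p_m^\beta/D))/\log z\bigr) \ge 1$, which is valid on the failure set. The sum then factors into products of the form
\[
\prod_{p<z}\bigl(1+\rho(p)\,p^{\epsilon/\log z}\bigr),
\]
and each such product is controlled by the dimension hypothesis through partial summation, in the same way $p^\beta - 1 \ll \beta\log p$ is used in Lemma~\ref{lNT}. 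The choice $\epsilon\approx 1$ yields the factor $e^{-s}$. The polynomial factor $(9\kappa+1)^\kappa$ comes from $(\log z/\log p_m)^\kappa$ on the range where $p_m \ge z^{1/(9\kappa+1)}$; the complementary range is absorbed by the exponential savings. The powers of $K$ accumulate, one for each application of the dimension hypothesis (about ten in all).

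\textbf{Step 4 (the case $s$ small).} If $s$ is small, the bound $e^{9\kappa-s}K^{11}$ already dominates the trivial bound coming from the Selberg/Brun comparison, so the inequality holds trivially. This justifies the crude constants in the statement. The genuinely delicate bookkeeping is in Step 3.
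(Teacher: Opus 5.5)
The paper does not prove this statement; it quotes \cite[Corollary 6.10]{FI}, i.e.\ the fundamental lemma of the $\beta$-sieve with $\beta=9\kappa+1$, extended to all $s\ge 1$. Your architecture is the right one: upper $\beta$-sieve weights, the Buchstab decomposition $\sum_d\lambda_d^+\rho(d)=V(z)+\sum_{n\ \mathrm{odd}}V_n$, where $V_n$ sums $\rho(p_1\cdots p_n)V(p_n)$ over tuples terminating at step $n$, and then bounds on the $V_n$.

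\textbf{Step 3 fails as described.} This step is the entire content of the fundamental lemma. Suppose you keep only the final failing condition $p_1\cdots p_{n-1}p_n^{\beta+1}\ge D$ and drop the other constraints so that the sum factors. You are then left with something at least $\prod_{p<z}(1+\rho(p))$, which for $\rho(p)=\kappa/p$ has size $(\log z)^{\kappa}$. The weight $(\log z/\log p_n)^{\kappa}$ is also of size $(\log z)^{\kappa}$ when $p_n$ is small. So this form of Rankin's trick gives at best $e^{\epsilon(\beta-s)}$ times a power of $\log z$, never a bound uniform in $z$.

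The analogy with Lemma~\ref{lNT} does not carry over. There the unweighted sum $\sum F(e)$ stays on the right-hand side; here it would have to be bounded by an Euler product of size about $V(z)^{-1}$.

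\textbf{The missing idea.} You must use the truncation conditions that a terminating tuple \emph{satisfies} at the earlier odd indices $m<n$. Dividing the failure inequality at $n$ by the survival inequality $p_1\cdots p_{m-1}p_m^{\beta+1}<D$ gives $\beta\log p_m<\log p_{m+1}+\cdots+\log p_{n-1}+(\beta+1)\log p_n$. Induction then yields $\log p_{n-2j}<q^{j}\log p_n$ with $q=(\beta+1)/(\beta-1)$. So every prime of the tuple lies in the window $[p_n,p_n^{q^{(n-1)/2}}]$. Together with $p_i<z$, this forces $n>s-\beta$ and $\log z/\log p_n<(n+\beta)q^{(n-1)/2}/s$. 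By the dimension hypothesis, the ordered sum over $p_1>\cdots>p_{n-1}$ in the window is at most $(\log K+\kappa\tfrac{n-1}{2}\log q)^{n-1}/(n-1)!$. Since $\kappa\log q\le 2/9$ when $\beta=9\kappa+1$, the resulting bound on $V_n/V(z)$ decays geometrically in $n$. Summing over $n>s-\beta$ is what produces $e^{9\kappa-s}$ times a fixed power of $K$. Without this window argument, Step 3 is an assertion rather than a proof.

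\textbf{Step 4 is also wrong.} For small $s$ nothing is trivial. The bound $\sum_{\gcd(n,P(z))=1}a_n\le\sum_n a_n\le X+|r_1|$ would require $V(z)^{-1}\le 1+4(9\kappa+1)^{\kappa}e^{9\kappa-s}K^{11}$. But $V(z)^{-1}$ can be of size $(\log z)^{\kappa}$ even when $s=1$.

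To reach all $D\ge z$ from the range $D\ge z^{9\kappa+1}$, argue as follows when $s<9\kappa+1$:
\begin{itemize}
\item Use $a_n\ge 0$ to sift only by primes below $z_0=D^{1/(9\kappa+1)}$; the remainders with $d\mid P(z_0)$ are among those with $d\mid P(z)$.
\item Apply the large-$s$ bound at $s_0=9\kappa+1$.
\item Use the dimension hypothesis once more: $V(z_0)\le K(\log z/\log z_0)^{\kappa}V(z)\le K(9\kappa+1)^{\kappa}V(z)$.
\end{itemize}
If $z_0<2$, then $\log z/\log 2<9\kappa+1$, and only in that case does the trivial bound suffice. This reduction is what accounts for the factor $(9\kappa+1)^{\kappa}$ and the extra power of $K$ in the statement.
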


\begin{proof}
See \cite[Corollary 6.10]{FI}.
\end{proof}

\subsection{The Erd\H{o}s--Nair--Tenenbaum sieve}
We now prove Theorem \ref{tSieve}.

\begin{proof}
If $x \in X_H$, then $n(\lambda(x))$ is a positive squarefree integer of size at most $H$. We set $Z := H^{c/3}$. Then we factor
$$
n(\lambda(x)) = \prod_{i = 1}^r p_i
$$
with $r \in \Z_{\geq 0}$ and primes $p_1 < \dots < p_r$. We let $0 \leq j < r$ be the unique index such that
\begin{equation}
\label{ePrimeFactorsSpltting}
\prod_{i = 1}^j p_i \leq Z, \quad \quad \prod_{i = 1}^{j + 1} p_i > Z,
\end{equation}
taking $j := r$ if no such index exists. Then we define
$$
a(x) := \prod_{i = 1}^j p_i, \quad \quad b(x) := \prod_{i = j + 1}^r p_i.
$$
Our estimation of \eqref{eXHSum} will be split in two cases
\begin{enumerate}
\item[(i)] $p_{j + 1} \leq \log Z$,
\item[(ii)] $p_{j + 1} > \log Z$,
\end{enumerate}
where we define $p_{r + 1} := \infty$.

\paragraph{Case (i).}
Since we set $p_{r + 1} := \infty$ by definition, we get $j < r$ in this case. Moreover, by the inequalities $p_{j + 1} \leq \log Z$ and \eqref{ePrimeFactorsSpltting}, we obtain 
$$
a(x) \geq \frac{Z}{\log Z} > Z^{1/2}
$$
for $H^{c/3}$ larger than an absolute constant. Because $n(\lambda(x)) \leq H$, there exists an absolute constant $C > 0$ such that the trivial bound 
$$
S(\lambda(x)) \leq S(1, (u_p)_p) \cdot C_1^{\frac{C \log H}{\log \log H}}
$$ 
holds for $H > 10$. This yields
\begin{equation}
    \label{eq:CaseIBnd}
\sum_{\substack{x \in X_H \\ \text{case (i)}}} S(\lambda(x)) \leq S(1, (u_p)_p) \cdot C_1^{\frac{C \log H}{\log \log H}} \sum_{\substack{x \in X_H \\ \text{case (i)}}} 1.
\end{equation}
Setting $d := a(x)$, we get $Z^{1/2} < d \leq Z$. Hence we have
$$
\sum_{\substack{x \in X_H \\ \text{case (i)}}} 1 \leq \sum_{\substack{Z^{1/2} < d \leq Z \\ p \mid d \Rightarrow p \leq \log Z}} \sum_{\substack{x \in X_H \\ a(x) = d}} 1 \leq \sum_{\substack{Z^{1/2} < d \leq Z \\ p \mid d \Rightarrow p \leq \log Z}} \sum_{\substack{x \in X_H \\ d \mid n(\lambda(x))}} 1.
$$
We employ equation \eqref{eLevel} to show that the inner sum is at most 
$$
\sum_{\substack{x \in X_H \\ d \mid n(\lambda(x))}} 1 \leq \#X_H \left(m(d) + \frac{C_1^{\omega(d)}}{H^c}\right) \leq \#X_H \left(m(d) + \frac{C_1^{\frac{C \log H}{\log \log H}}}{H^c}\right).
$$
Inserting this estimate back in and applying Lemma \ref{lSmooth} shows that the left hand side of \eqref{eq:CaseIBnd} is at most
\begin{align}
\label{eUpperBound1}
S(1, (u_p)_p) C_1^{\frac{C \log H}{\log \log H}} \#X_H \left(\frac{Z C_1^{\frac{C \log H}{\log \log H}}}{H^c} + \frac{\exp\left(2 C_1^2 (\log Z)^{1/2}\right)}{Z^{1/4}}\right).
\end{align}
Indeed, Lemma \ref{lSmooth} applies, since $m(d)$ is a multiplicative function supported on squarefrees and bounded on primes $p$ by $C_1^2/p$.

\paragraph{Case (ii).}
In this case we have
\begin{equation}
\label{eq:CaseIIBnd}
\sum_{\substack{x \in X_H \\ \text{case (ii)}}} S(\lambda(x)) \leq \sum_{\substack{x \in X_H \\ \text{case (ii)}}} S(a(x), \mathcal{D}(x)) C_1^{\omega(b(x))}
\end{equation}
by Definition \ref{dSieve}, where we have set $\mathcal{D}(x) := (\lambda(x)_p)_{p \mid a(x)}$. Set
$$
s := \left \lfloor \frac{\log Z}{\log p_{j + 1}} \right \rfloor,
$$
which is to be interpreted as $s = 0$ in case $p_{j + 1} = \infty$. By construction we have $Z^{1/(s + 1)} < p_{j + 1} \leq Z^{1/s}$. Moreover, we have
$$
s \in \Z \cap \left[0, \frac{\log Z}{\log \log Z}\right]
$$
by virtue of being in case (ii). By definition of $Z$, we get the upper bound $\omega(b(x)) \leq 3 (s + 1)/c$. Setting $d := a(x)$ again and splitting the sum over $s$, we obtain
\begin{align}
\label{eSplitOvers}
\sum_{\substack{x \in X_H \\ \text{case (ii)}}} S(a(x), \mathcal{D}(x)) C_1^{\omega(b(x))} \leq 
\sum_{0 \leq s \leq \frac{\log Z}{\log \log Z}} C_1^{\frac{3(s + 1)}{c}} \sum_{\substack{(d, \mathcal{D}) \\ d \leq Z}} S(d, \mathcal{D}) \sum_{\substack{x \in X_H \\ (a(x), \mathcal{D}(x)) = (d, \mathcal{D}) \\ Z^{1/(s + 1)} < P^-(b(x)) \leq Z^{1/s}}} 1.
\end{align}
Let us now record the observation that $s \geq 2$ implies $d \geq Z^{1/2}$. 

At this stage our goal will be to upper bound the inner sum by an application of the fundamental lemma of sieve theory. We define
$$
P_{s, d} := \prod_{\substack{2C_1^2 < p \leq Z^{1/(s + 1)} \\ p \nmid d}} p,
$$
so $\gcd(n(\lambda(x)), P_{s, d}) = 1$. For a fixed pair $(d, \mathcal{D}) = (d, (c_p)_p)$, we have
$$
\sum_{\substack{x \in X_H \\ (a(x), \mathcal{D}(x)) = (d, \mathcal{D}) \\ Z^{1/(s + 1)} < P^-(b(x)) \leq Z^{1/s}}} 1 \leq \sum_{\substack{x \in X_H \\ \forall p \mid d \, : \,  \lambda(x)_p = c_p \\ \gcd(n(\lambda(x)), P_{s, d}) = 1}} 1.
$$
We now apply Theorem \ref{tComb}. In the notation of that theorem, we take
\begin{gather*}
a_m := \# \left\{x \in X_H : p \mid d \Rightarrow \lambda(x)_p = c_p \text{ and } dm = \frac{n(\lambda(x))}{\prod_{\substack{p \leq 2C_1^2 \\ p \mid n(\lambda(x))}} p}\right\}, \quad X := \# X_H \prod_{p \mid d} \mu(p, c_p), \\
D := H^{c/2} = Z^{3/2}, \quad \quad z := Z^{\min(1/(s + 1), 1/2)}.
\end{gather*}
With this notation set, we have
$$
|r_e| = \left|\sum_{m \equiv 0 \bmod e} a_m - \#X_H m(e) \prod_{p \mid d} \mu(p, c_p)\right| \leq \frac{\#X_H C_1^{\omega(e)}}{H^c}
$$
for all $e \mid P_{s, d}$ with $e \leq H^{c/2}$ by equation \eqref{eLevel} (note that, by construction, $r_e = 0$ if $e$ is not coprime with $d \prod_{p \leq 2C_1^2} p$). Hence Theorem \ref{tComb}, whose assumptions are satisfied thanks to the bound \eqref{eSieveAssu}, gives
$$
\sum_{\substack{x \in X_H \\ \forall p \mid d \, : \,  \lambda(x)_p = c_p \\ \gcd(n(\lambda(x)), P_{s, d}) = 1}} 1 \leq B'(\kappa, K) \# X_H \prod_{p \mid d} \mu(p, c_p) \hspace{-0.4cm} \prod_{\substack{2C_1^2 < p \leq Z^{\frac{1}{s + 1}} \\ p \nmid d}} \hspace{-0.4cm} \left(1 - m(p)\right) + \frac{\# X_H}{H^c} \sum_{\substack{e \leq H^{c/2} \\ e \mid P_{s, d}}} C_1^{\omega(e)}
$$
with $B'(\kappa, K) := 1 + 4 (9\kappa + 1)^\kappa e^{9\kappa} K^{11}$. 

After summing over $s, d, \mathcal{D}$ in equation \eqref{eSplitOvers}, a direct calculation shows that the second term above can be absorbed in the main term of Theorem \ref{tSieve} for $H > \exp\big((2C_1)^{C/c^2}\big)$. Returning to equation \eqref{eSplitOvers} and recalling that $s \geq 2$ implies $d \geq Z^{1/2}$, we find that the left hand side of \eqref{eq:CaseIIBnd} is at most
$$
B(\kappa, K) \#X_H \prod_{2C_1^2 < p \leq Z} (1 - m(p)) \sum_{0 \leq s \leq \frac{\log Z}{\log \log Z}} C_1^{\frac{3(s + 1)}{c}} (s + 1)^\kappa \hspace{-0.7cm} \sum_{\substack{(d, \mathcal{D}) \\ d \leq Z, P^+(d) \leq Z^{1/s} \\ s \geq 2 \Rightarrow d \geq Z^{1/2}}} \hspace{-0.6cm} S(d, \mathcal{D}) \rho(d, \mathcal{D}),
$$
where we applied the bound \eqref{eSieveAssu} and where we recall the notations \eqref{egdD} and $B(\kappa, K) = K B'(\kappa, K)$. For $s \geq 2$, we allude to Lemma \ref{lNT} with 
$$
F(d) = \sum_{\substack{\mathcal{D} \\ (d, \mathcal{D}) \text{ a pair}}} S(d, \mathcal{D}) \cdot \rho(d, \mathcal{D}).
$$
To check that the condition \eqref{eFsub} of Lemma \ref{lNT} is satisfied, we prove that for all squarefree integers $d, e$ with $\gcd(d, e) = 1$
$$
F(de) \leq \frac{F(d) \cdot (2C_1^3)^{\omega(e)}}{e}.
$$
Indeed, this follows from
\begin{align*}
F(de) &= \sum_{\substack{\mathcal{D}, \mathcal{E} \\ (de, \mathcal{D} \times \mathcal{E}) \text{ a pair}}} S(d e, \mathcal{D} \times \mathcal{E}) \cdot \rho(de, \mathcal{D} \times \mathcal{E}) \nonumber \\
&\leq C_1^{\omega(e)} \sum_{\substack{\mathcal{D} \\ (d, \mathcal{D}) \text{ a pair}}} S(d, \mathcal{D}) \cdot \rho(d, \mathcal{D}) \times \sum_{\substack{\mathcal{E} \\ (e, \mathcal{E}) \text{ a pair}}} \rho(e, \mathcal{E}) \nonumber \\
&\leq 2^{\omega(e)} \cdot C_1^{\omega(e)} \cdot m(e) \sum_{\substack{\mathcal{D} \\ (d, \mathcal{D}) \text{ a pair}}} S(d, \mathcal{D}) \cdot \rho(d, \mathcal{D}) \leq \frac{F(d) \cdot (2C_1^3)^{\omega(e)}}{e}
\end{align*}
by submultiplicativity of $S$, the inequality $m(p) \leq 1/2$ for $p \geq 2C_1^2$ and the inequality $m(p) \leq C_1^2/p$. Inserting the bound from Lemma \ref{lNT} with $\delta = \frac{1}{2} \log s$ and simply keeping the terms with $s \in \{0, 1\}$, we get
\begin{align}
\label{eUpperBound2}
\leq B(\kappa, K) S \#X_H \prod_{2C_1^2 < p \leq Z} (1 - m(p)) \sum_{\substack{(d, \mathcal{D}) \\ d \leq Z}} S(d, \mathcal{D}) \rho(d, \mathcal{D})
\end{align}
with 
$$
S := (1 + 2^\kappa) C_1^{6/c} + \sum_{s = 2}^\infty \frac{(C C_1^3)^{2C_1^3} \cdot C_1^{\frac{3 (s + 1)}{c}} \cdot (s + 1)^\kappa \cdot \exp\left(4 C_1^3 s^{1/2}\right)}{e^{\frac{s \log s}{4}}}
$$
for an absolute constant $C > 0$.

\paragraph{Conclusion of proof.}
In case (i) we have proven the upper bound in \eqref{eUpperBound1}, while in case (ii) we have proven the upper bound in \eqref{eUpperBound2}. The upper bound \eqref{eUpperBound1} can be absorbed in \eqref{eUpperBound2} for $H > \exp((2C_1)^{C/c^2})$. Examining \eqref{eUpperBound2}, we get the main term in Theorem \ref{tSieve}, and the result follows.
\end{proof}

\subsection{The lower bound}
We now prove Theorem \ref{tSieve2}.

\begin{proof}
Let $v > 0$ be a small real number to be chosen later, and set $Z := H^v$. Given an integer $n$, we define
$$
n^\flat := \prod_{p \leq Z} p^{v_p(n)}.
$$
Fixing all pairs $(d, \mathcal{D}) = (d, (c_p)_p)$ with $P^+(d) \leq Z$, we have
\begin{align*}
\sum_{x \in X_H} S(\lambda(x)) &= \sum_{\substack{(d, \mathcal{D}) \\ P^+(d) \leq Z}} \sum_{\substack{x \in X_H \\ n(\lambda(x))^\flat = d \\ \forall p \mid d \, : \, \lambda(x)_p = c_p}} S(\lambda(x)) = \sum_{\substack{(d, \mathcal{D}) \\ P^+(d) \leq Z}} \sum_{\substack{x \in X_H \\ P^-(n(\lambda(x))/d) > Z \\ \forall p \mid d \, : \, \lambda(x)_p = c_p}} S(\lambda(x)) \\
&\geq \sum_{\substack{(d, \mathcal{D}) \\ 1 \leq d \leq Z}} \sum_{\substack{x \in X_H \\ P^-(n(\lambda(x))/d) > Z \\ \forall p \mid d \, : \, \lambda(x)_p = c_p}} S(\lambda(x)),
\end{align*}
where we used that $S(\lambda(x)) \geq 0$ and that all $d \leq Z$ satisfy $P^+(d) \leq d \leq Z$.  Since the number of prime divisors of $n(\lambda(x))/d$ is bounded by $\lfloor 1/v \rfloor$, we get the estimate
$$
\sum_{\substack{(d, \mathcal{D}) \\ 1 \leq d \leq Z}} \sum_{\substack{x \in X_H \\ P^-(n(\lambda(x))/d) > Z \\ \forall p \mid d \, : \, \lambda(x)_p = c_p}} S(\lambda(x)) \geq C_1^{- \lfloor 1/v \rfloor} \sum_{\substack{(d, \mathcal{D}) \\ 1 \leq d \leq Z}} S(d, \mathcal{D}) \sum_{\substack{x \in X_H \\ P^-(n(\lambda(x))/d) > Z \\ \forall p \mid d \, : \, \lambda(x)_p = c_p}} 1
$$
from our assumption \eqref{eLowerS}.

We apply \cite[Theorem 6.9]{FI} with $z \leftarrow Z$ and $D \leftarrow H^\gamma$ with $\gamma$ to be chosen soon. In particular, we must choose $\gamma \geq (9 \kappa + 1) v$ in order to apply this result. Then we get
$$
\sum_{\substack{x \in X_H \\ P^-(n(\lambda(x))/d) > Z \\ \forall p \mid d \, : \, \lambda(x)_p = c_p}} 1 \geq \# X_H \prod_{p \mid d} \mu(p, c_p) \prod_{\substack{2C_1^2 < p \leq Z \\ p \nmid d}} (1 - m(p)) \left(1 - e^{9 \kappa - \frac{\gamma}{v}} K^{10}\right) - \frac{\#X_H}{H^c} \sum_{e \leq H^\gamma} C_1^{\omega(e)}.
$$
This inspires us to choose $\gamma := \frac{c}{3}$, and we want to pick $v$ such that
$$
v \leq \frac{c}{27 \kappa + 3}, \quad \quad e^{9 \kappa - \frac{c}{3v}} K^{10} \leq \frac{1}{3}.
$$
Recalling that $K \geq 1$, we pick $v := \frac{c}{27\kappa + 3\log(3K^{10})}$, which satisfies both conditions above. Finally, observe that 
$$
\frac{C_1^{- \lfloor 1/v \rfloor} \#X_H}{H^c} \sum_{\substack{(d, \mathcal{D}) \\ 1 \leq d \leq Z}} S(d, \mathcal{D}) \sum_{e \leq H^\gamma} C_1^{\omega(e)} \leq C_1^{- \lfloor 1/v \rfloor} \cdot \#X_H \cdot S(1, (u_p)_p)
$$
for $H > \exp\left((2C_1)^{C/c^2}\right)$, completing the proof.
\end{proof}
\section{Gridding}
\label{sGrid}
Let $k \in \Z_{\geq 1}$ and let $g_1, \dots, g_k$ be nonnegative multiplicative functions supported on subsets of the positive squarefree integers. Assume that there is $C_2 > 100$ such that
\[
g_i(p) \le C_2p^{-1}
\]
for all primes $p$ and $1 \le i \le k$. A choice of $1 \le i \le k$ will be called \emph{rare} if $\sum_p g_i(p)$ is finite. We take $C_4$ to be the supremum of this sum over all choices of rare $i$. Fixing some $C_3 > 1$, let
\[
S: \Z_{\geq 1}^k \to \R_{\ge 0}
\]
be a function satisfying
\[
C_3^{-1} \cdot S(n_1, \dots, n_k) \le S(n_1, \dots, pn_i, \dots, n_k) \le C_3 \cdot S(n_1, \dots, n_k)
\]
for all $1 \le i \le k$, all primes $p$, and all tuples $(n_1, \dots, n_k)$ of nonnegative integers indivisible by $p$. It will be convenient to assume $S(n_1, \dots, n_k) = 0$ if $n_1 \cdots n_k$ is not squarefree.

We are aiming to control the order of growth of
$$
\sum_{\substack{(n_1, \dots, n_k) \in \Z_{\geq 1}^k \\ \prod_{i \le k} n_i \le H}} g_1(n_1) \cdots g_k(n_k) S(n_1, \dots, n_k).
$$
Following \cite{Smi22b}, we will do this by cutting the space of tuples $(n_1, \dots, n_k)$ into subsets parameterized by certain product spaces known as \emph{grids}. 

\begin{mydef}[{\cite[Definition 8.3]{Smi22b}}]
Given $H \ge 25$, take
\[
\alpha_0(H) = \exp^{(3)}\left(\tfrac{1}{3}\log^{(3)} H\right) \quad\text{and}\quad \alpha(H) = 1 + \exp^{(3)}\left(\tfrac{1}{4}\log^{(3)} H\right)^{-1}.
\]
A grid of height $H$ is a tuple $\left(r, (X_i)_{i \le r}, (k_i)_{i \le r}\right)$ satisfying:
\begin{itemize}
    \item $r \in \Z_{\geq 0}$, $X_i$ is a set of primes for each integer $1 \le i \le r$, and $k_i$ is an integer $1 \leq k_i \leq k$, 
    \item each $X_i$ is either a singleton prime smaller than $\alpha_0(H)$, or the set of all primes in an interval of the form
\begin{equation}
\label{eq:grid_axis}
\big[\alpha_0(H) \alpha(H)^m,\, \alpha_0(H) \alpha(H)^{m+1}\big)
\end{equation}
for some integer $m$, 
    \item the sets $X_i$ and $X_j$ are pairwise disjoint, and 
    \item $\prod_{1 \le i \le r} \sup(X_i)$ is no larger than $H$.
\end{itemize}
Given a grid of height $H$, and given a point $x = (p_1, \dots, p_r)$ in $X = \prod_{1 \le j \le r} X_j$ and $1 \le i \le k$, we define
\[
n_i(x) = \prod_{\substack{1 \le j \le r \\ k_j = i}} p_j.
\]
We then say that $\overline{n}(x) := (n_1(x), n_2(x), \dots, n_k(x))$ is contained in a grid of height $H$. We also define
$$
g(\overline{n}(x)) := g_1(n_1(x)) \cdots g_k(n_k(x)).
$$
Given a grid of height $H$, take $S_{\text{sm}}$ to be the set of $1 \le i \le r$ such that $X_i$ is a singleton, take $S_{\text{med}}$ to be the set of $i$ such that $X_i$ is not a singleton but 
$$
\sup(X_i) \le \exp^{(3)}\left(\tfrac{1}{2} \log^{(3)} H\right),
$$
and take $S_{\text{lg}}$ to be the set of all remaining indices.
\end{mydef}

As in \cite{Smi22b}, we will need a notion of a good grid of height $H$. This requires some adjustment, since we are putting very few restrictions on the multiplicative functions $g_i$.

\begin{mydef}
\label{defn:goodgrid}
Given a grid $(r, (X_i)_{i \le r}, (k_i)_{i \le r})$ of height $H$, we call the grid \emph{okay} if the following properties all hold:
\begin{enumerate}
    \item We have $r \le (\log \log H)^2$.
    \item For all $1 \le i \le r$ not in $\Ssm$,
    \[\sum_{p \in X_i} g_{k_i}(p) \ge \exp^{(3)}\left(\tfrac{2}{7}\log^{(3)} H\right)^{-1}.\]
    \item For all $1 \le i \le r$, $k_i$ is not rare.
\end{enumerate}
Fix $0 < c_1 < 1$. Assuming that the grid $(r, (X_i)_{i \le r}, (k_i)_{i \le r})$ is okay, call it $c_1$-\emph{good} if
\begin{enumerate}
    \item[4.] Defining $S_{\text{sm}}$ and $S_{\text{med}}$ as above, we have
    \[
    \# S_{\text{sm}} \le (\log^{(2)} H)^{\frac{1}{3} + \frac{1}{100}} \quad \text{and}\quad \# S_{\text{med}} \le (\log^{(2)} H)^{\frac{1}{2} + \frac{1}{100}}.
    \]
    \item[5.] For all $1 \le j \le k$ such that
    \[
    \sum_{p \le H} g_j(p) \ge c_1 \log \log H,
    \]
    there are at least $(\log^{(2)} H)^{\frac{5}{6}}$ choices of $i$ such that $k_i = j$ and
    \[\sum_{p \in X_i} g_j(p) \ge \sum_{p \in X_i} \tfrac{1}{3}c_1p^{-1}.\]
\end{enumerate}
\end{mydef}

\begin{proposition}
\label{prop:gridding}
There exists an absolute constant $C > 0$ such that the following holds. Let $H \geq 25$ be a real number such that 
\begin{equation}
    \label{eq:H_grid_bnd}
H \geq \exp^{(2)} \left((k C_2 C_3/c_1)^C\right).
\end{equation}
Let $A > 1$ be such that, for all okay grids $X$ of height $H$, we have 
\begin{equation}
\label{eq:okay_assu}
\sum_{x \in X} g\left(\overline{n}(x)\right) S\left(\overline{n}(x)\right) \le A C_3^{|S_{\textup{sm}}|} \sum_{x \in X} g\left(\overline{n}(x)\right),
\end{equation}
and for all $c_1$-good grids $X$ of height $H$, we have
\begin{equation}
\label{eq:good_assu}
\sum_{x \in X} g\left(\overline{n}(x)\right) S\left(\overline{n}(x)\right) \le A \sum_{x \in X} g\left(\overline{n}(x)\right).
\end{equation}
Then we have
\begin{equation}
\label{eq:gridding_prop}
\sum_{n_1 \cdots n_k \le H} g_1(n_1) \cdots g_k(n_k) S(n_1, \dots, n_k) \le 2\exp(kC_3C_4) A \sum_{\substack{n_1 \cdots n_k \le H \\ \mu^2(n_1 \cdots n_k) = 1}} g_1(n_1) \cdots g_k(n_k).
\end{equation}
\end{proposition}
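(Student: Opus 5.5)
The plan is to follow the gridding argument of \cite[Section 8]{Smi22b}, adapted to general multiplicative weights $g_i$. First handle the rare indices. Any squarefree tuple $(n_1,\dots,n_k)$ can be written as $n_i = n_i' m_i$, where $m_i$ collects the prime factors of $n_i$ when $i$ is rare and $m_i=1$ otherwise. Pulling out the $m_i$ costs at most a factor $C_3^{\omega(m_1\cdots m_k)}$ in $S$. Summing over the $m_i$ with weights $g_i(m_i)C_3^{\omega(m_i)}$ gives at most $\prod_{i \text{ rare}}\prod_p(1+C_3 g_i(p))\le \exp(kC_3C_4)$. This is where the factor $\exp(kC_3C_4)$ comes from. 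It remains to treat tuples supported on non-rare indices. The comparison sum then loses nothing, since the full sum on the right of \eqref{eq:gridding_prop} dominates these tuples times the rare-factor product.

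Next, cover the set of squarefree tuples with non-rare support and $\prod n_i \le H$ by disjoint grids. Order the prime factors of $n_1\cdots n_k$, labelling each prime by the index $i$ with $p\mid n_i$. Primes below $\alpha_0(H)$ give singleton axes. A prime $p\ge \alpha_0(H)$ lies in a unique interval \eqref{eq:grid_axis}, and tuples with two primes in the same interval are handled separately. So each point with distinct intervals determines a grid $(r,(X_i),(k_i))$, and the point lies in that grid. The boundary condition $\prod \sup X_i\le H$ versus $\prod p_i \le H$ fails only when the product is within a factor $\alpha(H)^r$ of $H$. I would control these boundary points, together with the tuples having two primes in one interval, by crude bounds: $S\le C_3^{\omega}S(1,\dots)$ combined with Shiu-type upper bounds (Lemma~\ref{lNT}) showing their $g$-mass is a negligible fraction. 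Failing that, I would group them into neighbouring grids of slightly smaller height, as in \cite{Smi22b}.

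Then partition the grids by type. Good grids are handled directly by \eqref{eq:good_assu}. For grids that are okay but not good, and for grids that are not okay, I need to show that the total $g$-mass of such points, weighted by the loss $C_3^{|\Ssm|}$ from \eqref{eq:okay_assu} or by a crude bound, is at most a small multiple of the total mass. The individual failures are handled as follows.
\begin{itemize}
\item Condition 1 fails, or condition 4 fails for $\Ssm$ or $\Smed$. This means an anomalously large number of prime factors in a range. By Rankin's trick (as in Lemma~\ref{lSmooth}), using $\sum_{p\le y} g_i(p)\le C_2\log\log y+O(1)$, the mass of such tuples is bounded by the mass of typical tuples times $\exp(-(\log^{(2)}H)^{1/3})$. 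This beats $C_3^{|\Ssm|}$ once $H$ satisfies \eqref{eq:H_grid_bnd}.
\item Condition 2 fails. The prime lies in an interval carrying tiny $g$-mass, and summing over such intervals gives small total mass.
\item Condition 5 fails. For a non-rare $j$ with $\sum_{p\le H}g_j(p)\ge c_1\log\log H$, the intervals where $g_j$ is at least $\tfrac13 c_1/p$ carry most of the $g_j$-mass in the relevant range. A large-deviation (Chernoff/Rankin) estimate then shows that having fewer than $(\log^{(2)}H)^{5/6}$ primes of $n_j$ in them has relative mass $\exp(-c(\log\log H)^{c'})$.
\end{itemize}
The factor $2$ absorbs all these exceptional contributions.

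The main obstacle is the large-deviation bounds for conditions 4 and 5 relative to the \emph{weighted} total mass $\sum g_1(n_1)\cdots g_k(n_k)$. Here the $g_i$ are arbitrary apart from the upper bound $C_2/p$, so there is no lower bound on the density of typical tuples. My approach is to compare via an exponential-tilting argument: multiply $g_j(p)$ by $\theta>1$ on the relevant set of primes, and compare the tilted and untilted sums. Both are controlled using the sieve comparisons of Section~\ref{sSieve}. The ratio is bounded by $\prod(1+(\theta-1)g_j(p))$, and this must be balanced against $\theta^{-\text{count}}$.
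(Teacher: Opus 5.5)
This is essentially the paper's proof. Rare indices are stripped off at cost $\exp(kC_3C_4)$. Tuples outside okay grids (too many prime factors, the boundary range $\alpha(H)^{\omega(n)}n>H$, close primes, failure of condition 2) are bounded crudely by $S\le C_3^{\omega}S(1,\dots,1)\le C_3^{\omega}A$ and compared with the trivial lower bound $1$ for the right-hand side; the bound $S(1,\dots,1)\le A$, which you leave implicit, holds because the empty grid is okay. Failures of conditions 4 and 5 are handled by Rankin-type tilting, with Lemma \ref{lNT} turning $\prod_{p\le H}(1+\sum_i g_i(p))$ back into the truncated sum.

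Fix two tilting details when you write it up. For condition 4, the factor $C_3^{|\Ssm|}$ must go inside the Rankin sum as the weight $(eC_3)^{\omega_1}$; this is why the threshold is $(\log^{(2)}H)^{1/3+1/100}$, and a saving of $\exp(-(\log^{(2)}H)^{1/3})$ obtained first and multiplied by $C_3^{|\Ssm|}$ afterwards does not suffice. Condition 5 is a lower-tail event, so the tilt must be $\theta<1$, not $\theta>1$: the paper replaces $g_j$ by $g_j/3$ on the heavy intervals and gains $(\log H)^{-c_1/5}$.
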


We first state a lemma to handle rare indices.

\begin{lemma}
Suppose $1$ is rare, so $\sum_p g_1(p) \le C_4$. Then 
\[
\sum_{n_1 \cdots n_k \le H} g_1(n_1) \cdots g_k(n_k) S(n_1, \dots, n_k) \leq \exp(C_3C_4) \hspace{-0.1cm} \sum_{n_2 \cdots n_k \le H} \hspace{-0.1cm} g_2(n_2) \cdots g_k(n_k) S(1, n_2, \dots, n_k).
\]
\end{lemma}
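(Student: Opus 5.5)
We will bound the left-hand side by grouping the tuples according to $n_1$, and then pull $n_1$ out of $S$ prime by prime using the multiplicative comparison hypothesis on $S$. Write $n_1 = p_1 \cdots p_m$ with distinct primes. Since $S$ vanishes unless $n_1 \cdots n_k$ is squarefree, we may assume $\gcd(n_1, n_2 \cdots n_k) = 1$. Applying the upper inequality $S(n_1, \dots, pn_i, \dots, n_k) \le C_3 S(n_1, \dots, n_k)$ once for each prime of $n_1$ gives
\[
S(n_1, n_2, \dots, n_k) \le C_3^{\omega(n_1)} S(1, n_2, \dots, n_k).
\]
Each application is valid: when we strip $p_j$, it does not divide the remaining entries, by squarefreeness of the product.

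Next, we drop the constraint coupling $n_1$ with the other variables. Since $n_1 \ge 1$, the condition $n_1 n_2 \cdots n_k \le H$ implies $n_2 \cdots n_k \le H$, and all terms are nonnegative. Hence the left-hand side is at most
\[
\Big(\sum_{n_1 \ge 1} g_1(n_1) C_3^{\omega(n_1)}\Big) \sum_{n_2 \cdots n_k \le H} g_2(n_2) \cdots g_k(n_k) S(1, n_2, \dots, n_k).
\]
(Dropping the coprimality condition between $n_1$ and the rest is harmless, again by positivity.)

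Finally, $g_1$ is multiplicative and supported on squarefree integers, so the first factor is an Euler product:
\[
\prod_p \big(1 + C_3 g_1(p)\big) \le \exp\Big(C_3 \sum_p g_1(p)\Big) \le \exp(C_3 C_4),
\]
using $1 + t \le e^t$ and rarity of the index $1$. This also shows the sum converges.

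There is no real obstacle here; the only point needing care is the order of the reductions. We strip the primes of $n_1$ from $S$ while coprimality still holds (using $S = 0$ on non-squarefree tuples), and only afterwards relax the height condition and expand the Euler product.
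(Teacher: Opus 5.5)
Your proposal is correct and follows the paper's proof essentially step for step. You bound $S$ by $C_3^{\omega(n_1)} S(1, n_2, \dots, n_k)$, drop the coupling to factor out $\sum_{n_1} g_1(n_1) C_3^{\omega(n_1)}$, and bound this Euler product by $\exp(C_3 C_4)$; you also spell out the squarefreeness check that the paper leaves implicit.
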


\begin{proof}
We may bound the left hand side by
\[
\sum_{\substack{(n_1, \dots, n_k) \in \Z_{\geq 1}^k \\ n_1 \cdots n_k \le H}} g_1(n_1) \cdots g_k(n_k) C_3^{\omega(n_1)} S(1, n_2, \dots, n_k),
\]
which in turn is at most
\[
\left(\sum_{n_1 \ge 1} g_1(n_1) C_3^{\omega(n_1)}\right) \cdot \left(\sum_{\substack{(n_2, \dots, n_k) \in \Z_{\geq 1}^{k-1} \\ n_2 \cdots n_k \le H}} g_2(n_2) \cdots g_k(n_k) S(1, n_2, \dots, n_k) \right).
\]
The left term in this product is given by
\[
\prod_p \left(1 + C_3 g_1(p)\right),
\]
and the logarithm of this product is bounded by $\sum_p C_3 g_1(p)$, which is at most $C_3C_4$.
\end{proof}

We now prove the proposition.

\begin{proof}[Proof of Proposition \ref{prop:gridding}]
By applying the above lemma after the necessary permutation of $\{1, \dots, k\}$ to every rare index, of which there are at most $k$, we find that it suffices to show that
\[
\sum_{\substack{n_1 \cdots n_k \le H \\ n_i = 1 \text{ if } i \text{ is rare}}} g_1(n_1) \cdots g_k(n_k) S(n_1, \dots, n_k) \le 2A \sum_{\substack{n_1 \cdots n_k \le H \\ \mu^2(n_1 \cdots n_k) = 1}} g_1(n_1) \cdots g_k(n_k),
\]
where the left sum is over all $(n_1, \dots, n_k)$ of product at most $H$ such that $n_i$ is $1$ for all rare indices $i$. We also remark that
\begin{equation}
\label{eq:trivial_lower}
\sum_{\substack{n_1 \cdots n_k \le H \\ \mu^2(n_1 \cdots n_k) = 1}} g_1(n_1) \cdots g_k(n_k) \geq 1
\end{equation}
by looking at the contribution from $n_1 = \dots = n_k = 1$. For all squarefree $n$, we have the bound
\begin{equation}
\label{eq:onen_gridding}
\sum_{n_1 \cdots n_k = n} g_1(n_1) \cdots g_k(n_k) S(n_1, \dots, n_k) \le \frac{(k C_2 C_3)^{\omega(n)} S(1, \dots, 1)}{n} \le \frac{A (k C_2 C_3)^{\omega(n)}}{n},
\end{equation}
where we have used the fact that $(1, \dots, 1)$ is in an okay grid by itself for the final bound. The sum over $(n_1, \dots, n_k)$ with $n_1 \cdots  n_k$ having at least $(\log \log H)^2$ prime factors is then at most 
\begin{align*}
\exp\left(-(\log \log H)^2\right) \sum_{\substack{n \le H \\ \mu^2(n) = 1}} \frac{A (e \cdot k C_2 C_3)^{\omega(n)}}{n} &\le A \exp\left(-(\log \log H)^2\right) \prod_{p \le H} \left(1 + ekC_2C_3 p^{-1} \right) \\ &\le A\exp\left(-(\log \log H)^2\right) \cdot (10\log H)^{ekC_2C_3},
\end{align*}
where the last inequality follows for $H \ge 25$ by Mertens' theorem.

Note that a squarefree integer $n = n_1 \cdots n_k$ lies in a grid of height $H$ if $n$ has no two prime divisors $p_1, p_2$ satisfying
\begin{equation}
\label{eq:close_primes}
\alpha_0(H) \le p_1 < p_2 \le \alpha(H) p_1
\end{equation}
and if
\[
\alpha(H)^{\omega(n)} n \le H.
\]
A squarefree $n$ not satisfying the latter condition with at most $(\log \log H)^2$ prime factors must lie in the interval
\[
H \alpha(H)^{- (\log \log H)^2} \le n \le H,
\]
and \eqref{eq:onen_gridding} gives that the contribution of these $n$ to \eqref{eq:gridding_prop} is at most
\[
\frac{A \cdot (k C_2C_3)^{(\log \log H)^2}}{H \alpha(H)^{- (\log \log H)^2}} \left(1 + H \left(1 - \alpha(H)^{-(\log \log H)^2} \right)\right).
\]
For an appropriate choice of the absolute constant $C > 0$, the bound \eqref{eq:H_grid_bnd} implies this is at most $A/100$. Meanwhile, the contribution from $n$ with $2$ prime divisors satisfying \eqref{eq:close_primes} to the sum may be bounded by
\[
\sum_{(p_1, p_2)} \frac{(k C_2)^2}{p_1p_2} \sum_{n \le H} \frac{A \mu^2(n) (k C_2 C_3)^{\omega(n)}}{n},
\]
where the sum is over $(p_1, p_2)$ satisfying \eqref{eq:close_primes}. By the effective prime number theorem, we have
\[
\sum_{(p_1, p_2)} \frac{(k C_2)^2}{p_1p_2} \ll (k C_2)^2 (\alpha(H) - 1) \log \log H
\]
for $H \ge 25$, where the implicit constant is absolute. Meanwhile, the latter sum is bounded by
\[
A(10 \log H)^{kC_2C_3},
\]
again by Mertens' theorem. 

In a similar vein, if we take $Y$ to be the set of pairs $(I, j)$, where $I$ is an interval of the form \eqref{eq:grid_axis} for some nonnegative integer with supremum no larger than $H$, where $j$ is a positive integer no larger than $k$, and where
\[
\sum_{p \in I} g_j(p) \le \exp^{(3)} \left(\tfrac{2}{7} \log^{(3)} H\right)^{-1},
\]
then we find that
\[
\sum_{(I, j)} \sum_{p \in I}  g_j(p) \le \exp^{(3)} \left(\tfrac{2}{7} \log^{(3)} H\right)^{-1} \cdot k \exp^{(3)} \left(\tfrac{1}{4} \log^{(3)} H\right) \cdot \log H, 
\]
which is very small. Calling this quantity $\delta$, we find from \eqref{eq:onen_gridding} that the contribution to \eqref{eq:gridding_prop} from grids not satisfying the second part of Definition \ref{defn:goodgrid} is at most $A \delta (10 \log H)^{kC_2C_3}$.

All together, if $H \geq \exp^{(2)} \left(Ck C_2 C_3\right)$ for a sufficiently large absolute constant $C$, we find that the subsum of \eqref{eq:gridding_prop} over the $(n_1, \dots, n_k)$ such that $n_i =1$ for all rare $i$ but such that $(n_1, \dots, n_k)$ does not lie in an okay grid of height $H$ is at most $\tfrac{1}{2}A$. This is acceptable in view of \eqref{eq:trivial_lower}.

We now claim that the subsum over all okay grids that are not good is bounded by
\[
\tfrac{1}{2} A \sum_{n_1 \cdots n_k \le H} \mu^2(n_1 \cdots n_k) g_1(n_1) \cdots g_k(n_k).
\]
Note that the claim implies the proposition as the good grids can be directly handled via our assumption \eqref{eq:good_assu}.

First consider the okay grids that are not good because $S_{\text{sm}}$ is too large. Taking $\omega_1(n)$ to be the number of prime divisors of $n$ smaller than $\alpha_0(H)$, the subsum of \eqref{eq:gridding_prop} of $(n_1, \dots, n_k)$ coming from such a grid is at most
\[
A \sum_{\substack{n_1 \cdots n_k \le H \\ \omega_1(n_1 \cdots n_k) \ge m}} C_3^{\omega_1(n_1 \cdots n_k)} g_1(n_1) \cdots g_k(n_k)
\]
by our assumption on okay grids \eqref{eq:okay_assu}, where $m := \left(\log^{(2)} H\right)^{1/3 + 1/100}$. This is no larger than 
\begin{align*}
&Ae^{-m} \sum_{n_1 \cdots n_k \le H} (eC_3)^{\omega_1(n_1 \cdots n_k)} \mu^2(n_1 \cdots n_k) g_1(n_1) \cdots g_k(n_k) \\
&\le Ae^{-m} \left(\prod_{p \le \alpha_0(H)} \left(1 + ekC_2C_3p^{-1}\right) \right) \sum_{n_1 \cdots n_k \le H} \mu^2(n_1 \cdots n_k) g_1(n_1) \cdots g_k(n_k).
\end{align*}
The product in this expression is at most
\[
\left(10 \log \alpha_0(H)\right)^{ekC_2C_3}
\]
for $H$ greater than some absolute constant, and this is less than $\tfrac{1}{4}e^m$ for $\log^{(3)} H \gg 1 + \log(k C_2 C_3)$, where the implicit constant is absolute. This implies that the subsum with $S_{\text{sm}}$ too large is negligible. A similar argument handles $S_{\text{med}}$.

This leaves the final criterion. By condition 4 and our assumption \eqref{eq:okay_assu}, we wish to show that 
\[
C_3^{(\log^{(2)} H)^{\frac{1}{3} + \frac{1}{100}}} \sum_{\substack{n_1 \cdots n_k \le H \\ \mu^2(n_1 \cdots n_k) = 1 \\ \text{5 is not satisfied}}} g_1(n_1) \cdots g_k(n_k) \le \frac{1}{4} \sum_{\substack{n_1 \cdots n_k \le H \\ \mu^2(n_1 \cdots n_k) = 1}} g_1(n_1) \cdots g_k(n_k).
\]
Fix some $j$ such that $\sum_{p \le H} g_j(p) \ge c_1 \log \log H$. Take $g'_j$ to be the multiplicative function supported on squarefrees given by
\[
g'_j(p) = 
\begin{cases} 
\tfrac{1}{3} g_j(p) \quad &\text{ if } \sum_{q \in I(p)} g_j(q) \ge \sum_{q \in I(p)} \tfrac{1}{3}c_1 q^{-1} \\ 
g_j(p) &\text{ otherwise}
\end{cases}  
\]
on primes, where $I(p)$ is the interval of the form \eqref{eq:grid_axis} containing $p$. Then we claim that for a good choice of the absolute constant $C > 0$ and for all $H \geq \exp^{(2)} \left(C(C_2/c_1)^2\right)$
\begin{equation}
\label{eq:gjprime}
\sum_{p \le H} g'_j(p) \le \frac{4}{5} \sum_{p \le H} g_j(p).
\end{equation}
In order to prove this, let $\mathcal{J}$ be the collection of intervals of the form \eqref{eq:grid_axis} with supremum no larger than $H$. Define $\mathcal{J}_{j, \text{sm}}$ the subcollection for which
$$
\sum_{q \in I(p)} g_j(q) < \sum_{q \in I(p)} \tfrac{1}{3}c_1 q^{-1}
$$
and define $\mathcal{J}_{j, \text{la}}$ to be its complement in $\mathcal{J}$. We will now prove that
\begin{equation}
\label{eq:gjprime2}
\sum_{I \in \mathcal{J}} \sum_{p \in I} g'_j(p) \leq \frac{3}{4} \sum_{I \in \mathcal{J}} \sum_{p \in I} g_j(p).
\end{equation}
Note that \eqref{eq:gjprime2} easily implies \eqref{eq:gjprime} thanks to the bound $g_j(p) \leq C_2/p$ and our assumption $H \geq \exp^{(2)} \left(C(C_2/c_1)^2\right)$. We now observe that for large enough $H$
\begin{align*}
\sum_{I \in \mathcal{J}_{j, \text{sm}}} \sum_{p \in I} g'_j(p) \leq \frac{c_1}{3} \log \log H \leq \frac{1}{3} \sum_{p \leq H} g_j(p) \leq \frac{5}{12} \sum_{I \in \mathcal{J}_{j, \text{sm}}} \sum_{p \in I} g_j(p) + \frac{5}{12} \sum_{I \in \mathcal{J}_{j, \text{la}}} \sum_{p \in I} g_j(p)
\end{align*}
and that
$$
\sum_{I \in \mathcal{J}_{j, \text{la}}} \sum_{p \in I} g'_j(p) = \frac{1}{3} \sum_{I \in \mathcal{J}_{j, \text{la}}} \sum_{p \in I} g_j(p).
$$
These easily combine to give equation \eqref{eq:gjprime2}.

For convenience, suppose $j = 1$ fails condition 5. The sum of $g_1(n_1) \cdots g_k(n_k)$ over tuples coming from okay grids not satisfying condition 5 with $j = 1$ is bounded by
\[
C_3^{(\log^{(2)} H)^{\frac{1}{3} + \frac{1}{100}}} 3^{(\log^{(2)} H)^{\frac{5}{6}}} \sum_{n_1 \cdots n_k \le H} \mu^2(n_1 \cdots n_k) g_1'(n_1) \cdots g_k(n_k),
\]
The inner sum is at most
\[
\prod_{p \le H} (1 + g'_1(p) + \dots + g_k(p)) \le \prod_{p \le H} \left(1 + \frac{g'_1(p) - g_1(p)}{1 + kC_2p^{-1}}\right) \prod_{p \le H} (1 + g_1(p) + \dots + g_k(p)).
\]
By equation \eqref{eq:gjprime} and the bound $\sum_{p \le H} g_1(p) \geq c_1 \log \log H$, we can show that the first product is at most $e^{C k C_2} (\log H)^{-c_1/5}$. We use Lemma \ref{lNT} with $\delta = 1$ and $x, z$ both equal to our $H$ and $F(n) = \mu^2(n) \sum_{n_1 \cdots n_k = n} g_1(n_1) \cdots g_k(n_k)$ to deduce that
$$
\prod_{p \le H} (1 + g_1(p) + \dots + g_k(p)) \leq (1 + (CkC_2)^{kC_2} \exp(2ekC_2 - 1)) \sum_{\substack{n_1 \cdots n_k \le H \\ \mu^2(n_1 \cdots n_k) = 1}} g_1(n_1) \cdots g_k(n_k).
$$
Using the last assumption on the size of $H$, we can then compute that
$$
\frac{C_3^{(\log^{(2)} H)^{\frac{1}{3} + \frac{1}{100}}} 3^{(\log^{(2)} H)^{\frac{5}{6}}} (1 + (CkC_2)^{kC_2} \exp(2ekC_2 - 1))}{e^{-C k C_2} (\log H)^{c_1/5}} \leq \frac{1}{4k}.
$$
Summing over $1 \leq j \leq k$ finishes the proof. 
\end{proof}
\section{Character sums}
\label{sChar}
Throughout this section we will rely on the theory developed in \cite[Section 3]{Smi22a}. In particular, we shall make extensive use of starting tuples, classes, spins, symbols and ramification sections.

Fix a function 
\[
\lambda: \{\text{Rational primes}\} \to \{\text{Primes of } \overline{\QQ}\}
\]
such that $\lambda(p) \cap \QQ = (p)$ for every rational prime $p$. We will use the notation $G_p$ for the decomposition group in $G_{\QQ}$ associated to $\lambda(p)$, with $\text{Frob}\, p$ notation for the associated Frobenius element, which is well-defined up to an element in the inertia group $I_p$.

Fix a finite $G_{\Q}$-module $M$ as in the statement of Theorem \ref{thm:main}. Take $K$ to be the minimal Galois extension of $\QQ$ so that $M$ is a $\Gal(K/\QQ)$-module and so that $K$ contains the $e_0^{th}$ roots of unity, where $e_0$ is the exponent of $M$. Take $\Vplac_0$ to be a set of rational places so that $(K/\QQ, \Vplac_0, e_0)$ is an unpacked starting tuple. 

If $p$ is a rational prime outside $\Vplac_0$, we have an exact sequence
\[
0 \to H^1_{\text{ur}}(G_p, M) \xrightarrow{\,\text{inf}\,} H^1(G_p, M) \xrightarrow{\mathfrak{R}_{\lambda(p)}} M(-1)^{G_p} \to 0,
\]
where $\mathfrak{R}_{\lambda(p)}$ is the ramification-measuring homomorphism \cite[Definition 3.4]{Smi22a}. From the starting tuple $(K/\QQ, \Vplac_0, e_0)$, we can define a ramification section \cite[Definition 3.10]{Smi22a}
\[
\mathfrak{B}_{\lambda(p)}: M(-1)^{G_p} \to H^1(G_{\QQ}, M),
\]
which is a section for the ramification-measuring homomorphism $\mathfrak{R}_{\lambda(p)}$. The image of the ramification section consists of cocycle classes ramified only at primes in $\Vplac_0 \cup \{p\}$. 

Going forward, we will feel free to omit the $\lambda$ from the notation $\mathfrak{B}_{\lambda(p)}$ and $\mathfrak{R}_{\lambda(p)}$.

We have another ramification-measuring homomorphism
\[
\mathfrak{R}_p^{\vee}: H^1\left(G_p, M^{\vee}\right) \to M^{\vee}(-1)^{G_p} = (M^*)^{G_p}.
\]
Given a subgroup $\mathscr{L}$ of $H^1(G_p, M)$, we take
\begin{align*}
&A_{\mathscr{L}} = \mathfrak{R}_p(\mathscr{L}) \subseteq M(-1)^{G_p}\quad\text{and}\\
&R_{\mathscr{L}} = \mathfrak{R}_p^{\vee}\big(\mathscr{L}^{\perp}\big) \subseteq (M^*)^{G_p},
\end{align*}
and we define a bilinear pairing $\Omega_{\mathscr{L}} : A_{\mathscr{L}} \times R_{\mathscr{L}} \to \QQ/\Z$ by the formula
\[
\Omega_{\mathscr{L}}(a, r) = r\big((w - \mfB_p(a))(\text{Frob}\, p)\big),
\]
where $w \in \mathscr{L}$ is chosen so $\mathfrak{R}_p(w) = a$. 

Given primes $p_1, p_2$ outside $\Vplac_0$ and subgroups $\mathscr{L}_i \subseteq H^1(G_{p_i}, M)$ for $i = 1, 2$ with $\mathscr{L}_i \neq H^1_{\text{ur}}(G_{p_i}, M)$, we say that $(p_1, \mathscr{L}_1)$ and $(p_2, \mathscr{L}_2)$ are \emph{strongly equivalent} if $\lambda(p_1)$ and $\lambda(p_2)$ are in the same class with respect to the starting tuple, have the same spin, and
\[
\left( A_{\mathscr{L}_1}, \,R_{\mathscr{L}_1}, \,\Omega_{\mathscr{L}_1}\right)  = \left( A_{\msL_2},\, R_{\msL_2}, \,\Omega_{\msL_2}\right).\]
We enumerate these equivalence classes $2, \dots, k$. We reserve the first equivalence class for tuples $(p, \mathscr{L})$ where either $p$ is in $\Vplac_0$ or $\mathscr{L}$ is the empty set. For $i \ge 2$, write $\left(C^i, A^i, R^i, \Omega^i\right)$ for the tuple corresponding to the $i^{th}$ equivalence class, where $C^i$ is the subset of rational primes $p$ with the prescribed class and spin.

We note that, for a fixed prime $p \not \in \Vplac_0$, the subgroup $\msL$ may be recovered from the data $\left(A_{\msL}, R_{\msL}, \Omega_{\msL}\right)$. In particular, we may define a multiplicative function $g_i$ which is given on the primes by
\begin{equation}
\label{eKeyPipeline}
g_i(p) = 
\begin{cases} 
\tilde{g}_p(\msL)^\nu \mu_p(\msL) (1 - \mathbf{1}_{p > 2C_1^2} \cdot m(p))^{-1} &\text{ if there is  } \msL \text{ with } (p, \msL) \text{ in class } i \\ 
0 &\text{ otherwise,}
\end{cases}
\end{equation}
where $\tilde{g}_p$ and $\mu_p$ are as in Definition \ref{defn:effective-constant} and $\nu$ is as in Theorem \ref{thm:main}.

Given positive integers $n_1, \dots, n_k$ whose product is squarefree, and such that $n_i$ is a product of primes in $C^i$, we then define a Selmer group
\[
\Sel \,M(n_1, \dots, n_k) = \ker\left(H^1(G_{\QQ}, M) \to \prod_{v} H^1(G_v, M)/\msL_v\right),
\]
where $\msL_v = H^1(G_v, M)$ if $v$ is in $\Vplac_0$ or divides $n_1$, where $\msL_v$ is the subgroup associated to the tuple $(A^i, R^i, \Omega^i)$ if $v$ divides $n_i$ for some $i \ge 2$, and where $\msL_v$ otherwise equals the unramified local conditions.

Define 
\begin{align*}
&\text{up}(g) = \max\left(2, \sup_{p, i} \frac{g_i(p)}{p}\right), \\
&\text{low}(g) = \sup_{(p, \msL)} \tilde{g}_p(\msL)^{-\kappa}, \\
&\text{sup}(g) = \text{up}(g) \cdot \text{low}(g).
\end{align*}
Then $\text{sup}(g) \geq 2$ is a real number by Definition \ref{defn:effective-constant}. Finally, we recall that we have defined a constant $c > 0$ in Definition \ref{defn:effective-constant}.

\begin{theorem}
\label{thm:charsum}
There exists $C > 0$ depending only on the starting tuple $(K/\Q, \Vplac_0, e_0)$, $c$ and $k$ such that the following holds. Choose a grid $(r, (X_i)_{i \le r}, (k_i)_{i \le r})$ of height $H$. Take $X = \prod_{i \le r} X_i$.

Suppose $X$ is an okay grid. Then, for $\log^{(3)} H \geq C \log^{(3)} (C |M|)$ 
\begin{align*}
\sum_{x \in X} g(\bar{n}(x)) \cdot |\Sel \,M(\bar{n}(x))| \le \textup{sup}(g)^C |M|^{2 |\Ssm|} \exp\left(C \left(\log |M|\right)^2\right) \mcTbnd(M) \sum_{x \in X} g(\bar{n}(x)).
\end{align*}
 If $X$ is a $\frac{c}{2k \cdot \textup{low}(g)}$-good grid, we have for $H \geq \exp^{(2)}\big((\log C|M|)^C\big)$
the bound
\begin{align*}
\sum_{x \in X} g(\bar{n}(x)) \cdot |\Sel \,M(\bar{n}(x))| \le(C \cdot \textup{sup}(g))^{2 \log_2 |M|} \exp\left(C \left(\log |M|\right)^2\right) \mcTbnd(M) \sum_{x \in X} g(\bar{n}(x)).
\end{align*}
\end{theorem}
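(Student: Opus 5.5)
The plan is to follow the strategy of \cite[Sections 5--7]{Smi22b}: expand the Selmer group size as a character sum over the grid, show that oscillating terms cancel, and bound the non-oscillating terms by a Tamagawa ratio of a submodule.

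\textbf{Parametrizing Selmer elements.} Fix $x = (p_1, \dots, p_r) \in X$. Using the exact sequence with $\mathfrak{R}_p$ and the ramification sections $\mfB_p$, every class in $H^1(G_\QQ, M)$ that is ramified only at $\Vplac_0 \cup \{p_1, \dots, p_r\}$ can be written uniquely as
\[
\phi_0 + \sum_{j \le r} \mfB_{p_j}(a_j),
\]
where $\phi_0$ is ramified only in $\Vplac_0$ and $a_j \in M(-1)^{G_{p_j}}$. The local condition at $p_j$ forces $a_j \in A^{k_j}$ for $k_j \geq 2$; for $k_j = 1$ it imposes nothing.

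\textbf{Turning local conditions into characters.} Given $a_j$, the remaining condition at $p_j$ says that the unramified part of the class lies in the coset prescribed by $\Omega^{k_j}$. Dually, this says that for every $r_j \in R^{k_j}$, the value of $r_j$ on (the class minus $\mfB_{p_j}(a_j)$) evaluated at $\text{Frob}\, p_j$ equals $\Omega^{k_j}(a_j, r_j)$. I write the indicator of this condition as
\[
\frac{1}{|R^{k_j}|}\sum_{r_j \in R^{k_j}} e(\cdots).
\]
The phases then split into three kinds of symbols: the values $\mfB_{p_i}(a_i)(\text{Frob}\, p_j)$ paired with $r_j$ for $i \ne j$; the values $\phi_0(\text{Frob}\, p_j)$; and the fixed data $\Omega^{k_j}$. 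The first kind are the symbols $\symb{p_i}{p_j}$ of \cite[Section 3]{Smi22a}. Because all primes in a fixed coordinate share the same class and spin, the second and third kinds depend only on the coordinate, not on the chosen prime. Consequently
\[
\sum_{x \in X} g(\bar n(x))\,|\Sel\,M(\bar n(x))|
\]
becomes a finite sum, over labellings $(\phi_0, (a_j, r_j)_j)$, of weighted multilinear character sums in the symbols $\symb{p_i}{p_j}$. The weight $g$ is a product over coordinates, so it factors compatibly.

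\textbf{Splitting the labellings.} Let $\Ssm$ be the set of small (singleton) coordinates. Coordinates in $\Ssm$ carry no averaging, so I bound their labels trivially. This costs at most $|A|\cdot|R| \le |M|^2$ per coordinate, which is the factor $|M|^{2|\Ssm|}$. For the non-singleton coordinates, I call a labelling \emph{linked} if some pair $i \ne j$ of such coordinates has $\langle a_i, r_j\rangle$ or $\langle a_j, r_i\rangle$ nontrivial as a character of the symbol. For a linked labelling, the sum over $X_i \times X_j$ should oscillate. I would bound it with the bilinear large-sieve / Chebotarev estimates of \cite[Sections 5--6]{Smi22b}, applied to boxes of the grid. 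Condition 2 of okayness ensures each axis carries $g$-mass at least $\exp^{(3)}(\tfrac{2}{7}\log^{(3)} H)^{-1}$, so the savings beat the ratio between the weighted and unweighted counts, and linked labellings contribute negligibly. Unlinked labellings are essentially those where the $a_j$ span something contained in a $G_\QQ$-submodule $T$ and the $r_j$ lie in the annihilator data of $T$. Counting them coordinate by coordinate gives a factor $|A^{k_j}\cap T(-1)|/|R^{k_j}\cap\ldots|$ at each prime, which is exactly $\mcT_v$ for $T$ with the induced conditions $\iota^{-1}(\msL_v)$. Summing over the boundedly many $T$ yields $\exp(C(\log|M|)^2)\,\mcTbnd(M)$.

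\textbf{Main obstacle.} The hard step is making the unlinked/linked dichotomy work with the arbitrary, non-uniform weights $g_i$. Smith's additive-combinatorial (Ramsey-type) argument needs many coordinates of each relevant class carrying density comparable to $1/p$. In a $\frac{c}{2k\,\textup{low}(g)}$-good grid, condition 5 supplies at least $(\log^{(2)} H)^{5/6}$ coordinates of every non-rare class with $g$-density at least $\tfrac13 c_1 p^{-1}$. This lets the loss be controlled by $(C\sup(g))^{2\log_2|M|}$: roughly, one pays a factor of $\sup(g)$ each time a label is fixed along a chain of at most $\log_2|M|$ subgroups. For merely okay grids, I would run the same argument but only over coordinates with enough density, which gives the weaker exponent $\sup(g)^C$.
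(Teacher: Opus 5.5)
\textbf{Gap: the good-grid bound.} Your okay-grid argument is broadly the paper's: parametrize by $\Psi_x$, turn the medium/large local conditions into characters, use the large sieve on oscillating pairs, and count the rest via submodules $T$. The good-grid bound is where the proposal breaks down, and it is the bound that \eqref{eq:good_assu} actually needs. The two bounds do not differ in the exponent of $\textup{sup}(g)$; they differ in that $|M|^{2|\Ssm|}$ must disappear. In your scheme that factor comes from two places: summing the labels $a_j$, $j\in\Ssm$, trivially, and losing the local Tamagawa factors at the singleton primes. The singletons carry no averaging, so the bilinear/large-sieve analysis over medium and large coordinates cannot recover this loss. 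The sentence about ``paying $\textup{sup}(g)$ per label along a chain of $\log_2|M|$ subgroups'' does not supply a mechanism. Invoking condition 5 alone does not either, and you never use condition 4, which turns out to be essential.

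\textbf{How the paper does it.} The paper stops counting small-prime labels altogether. For fixed $T$ it weakens the conditions at medium/large primes to the $\msL'_{xv}$ of Definition \ref{dNewLocal}. By \eqref{eAlwaysThere}, each pair (subgrid $X'$, label $m$) either contributes nothing or satisfies $\Psi_x(m)\in\Sel(M,(\msL'_{xv})_v)$ for every $x\in X'$. So after Proposition \ref{pBilinear}, the sum over $m$ collapses to $\sum_x g(\bar n(x))\,|\Sel(M,(\msL'_{xv})_v)|$ times $\prod_s|R_s\cap T^{\perp}|/|R_s|$. Proposition \ref{pCheb} then bounds this modified Selmer group through the filtration $T_\circ\subseteq T\subseteq T^\circ\subseteq M$:
\begin{itemize}
\item Greenberg--Wiles for $T$ puts all small-prime information exactly into $\mcT(T)$.
\item $\Sel(M/T^\circ)$ and $\Sel(T_\circ^\vee)$ consist of classes unramified at every medium/large prime (Lemma \ref{lMTUnr}). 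Effective Chebotarev at the many weakly equivalent coordinates from Lemma \ref{lPigeon} shows they have size at most $(C\,\textup{sup}(g))^{\log_2|M|}$ outside a set of relative mass about $\exp(-(\log^{(2)}H)^{5/6})$ (Lemma \ref{lChebSmall}). The exponent $\log_2|M|$ comes from embedding $M/T^\circ$ into at most $\log_2|M|$ copies of $\QQ/\Z$ and pigeonholing.
\item Lemma \ref{lTamaDichotomy} shows $T^\circ/T$ and $T/T_\circ$ carry unramified conditions at every $v\notin\Vplac_0$, unless a Tamagawa ratio jumps by $\exp(\tfrac12(\log^{(2)}H)^{5/6})$.
\end{itemize}
Condition 4 on $|\Ssm|$ is what lets these savings beat the trivial bound $|M|^{C|\Ssm|}$ on the discarded $x$.

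\textbf{Smaller fixes in the okay case.} ``Linked'' must refer to the reciprocity-combined coefficient $r_s\cdot\tau a_t+\tau r_t\cdot a_s$ of $\symb{\tau\ovp_t}{\ovp_s}$ (Lemma \ref{lTwoLines}). Individually nonzero pairings can cancel, and then nothing oscillates. Consequently the non-oscillating $r$ are not only those in $T^\perp$; the paper controls them because they are determined by $(r_t)_{t\in S_{\textup{gen}}}$ with $|S_{\textup{gen}}|\le\log_2|M|$. Also, the per-coordinate count must be $|A_{\iota^{-1}(\msL)}|/|R_s/(R_s\cap T^\perp)|$, including the $\Omega$-constraint of Lemma \ref{lPullBack}. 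Using $|A\cap T(-1)|$ instead loses a factor at every one of the unboundedly many coordinates.
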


Note that the right hand side of Theorem \ref{thm:charsum} makes sense, as the Tamagawa bound $\mcTbnd(M)$ is determined by $k_1, \dots, k_r$ (i.e. is independent of $x \in X$ provided that $g(\bar{n}(x)) \neq 0$). 

\subsection{Some local computations}
We shall make repeated use of the following key facts. Let $\msL \subseteq H^1(G_p, M)$ be a subgroup. If we have a surjection $\pi: M \rightarrow U$, then we recall that $U$ is endowed with the pushforward local conditions $\pi(\msL)$, see \cite[Section 4]{MSFinite}.

\begin{lemma}
\label{lSubquotient}
Let $U \subseteq V \subseteq M$ be submodules. Then the two ways of endowing local conditions on the subquotient $V/U$ coincide (namely first pullback and then pushforward or first pushforward and then pullback).
\end{lemma}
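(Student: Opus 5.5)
The plan is to reduce the statement to the two definitions of local conditions on subquotients and then compare them directly at the level of local cohomology. Fix a place $v$ and write $\msL = \msL_v \subseteq H^1(G_v, M)$. Let $\iota_V: V \to M$ and $\iota_{V/U}$ denote the inclusions, and let $\pi_U: M \to M/U$ and $\pi: V \to V/U$ be the projections. Write $j: V/U \hookrightarrow M/U$ for the induced inclusion.

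The first construction pulls back to $V$ and then pushes forward to $V/U$, giving
\[\msL^{(1)} = \pi_*\big(\iota_V^{-1}(\msL)\big).\]
The second pushes forward to $M/U$ and then pulls back along $j$, giving
\[\msL^{(2)} = j^{-1}\big(\pi_{U,*}(\msL)\big).\]
The goal is to prove $\msL^{(1)} = \msL^{(2)}$ as subgroups of $H^1(G_v, V/U)$.

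\textbf{Step 1: the inclusion $\msL^{(1)} \subseteq \msL^{(2)}$.} This is formal. The square $\pi_U \circ \iota_V = j \circ \pi$ commutes, so it commutes on $H^1$ by functoriality. Take $y \in \iota_V^{-1}(\msL)$. Then
\[j_*(\pi_* y) = \pi_{U,*}(\iota_{V,*} y) \in \pi_{U,*}(\msL),\]
so $\pi_* y \in \msL^{(2)}$.

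\textbf{Step 2: the reverse inclusion, by a diagram chase.} Take $z \in H^1(G_v, V/U)$ with $j_* z = \pi_{U,*}(w)$ for some $w \in \msL$. I want to find $y \in H^1(G_v, V)$ with $\iota_{V,*} y \in \msL$ and $\pi_* y = z$.

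Use the long exact sequences of the short exact sequences $0 \to U \to V \to V/U \to 0$ and $0 \to U \to M \to M/U \to 0$, together with the map between them induced by $\iota_V$. The connecting maps satisfy $\delta_V(z) = \delta_M(j_* z)$. Since $j_* z = \pi_{U,*}(w)$, we get $\delta_M(j_* z) = 0$. Hence $\delta_V(z) = 0$, and so $z = \pi_* y_0$ for some $y_0 \in H^1(G_v, V)$.

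\textbf{Step 3: correcting $y_0$, the main obstacle.} The problem is that $\iota_{V,*} y_0$ need not lie in $\msL$. We know
\[\pi_{U,*}\big(\iota_{V,*} y_0 - w\big) = j_* z - j_* z = 0,\]
so by exactness of the sequence for $M$,
\[\iota_{V,*} y_0 - w = \iota_{U,*} u \quad\text{for some } u \in H^1(G_v, U).\]
Let $\iota_{UV}: U \to V$ be the inclusion and set $y = y_0 - \iota_{UV,*} u$. Then:
\begin{itemize}
\item[(a)] $\pi_* y = \pi_* y_0 = z$, because $\pi \circ \iota_{UV} = 0$;
\item[(b)] $\iota_{V,*} y = \iota_{V,*} y_0 - \iota_{U,*} u = w \in \msL$.
\end{itemize}
Therefore $z \in \msL^{(1)}$, which gives the reverse inclusion.

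The step I expect to need the most care is Step 3, since it is the only place where exactness is really used. The trick is to absorb the discrepancy into the image of $H^1(G_v, U)$, which maps to $0$ in $V/U$. No hypothesis on $\msL$ is needed; the argument uses only functoriality and the two long exact sequences.

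Finally, the same argument applies verbatim at every place $v$. This includes the places where $\msL_v$ is the unramified subgroup, so the two decorated structures on $V/U$ agree at all places.
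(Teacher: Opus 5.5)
Your proof is correct and complete. Step 1 is functoriality, and Steps 2--3 give the reverse inclusion: naturality of the connecting map lifts $z$ to some $y_0$, and the lift is then corrected by the image of $H^1(G_v,U)$. The paper gives no argument of its own and simply cites \cite[p.~20]{MSFinite}, so your diagram chase is a self-contained proof of exactly the fact being quoted.
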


\begin{proof}
See \cite[p.~20]{MSFinite}.
\end{proof}

Our next result is essentially by definition.

\begin{lemma}
\label{lFormula}
Let $v \not \in \Vplac_0$ and let $\msL \subseteq H^1(G_v, M)$. Then we have
$$
\mcT_v(M) = \frac{|A_\msL|}{|R_\msL|}.
$$
\end{lemma}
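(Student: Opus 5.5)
The plan is to count $\#\msL$ and $\#H^0(G_v, M)$ by means of the unramified subgroup $H^1_{\text{ur}} := H^1_{\text{ur}}(G_v, M)$ and its dual analogue $H^{1,\vee}_{\text{ur}} := H^1_{\text{ur}}(G_v, M^{\vee})$. Two standard facts for $v \notin \Vplac_0$ will be used; both hold because $v \nmid |M|$ and $M$ is unramified at $v$.
\begin{enumerate}
\item[(a)] We have $\#H^1_{\text{ur}} = \#H^0(G_v, M)$. Indeed, $H^1_{\text{ur}} = H^1(\widehat{\Z}, M)$ is the module of coinvariants of Frobenius acting on the finite module $M$. Its order equals that of the invariants, since the kernel and cokernel of $\text{Frob} - 1$ on a finite group have the same size.
\item[(b)] Under local Tate duality, $H^1_{\text{ur}}$ and $H^{1,\vee}_{\text{ur}}$ are exact annihilators of each other. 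Consequently $\#H^1(G_v, M^{\vee}) = \#H^{1,\vee}_{\text{ur}} \cdot \#H^1_{\text{ur}}$.
\end{enumerate}

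First, I would restrict the exact sequence $0 \to H^1_{\text{ur}} \to H^1(G_v,M) \xrightarrow{\mathfrak{R}_v} M(-1)^{G_v} \to 0$ to $\msL$. Since $A_\msL = \mathfrak{R}_v(\msL)$ by definition, this gives
\[
\#\msL = \#(\msL \cap H^1_{\text{ur}}) \cdot |A_\msL|.
\]
Combined with (a), the claimed identity $\mcT_v(M) = |A_\msL|/|R_\msL|$ therefore reduces to showing
\[
\#(\msL \cap H^1_{\text{ur}}) = \#H^1_{\text{ur}}/|R_\msL|.
\]

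Second, I would prove this reduced identity by dualizing. By (b) and the nondegeneracy of the local pairing, the annihilator of $\msL \cap H^1_{\text{ur}}$ is $\msL^{\perp} + H^{1,\vee}_{\text{ur}}$. This gives
\[
\#(\msL \cap H^1_{\text{ur}}) = \frac{\#H^1(G_v, M^{\vee})}{\#\bigl(\msL^{\perp} + H^{1,\vee}_{\text{ur}}\bigr)}.
\]
To evaluate the denominator, note that $\mathfrak{R}_v^{\vee}$ induces an isomorphism $H^1(G_v, M^{\vee})/H^{1,\vee}_{\text{ur}} \cong (M^*)^{G_v}$. The image of $\msL^{\perp}$ under this isomorphism is $R_\msL$, so
\[
\#\bigl(\msL^{\perp} + H^{1,\vee}_{\text{ur}}\bigr) = |R_\msL| \cdot \#H^{1,\vee}_{\text{ur}}.
\]
Substituting this and the cardinality formula in (b) gives exactly $\#(\msL\cap H^1_{\text{ur}}) = \#H^1_{\text{ur}}/|R_\msL|$, which finishes the argument.

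Every step here is formal linear algebra over finite abelian groups. The one input that needs care is (b), the exact mutual annihilation of the unramified subgroups, which is where the hypothesis $v \notin \Vplac_0$ (in particular $v \nmid |M|$) is used. I would cite this as the standard fact from local Tate duality rather than reprove it.
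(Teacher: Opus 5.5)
Your proof is correct and matches the paper's. The paper simply combines the definition $\mcT_v(M) = |\msL|/|H^0(G_v,M)|$ with the same three identities $|H^0(G_v,M)| = |H^1_{\text{ur}}(G_v,M)|$, $|\msL| = |A_\msL|\cdot|\msL\cap H^1_{\text{ur}}(G_v,M)|$ and $|R_\msL|\cdot|\msL\cap H^1_{\text{ur}}(G_v,M)| = |H^1_{\text{ur}}(G_v,M)|$. Your annihilator computation supplies the justification for the third identity, which the paper leaves implicit; it is the observation, made in the proof of Lemma \ref{lPullBack}, that $R_\msL$ is the orthogonal complement of $\msL\cap H^1_{\text{ur}}(G_v,M)$ under the Frobenius-evaluation pairing.
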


\begin{proof}
Combining the identities 
\begin{gather*}
|H^0(G_v, M)| = |H^1_{\text{ur}}(G_v, M)|, \quad |R_\msL| \cdot |H^1_{\text{ur}}(G_v, M) \cap \msL| = |H^1_{\text{ur}}(G_v, M)|, \\
|\msL| = |A_\msL| \cdot |H^1_{\text{ur}}(G_v, M) \cap \msL|
\end{gather*}
with the definition $\mcT_v(M) = |\msL|/|H^0(G_v, M)|$ gives the lemma.
\end{proof}

\begin{lemma}
\label{lTamaSES}
Let $v$ be a place of $\Q$ and let
$$
0 \rightarrow T \xrightarrow{\iota} M \xrightarrow{\pi} U \rightarrow 0
$$
be a short exact sequence of $G_v$-modules. Then
$$
\mathcal{T}_v(M) = \mathcal{T}_v(T) \cdot \mcT_v(U).
$$
\end{lemma}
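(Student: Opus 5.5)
The plan is a direct order count using the long exact sequence in Galois cohomology of $G_v$, restricted to the local conditions. Here $T$ carries the pullback conditions $\msL_T := \iota^{-1}(\msL)$ and $U$ carries the pushforward conditions $\msL_U := \pi(\msL)$, as in the convention recalled before Lemma \ref{lSubquotient}. Write $\delta : H^0(G_v, U) \to H^1(G_v, T)$ for the connecting map. The claim is that the six-term sequence
\[
0 \to H^0(G_v, T) \xrightarrow{\iota} H^0(G_v, M) \xrightarrow{\pi} H^0(G_v, U) \xrightarrow{\delta} \msL_T \xrightarrow{\iota} \msL \xrightarrow{\pi} \msL_U \to 0
\]
is well defined and exact. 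The lemma then follows by taking the alternating product of cardinalities.

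First I check that the maps land where claimed. Since $\iota \circ \delta = 0$ and $0 \in \msL$, the image of $\delta$ lies in $\iota^{-1}(\msL) = \msL_T$. By definition $\iota(\msL_T) \subseteq \msL$ and $\pi(\msL) = \msL_U$. Next I verify exactness at each spot.
\begin{itemize}
\item The first three terms are exact by the usual long exact sequence.
\item At $\msL_T$: the kernel of $\iota|_{\msL_T}$ equals $\ker(\iota) \cap \msL_T = \delta(H^0(G_v,U)) \cap \msL_T$. This is just $\delta(H^0(G_v,U))$, by the previous observation.
\item At $\msL$: the kernel of $\pi|_{\msL}$ is $\msL \cap \iota(H^1(G_v,T))$. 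This is exactly $\iota(\iota^{-1}(\msL)) = \iota(\msL_T)$.
\item At $\msL_U$: surjectivity holds by definition of the pushforward.
\end{itemize}

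All groups are finite, since $H^1$ of a finite module over a local Galois group is finite. Exactness therefore gives
\[
\# H^0(G_v,T)\cdot \# H^0(G_v,U) \cdot \# \msL = \# H^0(G_v,M) \cdot \# \msL_T \cdot \# \msL_U .
\]
Dividing both sides by $\# H^0(G_v,T)\,\# H^0(G_v,M)\,\# H^0(G_v,U)$ yields $\mcT_v(M) = \mcT_v(T)\,\mcT_v(U)$, with $\mcT_v$ defined as in Definition \ref{defn:decorated_modules}.

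Nothing here depends on $v$ being finite or outside $\Vplac_0$, so the argument covers every place. The only point needing care, which is hardly an obstacle, is exactness at $\msL_T$ and at $\msL$. Both reduce to the identities $\ker \iota = \operatorname{im}\delta \subseteq \iota^{-1}(\msL)$ and $\msL \cap \operatorname{im}\iota = \iota(\iota^{-1}\msL)$.
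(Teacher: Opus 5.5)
Your proposal is correct and is essentially the paper's proof, which writes down the same six-term exact sequence $0 \to H^0(G_v,T) \to H^0(G_v,M) \to H^0(G_v,U) \to \iota^{-1}(\msL) \to \msL \to \pi(\msL) \to 0$ and concludes by taking the alternating product of orders. The only difference is that you explicitly verify the exactness at $\iota^{-1}(\msL)$ and at $\msL$, which the paper leaves implicit.
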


\begin{proof}
By the long exact sequence we get
$$
0 \rightarrow H^0(G_v, T) \rightarrow H^0(G_v, M) \rightarrow H^0(G_v, U) \rightarrow \iota^{-1}(\msL) \rightarrow \msL \rightarrow \pi(\msL) \rightarrow 0,
$$
which immediately implies the result.
\end{proof}

Take $T$ to be a submodule of $M$. Recall that we endow $T$ with the local conditions $\msL'$ given by the preimage of $\msL$ under the map $H^1(G_p, T) \to H^1(G_p, M)$. We will compute this preimage explicitly in our next lemma. We have a perfect pairing
$$
\frac{M}{(\Frob \, p - 1)M} \times (M^\ast)^{G_p} \rightarrow \Q/\Z.
$$
We write $T^\perp \subseteq (M^\ast)^{G_p}$ for the orthogonal complement of $T$ under this pairing; here we view $T$ as a subset of $\frac{M}{(\Frob \, p - 1)M}$ via $\iota$ and the canonical quotient map. Via the identification
\begin{equation}
\label{eUrIdentification}
\frac{M}{(\Frob \, p - 1)M} \cong H^1_{\text{ur}}(G_p, M)
\end{equation}
we see that $T^\perp$ is exactly the orthogonal complement of $\iota(H^1_{\text{ur}}(G_p, T))$.

\begin{lemma}
\label{lPullBack}
Let $p \not \in \Vplac_0$ and let $\iota: T \rightarrow M$ be an inclusion. Then the local conditions $\msL' := \iota^{-1}(\msL)$ on $T$ are associated to the groups
\begin{align}
A_{\msL'} &= \{a \in T(-1)^{G_p} \,:\, \iota(a) \in A_{\msL}, \, \Omega_{\msL}(\iota(a), r) = 0 \textup{ for all } r \in T^{\perp} \cap R_{\msL}\} \qquad\textup{and} \label{eASub} \\
R_{\msL'} &= R_{\msL}/(T^{\perp} \cap R_{\msL}). \label{eRSub}
\end{align}
Here equation equation \eqref{eRSub} means that the natural map $\iota^\ast$ from $(M^\ast)^{G_p}$ to $(T^\ast)^{G_p}$, once restricted to $R_{\msL}$, is surjective with image $R_{\msL'}$ and has kernel $T^\perp \cap R_{\msL}$.
\end{lemma}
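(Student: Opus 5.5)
We need to compute $\msL' = \iota^{-1}(\msL)$, where $\iota$ also denotes the induced map $H^1(G_p, T) \to H^1(G_p, M)$. We work throughout with the unramified/ramified decomposition at $p \notin \Vplac_0$. The plan is to prove \eqref{eRSub} first and then deduce \eqref{eASub} from it.

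\textbf{Step 1: the unramified part of $\msL'$.} Unramified classes pull back to unramified classes. Under \eqref{eUrIdentification}, $\iota$ on $H^1_{\text{ur}}$ is the map $T/(\Frob\, p-1)T \to M/(\Frob\, p-1)M$. Since $R_\msL$ is the annihilator of $\msL\cap H^1_{\text{ur}}(G_p,M)$ under the perfect pairing with $(M^*)^{G_p}$ (this is the content of the identity used in Lemma \ref{lFormula}), an unramified class $t$ lies in $\msL'$ exactly when $\iota(t)$ pairs to zero with $R_\msL$. Equivalently, $t$ pairs to zero with $\iota^*(R_\msL)$ under the corresponding pairing for $T$.

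Hence $\msL'\cap H^1_{\text{ur}}(G_p,T)$ is the annihilator of $\iota^*(R_\msL)\subseteq (T^*)^{G_p}$. But $\msL'\cap H^1_{\text{ur}}(G_p,T)$ is also the annihilator of $R_{\msL'}$. Both are subgroups of $(T^*)^{G_p}$ and the pairing is perfect, so $R_{\msL'}=\iota^*(R_\msL)$.

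The kernel of $\iota^*$ restricted to $R_\msL$ is $\{r\in R_\msL : r|_T = 0\}$. As functionals on $M/(\Frob\, p-1)M$, this is precisely $T^\perp\cap R_\msL$. This proves \eqref{eRSub}.

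\textbf{Step 2: the ramified part of $\msL'$.} Take $a\in T(-1)^{G_p}$ and use the ramification section to write a candidate lift as $\mfB_p^T(a)$ plus an unramified class. By compatibility of $\mathfrak{R}$ with $\iota$, a class in $\msL'$ with ramification $a$ maps to a class in $\msL$ with ramification $\iota(a)$, so we need $\iota(a)\in A_\msL$.

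Given $\iota(a)\in A_\msL$, pick $w\in\msL$ with $\mathfrak{R}_p(w)=\iota(a)$. The question is whether we can choose $y\in H^1(G_p,T)$ with $\mathfrak{R}(y)=a$ such that $\iota(y)-w$, an unramified class, lies in $\msL\cap H^1_{\text{ur}}$. Unramified classes are evaluated at Frobenius, so this is the condition that $r(\iota(y)-w)(\Frob\, p)=0$ for all $r\in R_\msL$.

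Now vary $y$ by unramified classes of $T$. This shifts $(\iota(y)-w)(\Frob\, p)$ by an arbitrary element of the image of $T$ in $M/(\Frob\, p-1)M$. Those shifts can kill every functional $r\in R_\msL$ except the ones vanishing on $T$, namely $r\in T^\perp\cap R_\msL$. So the obstruction is exactly the functional $r\mapsto r\big((\iota(y_0)-w)(\Frob\, p)\big)$ restricted to $T^\perp\cap R_\msL$, for any fixed lift $y_0$.

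\textbf{Step 3: matching the obstruction with $\Omega_\msL$.} Take $y_0=\mfB_p(a)$ for $T$. Then the obstruction is $-\,r\big((w-\iota(\mfB_p(a)))(\Frob\, p)\big)$. To identify this with $-\Omega_\msL(\iota(a),r)$, I need the naturality
\[
\iota\circ\mfB^T_p=\mfB^M_p\circ\iota,
\]
at least after pairing with $r\in T^\perp$. I will check this from the construction of ramification sections in \cite[Definition 3.10]{Smi22a}, which is functorial in the module for a fixed starting tuple.

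This is the step I expect to be the main obstacle. If full functoriality fails, the two sides differ by an unramified class. I would then need to show that this class is in the image of $T$, so that it is annihilated by $T^\perp$.

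With the identification in place, $\msL'$ contains a class with ramification $a$ if and only if $\iota(a)\in A_\msL$ and $\Omega_\msL(\iota(a),\cdot)$ vanishes on $T^\perp\cap R_\msL$. This gives \eqref{eASub}.
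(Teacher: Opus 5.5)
Your proposal is correct and follows essentially the paper's route. You get \eqref{eRSub} by identifying $\iota^*(R_{\msL})^{\perp}$ with $H^1_{\text{ur}}(G_p,T)\cap\iota^{-1}(\msL)$, and \eqref{eASub} from the fact that the annihilator of $T^{\perp}\cap R_{\msL}$ in $H^1_{\text{ur}}(G_p,M)$ is $(\msL\cap H^1_{\text{ur}}(G_p,M))+\iota(H^1_{\text{ur}}(G_p,T))$; your obstruction argument just packages the paper's two separate inclusions into one step.

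The naturality $\mfB_p\circ\iota=\iota\circ\mfB_p$ that you flag as the main obstacle is exactly property (2) of \cite[Definition 3.10]{Smi22a}, which the paper invokes directly, so no fallback is needed. Also, in Step 1 the annihilator description of $R_{\msL}$ is better justified straight from local Tate duality (unramified classes are exact annihilators for $p\notin\Vplac_0$), as the paper does. The cardinality identity in Lemma \ref{lFormula} only records the size of $R_{\msL}$, not the annihilator relation itself.
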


\begin{proof}
We will first prove \eqref{eRSub}. Under the pairing given by local Tate duality, $H^1_{\text{ur}}(G_p, M)$ and $H^1_{\text{ur}}(G_p, M^\vee)$ annihilate each other for $p \not \in \Vplac_0$. Hence $R_{\msL}$ is exactly the orthogonal complement of $H^1_{\text{ur}}(G_p, M) \cap \msL$ under the natural pairing
$$
H^1_{\text{ur}}(G_p, M) \times (M^\ast)^{G_p} \to \QQ/\Z
$$
given by evaluation at Frobenius. Consider the diagram
$$
\begin{tikzcd}[column sep=0.1in]
H^1_{\text{ur}}(G_p, T) \ar[d, "\iota"] & \times & (T^\ast)^{G_p} & \to & \Q/\Z \ar[d, "="] \\
H^1_{\text{ur}}(G_p, M) & \times & (M^\ast)^{G_p} \ar[u, "\iota^*"] & \to & \Q/\Z
\end{tikzcd}
$$
which gives $\langle \iota(t), m \rangle = \langle t, \iota^\ast(m) \rangle$. The diagram shows that the kernel of $\iota^\ast: (M^\ast)^{G_p} \rightarrow (T^\ast)^{G_p}$ is $T^{\perp}$. Thus, we need to show that the orthogonal complement $R_{\msL'}$ of $H^1_{\text{ur}}(G_p, T) \cap \iota^{-1}(\msL)$ under the pairing
$$
H^1_{\text{ur}}(G_p, T) \times (T^\ast)^{G_p} \to \QQ/\Z
$$
is exactly $\iota^\ast(R_{\msL})$. Using the diagram again, we find that $\iota^\ast(R_{\msL})^\perp = \iota^{-1}(R_{\msL}^\perp)$. But we also know that 
$$
\iota^{-1}(R_{\msL}^\perp) = \iota^{-1}(H^1_{\text{ur}}(G_p, M) \cap \msL) = H^1_{\text{ur}}(G_p, T) \cap \iota^{-1}(\msL) =  (R_{\msL'})^\perp,
$$
where the middle equality relies on the fact that $I_p$ acts trivially on $M$ and where the other two equalities are by definition. This shows that $\iota^\ast(R_{\msL})^\perp = (R_{\msL'})^\perp$ and hence $\iota^\ast(R_{\msL}) = R_{\msL'}$.

It remains to prove the first part of the lemma. We will first show $\subseteq$ in equation \eqref{eASub}. To this end, take some $a \in A_{\msL'}$. By definition, this means that there is some cocycle $\phi \in \iota^{-1}(\msL)$ such that $\mathfrak{R}_p(\phi) = a$. Since the ramification-measuring homomorphism commutes with module maps, it is then clear that $a \in T(-1)$ and $\iota(a) \in A_{\msL}$, so it remains to show that
$$
\Omega_{\msL}(\iota(a), r) = 0 \quad \text{ for all } r \in T^{\perp} \cap R_{\msL}.
$$
But we have by definition
$$
\Omega_{\mathscr{L}}(\iota(a), r) = r\big((\iota(\phi) - \mfB_p(\iota(a)))(\text{Frob}\, p)\big).
$$
By naturality of ramification sections \cite[Definition 3.10 (2)]{Smi22a}, we have $\mfB_p(\iota(a)) = \iota(\mfB_p(a))$ and hence $\iota(\phi) - \mfB_p(\iota(a)) \in \iota(H^1_{\text{ur}}(G_p, T))$. This gives the vanishing of $\Omega_{\mathscr{L}}(\iota(a), r) = 0$ for all $r \in T^{\perp} \cap R_{\msL}$.

Finally, we have to show $\supseteq$ in equation \eqref{eASub}. We start by considering the diagram
\begin{equation}
\label{eRamDiagram}
\begin{tikzcd}[column sep=0.3in]
H^1(G_p, T) \ar[d, "\mathfrak{R}_{p, T}", two heads] \ar[r, "\iota"] & H^1(G_p, M) \ar[d, "\mathfrak{R}_{p, M}", two heads] \\
T(-1)^{G_p} \arrow[r, hook, "\iota"] & M(-1)^{G_p} 
\end{tikzcd}
\end{equation}
and we take $a' \in M(-1)^{G_p}$ that is in the image of $\iota$ (say $\iota(a) = a'$), lies in $\mathfrak{R}_{p, M}(\msL)$, and moreover satisfies $\Omega_{\msL}(a', r) = 0$ for all $r \in T^{\perp} \cap R_{\msL}$. We must then find a cocycle $\phi \in \iota^{-1}(\msL)$ such that $\mathfrak{R}_{p, T}(\phi) = a$. 

First, take some cocycle $\psi \in \msL$ such that $\mathfrak{R}_{p, M}(\psi) = a'$ and take some cocycle $z \in H^1(G_p, T)$ with $\mathfrak{R}_{p, T}(z) = a$. We then compute
\begin{equation}
\label{eOmegaCompute}
0 = \Omega_{\mathscr{L}}(\mathfrak{R}_{p, M}(\psi), r) = r\big((\psi - \mfB_p(\iota(\mathfrak{R}_{p, T}(z))))(\text{Frob}\, p)\big) \quad \text{ for all } r \in T^{\perp} \cap R_{\msL}.
\end{equation}
By the identification \eqref{eUrIdentification} and the text immediately afterwards, $T^\perp$ is exactly the orthogonal complement of $\iota(H^1_{\text{ur}}(G_p, T))$. Hence the orthogonal complement of $T^{\perp} \cap R_{\msL}$ under the pairing $(M^\ast)^{G_p} \times H^1_{\text{ur}}(G_p, M) \rightarrow \Q/\Z$ is exactly $\msL \cap H^1_{\text{ur}}(G_p, M) + \iota(H^1_{\text{ur}}(G_p, T))$. Thus, equation \eqref{eOmegaCompute} yields
$$
\psi - \mfB_p(\iota(\mathfrak{R}_{p, T}(z))) = m + \iota(t),
$$
for some $m \in \msL \cap H^1_{\text{ur}}(G_p, M)$ and $t \in H^1_{\text{ur}}(G_p, T)$. Rearranging then shows that
$$
\psi - m = \iota(t) + \mfB_p(\iota(\mathfrak{R}_{p, T}(z))) = \iota(t + \mfB_p(\mathfrak{R}_{p, T}(z))).
$$
We now take $\phi := t + \mfB_p(\mathfrak{R}_{p, T}(z))$. Then $\iota(\phi)$ maps to $\psi - m \in \msL$, and hence $\phi \in \iota^{-1}(\msL)$ and moreover $\mathfrak{R}_{p, M}(\iota(\phi)) = a'$. But this forces $\mathfrak{R}_{p, T}(\phi) = a$ by injectivity of the bottom row of \eqref{eRamDiagram}.
\end{proof}

Meanwhile, we may endow $M/T$ with the local conditions $\msL^{''}$ coming from the image of $\msL$ under the map $H^1(G_p, M) \to H^1(G_p, M/T)$. The corresponding groups take the form
\begin{align*}
A_{\msL^{''}} &=  A_{\msL}/(T(-1) \cap A_{\msL})\qquad\text{and}
\\
R_{\msL^{''}} & = \big\{r \in T^{\perp} \cap R_{\msL}\,:\,\, \Omega_{\msL}(a, r) = 0 \text{ for all } a \in T(-1) \cap A_{\msL}\big\}.
\end{align*}
Indeed, these operations are dual to each other under the operator $M \mapsto M^{\vee}$, so the above formulae follow from Lemma \ref{lPullBack}. More precisely, taking $\msL^{\vee}$ to be the corresponding local conditions, we have
\begin{align*}
A_{\msL^{\vee}} &= R_{\msL} \quad\text{in} \quad M^{\vee} (-1)^{G_p} = (M^*)^{G_p}\qquad\text{and }\\
R_{\msL^{\vee}} &= A_{\msL} \quad\text{in} \quad \left((M^{\vee})^*\right)^{G_p} = M(-1)^{G_p}.
\end{align*}
Our next lemma justifies the terminology weak and strong equivalence.

\begin{lemma}
\label{lStrongWeak}
Let $(p_1, \msL_1)$ and $(p_2, \msL_2)$ be strongly equivalent. Then $(p_1, \msL_1)$ and $(p_2, \msL_2)$ are weakly equivalent.
\end{lemma}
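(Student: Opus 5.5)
The plan is to build the pair $(\iota, \sigma)$ required in Definition \ref{defn:equiv_local} directly from the data of the starting tuple. Strong equivalence tells us three things. First, $\lambda(p_1)$ and $\lambda(p_2)$ lie in the same class, so their Frobenius elements in $\Gal(K/\QQ)$ are conjugate; after conjugating, which is the role of $\sigma$, they coincide. Second, $p_1$ and $p_2$ have the same spin. Third, the triples $(A, R, \Omega)$ agree. Since $K$ contains the $e_0$-th roots of unity, the class also pins down $p_1 \equiv p_2 \bmod e_0$. Consequently the two groups $G_{p_i}/mI_{p_i} \cong \widehat{\Z} \ltimes \Z/m\Z$ carry the same semidirect product structure.

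Concretely, we choose $\sigma \in G_{\QQ}$ conjugating the image of $\Frob\, p_1$ in $\Gal(K/\QQ)$ to that of $\Frob\, p_2$. We then define $\iota$ by sending a Frobenius lift and a tame inertia generator at $p_1$ to the corresponding elements at $p_2$, with the generators normalized compatibly via the $e_0$-th roots of unity. Because $M$ factors through $\Gal(K/\QQ)$ and inertia acts trivially, the compatibility $\sigma(\tau m) = \iota(\tau)\sigma(m)$ holds. The induced map $(\iota, \sigma)$ on $H^1$ preserves unramified classes and commutes with the ramification-measuring homomorphisms $\mathfrak{R}_{p_i}$, where we identify $M(-1)^{G_{p_1}}$ with $M(-1)^{G_{p_2}}$ via $\sigma$ and the chosen roots of unity.

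The remaining task is to check that $(\iota, \sigma)$ carries $\msL'_2$ to $\msL'_1$. A subgroup $\msL$ at a prime $p \notin \Vplac_0$ is recovered from $(A_\msL, R_\msL, \Omega_\msL)$ as follows:
\[
\msL = \{\, \mfB_p(a) + u \,:\, a \in A_\msL,\ u \in H^1_{\text{ur}}(G_p, M),\ r(u(\Frob\, p)) = \Omega_\msL(a, r) \text{ for all } r \in R_\msL \,\},
\]
where the ramification section $\mfB_p$ is restricted to $G_p$. The map $(\iota,\sigma)$ matches $A$ with $A$, matches $R$ with $R$ (unramified classes are evaluated at the matched Frobenii), and matches the unramified pieces. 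It therefore suffices to show that the local restrictions of the ramification sections $\mfB_{p_1}(a)$ and $\mfB_{p_2}(a)$ correspond under $(\iota,\sigma)$, modulo nothing. Then the two descriptions coincide because the $\Omega$'s agree.

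I expect this last step to be the main obstacle. It requires knowing that $\mfB_{p}(a)|_{G_p}$, evaluated at Frobenius, depends only on the class and spin of $\lambda(p)$. This is exactly what spins are designed to encode in \cite[Section 3]{Smi22a}. I would first try to cite the defining property of ramification sections \cite[Definition 3.10]{Smi22a}, namely that the value $\mfB_p(a)(\Frob\, p)$ modulo $(\Frob\, p-1)M$ is determined by the class and spin. Failing that, I would unwind the construction of $\mfB_p$ through the unpacked starting tuple to verify this dependence directly.
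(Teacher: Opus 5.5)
Your skeleton is the same as the paper's: build $\iota$ by matching $\mathrm{Tine}$ and Frobenius, check that unramified classes and the maps $\mathfrak{R}_{p_i}$ correspond, and use your (correct) description of $\msL$ via $(A_\msL, R_\msL, \Omega_\msL)$. This reduces everything to showing $r(\mfB_{p_1}(a)(\Frob\, p_1)) = r(\mfB_{p_2}(a)(\Frob\, p_2))$ for $a \in A$ and $r \in R$. However, the step you defer is a genuine gap, and the fact you plan to cite is false as stated.

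For $a \neq 0$ the class $\mfB_p(a)$ is ramified at $p$, so $\mfB_p(a)(\Frob\, p)$ modulo $(\Frob\, p - 1)M$ depends on the chosen Frobenius lift. Replacing $\Frob\, p$ by $\Frob\, p \cdot \mathrm{Tine}^k$ shifts it by $a(\zeta^k)$. Hence class and spin cannot determine this value. What they do give is \cite[Proposition 3.20]{Smi22a} (not Definition 3.10, which only supplies naturality), and that is an equality only up to such a shift.

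Your $\iota$ is built from a Frobenius lift fixed before looking at the ramification sections, so the needed identity can fail, and then $\iota$ sends $\msL_2'$ to the wrong subgroup. This already happens for $M = \Z/\ell\Z$ with trivial action and $p_i \equiv 1 \bmod \ell$: changing the lift at $p_1$ by $\mathrm{Tine}^k$ permutes the ramified lines in $H^1(G_{p_1}, M)$.

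The missing idea is to let the ramification sections choose the Frobenius lift.
\begin{enumerate}
\item The paper applies \cite[Proposition 3.20]{Smi22a} to the auxiliary module $N = \mathrm{Map}(M(-1), M)$ with the universal element $m\colon \zeta \mapsto (a \mapsto a(\zeta))$ of $N(-1)$.
\item This gives that the whole tuples $\big(\mfB_{p_i}(a)(\mathrm{Frob}_{\QQ}\,\lambda(p_i))\big)_{a \in M(-1)}$ for $i = 1, 2$ agree in $N/\mathrm{im}\, m$, i.e.\ up to a single shift $a \mapsto a(\zeta)$ that is uniform in $a$.
\item It then multiplies $\mathrm{Frob}_{\QQ}\,\lambda(p_1)$ by a suitable power of $\mathrm{Tine}_{\QQ}\,\lambda(p_1)$ so that the tuples agree exactly, and only then defines $\iota$. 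After this, your final comparison with the $\Omega$'s goes through.
\end{enumerate}
Passing to $N$ matters because one lift must serve every $a$ at once. Applying the spin property to each $a$ separately only yields shifts $\zeta_a$ that may depend on $a$.

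Finally, the class of $\lambda(p)$ determines its Frobenius in $\Gal(K/\QQ)$ exactly, not just up to conjugacy, so you should take $\sigma = \mathrm{id}$. A nontrivial conjugating $\sigma$ would replace $A_{\msL_2}$ and $R_{\msL_2}$ by $\sigma$-translates, which strong equivalence does not control.
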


\begin{proof}
We recall the definitions of $\mathrm{Tine}_{\Q} \, \lambda(p)$ and $\mathrm{Frob}_{\Q} \, \lambda(p)$ in \cite[Section 2]{Smi22a}. By abuse of notation, we will also write $\mathrm{Tine}_{\Q} \, \lambda(p)$ and $\mathrm{Frob}_{\Q} \, \lambda(p)$ for their images inside $G_p/mI_p$. Define
$$
N := \mathrm{Map}(M(-1), M).
$$
We view $N$ as a Galois module by sending $\sigma \in G_{\Q}$ and $f \in \mathrm{Map}(M(-1), M) = N$ to the map $a \mapsto \sigma(f(a))$. This makes $N$ isomorphic to just $|M|$ copies of $M$, indexed by $M(-1)$. Since $p_1$ and $p_2$ have the same spin by strong equivalence, we get from \cite[Proposition 3.20]{Smi22a}, applied with $N$ and with $m \in N(-1)$ the map sending $\zeta$ to $a \mapsto a(\zeta)$, the equality
$$
\big(\mfB_{p_1}(a)(\mathrm{Frob}_{\Q} \, \lambda(p_1))\big)_{a \in M(-1)} = \big(\mfB_{p_2}(a)(\mathrm{Frob}_{\Q} \, \lambda(p_2))\big)_{a \in M(-1)} \quad \text{ in } \, \, \frac{N}{\text{im } m}.
$$
This means precisely that we can choose $\mathrm{Frob}_{\Q} \, \lambda(p_1)$, by changing $\mathrm{Frob}_{\Q} \, \lambda(p_1)$ by a power of $\mathrm{Tine}_{\Q} \, \lambda(p_1)$ if necessary, in such a way that
\begin{equation}
\label{eLiftTrick}
\big(\mfB_{p_1}(a)(\mathrm{Frob}_{\Q} \, \lambda(p_1))\big)_{a \in M(-1)} = \big(\mfB_{p_2}(a)(\mathrm{Frob}_{\Q} \, \lambda(p_2))\big)_{a \in M(-1)},
\end{equation}
and we shall henceforth work with this particular choice of $\mathrm{Frob}_{\Q} \, \lambda(p_1)$.

Since $p_1, p_2 \not \in \Vplac_0$ and since $\lambda(p_1)$ and $\lambda(p_2)$ share the same class, the natural map $\iota: G_{p_1}/mI_{p_1} \rightarrow G_{p_2}/mI_{p_2}$ sending
$$
\mathrm{Tine}_{\Q} \, \lambda(p_1) \mapsto \mathrm{Tine}_{\Q} \, \lambda(p_2), \quad \quad \mathrm{Frob}_{\Q} \, \lambda(p_1) \mapsto \mathrm{Frob}_{\Q} \, \lambda(p_2)
$$
extends uniquely to a continuous group isomorphism. Moreover, since $p_1, p_2 \not \in \Vplac_0$, since $\lambda(p_1)$ and $\lambda(p_2)$ share the same class, and since $\mathrm{Frob}_{\Q} \, \lambda(p_1)$ maps to $\mathrm{Frob}_{\Q} \, \lambda(p_2)$ under $\iota$, we also see that $\tau m = \iota(\tau) m$ for all $\tau \in G_{p_1}/mI_{p_1}$ and all $m \in M$. We can therefore take $\sigma = \mathrm{id}$ in the definition of weak equivalence.

Denote by $\inf_i: H^1(G_{p_i}/mI_{p_i}, M) \xrightarrow{\sim} H^1(G_{p_i}, M)$ the inflation map. We also write $\msL_i' := \inf_i^{-1}(\msL_i)$. It remains to prove that $\iota$ identifies $\msL_2'$ with $\msL_1'$. 

Since $\iota$ is an isomorphism, it suffices to show that $\phi \in \msL_2'$ implies $\iota(\phi) \in \msL_1'$. To this end, take $\phi \in \msL_2'$. To start, we unwind the definitions of $\mathfrak{R}_{\lambda(p_i)}$ and $\iota$ to conclude that the diagram
\begin{equation*}
\begin{tikzcd}[column sep=0.5in]
H^1(G_{p_2}/mI_{p_2}, M) \ar[d, "\iota"] \ar[r, "\inf_2"] & H^1(G_{p_2}, M) \ar[r, "\mathfrak{R}_{\lambda(p_2)}"] & M(-1)^{G_{p_2}} \ar[d, "="] \\
H^1(G_{p_1}/mI_{p_1}, M) \ar[r, "\inf_1"] & H^1(G_{p_1}, M) \arrow[r, "\mathfrak{R}_{\lambda(p_1)}"] & M(-1)^{G_{p_1}} 
\end{tikzcd}
\end{equation*}
commutes. Set
$$
a := \mathfrak{R}_{\lambda(p_2)}(\mathrm{inf}_2(\phi)) = \mathfrak{R}_{\lambda(p_1)}(\mathrm{inf}_1(\iota(\phi))).
$$
Since $\phi \in \msL_2'$, we must certainly have that $a \in A_{\msL_2}$, thus $a \in A_{\msL_2} = A_{\msL_1}$ by strong equivalence. Take $w_1 \in \msL_1'$ mapping to $a$. Using that $R_{\msL_1} = R_{\msL_2}$ and $\Omega_{\msL_1} = \Omega_{\msL_2}$ by strong equivalence, we get
\begin{align*}
r\big((\mathrm{inf}_2(\phi) - \mfB_{p_2}(a))(\mathrm{Frob}_{\Q} \, \lambda(p_2))\big) &= \Omega_{\msL_2}(a, r) = \Omega_{\msL_1}(a, r) \\
&= r\big((\mathrm{inf}_1(w_1) - \mfB_{p_1}(a))(\mathrm{Frob}_{\Q} \, \lambda(p_1))\big).
\end{align*}
Since $\iota$ sends $\mathrm{Frob}_{\Q} \, \lambda(p_1)$ to $\mathrm{Frob}_{\Q} \, \lambda(p_2)$, we get from equation \eqref{eLiftTrick}
$$
r\big((\mathrm{inf}_2(\phi) - \mfB_{p_2}(a))(\mathrm{Frob}_{\Q} \, \lambda(p_2))\big) = r\big((\mathrm{inf}_1(\iota(\phi)) - \mfB_{p_1}(a))(\mathrm{Frob}_{\Q} \, \lambda(p_1))\big)
$$
for all $r \in R_{\msL_1} = R_{\msL_2}$. Combining the last two equations, we conclude that
$$
r\big((\mathrm{inf}_1(\iota(\phi)) - \mathrm{inf}_1(w_1))(\mathrm{Frob}_{\Q} \, \lambda(p_1))\big) = 0
$$
for all $r \in R_{\msL_1} = R_{\msL_2}$, which implies $\iota(\phi) \in \msL_1'$.
\end{proof}

In some of our arguments, it will be important that the same equivalence class $k_i$ appears for many $1 \leq i \leq r$. Since Definition \ref{defn:effective-constant} involves weak equivalence instead of strong equivalence, we are only able to show an abundance of indices in the same ``weak equivalence class'', as we formulate more precisely in our next result. 

\begin{lemma}
\label{lPigeon}
Let $(r, (X_i)_{i \le r}, (k_i)_{i \le r})$ be a $\frac{c}{2k \cdot \textup{low}(g)}$-good grid of height $H \geq \exp^{(2)}(2/c)$. Let $1 \leq i \leq r$. Then there are at least $(\log^{(2)} H)^{5/6}$ choices of $j$ such that $k_i$ and $k_j$ are weakly equivalent, and moreover
$$
\sum_{p \in X_j} g_{k_j}(p) \geq \sum_{p \in X_j} \frac{c}{6k \cdot \textup{low}(g) \cdot p}.
$$
\end{lemma}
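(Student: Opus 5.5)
The idea is to reduce to condition 5 of Definition \ref{defn:goodgrid}, applied to a suitable strong equivalence class $j$ inside the weak equivalence class $\mathcal{L}$ of $k_i$. First, since the grid is okay, condition 3 says $k_i$ is not rare, so $\sum_p g_{k_i}(p) = \infty$.

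\textbf{Step 1: from rarity to $\mu$.} Unwinding \eqref{eKeyPipeline}, $g_{k_i}(p)$ is comparable to $\mu_p(\msL)$ for the unique $\msL$ with $(p,\msL)$ in class $k_i$. Indeed, $\tilde g_p(\msL)^\nu$ lies between $\textup{low}(g)^{-1}$ and $C^\nu$, and the factor $(1-m(p))^{-1}$ is bounded. Hence $\sum_{(p,\msL)\in \text{class } k_i}\mu_p(\msL)=\infty$. By Lemma \ref{lStrongWeak}, the strong class $k_i$ sits inside a single weak equivalence class $\mathcal{L}$. Therefore $\sum_{(p,\msL)\in\mathcal{L}}\mu_p(\msL)$ diverges, and condition (3) of Definition \ref{defn:effective-constant} gives
\[
\sum_{(p,\msL)\in\mathcal{L},\, p\le H}\mu_p(\msL)\ge -1+c\log\log H .
\]

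\textbf{Step 2: pigeonhole over strong classes.} The weak class $\mathcal{L}$ is a union of at most $k$ strong classes $j$, since Lemma \ref{lStrongWeak} says strong classes refine weak ones. So some strong class $j$ in $\mathcal{L}$ satisfies
\[
\sum_{(p,\msL)\in j,\,p\le H}\mu_p(\msL)\ge \frac{c\log\log H-1}{k}.
\]
Using $g_j(p)\ge \textup{low}(g)^{-1}\mu_p(\msL)$, which holds because $\tilde g_p^\nu\ge \tilde g_p^{\kappa}$-type lower bounds (with $\nu\le\kappa$) are at least $\textup{low}(g)^{-1}$ and the sieve factor is at least $1$, we get
\[
\sum_{p\le H}g_j(p)\ge \frac{c\log\log H-1}{k\,\textup{low}(g)}\ge \frac{c}{2k\,\textup{low}(g)}\log\log H .
\]
The last inequality uses $H\ge\exp^{(2)}(2/c)$, which gives $c\log\log H\ge 2$.

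\textbf{Step 3: apply goodness.} Condition 5 of $c_1$-goodness with $c_1=\frac{c}{2k\,\textup{low}(g)}$ now produces at least $(\log^{(2)}H)^{5/6}$ indices $j'$ with $k_{j'}=j$ and
\[
\sum_{p\in X_{j'}}g_j(p)\ge \sum_{p\in X_{j'}}\tfrac13 c_1 p^{-1}=\sum_{p\in X_{j'}}\frac{c}{6k\,\textup{low}(g)\,p}.
\]
Each such $k_{j'}=j$ is weakly equivalent to $k_i$, which is exactly the claim.

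\textbf{Main obstacle.} The delicate point is the bookkeeping in Steps 1 and 2. One must check that the normalisations in \eqref{eKeyPipeline} give the lower bound $g_j(p)\ge \textup{low}(g)^{-1}\mu_p(\msL)$ uniformly, and that divergence of class $k_i$ really forces the divergent alternative of condition (3). The second point uses that $\sum$ over a finite union of classes diverges iff some member diverges.
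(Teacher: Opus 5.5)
Your proof is correct and is essentially the paper's argument: non-rarity of $k_i$ forces the divergent alternative of Definition \ref{defn:effective-constant} (3) for the weak class of $k_i$, and the bound $\tilde g_p(\msL)^{\nu} \ge \textup{low}(g)^{-1}$ transfers this to the $g_j$. Pigeonholing over the at most $k$ strong classes in that weak class, with $H \ge \exp^{(2)}(2/c)$ absorbing the $-1$, then verifies the hypothesis of condition 5 with $c_1 = \frac{c}{2k\cdot\textup{low}(g)}$; your write-up is slightly cleaner than the paper's, since you pigeonhole over strong classes rather than grid indices and you make explicit why non-rarity of $k_i$ forces the divergent alternative.
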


\begin{proof}
Define $\mathcal{S}$ to be the set of indices $j$ such that $k_i$ and $k_j$ are weakly equivalent. This is well-defined by Lemma \ref{lStrongWeak}. By Definition \ref{defn:goodgrid} (3), $k_i$ is not rare. Hence the hypothesis of Definition \ref{defn:effective-constant} (3) (``no moderately rare local conditions'') is satisfied. In particular, using that $\tilde{g}_p(\msL)^\kappa$ is lower bounded by $\text{low}(g)^{-1}$, we have 
$$
\sum_{j \in \mathcal{S}} \sum_{\substack{(p, \msL_p) \text{ strongly equivalent to } j \\ p \leq H}} g_j(p) \geq \frac{-1 + c \log \log H}{\text{low}(g)}.
$$
By the pigeonhole principle, this implies that the hypothesis of Definition \ref{defn:goodgrid} (5) is met for at least one strong equivalence class $k_j$ with $j \in \mathcal{S}$, and now the conclusion of Definition \ref{defn:goodgrid} (5) gives the lemma.
\end{proof}

\subsection{Proof of Theorem \ref{thm:charsum}: Initial reductions}
We start by defining, for each submodule $T$ of $M$, a modified set of local conditions on $M$.

\begin{mydef}
\label{dNewLocal}
Given a submodule $T \subseteq M$ and $x \in X$, define local conditions $(\msL'_{xv})_v$ for $M$ as follows:
\begin{itemize}
\item Given $v = \pi_s(x)$ for any $s \in \Smed \cup \Slg$, we take
\[
\msL'_{xv} = H^1_{\text{ur}}(G_v, T) + \ker\left(\msL_{xv} \,\xrightarrow{\quad}\, H^1(I_v, M/T)\right).
\]
\item At all other places $v$, we take $\msL'_{xv} = \msL_{xv}$.
\end{itemize}
\end{mydef}

Take $T^{\circ}$ to be the maximal submodule of $M$ such that the image of $\msL'_{xv}$ (or, equivalently, $\msL_{xv}$) in $H^1(G_v, M/T)$ contains the image of $H^1_{\text{ur}}(G_v, T^{\circ}/T)$ for all $v$ of the form $\pi_s(x)$ with $s \in \Smed \cup \Slg$. Since, for each fixed $s$, all elements of $\{(v, \msL_{xv}) : x \in X, v = \pi_s(x), g_{k_s}(v) \neq 0\}$ are strongly equivalent, we know that this definition does not depend on the choice of $x$ as long as $g(\bar{n}(x)) \neq 0$. Moreover, this is indeed well-defined by the following two facts:
\begin{itemize}
\item The image of $\msL'_{xv}$ in $H^1(G_v, M/T)$ contains the image of $H^1_{\text{ur}}(G_v, T/T)$.
\item If the image of $\msL'_{xv}$ in $H^1(G_v, M/T)$ contains the image of both $H^1_{\text{ur}}(G_v, U_1/T)$ and $H^1_{\text{ur}}(G_v, U_2/T)$, then it also contains the image of $H^1_{\text{ur}}(G_v, (U_1 + U_2)/T)$.
\end{itemize}

\begin{lemma}
\label{lMTUnr}
Let $v$ be of the form $\pi_s(x)$ with $s \in \Smed \cup \Slg$. The pushforward local conditions on $M/T$ (induced from $\msL'_x$) are contained inside $H^1_{\textup{ur}}(G_v, M/T)$. The local conditions on $T^\vee$, via pullback and local Tate duality, are also contained inside $H^1_{\textup{ur}}(G_v, T^\vee)$. Moreover, the subquotient local conditions on $T^\circ/T$ are exactly equal to $H^1_{\textup{ur}}(G_v, T^\circ/T)$.
\end{lemma}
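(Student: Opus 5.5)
The plan is to verify all three claims directly from Definition \ref{dNewLocal}. The key input is that $v \notin \Vplac_0$, so $I_v$ acts trivially on $M$. Throughout, write $\pi: M \to M/T$ and $\iota: T \to M$.

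\emph{Pushforward to $M/T$.} I would compute $\pi(\msL'_{xv})$ term by term. First, $\pi(H^1_{\textup{ur}}(G_v, T))$ is zero, since the composite $T \to M \to M/T$ is zero. Second, the image of $\ker\bigl(\msL_{xv} \to H^1(I_v, M/T)\bigr)$ consists of classes whose restriction to $I_v$ vanishes. These classes lie in the kernel of restriction, which by inflation--restriction is $H^1_{\textup{ur}}(G_v, M/T)$. This gives the first claim immediately.

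\emph{Pullback to $T$ and dualization.} Next I would show that the pullback $\iota^{-1}(\msL'_{xv}) \subseteq H^1(G_v, T)$ contains $H^1_{\textup{ur}}(G_v, T)$. This holds because $\iota(H^1_{\textup{ur}}(G_v,T)) \subseteq \msL'_{xv}$ by construction. Under local Tate duality for $T$ and $T^\vee = T^*(1)$, the orthogonal complement of the pulled-back conditions is then contained in $H^1_{\textup{ur}}(G_v,T)^{\perp}$. Since $v \nmid |M|$, the unramified subgroups are exact annihilators of each other, so $H^1_{\textup{ur}}(G_v,T)^{\perp} = H^1_{\textup{ur}}(G_v, T^\vee)$. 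This gives the second claim.

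\emph{Subquotient $T^\circ/T$.} By Lemma \ref{lSubquotient}, I may compute these conditions as the pushforward to $M/T$ followed by the pullback along $T^\circ/T \hookrightarrow M/T$. By the first step, the pushforward lies in $H^1_{\textup{ur}}(G_v, M/T)$. Since $I_v$ acts trivially, $H^1(I_v,T^\circ/T) \to H^1(I_v,M/T)$ is just $\Hom(I_v, \cdot)$ applied to an injection, hence injective. So a class of $H^1(G_v,T^\circ/T)$ whose image is unramified is itself unramified. Therefore the subquotient conditions are contained in $H^1_{\textup{ur}}(G_v,T^\circ/T)$.

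For the reverse inclusion I need that the image of $\msL'_{xv}$ in $H^1(G_v,M/T)$ contains the image of $H^1_{\textup{ur}}(G_v,T^\circ/T)$. This is exactly the defining property of $T^\circ$, stated with $\msL'_{xv}$ in the definition. The statement also allows the equivalent form with $\msL_{xv}$, which needs a remark. The image of $\msL'_{xv}$ equals the unramified part of the image of $\msL_{xv}$. This is because any element of $\msL_{xv}$ mapping to an unramified class in $H^1(G_v,M/T)$ has trivial image in $H^1(I_v,M/T)$, so it lies in the kernel appearing in Definition \ref{dNewLocal}. Since $H^1_{\textup{ur}}(G_v,T^\circ/T)$ maps into unramified classes, the two containment conditions agree.

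\emph{Main obstacle.} The step requiring the most care is this last identification: the pushforward of $\msL'_{xv}$ must be exactly the unramified part of the pushforward of $\msL_{xv}$. The rest is formal bookkeeping, using the exactness of $0 \to H^1_{\textup{ur}} \to H^1 \to H^1(I_v,\cdot)$ and the trivial inertia action.
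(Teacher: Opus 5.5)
Your proposal is correct and follows the paper's argument essentially step for step. The first two claims come directly from Definition \ref{dNewLocal}: $\pi\iota = 0$, inflation--restriction, and the fact that unramified classes are exact annihilators. The third claim uses Lemma \ref{lSubquotient}, injectivity of $\Hom(I_v, T^\circ/T) \to \Hom(I_v, M/T)$, and the defining property of $T^\circ$. Your extra check that $\pi(\msL'_{xv}) = \pi(\msL_{xv}) \cap H^1_{\textup{ur}}(G_v, M/T)$ justifies the paper's unproved parenthetical ``or, equivalently, $\msL_{xv}$'' in the definition of $T^\circ$. It is not actually needed for the lemma, which only uses the $\msL'_{xv}$ version of that definition.
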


\begin{proof}
By definition of $(\msL'_{xv})_v$, we see that the pushforward local conditions on $M/T$ are contained inside $H^1_{\text{ur}}(G_v, M/T)$ and that the pullback local conditions on $T$ contain $H^1_{\text{ur}}(G_v, T)$, hence the resulting local conditions on $T^\vee$ are contained inside $H^1_{\textup{ur}}(G_v, T^\vee)$.

Since $I_v$ acts trivially on $M$, we have an injection $\Hom(I_v, T^\circ/T) \rightarrow \Hom(I_v, M/T)$. Hence the pullback of the unramified local conditions on $M/T$ are contained inside the unramified local conditions on $T^\circ/T$. Therefore Lemma \ref{lSubquotient} shows that the induced local conditions on $T^\circ/T$ are contained inside $H^1_{\text{ur}}(G_v, T^\circ/T)$ as well. Now the lemma is an immediate consequence of the definition of $T^\circ$.
\end{proof}

Similarly, take $T_{\circ}$ to be the minimal submodule of $T$ such that the preimage of $\msL'_{xv}$ (or, equivalently, $\msL_{xv}$) in $H^1(G_v, T)$ has trivial image in $H^1(I_v, T/T_{\circ})$ for all $v$ of the form $\pi_s(x)$ with $s \in \Smed \cup \Slg$. 

\begin{lemma}
\label{lChebSmall}
There exists $C > 0$ depending only on the starting tuple $(K/\Q, \Vplac_0, e_0)$, $c$ and $k$ such that the following holds.

Take $\pi: M \to M/T^{\circ}$ and $\iota: T_{\circ} \to M$ to be the standard projection and inclusion, respectively. Let $X$ be a $\frac{c}{2k \cdot \textup{low}(g)}$-good grid of height $H \geq \exp^{(2)}\big(\log(C|M|)^{C}\big)$. Define $X_{\textup{bad}}$ to be the subset of $x \in X$ satisfying
\[
\left|\Sel\, (M/T^{\circ}, \pi(\msL'_x))\right| \ge (C \cdot \textup{sup}(g))^{\log_2 |M|} \quad \textup{or} \quad \left|\Sel\, (T_{\circ}^{\vee}, \iota^{-1}(\msL'_x)^{\perp})\right| \ge (C \cdot \textup{sup}(g))^{\log_2 |M|}.
\]
Then we have
$$
\sum_{x \in X_{\textup{bad}}} g(\bar{n}(x)) \leq |M|^{(C + |\Ssm|)^2} \exp\left(|\Ssm \cup \Smed| - (\log^{(2)} H)^{5/6}\right) \sum_{x \in X} g(\bar{n}(x)).
$$
\end{lemma}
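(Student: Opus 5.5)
The plan is to reduce the statement to a Chebotarev-type estimate applied to a fixed, bounded collection of global cohomology classes. I treat the quotient $M/T^{\circ}$ first. The dual statement for $T_{\circ}^{\vee}$ should follow the same way, because $T_{\circ}$ is the dual analogue of $T^{\circ}$: the duality $M\mapsto M^{\vee}$ exchanges pullback to submodules with pushforward to quotients, as in the discussion after Lemma \ref{lPullBack}, and exchanges the maximality defining $T^{\circ}$ with the minimality defining $T_{\circ}$. The goal is therefore to bound the $g$-mass of those $x$ with $|\Sel(M/T^{\circ},\pi(\msL'_x))|\ge (C\cdot\textup{sup}(g))^{\log_2|M|}$ by the claimed quantity.

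\emph{Step 1: the Selmer elements lie in a small fixed group.} By Lemma \ref{lMTUnr}, at every prime $v=\pi_s(x)$ with $s\in\Smed\cup\Slg$ the pushforward local condition on $M/T$ lies in $H^1_{\textup{ur}}(G_v,M/T)$. Since pushforward is compatible with passing to the further quotient $M/T^{\circ}$, the same holds there. So every element of $\Sel(M/T^{\circ},\pi(\msL'_x))$ is ramified only at $\Vplac_0$ and at the primes $\pi_s(x)$ with $s\in\Ssm$, and these small primes are the same for every $x$ in the grid. Such classes form a fixed finite group $G_0\subseteq H^1(G_{\QQ},M/T^{\circ})$. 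A standard count of classes with prescribed ramification, using the ramification sections $\mfB_p$ together with the finiteness of classes ramified only in $\Vplac_0$, gives $|G_0|\le |M|^{C+|\Ssm|}$. Its subgroups number at most $|M|^{(C+|\Ssm|)^2}$, which explains the first factor in the bound.

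\emph{Step 2: each fixed nonzero class is locally obstructed at many primes.} Fix a subgroup $W\subseteq G_0$ with $|W|\ge (C\cdot\textup{sup}(g))^{\log_2|M|}$. It suffices to show that the $g$-mass of $x$ with $W\subseteq\Sel$ is at most $\exp\big(|\Ssm\cup\Smed|-(\log^{(2)}H)^{5/6}\big)$ times the total mass, and then to take a union bound over $W$. For $\varphi\in W$ and a large prime $p=\pi_s(x)$, the condition $\res_p\varphi\in\pi(\msL'_{xp})$ depends only on $\Frob\,p$ in the fixed field $K_W$ cut out by $K$ and $W$. The conductor of $K_W$ is supported on $\Vplac_0$ and the small primes of the grid, so it is at most $\alpha_0(H)^{O(|\Ssm|)}$, and $[K_W:\QQ]\le |M|^{O(\log|M|)}|W|$.

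The maximality of $T^{\circ}$, combined with Lemma \ref{lMTUnr}, says that no nonzero unramified class of $M/T^{\circ}$ is admitted at all $(p,\msL)$ in the relevant classes. Lemma \ref{lPigeon} then supplies at least $(\log^{(2)}H)^{5/6}$ axes $X_j$, each weakly equivalent to a fixed index $i$ and each carrying $g$-mass at least $\frac{c}{6k\,\textup{low}(g)}\sum_{p\in X_j}p^{-1}$. On each such axis I want: for a positive proportion of Frobenius classes, a fixed nonzero $\varphi$ fails the local condition. Because the conditions vary along the weak equivalence class, I would use the group change $(\iota,\sigma)$ of Definition \ref{defn:equiv_local} to transport the obstruction to every member of the class.

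The threshold on $|W|$ is what makes this uniform. If $|W|$ exceeds $(C\cdot\textup{sup}(g))^{\log_2|M|}$, then iterating over a filtration of length at most $\log_2|M|$ shows that the proportion of $p$ in $X_j$ at which all of $W$ is admitted is at most $1-\delta$ with respect to $g$-weight. Here $\delta$ is at least a fixed constant divided by $\textup{sup}(g)$, and the $\textup{sup}(g)$ factor absorbs the discrepancy between the $g$-weight and the uniform Chebotarev weight. The input for this proportion is effective Chebotarev in the short intervals $[\alpha_0\alpha^m,\alpha_0\alpha^{m+1})$. Since $\alpha_0(H)$ is enormous compared with the conductor and degree of $K_W$ once $H\ge\exp^{(2)}(\log(C|M|)^C)$, the Lagarias--Odlyzko bound applies, with any Siegel zero handled by the usual exceptional-modulus argument. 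Medium axes only lose a bounded factor each, which accounts for the $\exp(|\Smed|)$ term.

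\emph{Step 3: independence across axes.} The axes are disjoint and $g$ is a product over coordinates. The $g$-mass of $x$ for which $W$ survives at every one of the $(\log^{(2)}H)^{5/6}$ chosen axes is therefore at most $(1-\delta)^{(\log^{(2)}H)^{5/6}}$ times the total, up to the factor $e^{|\Ssm\cup\Smed|}$. Absorbing $\delta$ requires enlarging the exponent constant slightly, which is permitted by the lower bound on $H$. I expect Step 2 to be the main obstacle. It requires turning the maximality of $T^{\circ}$ into a quantitative statement, namely that every sufficiently large $W$ is obstructed with density bounded below by $1/\textup{sup}(g)^{O(1)}$ uniformly over weakly rather than strongly equivalent classes, and it requires controlling Chebotarev for fields whose conductors involve the small primes.
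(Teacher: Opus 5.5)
\textbf{There is a genuine gap in Steps 2--3, and it is quantitative.} Your skeleton agrees with the paper's. The Selmer classes lie in a fixed group ramified only at $\Vplac_0$ and the small primes. You sum over subgroups $W$, use effective Chebotarev on the large axes weakly equivalent to a fixed index (Lemma \ref{lPigeon}), and multiply savings across axes while giving up the small and medium ones.

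The problem is the per-axis target. You aim for ``all of $W$ survives on at most a $(1-\delta)$-fraction of the $g$-mass of $X_j$'' with $\delta \gg 1/\textup{sup}(g)$. Over the axes this yields only $\exp\bigl(-\delta(\log^{(2)}H)^{5/6}\bigr)$. Your claim that $\delta$ can be absorbed by enlarging a constant is false:
\begin{itemize}
\item The coefficient of $(\log^{(2)}H)^{5/6}$ in the target is exactly $1$.
\item Lemma \ref{lPigeon} guarantees only $(\log^{(2)}H)^{5/6}$ axes, no more.
\item The leftover factor $\exp\bigl((1-\delta)(\log^{(2)}H)^{5/6}\bigr)$ is not dominated by $|M|^{(C+|\Ssm|)^2}$. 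Take $\Ssm=\emptyset$; even in general, good grids have $|\Ssm|\le(\log^{(2)}H)^{1/3+1/100}$.
\end{itemize}
What you need is that the surviving $g$-proportion on each such large axis is at most $1/e$. An obstruction of the form ``a fixed nonzero class fails at a positive proportion of Frobenius classes'' can never give this. The threshold on $|W|$ must make the survival density small, not merely less than one.

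\textbf{The missing idea} is how the maximality of $T^{\circ}$ becomes such a bound. The paper proceeds as follows.
\begin{enumerate}
\item Let $R''_s\subseteq (M/T^{\circ})^*$ be the dual local-condition groups at the medium and large indices. Let $V_0$ be the $\Gal(K/\Q)$-module they generate. The paper shows $V_0^\perp=T^{\circ}$: the local conditions on $V_0^\perp/T$ are all of $H^1_{\textup{ur}}$, so maximality applies.
\item Hence at most $\log_2|M|$ translates $\tau_i r_i$ of generators give an injection $M/T^{\circ}\hookrightarrow\bigoplus_i\Q/\Z$.
\item Let $W_0$ be the restriction of $W$ to $G_K$; then $|W_0|\ge|W|\,|M|^{-[K:\Q]}$. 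By pigeonhole, some single functional satisfies $|\tau_i r_i(W_0)|\ge|W_0|^{1/\log_2|M|}$.
\item Since $W_0$ is $\Gal(K/\Q)$-invariant, the same bound holds for $\sigma r_i$, where $R_s=\sigma(R_{s_i})$ for each $s$ weakly equivalent to $s_i$.
\item Let $L$ be the decomposition field of $\lambda(p_s)$ in $K$ and $B$ the restriction of $W$ to $G_L$. For all of $W$ to satisfy the local condition at $p_s$, the prime $\lambda(p_s)\cap L$ must split completely in an extension $E/L$ of degree $|\sigma r_i(B)|$.
\item Chebotarev then bounds the surviving $g$-mass by $C\,\textup{up}(g)\,|\sigma r_i(B)|^{-1}\sum_{p\in X_s}p^{-1}$.
\end{enumerate}
The threshold $(C\cdot\textup{sup}(g))^{\log_2|M|}$ is chosen so that this is at most $\frac{c}{6ek\,\textup{low}(g)}\sum_{p\in X_s}p^{-1}$. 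That is at most $1/e$ of the mass guaranteed by Lemma \ref{lPigeon}, and this is what produces $\exp\bigl(-(\log^{(2)}H)^{5/6}\bigr)$.

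Working with the field $E$ cut out by one functional, rather than your $K_W$, also turns ``all of $W$ is admitted'' into a single complete-splitting condition. It keeps the degree and discriminant in the range where Chebotarev applies on the large axes.
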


\begin{proof}
Take $\Vplac_{\text{sm}}$ to be the union of $\Vplac_0$ with the set of places of the form $\pi_s(x)$ with $s \in \Ssm$, and take $W$ to be a subgroup of 
\begin{equation}
\label{eWHypothesis}
W \subseteq \ker\left(H^1(G_{\QQ}, M/T^{\circ}) \to \prod_{v \not \in \Vplac_{\text{sm}}} H^1\left(I_v, M/T^{\circ}\right)\right).
\end{equation}
For each $s \in \Smed \cup \Slg$, we denote by $R_s''$ the subset of $(M/T^\circ)^\ast \subseteq M^\ast$ obtained from the pushforward local conditions $\pi(\msL'_{x \pi_s(x)})$ on $M/T^\circ$ (by strong equivalence, this is the same set independent of the choice of $x \in X$ and $v = \pi_s(x)$ as long as $g_{k_s}(v) \neq 0$). We claim that there exists some sequence $s_1, \dots, s_n \in \Smed \cup \Slg$ and corresponding elements $\tau_1, \dots, \tau_n \in \Gal(K/\Q)$ such that
\begin{equation}
\label{eMTSum}
\sum_{i = 1}^n \tau_iR_{s_i}'' = (M/T^\circ)^\ast.
\end{equation}
In order to prove the claim, we define $V_0$ to be the subgroup of $(M/T^\circ)^\ast$ generated by $\tau r_s$ with $\tau \in \Gal(K/\Q)$ and $r_s \in R_s''$ for some $s \in \Smed \cup \Slg$. Note that then $V_0$ is in fact a $\Gal(K/\Q)$-submodule of $(M/T^\circ)^\ast$. Since $(M/T^\circ)^\ast$ is finite, the desired existence follows once we have shown that $V_0 = (M/T^\circ)^\ast \subseteq M^\ast$, which we do now.

We claim that the local conditions on $V_0^\perp/T$ equal the full $H^1_{\text{ur}}(G_v, V_0^\perp/T)$. Note that $V_0^\perp$ certainly contains $T^\circ$. Hence, once we have established the claim, we deduce $V_0^\perp = T^\circ$ by maximality of $T^\circ$, so it remains to prove the claim. Consider the exact sequence
$$
0 \rightarrow T^\circ/T \rightarrow V_0^\perp/T \rightarrow V_0^\perp/T^\circ \rightarrow 0.
$$
Now all of these are subquotients of $M/T$, hence by Lemma \ref{lSubquotient} and Lemma \ref{lMTUnr}, their local conditions are all contained inside respectively $H^1_{\text{ur}}(G_v, T^\circ/T)$, $H^1_{\text{ur}}(G_v, V_0^\perp/T)$, and $H^1_{\text{ur}}(G_v, V_0^\perp/T^\circ)$. But for $T^\circ/T$ the local conditions are the full $H^1_{\text{ur}}(G_v, T^\circ/T)$ by definition of $T^\circ$, and so are the local conditions on $V_0^\perp/T^\circ$ by using the definition of $V_0$ and equation \eqref{eRSub}. From these facts, it follows that the local conditions on $V_0^\perp/T$ also equal the full $H^1_{\text{ur}}(G_v, V_0^\perp/T)$; indeed the size of the local conditions is now at least 
$$
|H^0(G_v, T^\circ/T)| \cdot |H^0(G_v, V_0^\perp/T^\circ)| \geq |H^0(G_v, V_0^\perp/T)|,
$$ 
while we also know that the local conditions are contained inside $H^1_{\text{ur}}(G_v, V_0^\perp/T)$ which has size $|H^0(G_v, V_0^\perp/T)|$.

By removing indices if necessary, we may assume that $n \leq \log_2 |M|$. Also take generators $r_1, \dots, r_n$ for respectively $R_{s_1}'', \dots, R_{s_n}''$. Rephrasing equation \eqref{eMTSum}, we conclude that the homomorphism $\varphi$ of abelian groups
\[
M/T^{\circ} \xhookrightarrow{\oplus_{\tau_i r_i}} \bigoplus_{1 \le i \le n} \Q/\Z
\]
is injective. Take $W_0$ to be the image of $W$ in $H^1(G_K, M/T^{\circ}) = \Hom(G_K, M/T^\circ)$. Therefore there exists $1 \leq i \leq n$ such that the image of $\tau_i r_i(W_0)$ inside $\Hom(G_K, \Q/\Z)$ is of size at least $|W_0|^{1/\log_2 |M|}$. 

Take $s \in \Slg$ such that $k_i$ and $k_s$ are weakly equivalent. This implies the existence of some $\sigma \in G_\Q$ such that $R_s = \sigma(R_i)$. Take $L$ to be the minimal subfield of $K$ so that $\lambda(p_s) \cap L$ is inert in $K/L$ for any $p_s$ taken from $\{p \in X_s : g_{k_s}(p) \neq 0\}$. If $B$ is the image of $W$ in $H^1(G_L, M/T^{\circ})$, we see that every element in $W$ satisfies the local condition at a given $p_s$ (i.e.~lies in $\pi(\mathscr{L}_{x p_s}')$) only if every element $b$ of $B$ satisfies $\sigma r_i(b(\text{Frob}\, \lambda(p_s))) = 0$. If $H \geq \exp^{(2)}(\log(C|M|)^C)$ for a constant $C > 0$ depending only on the starting tuple, then the local conditions are satisfied only if $\lambda(p_s) \cap L$ splits completely in a field $E/L$ satisfying 
\begin{equation}
\label{eEbounds}
[E : L] = |\sigma r_i(B)| \leq |M|^{C (\log^{(2)} H)^{\left(\frac{1}{3} + \frac{1}{100}\right)}}, \quad \quad |\Delta_E| \leq \exp^{(3)}\left(\left(\tfrac{1}{3} + \tfrac{2}{100}\right) \log^{(3)} H\right).
\end{equation}
Applying the effective Chebotarev density theorem \cite{LO}, as codified by Thorner--Zaman in \cite[Theorem 1.1]{TZ}, and using our bounds on the degree and discriminant of $E$ from equation \eqref{eEbounds}, there exists $C > 0$ such that 
\begin{equation}
\label{eCheb1}
\sum_{\substack{\alpha_0(H) \alpha(H)^m \leq p < \alpha_0(H) \alpha(H)^{m + 1} \\ \lambda(p) \cap L \text{ splits in } E}} \frac{1}{p} \leq \frac{C}{|\sigma r_i(B)|} \sum_{\alpha_0(H) \alpha(H)^m \leq p < \alpha_0(H) \alpha(H)^{m + 1}} \frac{1}{p}.
\end{equation}
In the above inequality, we have incorporated any potential contribution from a Siegel zero into the main term, which is possible as we are only searching for an upper bound instead of an asymptotic. Take $X'_s$ to be the set of $p_s \in X_s$ such that $\lambda(p_s) \cap L$ splits completely in the field $E/L$. It follows from \eqref{eCheb1} that
\begin{equation}
\label{eCheb2}
\sum_{p \in X'_s} g_{k_s}(p) \leq \hspace{-0.3cm} \sum_{\substack{\alpha_0(H) \alpha(H)^m \leq p < \alpha_0(H) \alpha(H)^{m + 1} \\ \lambda(p) \cap L \text{ splits in } E}} \hspace{-0.3cm} \frac{\text{up}(g)}{p} \leq \frac{C \cdot \text{up}(g)}{|\sigma r_i(B)|} \hspace{-0.2cm}  \sum_{\alpha_0(H) \alpha(H)^m \leq p < \alpha_0(H) \alpha(H)^{m + 1}} \frac{1}{p},
\end{equation}
where $m$ is the unique integer such that $X_s$ consists of all the primes in the half-open interval $[\alpha_0(H) \alpha(H)^m, \alpha_0(H) \alpha(H)^{m + 1})$. In order to make use of equation \eqref{eCheb2}, we will now give a lower bound for $|\sigma r_i(B)|$. Since $\sigma r_i(B)$ restricts to $\sigma r_i(W_0)$, it suffices to do this for $\sigma r_i(W_0)$. 

Now observe that we have for every $\tau \in G_\Q$ a commutative diagram
$$
\begin{tikzcd}
\Hom(G_K, M/T^\circ)^{\Gal(K/\Q)} \arrow[d] \arrow[dr] & \\
\Hom\left(G_K, M/(T^{\circ} + (r_i)^{\perp})\right) \arrow[r, "\cong"] & \Hom\left(G_K, M/(T^{\circ} + \tau((r_i)^{\perp}))\right),
\end{tikzcd}
$$
where the bottom map sends a homomorphism $\phi \in \Hom\left(G_K, M/(T^{\circ} + (r_i)^{\perp})\right)$ to the homomorphism $g \mapsto \tau \phi(\tau^{-1} g \tau)$. By inflation-restriction, we have $W_0 \subseteq \Hom(G_K, M/T^\circ)^{\Gal(K/\Q)}$. Hence the size of the kernel of the map
$$
W_0 \rightarrow \Hom\left(G_K, M/(T^{\circ} + \tau((r_i)^{\perp}))\right)
$$
does not depend on $\tau$. Since this is also the kernel of $\tau r_i: W_0 \rightarrow \Hom(G_K, \Q/\Z)$, we conclude that the size of the image of $\tau r_i(W_0)$ does not depend on $\tau$. Recalling that $\tau_i r_i(W_0)$ is of size at least $|W_0|^{1/\log_2 |M|}$, we deduce the same bound for $|\sigma r_i(W_0)|$ and hence $|\sigma r_i(B)|$. By the inflation-restriction exact sequence and the bound $|H^1(\Gal(K/\Q), M)| \leq |M|^{[K : \QQ]}$, we have 
\[
|W_0| \ge |W| \cdot |M|^{-[K : \QQ]}.
\]
Thus, if we have $|W| \geq |M|^{[K : \QQ]} \cdot \left(\frac{6e \cdot C \cdot k \cdot \text{sup}(g)}{c}\right)^{\log_2 |M|}$, we obtain
\begin{equation}
\label{eCheb3}
\frac{C \cdot \text{up}(g)}{|\sigma r_i(B)|} \sum_{\alpha_0(H) \alpha(H)^m \leq p < \alpha_0(H) \alpha(H)^{m + 1}} \frac{1}{p} \leq \frac{c}{6e \cdot k \cdot \text{low}(g)} \sum_{\alpha_0(H) \alpha(H)^m \leq p < \alpha_0(H) \alpha(H)^{m + 1}} \frac{1}{p}.
\end{equation}
Combining equations \eqref{eCheb2} and \eqref{eCheb3}, we deduce that for every $s \in \Slg$ such that $k_i$ and $k_s$ are weakly equivalent, we have 
$$
\sum_{p \in X'_s} g_{k_s}(p) \leq \frac{c}{6e \cdot k \cdot \text{low}(g)} \sum_{\alpha_0(H) \alpha(H)^m \leq p < \alpha_0(H) \alpha(H)^{m + 1}} \frac{1}{p}.
$$
Applying Lemma \ref{lPigeon} to the index $i$ and using the above bound for every $s \in \Slg$ such that $k_i$ and $k_s$ are weakly equivalent, we find that for $|W| \geq |M|^{[K : \QQ]} \cdot (6e \cdot C \cdot k \cdot \text{sup}(g)/c)^{\log_2 |M|}$
\[
\sum_{x \in X} g(\bar{n}(x)) \cdot \mathbf{1}_{W \subseteq \Sel(M/T^{\circ}, \pi(\msL'_x))} \le \exp\left(|\Ssm \cup \Smed| - (\log^{(2)} H)^{5/6}\right) \sum_{x \in X} g(\bar{n}(x)).
\] 
We now apply this theory to $W = \Sel\, (M/T^{\circ}, \pi(\msL'_x))$, which is contained inside \eqref{eWHypothesis} by Lemma \ref{lMTUnr}. Summing over all subspaces $W$ ends the proof in case $\left|\Sel\, (M/T^{\circ}, \pi(\msL'_x))\right| \ge (C' \cdot \textup{sup}(g))^{\log_2 |M|}$. The case $\left|\Sel\, (T_{\circ}^{\vee}, \iota^{-1}(\msL'_x)^{\perp})\right| \ge (C' \cdot \textup{sup}(g))^{\log_2 |M|}$ is similar.
\end{proof}

\begin{lemma}
\label{lTamaDichotomy}
There exists $C > 0$ depending only on the starting tuple $(K/\Q, \Vplac_0, e_0)$, $c$ and $k$ such that the following holds. Suppose that $X$ is a $\frac{c}{2k \cdot \textup{low}(g)}$-good grid of height $H \geq \exp^{(2)}\big((\log C|M|)^3\big)$. Suppose that
\begin{equation}
    \label{eq:Tcircnobetter}
\max\left(\mcT(T_{\circ}), \mcT(T^{\circ})\right) \le \exp\left(\frac{1}{2} (\log^{(2)} H)^{5/6}\right) \mcT(T),
\end{equation}
where the local conditions here correspond to $\msL_x$ for any $x \in X$ with $g(\bar{n}(x)) \neq 0$.

Then, for any such $x \in X$ and any place $v \not \in \Vplac_0$, the subquotient local conditions of
\[
H^1(G_v, T^{\circ}/T) \quad \textup{and} \quad H^1(G_v, T/T_{\circ}) 
\]
corresponding to $\msL_{xv}$ are precisely the unramified local conditions.
\end{lemma}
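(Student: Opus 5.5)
We argue by contrapositive. Suppose that for some admissible $x$ and some $v \notin \Vplac_0$ the subquotient local conditions on $T^{\circ}/T$ (the case $T/T_{\circ}$ is dual, via $M \mapsto M^{\vee}$ and the formulas after Lemma \ref{lPullBack}) are not exactly $H^1_{\text{ur}}(G_v, T^{\circ}/T)$. We will show that $\mcT(T^{\circ})$ then exceeds $\mcT(T)$ by far more than $\exp(\frac12(\log^{(2)} H)^{5/6})$, contradicting \eqref{eq:Tcircnobetter}.

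The first step is to compare the two Tamagawa ratios. Apply Lemma \ref{lTamaSES} to $0 \to T \to T^{\circ} \to T^{\circ}/T \to 0$ with the pulled-back conditions on $T$ and $T^{\circ}$; Lemma \ref{lSubquotient} ensures the induced conditions on $T^{\circ}/T$ are the subquotient ones. This gives
\[
\frac{\mcT(T^{\circ})}{\mcT(T)} = (\text{global } H^0 \text{ factors}) \cdot \prod_v \mcT_v(T^{\circ}/T).
\]
The global factor is bounded by $|M|^{O(1)}$. By Lemma \ref{lFormula}, $\mcT_v(T^{\circ}/T) = |A|/|R|$ for the associated groups. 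At places $v=\pi_s(x)$ with $s \in \Smed \cup \Slg$, Lemma \ref{lMTUnr} shows the conditions are exactly unramified, so these factors equal $1$. At the unramified places off the grid the factor is also $1$. Hence the ratio is governed by the small primes, the places of $\Vplac_0$, and, crucially, the strong-equivalence classes occurring at the grid primes.

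The second step, which is the key one, is to pass from one exceptional place to many. Since $T^{\circ}$ is defined only through $\Smed \cup \Slg$, a discrepancy at a place $v$ outside those indices still has to be propagated. Here I would use Lemma \ref{lPigeon} together with the definition of weak equivalence. If at some $v=\pi_s(x)$ the pullback conditions on $T^{\circ}/T$ differ from unramified, then the same deviation occurs, up to a Galois twist $\sigma$, at all weakly equivalent classes. There are at least $(\log^{(2)} H)^{5/6}$ of these.

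I expect the main obstacle to be this Galois-twist issue. Weak equivalence only identifies the local conditions up to $\sigma \in G_{\QQ}$, which moves $T^{\circ}/T$ to $\sigma(T^{\circ})/\sigma(T)$. Since $T$ and $T^{\circ}$ are $G_{\QQ}$-submodules, though, $\sigma$ preserves them, so the deviation transports. It then remains to show that a strict deviation forces a factor of at least $2$ in $\mcT_v$ for either $T^{\circ}$ (when the conditions exceed unramified, meaning $A \ne 0$) or $T_{\circ}$ in the dual case. This uses the maximality of $T^{\circ}$ and the minimality of $T_{\circ}$. Multiplying over the at least $(\log^{(2)} H)^{5/6}$ such places gives
\[
\max\left(\mcT(T_{\circ}), \mcT(T^{\circ})\right) \ge 2^{(\log^{(2)} H)^{5/6}} |M|^{-O(|\Ssm| + 1)} \mcT(T).
\]
For the smaller places, the bound $|\Ssm| \le (\log^{(2)} H)^{1/3 + 1/100}$ from the good-grid hypothesis lets their contribution be absorbed once $H \ge \exp^{(2)}((\log C|M|)^3)$. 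This contradicts \eqref{eq:Tcircnobetter}.

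Finally, the equality at places off the grid primes follows directly: there the local conditions are unramified, or else full at $\Vplac_0$, which is excluded by hypothesis.
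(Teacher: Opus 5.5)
\textbf{Gap: Lemma~\ref{lMTUnr} is applied to the wrong local conditions.} Your outline is the paper's: multiplicativity of $\mcT_v$ along $0\to T\to T^{\circ}\to T^{\circ}/T\to 0$, propagation of a deviation via Lemma~\ref{lPigeon} and weak equivalence, a factor $2$ per place, and absorption of $\Ssm$ and $\Vplac_0$. The problem is your first step, which makes the argument inconsistent as written.

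Lemma~\ref{lMTUnr} concerns the \emph{modified} conditions $\msL'_x$ of Definition~\ref{dNewLocal}. There the part of $\msL_{xv}$ that is ramified modulo $T$ has been discarded at every $v=\pi_s(x)$ with $s\in\Smed\cup\Slg$. Both the hypothesis \eqref{eq:Tcircnobetter} and the conclusion of the lemma concern the \emph{original} $\msL_{xv}$. For those, nothing forces the subquotient conditions on $T^{\circ}/T$ at medium and large primes to be exactly unramified; that is precisely what the lemma is proving. If those factors really were $1$, then $\mcT(T^{\circ})/\mcT(T)$ would be at most $|M|^{O(1+|\Ssm|)}$ and no contradiction could arise. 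The claim also contradicts your second step. Lemma~\ref{lPigeon} gives at least $(\log^{(2)}H)^{5/6}$ weakly equivalent indices, of which at most $|\Ssm|$ are small, so most of the places you multiply over are medium/large, and you need factors $\ge 2$ there. Relatedly, you treat deviations as arising only at small primes, whereas the main case is a deviation at a medium or large prime.

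\textbf{What to use instead.} Use the defining property of $T^{\circ}$, applied to $\msL_{xv}$ itself. At $v=\pi_s(x)$ with $s\in\Smed\cup\Slg$, the image of $\msL_{xv}$ in $H^1(G_v,M/T)$ contains the image of $H^1_{\text{ur}}(G_v,T^{\circ}/T)$. Computing the subquotient as pushforward then pullback (Lemma~\ref{lSubquotient}), the conditions on $T^{\circ}/T$ therefore contain $H^1_{\text{ur}}(G_v,T^{\circ}/T)$.

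Since $|H^1_{\text{ur}}(G_v,N)|=|H^0(G_v,N)|$ for $v\notin\Vplac_0$, $\mcT_v(T^{\circ}/T)$ is the index of $H^1_{\text{ur}}$ in these conditions. It is an integer, equal to $1$ exactly when the conclusion holds at $v$ and at least $2$ otherwise. This containment, not the maximality of $T^{\circ}$, is what guarantees that a deviation is a strict enlargement rather than an incomparable subgroup.

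The containment transfers to small grid places by weak equivalence with a medium/large index. Your remark that $\sigma$ preserves the $G_{\QQ}$-modules $T$ and $T^{\circ}$ is what makes this legitimate, together with the fact that $\iota$ carries inertia to inertia (inertia is the torsion subgroup of $G_p/mI_p$).

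With this fixed, the count goes through. Lemma~\ref{lPigeon} gives at least $(\log^{(2)}H)^{5/6}-|\Ssm|$ medium/large places, each contributing a factor $\ge 2$. Every other place outside $\Vplac_0$ and the small primes contributes $\ge 1$, and each of the remaining places contributes $\ge |M|^{-C}$. This contradicts \eqref{eq:Tcircnobetter} exactly as you describe. The $T/T_{\circ}$ case is symmetric. The defining property of $T_{\circ}$ gives containment of the conditions in $H^1_{\text{ur}}$, so $\mcT_v(T/T_{\circ})\le 1/2$ at a deviating place, and $\mcT_v(T_{\circ})=\mcT_v(T)/\mcT_v(T/T_{\circ})$.
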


\begin{proof}
For all places $v$ and any $x \in X$ with $g(\bar{n}(x)) \neq 0$, if we endow all modules with the subquotient local conditions from $\msL_{xv}$, we have by Lemma \ref{lTamaSES}
\[
\mathcal{T}_v(T^{\circ}) = \mathcal{T}_v(T) \cdot \mcT_v(T^{\circ}/T).
\]
By our definition of $T^{\circ}$, the local conditions for $T^{\circ}/T$ corresponding to $\msL_{xv}$ contain the full $H^1_{\text{ur}}(G_v, T^{\circ}/T)$ for all $v$ outside $\Vplac_0$. So $\mcT_v(T^{\circ}/T)$ is at least $1$ at all such $v$.

If these local conditions properly contain $H^1_{\text{ur}}(G_v, T^{\circ}/T)$ for some $v$ outside $\Vplac_0$, then Lemma \ref{lPigeon} implies that this happens for at least $(\log^{(2)} H)^{5/6} - |\Ssm|$ choices of $v$ of the form $\pi_s(x)$ with $s \in \Smed \cup \Slg$. From this, we deduce that
\[
\mcT(T^{\circ}/T) \ge |M|^{-C' (|\Vplac_0| + |\Ssm|)} \cdot 2^{(\log^{(2)} H)^{5/6} - |\Ssm|}, 
\]
where $C' > 0$ depends only on the starting tuple. For a suitable choice of $C > 0$, this contradicts \eqref{eq:Tcircnobetter} for $H \geq \exp^{(2)}((\log C|M|)^3)$. This gives the lemma for $T^{\circ}/T$. A similar argument then works for $T/T_{\circ}$.
\end{proof}

\begin{proposition}
\label{pCheb}
There exists $C > 0$ depending only on the starting tuple $(K/\Q, \Vplac_0, e_0)$, $c$ and $k$ such that for all $\frac{c}{2k \cdot \textup{low}(g)}$-good grids $X$ of height $H \geq \exp^{(2)}\big((\log C|M|)^C\big)$
\begin{multline*}
\sum_{x \in X} g(\bar{n}(x)) \left|\Sel(M, (\msL_{xv}')_v)\right| \le (C \cdot \textup{sup}(g))^{2 \log_2 |M|} \cdot \max_{U \subseteq M} \mcT(U) \cdot \\
\left(\prod_{s \in \Smed \cup \Slg} |R_s/(R_s \cap T^{\perp})|\right) \cdot \sum_{x \in X} g(\bar{n}(x)).
\end{multline*}
\end{proposition}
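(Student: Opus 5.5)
We will bound $|\Sel(M, (\msL'_{xv})_v)|$ by filtering $M$ through $T_{\circ} \subseteq T \subseteq T^{\circ} \subseteq M$ and controlling each graded piece. Since $|\Sel|$ is subadditive along short exact sequences of decorated modules (with pullback and pushforward local conditions, compatible on subquotients by Lemma \ref{lSubquotient}), we have
\[
|\Sel(M, \msL'_x)| \le |\Sel(T_{\circ}, \iota^{-1}\msL'_x)| \cdot |\Sel(T^{\circ}/T_{\circ})| \cdot |\Sel(M/T^{\circ}, \pi(\msL'_x))|.
\]
The outer factors are handled by Lemma \ref{lChebSmall}. The top piece $M/T^{\circ}$ is directly bounded by $(C\,\textup{sup}(g))^{\log_2|M|}$ off $X_{\textup{bad}}$. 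For $T_{\circ}$, we apply the Greenberg--Wiles formula \eqref{eq:Wiles}, which gives $|\Sel(T_{\circ})| = \mcT(T_{\circ}) \, |\Sel(T_{\circ}^{\vee})|$, and the dual Selmer group is again small off $X_{\textup{bad}}$. On $X_{\textup{bad}}$ we use the trivial bound $|\Sel| \le |M|^{O(|\Vplac_0| + |\Ssm| + \ldots)}$. This trivial bound is poor, but it is beaten by the saving $\exp(|\Ssm\cup\Smed| - (\log^{(2)}H)^{5/6})$ together with condition 4 of goodness. To make that work, we first reduce to the case where the number of ramified places is controlled, using the crude bound $|\Sel| \le |M|^{2(|\Vplac_0|+r)}$ and $r \le (\log\log H)^2$. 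This is the delicate point: the crude bound must be something like $|M|^{|\Ssm|+|\Smed|+|\Slg|}$, and $|\Slg|$ is not small. We will therefore instead bound $\Sel$ on bad $x$ by $\mcT$-type quantities times $|\Sel(\text{dual})|$, and count dual elements by summing over subgroups $W$ as in the proof of Lemma \ref{lChebSmall}. This yields a bound on the average of $|\Sel(M/T^{\circ})|$ and $|\Sel(T_\circ^\vee)|$ rather than on a bad set. We expect this to be the main obstacle.

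For the middle piece we split into cases according to \eqref{eq:Tcircnobetter}. If \eqref{eq:Tcircnobetter} fails, then $\max_U \mcT(U)$ exceeds $\mcT(T)$ by a factor $\exp(\tfrac12(\log^{(2)}H)^{5/6})$. In that case we apply the argument with $T$ replaced by $T_{\circ}$ or $T^{\circ}$ and iterate, which terminates after at most $\log_2|M|$ steps since the chain of submodules has bounded length. Alternatively, we can absorb the loss directly, since the right-hand side carries $\max_U \mcT(U)$. If \eqref{eq:Tcircnobetter} holds, Lemma \ref{lTamaDichotomy} says the local conditions on $T^{\circ}/T$ and $T/T_{\circ}$ are exactly unramified outside $\Vplac_0$. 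Hence their Selmer groups sit inside the bounded groups of classes unramified outside $\Vplac_0 \cup \{\pi_s(x): s\in\Ssm\}$. At the $\Smed\cup\Slg$ places, the only remaining freedom comes from $\msL'_{xv}$ near $T$. Here Lemma \ref{lPullBack} and Lemma \ref{lFormula} identify the local Tamagawa factor of $T$ relative to the modified conditions. The factor $|R_s/(R_s\cap T^\perp)| = |R_{\msL'}|$ from \eqref{eRSub} accounts for the discrepancy between $\mcT_v(T)$ and the index of $\msL'$, which yields the product over $s \in \Smed\cup\Slg$.

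Finally we assemble the pieces. The Greenberg--Wiles formula converts $|\Sel(T_\circ)|$, and through the filtration $|\Sel(T)|$, into $\mcT(T)\le\max_U\mcT(U)$ times dual Selmer sizes. The dual Selmer sizes and $|\Sel(M/T^\circ)|$ contribute $(C\,\textup{sup}(g))^{2\log_2|M|}$ on average by the Lemma \ref{lChebSmall}-type counting. The bounded unramified pieces and the places in $\Vplac_0$ are absorbed into $C^{\log_2|M|}$. Summing against $g(\bar n(x))$ gives the stated inequality.
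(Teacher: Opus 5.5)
Your filtration $T_\circ \subseteq T \subseteq T^\circ \subseteq M$ applies Greenberg--Wiles to $T_\circ$. The paper instead applies it to $T$ and then splits $\Sel(T^\vee)$ along $0 \to (T/T_\circ)^\vee \to T^\vee \to T_\circ^\vee \to 0$. Your variant works for the non-discardable points, once you verify two things:
\begin{itemize}
\item $\mcT_v(T_\circ, \iota^{-1}\msL'_{xv}) = \mcT_v(T, \msL_{xv})\cdot|R_s/(R_s\cap T^\perp)|$ at $v = \pi_s(x)$ with $s \in \Smed\cup\Slg$. This holds because, with $\iota_T : T \hookrightarrow M$, one has $A_{\iota_T^{-1}\msL_{xv}} \subseteq T_\circ(-1)$ by the definition of $T_\circ$, and the pulled-back conditions contain the unramified ones; then use \eqref{eRSub}.
\item $\mcT_v(T/T_\circ)=1$ at the $\Ssm$ places, which follows from Lemmas~\ref{lTamaSES} and~\ref{lTamaDichotomy}.
\end{itemize}

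The genuine gap is the discardable points: $x \in X_{\textup{bad}}$, and grids violating \eqref{eq:Tcircnobetter}. You rightly note that a crude bound $|M|^{O(r)}$ cannot be beaten by a saving of $\exp(-(\log^{(2)}H)^{5/6})$. But you then assume any crude bound must lose $|M|^{|\Slg|}$, and replace it with an unspecified averaging over subgroups $W$. That does not close the argument, for three reasons:
\begin{itemize}
\item Bounding the averages of $|\Sel(M/T^\circ)|$ and $|\Sel(T_\circ^\vee)|$ separately does not bound the average of the product of all your factors.
\item Lemma~\ref{lChebSmall} only bounds the $g$-weight of the bad set, so you still need a pointwise bound on that set.
\item Iterating with $T$ replaced by $T_\circ$ or $T^\circ$ is not meaningful, since $\msL'_x$ (Definition~\ref{dNewLocal}) and the factor $\prod_s|R_s/(R_s\cap T^\perp)|$ are tied to the fixed $T$. ``Absorbing the loss'' presupposes exactly the pointwise bound you say you lack.
\end{itemize}

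The missing observation is that nothing is lost at the $\Smed\cup\Slg$ places. By Lemmas~\ref{lMTUnr} and~\ref{lSubquotient}, at every $\pi_s(x)$ with $s\in\Smed\cup\Slg$, the local conditions on all residual pieces are contained in the unramified ones. These pieces are $M/T^\circ$, $T^\circ/T$, $T/T_\circ$ (or $(T/T_\circ)^\vee$), and $T_\circ^\vee$. So each of their Selmer groups lies in the classes unramified outside $\Vplac_0\cup\{\pi_s(x): s\in\Ssm\}$, a group of order at most $|M|^{C(1+|\Ssm|)}$.

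Combined with the Greenberg--Wiles step, this gives for every $x$ a bound of the shape \eqref{eStich2}:
\[
|\Sel(M,\msL'_x)|\le |M|^{C(1+|\Ssm|)}\,\mcT(T)\prod_s|R_s/(R_s\cap T^\perp)|.
\]
The only unbounded factor here is the one already on the right-hand side. By condition 4 of Definition~\ref{defn:goodgrid}, $|M|^{C|\Ssm|}=\exp\big(O(\log|M|\cdot(\log^{(2)}H)^{1/3+1/100})\big)$. This is beaten in both discardable cases:
\begin{itemize}
\item for $x \in X_{\textup{bad}}$, by the saving in Lemma~\ref{lChebSmall};
\item when \eqref{eq:Tcircnobetter} fails, by $\mcT(T)\le \exp(-\tfrac12(\log^{(2)}H)^{5/6})\max_U\mcT(U)$. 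This failure is a property of the whole grid, since these Tamagawa ratios are constant over $x$ with $g(\bar{n}(x))\neq 0$.
\end{itemize}

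A smaller point concerns your non-discardable case. There Lemma~\ref{lTamaDichotomy} makes the conditions on $T^\circ/T$ and $T/T_\circ$ unramified at all $v\notin\Vplac_0$, including the $\Ssm$ places. So these Selmer groups lie in classes unramified outside $\Vplac_0$ and have size $O(|M|^C)$. Allowing ramification at the $\Ssm$ places, as you wrote, would leave an $|M|^{|\Ssm|}$ factor that the good-grid bound does not permit.
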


\begin{proof}
Throughout our proof, all of our implicit local conditions will be the subquotient local conditions obtained from $(\msL_{xv}')_v$. We will proceed by showing a general upper bound for $\Sel(M)$, and will then derive the proposition from this bound. Taking Selmer groups gives an exact sequence
$$
\Sel(T) \rightarrow \Sel(M) \rightarrow \Sel(M/T).
$$
We use the Selmer group functor again and obtain the exact sequence
$$
\Sel(T^\circ/T) \rightarrow \Sel(M/T) \rightarrow \Sel(M/T^\circ).
$$
So far we have shown that
\begin{equation}
\label{eBound1}
|\Sel(M)| \leq |\Sel(T)| \cdot |\Sel(T^\circ/T)| \cdot |\Sel(M/T^\circ)|.
\end{equation}
We now apply the Greenberg--Wiles' formula \eqref{eq:Wiles} to incorporate $\Sel(T^\vee)$. Writing $\iota: T \rightarrow M$ for the natural inclusion, we see that $\iota^{-1}(\msL_{xv}') = \iota^{-1}(\msL_{xv}) + H^1_{\text{ur}}(G_v, T)$ for places $v$ of the shape $\pi_s(x)$ with $s \in \Smed \cup \Slg$. This readily implies
$$
A_{\iota^{-1}(\msL_{xv}')} = A_{\iota^{-1}(\msL_{xv})}, \quad \quad R_{\iota^{-1}(\msL_{xv}')} = 0.
$$
Thus Lemma \ref{lFormula} and equation \eqref{eRSub} of Lemma \ref{lPullBack} yield
$$
|\iota^{-1}(\msL_{xv}')| = |\iota^{-1}(\msL_{xv})| \cdot |R_s/(R_s \cap T^{\perp})|.
$$
Hence the Greenberg--Wiles' formula gives the bound 
\begin{equation}
\label{eBound2}
|\Sel(T)| \leq |M|^C \cdot |\Sel(T^\vee)| \cdot \mcT(T) \cdot \left(\prod_{s \in \Smed \cup \Slg} |R_s/(R_s \cap T^{\perp})|\right),
\end{equation}
where $\mcT(T)$ is taken with respect to the local conditions $\iota^{-1}(\msL_{xv})$. Now the inclusion $T_\circ \xhookrightarrow{} T$ dualizes to
$$
0 \rightarrow (T/T_\circ)^\vee \rightarrow T^\vee \rightarrow T_\circ^\vee \rightarrow 0.
$$
Therefore we get the upper bound
\begin{equation}
\label{eBound3}
|\Sel(T^\vee)| \leq |\Sel((T/T_\circ)^\vee)| \cdot |\Sel(T_\circ^\vee)|.
\end{equation}
Stitching the bounds \eqref{eBound1}, \eqref{eBound2} and \eqref{eBound3} together, we always have
\begin{multline}
\label{eStitch}
|\Sel(M)| \leq |M|^C \cdot \mcT(T) \cdot \prod_{s \in \Smed \cup \Slg} |R_s/(R_s \cap T^{\perp})| \cdot \\
|\Sel(T^\circ/T)| \cdot |\Sel(M/T^\circ)| \cdot |\Sel((T/T_\circ)^\vee)| \cdot |\Sel(T_\circ^\vee)|.
\end{multline}
At places $v = \pi_s(x)$ for $s \in \Smed \cup \Slg$, it follows from Lemma \ref{lMTUnr} and Lemma \ref{lSubquotient} that the local conditions on the four Selmer groups appearing on the right hand side of \eqref{eStitch} are always contained inside the unramified local conditions. Thus, equation \eqref{eStitch} implies
\begin{equation}
\label{eStich2}
|\Sel(M)| \leq |M|^C \cdot |M|^{C |\Ssm|} \cdot \mcT(T) \cdot \prod_{s \in \Smed \cup \Slg} |R_s/(R_s \cap T^{\perp})|
\end{equation}
for some $C > 0$.

Now to finish the proof, we distinguish two cases. Call $x \in X$ discardable if $x \in X_{\textup{bad}}$ (see Lemma \ref{lChebSmall} for the definition of $X_{\textup{bad}}$) or if $x$ violates the inequality \eqref{eq:Tcircnobetter} from Lemma \ref{lTamaDichotomy}. In this case, we employ the bound from equation \eqref{eStich2}, which, using Definition \ref{defn:goodgrid} (4), combines with the savings from respectively Lemma \ref{lChebSmall} and Lemma \ref{lTamaDichotomy} to give an acceptable saving for $H \geq \exp^{(2)}((\log C|M|)^C)$.

Now suppose that $x$ is not discardable. Then by Lemma \ref{lChebSmall}, $|\Sel(M/T^\circ)|$ and $|\Sel(T_\circ^\vee)|$ are bounded by $(C \cdot \textup{sup}(g))^{\log_2 |M|}$, and moreover $|\Sel((T/T_\circ)^\vee)|$ and $|\Sel(T^\circ/T)|$ are bounded by $|M|^C$ thanks to Lemma \ref{lTamaDichotomy}, as they carry the unramified local conditions. Hence equation \eqref{eStitch} gives the proposition.
\end{proof}

\subsection{Bilinear methods}
By considering the support of $g_{k_s}(p)$ in equation \eqref{eKeyPipeline}, we may pass without loss of generality to the subset $Y_s$ of $X_s$ consisting of those primes $p \in X_s$ that come with a choice of $\msL_p$ such that $(p, \msL_p)$ is in the $k_s^{th}$ strong equivalence class. By definition of strong equivalence, we may sensibly speak about $A_s$, $R_s$ and $\Omega_s$ in the subset $Y_s$. Take
\[
\mathscr{M} = \ker\left(H^1(G_\Q, M) \rightarrow \prod_{v \not \in \Vplac_0} H^1(I_v, M)\right) \oplus \bigoplus_{s \in \Ssm \cup \Smed \cup \Slg} A_s.
\]
We have a map $\Psi_x: \mathscr{M} \to H^1(G_\Q, M)$ given by
\[
(\phi_0, (a_s)_s) \mapsto \phi_0 + \sum_s \mathfrak{B}_{\pi_s(x)}(a_s).
\]

\begin{proposition}
\label{pBilinear}
There exists $C > 0$ depending only on the starting tuple $(K/\Q, \Vplac_0, e_0)$ such that the following holds. Let $X$ be an okay grid of height $H$.

Let $T$ be a submodule of $M$. For $s \in \Smed \cup \Slg$, partition $Y_s$ into equivalence classes according to the symbols at small primes, and take $X'_s$ to be one such class. Take $X' = \prod_{s \in \Smed \cup \Slg} X'_s$; this naturally corresponds to a subgrid of $X$. We assume that
\begin{equation}
\label{egNoVanish}
\sum_{x \in X'} g(\bar{n}(x)) \ge \exp^{(3)} \left(\frac{1}{4} \log^{(3)} H\right)^{-1} \sum_{x \in X} g(\bar{n}(x)).
\end{equation} 
Take $m := (\phi_0, (a_s)_s) \in \mathscr{M}$. We assume that the $a_s$ with $s \in \Smed \cup \Slg$ generate $T(-1)$. Take $Z$ to be the subset of $z \in X'$ such that $\Psi_z(m) \in \Sel\, M(\bar{n}(z))$. Then, if $\log^{(3)} H \geq C \log^{(3)}(C|M|)$,
\[
\sum_{z \in Z} g(\bar{n}(z)) \le \textup{sup}(g)^C \cdot \exp\left( C (\log |M|)^2\right) \cdot \prod_{s \in \Smed \cup \Slg} \frac{|R_s \cap T^{\perp}|}{|R_s|} \cdot \sum_{x \in X'} g(\bar{n}(x)).
\]
\end{proposition}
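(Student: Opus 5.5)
We will prove Proposition \ref{pBilinear} with the bilinear character sum method of \cite[Sections 5--7]{Smi22b}, adapted to the multiplicative weight $g$. The plan is to express the condition $\Psi_z(m) \in \Sel\, M(\bar{n}(z))$ at each large or medium prime $p_s = \pi_s(z)$ as a system of equations in $\QQ/\Z$. These equations must then be shown to behave like independent random conditions across the grid, each failing with probability about $|R_s \cap T^\perp|/|R_s|$.

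\textbf{Unwinding the local condition.} Fix $s \in \Smed \cup \Slg$. Since $\mathfrak{R}_{p_s}(\Psi_z(m)) = a_s \in A_s$, the class $\Psi_z(m)$ lies in $\msL_{z p_s}$ if and only if
\[
r\big((\Psi_z(m) - w)(\Frob\, p_s)\big) = 0 \quad\text{for all } r \in R_s,
\]
where $w$ is any lift of $a_s$ to $\msL_{zp_s}$. By the definition of $\Omega_s$, this is equivalent to
\[
r\Big(\big(\phi_0 + \textstyle\sum_{t \ne s} \mfB_{\pi_t(z)}(a_t)\big)(\Frob\, p_s)\Big) + r\big(\mfB_{p_s}(a_s)(\Frob\, p_s)\big) = \Omega_s(a_s, r).
\]
The term $r(\mfB_{p_s}(a_s)(\Frob\, p_s))$ is determined by the spin, which is fixed within the class. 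The terms involving $\phi_0$ and $\mfB_{\pi_t(z)}(a_t)$ for $t \in \Ssm$ are fixed on $X'$, because $X'_s$ was chosen so that the symbols at small primes are constant. What remains is a sum of symbols $\symb{\cdot}{\cdot}$, as in \cite[Section 3]{Smi22a}, pairing $p_s$ with $p_t$ for $t \in \Smed \cup \Slg$, $t \neq s$. The condition therefore becomes: a linear combination of pairwise symbols, with coefficients $r(a_t)$ coming from the pairing of $R_s$ with $a_t$, equals a fixed constant.

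\textbf{Restricting to $T^\perp$.} If $r \in R_s \cap T^\perp$, every coefficient vanishes, since the $a_t$ lie in $T(-1)$. Hence no savings are available from those $r$. For $r$ outside $T^\perp$, at least one $r(a_t)$ is nonzero. We expand the indicator of the system as
\[
\frac{1}{|R_s|}\sum_{r \in R_s} \exp\big(2\pi i\,(\text{expression in } r)\big).
\]
The term with $r \in T^\perp$ contributes exactly the main term $|R_s \cap T^\perp|/|R_s|$. The product over $s$ then gives a sum over tuples $(r_s)_s$. For any tuple with some $r_s \notin T^\perp$, we must prove cancellation relative to $\sum_{x \in X'} g(\bar{n}(x))$. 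Because the $a_t$ generate $T(-1)$, such an $r_s$ pairs nontrivially with some $a_t$. This gives a genuine oscillating factor $\symb{p_s}{p_t}$ in variables from two distinct large or medium axes.

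\textbf{Bounding the oscillating terms.} This is the main obstacle. There are two regimes, and in both the sieve weights $g_{k_s}(p)$ must be handled.
\begin{itemize}
\item When both axes lie in $\Slg$ and both are large, we apply a large sieve bilinear estimate for Hecke-type characters over $K$, in the form of \cite[Section 6]{Smi22b}. The weights $g$ only enter as bounded coefficients of size at most $\text{up}(g)/p$. Hypothesis \eqref{egNoVanish} and okay-grid condition 2 give enough mass on each axis to absorb the $\text{sup}(g)^C$ loss.
\item When one prime is not much larger than the other, we use effective Chebotarev, as in Lemma \ref{lChebSmall}, via Thorner--Zaman, inside a Siegel-zero-tolerant upper bound. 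The contribution of an exceptional field is placed in the main term, as before.
\end{itemize}
The expected difficulty is that the weight $g_{k_s}$ is not uniform on $X_s$, so the orthogonality used in \cite{Smi22b} is unavailable. To get around this, we will first apply Cauchy--Schwarz in the variable carrying the weight and then bound $\sum g(p)^2 \le \text{up}(g) \sum g(p)/p$.

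The number of character tuples $(r_s)_s$ is controlled by grouping the tuples according to which $t$ detects them. The pairing structure contributes a factor of at most $\exp(C(\log|M|)^2)$, and the number of nonvanishing $a_s$ is bounded by $\log_2|M|$ generators. Summing all contributions gives the stated bound once $\log^{(3)} H \ge C\log^{(3)}(C|M|)$.
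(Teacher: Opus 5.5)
Your proposal has a genuine gap at the step ``for any tuple with some $r_s \notin T^\perp$, we must prove cancellation'', which you justify by saying that $r_s$ pairs nontrivially with some $a_t$ and so produces an oscillating symbol. This is false for two reasons.

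\begin{itemize}
\item $r_s$ may pair nontrivially only with $a_s$ itself, and the pairing contains no symbol of $p_s$ with itself.
\item More seriously, the two cross terms attached to a pair $s \neq t$ are not independent. These are $r_s\bigl(\tau^{-1}(a_t(\symb{\tau\ovp_s}{\ovp_t}))\bigr)$ and the term with $s$ and $t$ exchanged. By the reciprocity law $\symb{\tau\ovp}{\ovq}=\zeta\cdot\tau\bigl(\symb{\tau^{-1}\ovq}{\ovp}\bigr)$, where $\zeta$ depends only on $\tau$ and the classes, these two terms can cancel identically.
\end{itemize}

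Here is a concrete instance. Take $M=\FF_2$ and $T=M$, with every $a_s$ equal to the nonzero element and every $r_s$ nonzero. Then $e(\langle m,r\rangle_x)$ is, up to a constant, a product over unordered pairs $\{s,t\}$ of $\bigl(\frac{\epsilon_t p_t}{p_s}\bigr)\bigl(\frac{\epsilon_s p_s}{p_t}\bigr)$, where $\epsilon_s$ depends only on the class of $p_s$. By quadratic reciprocity this character is constant on $X'$. Yet every $r_s$ lies outside $T^\perp$ and pairs nontrivially with every $a_t$.

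So the non-oscillating characters are not just $r=0$, and your sketch never identifies them or bounds their number. Your last paragraph names the factor $\exp(C(\log|M|)^2)$ but gives no mechanism producing it. Also, the number of nonvanishing $a_s$ is not bounded by $\log_2|M|$; only a generating subset is.

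The missing argument runs as follows.

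\begin{enumerate}
\item Use reciprocity (Lemma \ref{lTwoLines}, equation \eqref{eCharSum2}) to write $\langle m,r\rangle_x$, up to a constant on $X'$, as
$\sum_{s>t}\sum_{\tau\in B(s,t)}(r_s\cdot\tau a_t+\tau r_t\cdot a_s)\cdot\symb{\tau\ovp_t}{\ovp_s}$.
The genuine bilinear coefficients are these symmetrized ones.
\item Choose $S_{\textup{gen}}\subseteq\Smed\cup\Slg$ with $|S_{\textup{gen}}|\le\log_2|M|$ such that $\{a_t : t\in S_{\textup{gen}}\}$ generates $T(-1)$.
\item Suppose $r_s\cdot\tau a_t+\tau r_t\cdot a_s=0$ for all $s\notin S_{\textup{gen}}$, all $t\in S_{\textup{gen}}$ and all $\tau$. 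Then $(r_t)_{t\in S_{\textup{gen}}}$ determines every $r_s\in R_s/(R_s\cap T^\perp)$, so there are at most $|M|^{\log_2|M|}$ such degenerate characters. Bounding these trivially is exactly where $\exp(C(\log|M|)^2)$ comes from.
\item For every other $r$, some such coefficient is nonzero. Fix all coordinates except $p_s,p_t$ and pass to general symbols via \eqref{eConversion}. Proposition \ref{pLargeSieve} then saves a power of $\alpha_0(H)$, whatever the relative sizes of $p_s$ and $p_t$.
\item That saving beats $|\mathscr{R}|\le|M|^{(\log\log H)^2}$ together with the losses from \eqref{egNoVanish} and okay-grid condition 2.
\end{enumerate}

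Proposition \ref{pLargeSieve} already allows arbitrary bounded inner coefficients, so the weights $g$ are absorbed directly and Cauchy--Schwarz is unnecessary. No separate Chebotarev regime is needed either.
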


Recall that, given a submodule $T$ of $M$, we defined new local conditions $\msL'_x$ on $M$ as in Definition \ref{dNewLocal}. Since the $a_s$ are all in $T(-1)$ by assumption, it follows that $\Sel \, M(\bar{n}(x)) \subseteq \Sel(M, (\msL_{xv}')_v)$ for all $x \in X'$. Moreover, since the symbols at small primes are fixed inside $X'$, we have for each $p \in \Vplac_0$ and for each $p$ of the shape $\pi_s(x)$ for some $s \in \Ssm$ that
$$
\res_p \Psi_x(m) \in \msL_{x p}
$$
holds for either all $x \in X'$ or none of the $x \in X'$. Since the left hand side of Proposition \ref{pBilinear} is zero in the latter case, we may and will assume that $\res_p \Psi_x(m) \in \msL_{x p}$ for all $x \in X'$. Under this assumption, we always have 
\begin{equation}
\label{eAlwaysThere}
\Psi_x(m) \in \Sel(M, (\msL_{xv}')_v) \quad \textup{ for all } x \in X'.
\end{equation}
We now write down our local conditions explicitly.

\begin{lemma}
\label{lLocalConditions}
Let $m := (\phi_0, (a_s)_s) \in \mathscr{M}$, let $s \in \Smed \cup \Slg$ and let $p = \pi_s(x)$. Then $\res_p(\Psi_x(m)) \in \msL_{xp}$ if and only if
\begin{equation}
\label{eLocalCondition}
r\left(\left(\Psi_x(m) - \mathfrak{B}_p(a_s)\right)(\Frob\, p) \right) = \Omega_s\left(a_s, r\right)\quad\textup{for all } r \in R_s.
\end{equation}
\end{lemma}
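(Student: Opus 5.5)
The plan is to unwind the definition of $\Omega_{\msL}$ and reduce the local condition to a statement about the unramified part of $\Psi_x(m)$. Put $p = \pi_s(x)$ and $\phi := \res_p \Psi_x(m)$. Since $\Psi_x(m) = \phi_0 + \sum_{s'} \mfB_{\pi_{s'}(x)}(a_{s'})$, the class $\phi_0$ is unramified at $p$. The classes $\mfB_{\pi_{s'}(x)}(a_{s'})$ with $s' \neq s$ are unramified at $p$, since their ramification is confined to $\Vplac_0 \cup \{\pi_{s'}(x)\}$ and the grid axes are disjoint. The section property $\mathfrak{R}_p \circ \mfB_p = \mathrm{id}$ then gives $\mathfrak{R}_p(\phi) = a_s$, and hence $\phi - \res_p \mfB_p(a_s) \in H^1_{\text{ur}}(G_p, M)$.

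For the forward direction, assume $\phi \in \msL_{xp}$. We may take $w = \phi$ in the definition of $\Omega_{\msL_{xp}}$, which is legitimate because $a_s = \mathfrak{R}_p(\phi) \in A_s$. The definition then reads
\[\Omega_s(a_s, r) = r\big((\phi - \mfB_p(a_s))(\Frob\, p)\big)\]
for every $r \in R_s$, and this is exactly \eqref{eLocalCondition}. The evaluation at $\Frob\, p$ is well defined because the class being evaluated is unramified at $p$ and $I_p$ acts trivially on $M$.

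For the converse, assume \eqref{eLocalCondition} holds. Since $a_s \in A_s$, choose some $w \in \msL_{xp}$ with $\mathfrak{R}_p(w) = a_s$. The definition of $\Omega_s$ gives
\[r\big((w - \mfB_p(a_s))(\Frob\, p)\big) = \Omega_s(a_s, r)\]
for all $r \in R_s$. Subtracting this from \eqref{eLocalCondition} shows that $u := \phi - w$ lies in $H^1_{\text{ur}}(G_p, M)$ and satisfies $r(u(\Frob\, p)) = 0$ for all $r \in R_s$.

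The main point is to conclude $u \in \msL_{xp}$, and this is the step I expect to carry the argument. As in the proof of Lemma \ref{lPullBack}, $R_s$ is exactly the orthogonal complement of $H^1_{\text{ur}}(G_p, M) \cap \msL_{xp}$ under the perfect pairing
\[H^1_{\text{ur}}(G_p, M) \times (M^*)^{G_p} \to \QQ/\Z\]
given by evaluation at Frobenius. This relies on the mutual annihilation of the unramified classes under local Tate duality for $p \notin \Vplac_0$, and on the identification \eqref{eUrIdentification}. By perfectness, $u$ is therefore in $H^1_{\text{ur}}(G_p, M) \cap \msL_{xp}$, so $\phi = w + u \in \msL_{xp}$.

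The only other care needed is to check two things: that $\Omega_s$ is independent of the choice of $w$, which follows from the same orthogonality, and that strong equivalence lets us write $A_s, R_s, \Omega_s$ in place of $A_{\msL_{xp}}, R_{\msL_{xp}}, \Omega_{\msL_{xp}}$, which holds by the definition of $Y_s$.
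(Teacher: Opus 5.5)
Your proof is correct and follows the paper's argument. Both reduce membership in $\msL_{xp}$ to $\res_p\Psi_x(m) - w \in \msL_{xp}\cap H^1_{\textup{ur}}(G_p,M)$ for a lift $w$ of $a_s$, then conclude using the definition of $\Omega_s$ and the fact that $R_s$ is exactly the annihilator of $\msL_{xp}\cap H^1_{\textup{ur}}(G_p,M)$ under the pairing given by evaluation at $\Frob\,p$; your only difference is splitting the equivalence into two directions and taking $w=\res_p\Psi_x(m)$ in the forward one.
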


\begin{proof}
We start by observing that a necessary condition for $\res_p(\Psi_x(m)) \in \msL_{xp}$ is that $\mathfrak{R}_p(\Psi_x(m)) = a_s \in A_s$, which holds by definition of $\mathscr{M}$. Assuming that this necessary condition is satisfied, we have $\res_p(\Psi_x(m)) \in \msL_{xp}$ if and only if there exists $w \in \msL_{xp}$ such that $\mathfrak{R}_p(w) = a_s$ and
\begin{align}
\label{eNrCondition}
\res_p(\Psi_x(m)) - w \in \msL_{xp} \cap \ker(\mathfrak{R}_p).
\end{align}
We remark that for $w \in \msL_{xp}$ satisfying $\mathfrak{R}_p(w) = a_s$, the truth of \eqref{eNrCondition} is independent of the choice of $w$. Equation \eqref{eNrCondition} may be rephrased as
$$
r\left((\res_p(\Psi_x(m)) - w)(\Frob \, p)\right) = 0
$$
for all $r \in R_s$. Observing that
$$
r\left((\res_p(\Psi_x(m)) - w)(\Frob \, p)\right) = r\left(\left(\Psi_x(m) - \mathfrak{B}_p(a_s)\right)(\Frob\, p)\right) - \Omega_s\left(a_s, r\right)
$$
ends the proof of the lemma.
\end{proof}

For $r \in R_s \cap T^\perp$, the validity of equation \eqref{eLocalCondition} is independent of $z \in X'$. Indeed, this follows from our assumption that $a_s \in T(-1)$ and functoriality of $\mfB_p$, see \cite[Definition 3.10]{Smi22a}. Henceforth we shall assume that equation \eqref{eLocalCondition} holds for all $r \in R_s \cap T^\perp$. Set
$$
\mathscr{R} := \bigoplus_{s \in \Smed \cup \Slg} \frac{R_s}{R_s \cap T^\perp}.
$$
We now deduce from our assumption regarding $R_s \cap T^\perp$ and from Lemma \ref{lLocalConditions} that we may define, for every $x = (p_s)_s \in X'$, a pairing $\langle\,\,\,,\,\,\rangle_x: \mathscr{M} \times \mathscr{R} \to \Q/\Z$ by
\[
\langle (\phi_0, (a_s)_s),\, (r_s)_s \rangle_x = \sum_s r_s\left(\left( \Psi_x(\phi_0, (a_s)_s) - \mathfrak{B}_{p_s}(a_s)\right)\left(\Frob \,p_s\right)\right) - \Omega_s(a_s, r_s). 
\]
Recall that we have fixed $m := (\phi_0, (a_s)_s) \in \mathscr{M}$. Then our goal is to give a reasonable upper bound for
\begin{align}
\label{eFullCharSum}
\sum_{z \in Z} g(\bar{n}(z)) &= \sum_{x \in X'} \frac{g(\bar{n}(x))}{\# \mathscr{R}} \sum_{r \in \mathscr{R}} e\left(\langle m, r \rangle_x \right) \leq \frac{1}{\# \mathscr{R}} \sum_{r \in \mathscr{R}} \left|\sum_{x \in X'} g(\bar{n}(x))  e\left(\langle m, r \rangle_x \right)\right| \nonumber \\
&= \prod_{s \in \Smed \cup \Slg} \frac{|R_s \cap T^{\perp}|}{|R_s|} \sum_{r \in \mathscr{R}} \left|\sum_{x \in X'} g(\bar{n}(x))  e\left(\langle m, r \rangle_x \right)\right|.
\end{align}
Our strategy will be to further fix $r$. Depending on $(m, r)$, the pairing $\langle m, r \rangle_x$ may behave in rather different ways as a function of $x$: in some situations it will be constant, for example when $m = r = 0$, while in ``typical'' situations we expect cancellation. For this reason, we will now focus our efforts on understanding the pairing $\langle m, r \rangle_x$ better as a function of $x$.

Recall that all elements in $X_s'$ share the same class, and we denote by $\sigma_s \in \Gal(K/\Q)$ the unique element imposed by that class. Given $s, t \in \Smed \cup \Slg$, we define $B(s, t)$ to be a set of representatives for the collection of double cosets $\langle \sigma_s \rangle \backslash \Gal(K/\Q) / \langle \sigma_t \rangle$. We will also frequently use the notation $\symb{\ovp_s}{\ovp_t} \in (\mu_{e_0})_{\langle \sigma_s \rangle \cap \langle \sigma_t \rangle}$ to mean the alternative symbol defined in \cite[Definition 3.21]{Smi22a}. Here $\ovp_s$ and $\ovp_t$ are the primes above $p_s$ and $p_t$ corresponding to our fixed embeddings of $G_q$ inside $G_\Q$. We also fix a total ordering $<$ on $\Smed \cup \Slg$.

\begin{lemma}
\label{lTwoLines}
Let $m = (\phi_0, (a_s)_s) \in \mathscr{M}$ and let $r = (r_s)_s \in \mathscr{R}$. Then there is some $c \in \Q/\Z$ such that for all $x = (p_s)_s \in X'$
\begin{align}
\label{eCharSum1}
\langle m, r \rangle_x = c + \sum_{\substack{s, t \in \Smed \cup \Slg \\ s \neq t}} \sum_{\tau \in B(t, s)} r_s\left(\tau^{-1}\left(a_t\left(\symb{\tau \ovp_s}{\ovp_t}\right)\right)\right).
\end{align}
Moreover, there is also some $c' \in \Q/\Z$ such that for all $x \in X'$
\begin{align}
\label{eCharSum2}
\langle m, r \rangle_x = c' + \sum_{\substack{s, t \in \Smed \cup \Slg \\ s > t}} \sum_{\tau \in B(s, t)} \left(r_s \cdot \tau a_t + \tau r_t \cdot a_s\right) \cdot \symb{\tau \ovp_t}{\ovp_s}.
\end{align}
The first and second dot product denote the evaluation pairing
\[
\frac{R_s}{R_s \cap T^\perp} \times T(-1) \to \Hom(\mu_{e_0}, \Q/\Z)
\]
induced from the evaluation pairing $M^\ast \times M(-1) \to \Hom(\mu_{e_0}, \Q/\Z)$, while the third dot product denotes the evaluation pairing
$$
\Hom(\mu_{e_0}, \Q/\Z) \times \mu_{e_0} \to \Q/\Z.
$$
\end{lemma}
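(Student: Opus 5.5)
We compute the pairing $\langle m, r\rangle_x$ term by term, isolating the part that depends on the primes $p_s$ with $s \in \Smed \cup \Slg$. By definition,
\[
\langle m, r \rangle_x = \sum_s r_s\Big(\big(\phi_0 + \textstyle\sum_{t \ne s} \mfB_{p_t}(a_t)\big)(\Frob\, p_s)\Big) + \sum_{s \in \Ssm}(\cdots) - \Omega_s(a_s, r_s).
\]
Here $\Psi_x(m) - \mfB_{p_s}(a_s) = \phi_0 + \sum_{t \ne s}\mfB_{p_t}(a_t)$. We take the ramification sections at small primes, and the indices $t \in \Ssm$, as fixed across $X'$.

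\textbf{Constant terms.} The first step is to show that several contributions are independent of $x \in X'$, and so can be absorbed into $c$.
\begin{itemize}
\item The term $r_s(\phi_0(\Frob\, p_s))$ depends only on the class of $\lambda(p_s)$, since $\phi_0$ is unramified outside $\Vplac_0$. It depends on $p_s$ only through the splitting of $p_s$ in the field cut out by $\phi_0$. Since $\phi_0$ is ramified only in $\Vplac_0$, this is governed by the symbols at small primes (the class and spin data of the starting tuple), which are fixed on $X'$.
\item Likewise, the contributions $r_s(\mfB_{p_t}(a_t)(\Frob\, p_s))$ with $t \in \Ssm$ are fixed on $X'$, since the symbols at small primes are constant there.
\item The terms $\Omega_s(a_s, r_s)$ are fixed by strong equivalence.
\end{itemize}

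\textbf{Cross terms.} What remains is $r_s(\mfB_{p_t}(a_t)(\Frob\, p_s))$ for $s \ne t$ both in $\Smed \cup \Slg$. We express this value via the symbol formalism of \cite[Section 3]{Smi22a}. Decompose the evaluation of the ramification section at $\Frob\, p_s$ according to the double cosets $\langle \sigma_t\rangle \backslash \Gal(K/\Q)/\langle \sigma_s \rangle$. The cited result (the formula expressing $\mfB_{\ovp_t}(a)(\Frob\,\ovp_s)$ in terms of the alternative symbols $\symb{\tau\ovp_s}{\ovp_t}$, up to terms depending only on classes and spins) then gives
\[
\sum_{\tau \in B(t,s)} \tau^{-1}\Big(a_t\big(\symb{\tau\ovp_s}{\ovp_t}\big)\Big)
\]
modulo $x$-independent corrections. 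Applying $r_s$ and summing yields \eqref{eCharSum1}. Well-definedness modulo $R_s \cap T^\perp$ follows because $a_t \in T(-1)$: both sides then lie in $T$, and $R_s\cap T^\perp$ kills $T$. This step, matching the normalizations of $\mfB$ and the alternative symbol and checking that the error is truly constant on $X'$, is the main obstacle. I would rely directly on \cite[Proposition 3.20]{Smi22a} and the definition of the symbol, exactly as in the proof of Lemma \ref{lStrongWeak}.

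\textbf{Symmetrized form.} For \eqref{eCharSum2}, group the pairs $(s,t)$ and $(t,s)$ with $s > t$. Rewrite $r_s(\tau^{-1}(a_t(\cdot)))$ as $(\tau r_s \cdot a_t)$ evaluated on the symbol, and reindex $\tau \mapsto \tau^{-1}$, which swaps $B(t,s)$ and $B(s,t)$. Then apply the reciprocity law for alternative symbols in \cite[Section 3]{Smi22a}. This law relates $\symb{\tau\ovp_s}{\ovp_t}$ to $\symb{\tau^{-1}\ovp_t}{\ovp_s}$ (after Galois action) up to a factor depending only on classes and spins, and that factor is absorbed into $c'$. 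The two contributions then combine into $(r_s\cdot\tau a_t + \tau r_t\cdot a_s)\cdot\symb{\tau\ovp_t}{\ovp_s}$, using Galois-equivariance of the evaluation pairing $M^*\times M(-1)\to\Hom(\mu_{e_0},\Q/\Z)$.
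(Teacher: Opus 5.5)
Your proposal is correct and follows essentially the same route as the paper. You absorb the $x$-independent contributions (from $\phi_0$, from the small-prime sections, and from $\Omega_s$) into $c$, rewrite the cross terms $r_s(\mfB_{p_t}(a_t)(\Frob\, p_s))$ in terms of the alternative symbols, and then split into $s>t$ and $s<t$, relabel, and apply reciprocity together with Galois-equivariance to get \eqref{eCharSum2}. The one slip is a citation: the cross-term formula is \cite[Proposition 3.22]{Smi22a}, which the paper applies as an exact identity with no correction terms. It is not Proposition 3.20, which is the spin statement comparing $\mfB_{p}(a)(\Frob\, p)$ across primes of equal spin in Lemma \ref{lStrongWeak} and does not give cross-term formulas; and the paper takes the reciprocity law from \cite[eq.~(5.1)]{Smi22b}.
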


\begin{proof}
In order to prove equation \eqref{eCharSum1}, we start with the identity 
\begin{multline*}
\langle m, r \rangle_x = \sum_s r_s\left(\phi_0(\Frob \, p_s) + \sum_{t \in \Ssm} \mathfrak{B}_{p_t}(a_t)(\Frob \, p_s)\right) - \sum_s \Omega_s(a_s, r_s) \\
+ \sum_{\substack{s, t \in \Smed \cup \Slg \\ s \ne t}} r_s\left(\mathfrak{B}_{p_t}(a_t)(\Frob \, p_s)\right).
\end{multline*}
Then we define 
$$
c := \sum_s r_s\left(\phi_0(\Frob \, p_s) + \sum_{t \in \Ssm} \mathfrak{B}_{p_t}(a_t)(\Frob \, p_s)\right) - \sum_s \Omega_s(a_s, r_s).
$$
Since all $p_s \in X_s'$ share the same strong equivalence class and have the same behavior with respect to $\Vplac_0$, we see that $c$ depends only on $m$ and $r$. Then we conclude that
$$
\langle m, r \rangle_x = c + \sum_{s \ne t} r_s\left(\mathfrak{B}_{p_t}(a_t)(\Frob \, p_s)\right) = c + \sum_{s \ne t} \sum_{\tau \in B(t, s)} r_s\left(\tau^{-1}\left(a_t\left(\symb{\tau \ovp_s}{\ovp_t}\right)\right)\right)
$$
by \cite[Proposition 3.22]{Smi22a}, which proves \eqref{eCharSum1}.

It remains to prove \eqref{eCharSum2}, and we will do so by using reciprocity and \eqref{eCharSum1}. We start by recalling the relevant reciprocity law for us \cite[eq.~(5.1)]{Smi22b}, namely
$$
\symb{\tau \ovp}{\ovq} = \zeta \cdot \tau\left(\symb{\tau^{-1} \ovq}{\ovp}\right),
$$
where $\zeta$ depends only on $\tau$, the class of $p$ and the class of $q$. We split the sum
\begin{multline}
\label{eSplit}
\sum_{s \ne t} \sum_{\tau \in B(t, s)} r_s\left(\tau^{-1}\left(a_t\left(\symb{\tau \ovp_s}{\ovp_t}\right)\right)\right) = \sum_{s > t} \sum_{\tau \in B(t, s)} r_s\left(\tau^{-1}\left(a_t\left(\symb{\tau \ovp_s}{\ovp_t}\right)\right)\right) + \\
\sum_{s < t} \sum_{\tau \in B(t, s)} r_s\left(\tau^{-1}\left(a_t\left(\symb{\tau \ovp_s}{\ovp_t}\right)\right)\right).
\end{multline}
For the terms $s > t$, we note that inversion defines a bijection between $B(t, s)$ and $B(s, t)$. Hence we obtain for those terms
\begin{align}
\label{eST}
\sum_{s > t} \sum_{\tau \in B(t, s)} r_s\left(\tau^{-1}\left(a_t\left(\symb{\tau \ovp_s}{\ovp_t}\right)\right)\right) 
&= \sum_{s > t} \sum_{\tau \in B(s, t)} r_s\left(\tau\left(a_t\left(\symb{\tau^{-1} \ovp_s}{\ovp_t}\right)\right)\right) \nonumber \\
&= \sum_{s > t} \sum_{\tau \in B(s, t)} \left(c_{s, t, \tau} + r_s\left(\tau\left(a_t\left(\tau^{-1}\left(\symb{\tau \ovp_t}{\ovp_s}\right)\right)\right)\right)\right) \nonumber \\
&= c_1 + \sum_{s > t} \sum_{\tau \in B(s, t)} (r_s \cdot \tau a_t) \cdot \left(\symb{\tau \ovp_t}{\ovp_s}\right)
\end{align}
for some constant $c_{s, t, \tau}$ depending only on $s, t, \tau, m, r$ and for some constant $c_1$ depending only on $m, r$, where we used reciprocity in the middle equation and where we used the definition $(\tau a_t)(x) = \tau(a_t(\tau^{-1}(x)))$ in the last equation.

For the terms $s < t$, we switch the roles of the letters $s$ and $t$ to obtain
\begin{align}
\label{eTS}
\sum_{s < t} \sum_{\tau \in B(t, s)} r_s\left(\tau^{-1}\left(a_t\left(\symb{\tau \ovp_s}{\ovp_t}\right)\right)\right) &= \sum_{s > t} \sum_{\tau \in B(s, t)} r_t\left(\tau^{-1}\left(a_s\left(\symb{\tau \ovp_t}{\ovp_s}\right)\right)\right) \nonumber \\
&= \sum_{s > t} \sum_{\tau \in B(s, t)} (\tau r_t \cdot a_s) \cdot \left(\symb{\tau \ovp_t}{\ovp_s}\right)
\end{align}
because $\tau r_t(x) = r_t(\tau^{-1}(x))$ by definition. Now we set $c' := c + c_1$. Then combining equations \eqref{eCharSum1}, \eqref{eSplit}, \eqref{eST}, \eqref{eTS} ends the proof of the lemma.
\end{proof}

We will now state the relevant cancellation result in our setting; this is a variant of the classical large sieve. In order to state this result, we will use the following definitions.

\begin{mydef}
Take $\sigma, \tau \in \Gal(K/\Q)$. For $\rho \in \Gal(K/\Q)$, we define the composite field
$$
L(\rho) := K^{\langle \sigma \rangle} \rho(K^{\langle \tau \rangle}).
$$
We also define $m_\rho$ to be the maximal divisor of $e_0$ such that $\mu_{m_\rho}$ is a subgroup of $L(\rho)^\times$. This is easily seen to depend only on the class of $\rho$ in the double coset space $\langle \sigma \rangle \backslash \Gal(K/\Q) / \langle \tau \rangle$. The group of symbols $G(\sigma, \tau)$ attached to $\sigma, \tau$ is by definition
$$
G(\sigma, \tau) = \left\{ f \in \mathrm{Map}(B(\sigma, \tau), \mu_{e_0}) : f(\rho) \in \mu_{m_\rho} \text{ for all } \rho \right\}.
$$
\end{mydef}

\begin{mydef}
Recall the definition of (general symbols) $\symb{\ovp}{\ovq}_{\text{gen}}$ in \cite[Definition 3.13]{Smi22a}. Via \cite[Proposition 3.17]{Smi22a}, these can naturally be viewed as elements of $G(\sigma, \tau)$ (with $\sigma = \Frob \, \ovp$ and $\tau = \Frob \, \ovq$). 
\end{mydef}

If $\ovp$ and $\ovq$ lie over different primes of $\Q$, then general symbols are related to symbols via the formula
\begin{align}
\label{eConversion}
\symb{\ovp}{\ovq} = \symb{\ovp}{\ovq}_{\text{gen}}(1)^{m_{\text{id}}/e_0},
\end{align}
where $m_{\text{id}}/e_0$ denotes the inverse of the isomorphism $(\mu_{e_0})_{\langle \sigma \rangle \cap \langle \tau \rangle} \cong \mu_{m_{\text{id}}}$ given by raising to the $e_0/m_{\text{id}}$ power. 

\begin{proposition}
\label{pLargeSieve}
Let $(K/\Q, \Vplac_0, e_0)$ be an unpacked starting tuple. Then there exists $C > 0$ such that the following holds. 

Let $[\ovp_0]$ and $[\ovq_0]$ be classes. Let $X_1$ be a finite subset of $[\ovp_0]$ and let $X_2$ be a finite subset of $[\ovq_0]$. We assume that no two primes in $X_1 \cup X_2$ lie over the same prime of $K(\Vplac_0)$. Define
$$
N_i := \max_{\ovp \in X_i} \ [\Z : \ovp \cap \Q].
$$
Then we have for all nonzero $\rho \in G(\sigma_s, \sigma_t)^\ast$ and all coefficients $d_{\ovq} \in \C$ of absolute value bounded by $1$
$$
\sum_{\ovp \in X_1} \left|\sum_{\ovq \in X_2} d_{\ovq} e\left(\rho(\symb{\ovp}{\ovq}_{\textup{gen}})\right)\right| \leq \frac{CN_1N_2}{\min(N_1, N_2)^{\frac{1}{3[K : \Q] + 3}}}.
$$
\end{proposition}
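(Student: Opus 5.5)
The plan is the classical large sieve argument in the form used for bilinear sums of power residue symbols (Heath-Brown, and its extension in \cite{Smi22b}), adapted to the general symbols of \cite{Smi22a}.

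The first step is to linearize. Fix a nonzero $\rho \in G(\sigma_s,\sigma_t)^\ast$. By \cite[Proposition 3.17]{Smi22a} and the definition of general symbols, the map $\ovp \mapsto \rho(\symb{\ovp}{\ovq}_{\text{gen}})$, for fixed $\ovq$, is a character $\chi_{\ovq}$ of a ray class group of $L = K^{\langle\sigma\rangle}$ (or of a composite $L(\rho')$ over the relevant double cosets). Its conductor is divisible by $\ovq \cap L$ and by primes over $\Vplac_0$, and otherwise only by a bounded modulus. Nontriviality of $\rho$ should make $\chi_{\ovq}$ a character of order dividing $e_0$ that is genuinely ramified at $\ovq$. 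Since no two primes of $X_1\cup X_2$ lie over the same prime of $K(\Vplac_0)$, the characters $\chi_{\ovq}\overline{\chi_{\ovq'}}$ with $\ovq\ne\ovq'$ are then nontrivial of conductor norm roughly $\ll N_2^2$.

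The second step is to apply H\"older's inequality to the inner sum. Raise it to an even power $2\ell$ and write
\[
\sum_{\ovp}\Big|\sum_{\ovq} d_{\ovq}\chi_{\ovq}(\ovp)\Big| \le |X_1|^{1-1/(2\ell)}\Big(\sum_{\ovp\in[\ovp_0],\,N\ovp\le N_1}\Big|\sum_{\ovq} d_{\ovq}\chi_{\ovq}(\ovp)\Big|^{2\ell}\Big)^{1/(2\ell)}.
\]
Here I extend the $\ovp$-sum to all ideals of $L$ in a box of norm up to about $N_1$, using positivity. Expanding gives a sum over $2\ell$-tuples of $\ovq$'s of the character $\prod_i \chi_{\ovq_i}\overline{\chi_{\ovq'_i}}$ summed over ideals.

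The third step splits the tuples into two kinds. For the diagonal tuples, where the product character is trivial, there are $O_\ell(N_2^{\ell})$ of them, and they contribute at most $N_1 N_2^\ell$. For the remaining tuples, the character is nontrivial with conductor of norm up to about $N_2^{2\ell}$. Pólya--Vinogradov over the number field $L$ (a bound for Hecke character sums over ideals in a box, with conductor $f$) gives at most $f^{1/2}\log f \ll N_2^{\ell}$ for each. There are $N_2^{2\ell}$ such tuples, so they contribute $N_2^{3\ell}$, up to factors depending on $[K:\Q]$. The exponent of $f$ degrades with the degree: Pólya--Vinogradov in degree $d$ gives roughly $f^{1/2}$ times $N^{1-1/d}$-type losses. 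This is the source of the exponent $\tfrac{1}{3[K:\Q]+3}$.

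The last step optimizes $\ell$ in the two regimes $N_1\ge N_2$ and $N_1<N_2$. In the second regime I swap the roles of $\ovp$ and $\ovq$ using the reciprocity law \cite[eq.~(5.1)]{Smi22b}. That law expresses $\symb{\ovp}{\ovq}$ through $\symb{\ovq'}{\ovp}$ up to a factor depending only on classes, and this factor can be absorbed into the coefficients. Choosing $\ell$ so that $N_{\max}^{\ell}$ roughly matches $N_{\min}^{\,\cdot}$ then gives the saving $\min(N_1,N_2)^{-1/(3[K:\Q]+3)}$.

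I expect the main obstacle to be the nontriviality claim in the first step: that a nonzero $\rho$ makes $\prod\chi_{\ovq_i}\overline{\chi_{\ovq'_i}}$ nontrivial unless the tuple is diagonal. This needs the structure of the general symbol as a function of $\ovq$ under reciprocity, namely that ramification at $\ovq$ is detected by $\rho$ on $\mu_{m_\rho}$. It also needs care because different primes $\ovq$ may induce the same prime of $L$. The hypothesis about distinct primes of $K(\Vplac_0)$ is meant exactly to rule this out.
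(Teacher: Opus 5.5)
\textbf{There is a genuine gap, in the range $N_1\approx N_2$.} Carry out your own count. Use your inputs: off-diagonal products have conductor norm $\ll N_2^{2\ell}$, and their sums are bounded by $F^{1/2}$. Up to logarithms you get
\[
\sum_{\ovp\in X_1}\Bigl|\sum_{\ovq\in X_2}d_{\ovq}\chi_{\ovq}(\ovp)\Bigr|\ll N_1^{1-\frac{1}{2\ell}}\bigl(N_1N_2^{\ell}+N_2^{3\ell}\bigr)^{\frac{1}{2\ell}}\ll N_1N_2^{1/2}+N_1^{1-\frac{1}{2\ell}}N_2^{3/2}.
\]
The second term increases with $\ell$, so $\ell=1$ is optimal and your final optimization step does nothing. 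The saving over the trivial bound $N_1N_2$ is then only $(N_2/N_1)^{1/2}$.

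For $N_1=N_2$ there is no saving at all. Swapping $\ovp$ and $\ovq$ by reciprocity cannot fix this, because that case is symmetric. Even in degree one, your argument gives the saving $\min(N_1,N_2)^{-1/6}$ only when $\max(N_1,N_2)\ge\min(N_1,N_2)^{4/3}$. The comparable range cannot be avoided either: the grid axes $X'_s,X'_t$ fed into this statement in the proof of Proposition~\ref{pBilinear} can have comparable size.

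\textbf{The missing idea: lengthen one variable by multiplicativity before applying Cauchy--Schwarz.} This is the second ingredient of the Friedlander--Iwaniec double-oscillation bound.
\begin{enumerate}
\item By the same reciprocity law you invoke, for fixed $\ovp$ the function $\ovq\mapsto e(\rho(\symb{\ovp}{\ovq}_{\textup{gen}}))$ is, up to a factor depending only on the classes, a ray class character $\psi_{\ovp}$ in $\ovq$, ramified only above $p$ and $\Vplac_0$.
\item After Cauchy--Schwarz in $\ovp$, write $|\sum_{\ovq}d_{\ovq}\psi_{\ovp}(\ovq)|^2=c_{\ovp}\bigl(\sum_{\ovq}d_{\ovq}\psi_{\ovp}(\ovq)\bigr)^2$ with $|c_{\ovp}|=1$. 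The inner sum becomes a sum of $\psi_{\ovp}(\mathfrak{b})$ over products $\mathfrak{b}=\mathfrak{q}_1\mathfrak{q}_2$ of norm up to about $N_2^2$.
\item Apply Cauchy--Schwarz in $\mathfrak{b}$, extend $\mathfrak{b}$ to all ideals of that norm by positivity, and bound $\sum_{\mathfrak{b}}\psi_{\ovp_1}\overline{\psi_{\ovp_2}}(\mathfrak{b})$, whose conductor has norm $\ll N_1^2$.
\end{enumerate}

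\textbf{Where the exponent comes from.} For sums over ideals of norm at most $x$ in a field of degree $n$, the correct replacement for P\'olya--Vinogradov is Landau's bound $x^{\frac{n-1}{n+1}}F^{\frac{1}{n+1}}(\log F)^n$, not $F^{1/2}\log F$. Granting conductor sizes as above:
\begin{itemize}
\item your estimate becomes $N_1N_2\bigl(N_2^{-1/2}+(N_2/N_1)^{\frac{1}{n+1}}\bigr)$;
\item the lengthened estimate gives $N_1N_2\bigl(N_1^{-1/4}+(N_1^{1/2}/N_2)^{\frac{1}{n+1}}\bigr)$.
\end{itemize}
The two meet at $N_1=N_2^{4/3}$, where both are $\ll N_1N_2\cdot N_2^{-\frac{1}{3(n+1)}}$. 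This crossover, not a ``$f^{1/2}$ times $N^{1-1/d}$'' loss, is the source of $\frac{1}{3[K:\Q]+3}$.

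For comparison, the paper does not reprove the statement. It quotes \cite[Theorem~5.2]{Smi22a}, whose proof \cite[p.~39]{Smi22a} already allows the coefficients $d_{\ovq}$.

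Also, the $K(\Vplac_0)$ hypothesis does not prevent two elements of $X_2$ from lying over the same prime of your field $L$. This is harmless, since such multiplicities are bounded and diagonal tuples can be counted by rational primes. But the hypothesis does not rule this out, contrary to what you say.
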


\begin{proof}
Without the coefficients $d_{\ovq}$, this is nothing more than a simplified version of \cite[Theorem 5.2]{Smi22a}. Inspecting the proof of \cite[Theorem 5.2]{Smi22a}, one sees that the more general result with coefficients $d_{\ovq}$ is actually proven \cite[p.~39]{Smi22a}.
\end{proof}

We are now ready to prove Proposition \ref{pBilinear}.

\begin{proof}[Proof of Proposition \ref{pBilinear}]
We start by picking a subset $S_{\text{gen}} \subseteq \Smed \cup \Slg$ such that the set $\{a_s : s \in S_{\text{gen}}\}$ generates $T(-1)$ and such that $|S_{\text{gen}}| \leq \log_2 |M|$. Let $r = (r_s)_s \in \mathscr{R}$. We now distinguish two cases.

\paragraph{Case 1.} Assume that for all $s \not \in S_{\text{gen}}$ and for all $t \in S_{\text{gen}}$ and for all $\tau \in \Gal(K/\Q)$ we have 
$$
r_s \cdot \tau a_t + \tau r_t \cdot a_s = 0.
$$
If we fix $r_t$ for $t \in S_{\text{gen}}$, then, using that $\{a_s : s \in S_{\text{gen}}\}$ generates $T(-1)$, we see that the above equation uniquely determines $r_s$ for all $s \in (\Smed \cup \Slg) - S_{\text{gen}}$. Thus there are at most $|M|^{|S_{\text{gen}}|}$ possibilities for $(r_s)_s$ in this case. Bounding the sum in equation \eqref{eFullCharSum} trivially for all such $(r_s)_s$, we stay within the bound of Proposition \ref{pBilinear}.

\paragraph{Case 2.} Assume that there exists $s \not \in S_{\text{gen}}$, $t \in S_{\text{gen}}$ and $\tau \in \Gal(K/\Q)$ such that
$$
r_s \cdot \tau a_t + \tau r_t \cdot a_s \neq 0.
$$
We apply Lemma \ref{lTwoLines}, and we insert the formula \eqref{eCharSum2} into \eqref{eFullCharSum}. Here we take care to include $\tau$ in our choice of representative set $B(s, t)$ for the double cosets $\langle \sigma_s \rangle \backslash \Gal(K/\Q) / \langle \sigma_t \rangle$.

We use the identity \eqref{eConversion} and the identity 
$$
\symb{\tau \ovp_t}{\ovp_s}_{\text{gen}}(1) = \tau\left(\tau^{-1}\left(\symb{\tau \ovp_t}{\ovp_s}_{\text{gen}}(1)\right)\right) = \tau\left(\symb{\ovp_t}{\ovp_s}_{\text{gen}}(\tau^{-1})\right)
$$
from \cite[Proposition 3.16 (3)]{Smi22a} to convert between symbols and general symbols. Now the desired upper bound follows from Proposition \ref{pLargeSieve}, which proves equidistribution of the general symbol $\symb{\ovp_t}{\ovp_s}_{\text{gen}}(\tau^{-1})$ as $\tau$ runs through $B(s, t)$ (hence $\tau^{-1}$ runs through $B(t, s)$), where we use the assumption \eqref{egNoVanish} and Definition \ref{defn:goodgrid} (2) to write our final bound in terms of $\sum_{x \in X'} g(\bar{n}(x))$. 
\end{proof}

\subsection{Proof of Theorem \ref{thm:charsum}}
We now combine Proposition \ref{pCheb} and Proposition \ref{pBilinear} to prove Theorem \ref{thm:charsum}.

\begin{proof}[Proof of Theorem \ref{thm:charsum}]
Throughout the proof, $C$ denotes a constant depending only on the starting tuple, $c$ and $k$.

Firstly, let $X$ be an okay grid of whose height $H$ satisfies $\log^{(3)} H \geq C \log^{(3)}(C|M|)$. For $s \in \Smed \cup \Slg$, recall that $X'_s$ is the subset of $Y_s$ cut out by symbols at the small primes and that $X' = \prod_{s \in \Smed \cup \Slg} X'_s$. Moreover, if the subgrid $X'$ does not satisfy \eqref{egNoVanish}, then we use Definition \ref{defn:goodgrid} (1) and the trivial bound
$$
|\Sel \,M(\bar{n}(x))| \leq |M|^{C + r} \leq |M|^{C + (\log \log H)^2},
$$
which stays within the claimed bound even after summing over the at most $|M|^{C r |\Ssm|}$ possible subgrids $X'$. We henceforth assume that $X'$ satisfies \eqref{egNoVanish}.

In this case, we have
$$
\sum_{x \in X'} g(\bar{n}(x)) \cdot |\Sel \,M(\bar{n}(x))| \leq \sum_{T \subseteq M} \sum_{\substack{m = (\phi_0, (a_s)_s) \in \mathscr{M} \\ \langle a_s : s \in \Smed \cup \Slg \rangle = T(-1)}} \sum_{x \in X'} g(\bar{n}(x)) \cdot \mathbf{1}_{\Psi_x(m) \in \Sel\, M(\bar{n}(x))}.
$$
We apply Proposition \ref{pBilinear} to each inner sum. This yields the bound
$$
\sum_{T \subseteq M} \sum_{\substack{m = (\phi_0, (a_s)_s) \in \mathscr{M} \\ \langle a_s : s \in \Smed \cup \Slg \rangle = T(-1)}} \textup{sup}(g)^C \cdot \exp\left( C (\log |M|)^2\right) \cdot \prod_{s \in \Smed \cup \Slg} \frac{|R_s \cap T^{\perp}|}{|R_s|} \cdot \sum_{x \in X'} g(\bar{n}(x)).
$$
Recall that we assumed throughout that equation \eqref{eLocalCondition} holds for all $r \in R_s \cap T^\perp$, i.e.
\begin{equation}
\label{eLocalCondition2}
r\left(\left(\Psi_x(\phi_0, (a_s)_s) - \mathfrak{B}_p(a_s)\right)(\Frob\, p) \right) = \Omega_s\left(a_s, r\right).
\end{equation}
Note that if $\langle a_s : s \in \Smed \cup \Slg \rangle = T(-1) = \langle b_s : s \in \Smed \cup \Slg \rangle$ and $a_s = b_s$ for $s \in \Ssm$, we have that
$$
r\left(\left(\Psi_x(\phi_0, (a_s)_s) - \mathfrak{B}_p(a_s)\right)(\Frob\, p) \right) = r\left(\left(\Psi_x(\phi_0, (b_s)_s) - \mathfrak{B}_p(b_s)\right)(\Frob\, p) \right)
$$
for all $x \in X'$. Hence the number of $(a_s)_{s \in \Smed \cup \Slg}$ satisfying equation \eqref{eLocalCondition2} is at most the number of $(a_s)_{s \in \Smed \cup \Slg}$ such that $\Omega_s\left(a_s, r\right) = 0$ for all $r \in R_s \cap T^\perp$. Summing over all such $a_s$ with $s \in \Smed \cup \Slg$, we recognize precisely the Tamagawa ratio $\mcT_v(T)$ at the medium and large primes by Lemma \ref{lFormula} and Lemma \ref{lPullBack}. Incurring a loss of $|M|^{2C + 2|\Ssm|}$ at the small primes from the possible $a_s$ for $s \in \Ssm$ and from the Tamagawa ratio, we then sum over all subgrids $X'$ satisfying \eqref{egNoVanish} to obtain the theorem in the case of okay grids.

Secondly, let $X$ be a $\frac{c}{2k \cdot \textup{low}(g)}$-good grid of height $H \geq \exp^{(2)}\big((\log C|M|)^C\big)$. We start as before and incorporate the condition \eqref{eAlwaysThere} to arrive at the bound
\begin{multline*}
\sum_{T \subseteq M} \sum_{\substack{m = (\phi_0, (a_s)_s) \in \mathscr{M} \\ \langle a_s : s \in \Smed \cup \Slg \rangle = T(-1)}} \textup{sup}(g)^C \cdot \exp\left( C (\log |M|)^2\right) \cdot \\
\prod_{s \in \Smed \cup \Slg} \frac{|R_s \cap T^{\perp}|}{|R_s|} \cdot \sum_{x \in X} g(\bar{n}(x)) \cdot \mathbf{1}_{\Psi_x(m) \in \Sel(M, (\msL_{xv}')_v)}.
\end{multline*}
after summing over the various $X'$ (note that we may simply omit those $X'$ failing \eqref{eAlwaysThere}, as the sum is empty in that case as argued immediately after Proposition \ref{pBilinear}).

Pulling the sum over $m$ to the inside, we recognize exactly $|\Sel(M, (\msL_{xv}')_v)|$, and hence an application of Proposition \ref{pCheb} ends the proof.
\end{proof}

\subsection{Proof of Theorem \ref{thm:main}}
\label{ssec:proof_main}
We have now established the main results of Section \ref{sSieve}, Section \ref{sGrid}, and Section \ref{sChar}, which are respectively Theorem \ref{tSieve}, Proposition \ref{prop:gridding} and Theorem \ref{thm:charsum}. Our final task is to combine them in order to prove our main theorem.

\begin{proof}[Proof of Theorem \ref{thm:main}]
Let $X$ be a set, let $M$ be a $G_\Q$-module and let $H > 1$ be a real number. Let $h: X \rightarrow \R_{\geq 0}$ be a height function. Let $\{M_x : x \in X\}$ be a constant-module family with effectively equidistributed local conditions. Let $\tilde{g}: \prod' P_p \rightarrow \R_{>0}$ also be as in Definition \ref{defn:effective-constant}. Our task is then to show that
$$
\sum_{x \in X_H} \left(\frac{\# \Sel\, M_x}{\mcTbnd(M_x)}\right)^{\kappa} \tilde{g}(\msL_x)^{\nu} \,\le\, \exp \exp (C\kappa) \sum_{x \in X_H} \tilde{g}(\msL_x)^{\nu},
$$
where we recall that $X_H := \{x \in X : h(x) \leq H\}$, and $\nu$ is at most $\kappa$.

We first note that it suffices to prove the result under the assumption that $\kappa \in \Z_{\geq 0}$. To see this, suppose $\kappa \in \R_{\geq 0} - \Z$, and take $\kappa_0 := \lceil \kappa \rceil$. Applying H\"{o}lder's inequality with $p = \kappa_0/\kappa$, $q = \kappa_0/(\kappa_0 - \kappa)$ gives
$$
\sum_{x \in X_H} \left(\frac{\# \Sel\, M_x}{ \mcTbnd(M_x)}\right)^{\kappa} \tilde{g}(\msL_x)^{\nu} \le \left(\sum_{x \in X_H} \left(\frac{\# \Sel\, M_x}{ \mcTbnd(M_x)}\right)^{\kappa_0}\tilde{g}(\msL_x)^{\nu}\right)^{\frac{\kappa}{\kappa_0}} \left(\sum_{x \in X_H} \tilde{g}(\msL_x)^{\nu}\right)^{\frac{\kappa_0 - \kappa}{\kappa_0}}
$$
and hence, if the result holds for the integer $\kappa_0$, we find that
\[
\sum_{x \in X_H} \left(\frac{\# \Sel\, M_x}{ \mcTbnd(M_x)}\right)^{\kappa} \tilde{g}(\msL_x)^{\nu} \le \exp \exp(C \kappa_0)^{\kappa/\kappa_0} \sum_{x \in X_H} \tilde{g}(\msL_x)^{\nu}.
\]
A small computation shows that $\exp\exp(C\kappa_0)^{\kappa/\kappa_0} \le \exp\exp(e^C \kappa)$, so this is acceptable.

Henceforth we assume that $\kappa \in \Z_{\geq 0}$. We start by an application of Theorem \ref{tSieve} with the function $\lambda$ mapping $x \in X_H$ to $(\prod_{\msL_{xp} \neq H^1_{\text{ur}}(G_p, M)} p, (\msL_{xp})_p)$ and the function $S$ being
$$
S(n, (\msL_p)_p) := \left(\frac{\# \Sel(M, (\msL_p)_p)}{\mcTbnd(M, (\msL_p)_p)}\right)^{\kappa} \tilde{g}((\msL_p)_p)^{\nu}.
$$
The assumption \eqref{eLevel} follows from Definition \ref{defn:effective-constant} (1), while the assumption \eqref{eSieveAssu} readily follows from Definition \ref{defn:effective-constant} (2) and our assumptions on $\tilde{g}$.

Hence Theorem \ref{tSieve} produces the upper bound
\begin{multline}
\label{ePostSieve}
\sum_{x \in X_H} \left(\frac{\# \Sel\, M_x}{\mcTbnd(M_x)}\right)^{\kappa} \tilde{g}(\msL_x)^{\nu} \le \\
\# X_H e^{e^{C \kappa}} \prod_{C^\kappa < p \leq H^{c/3}} (1 - m(p)) \sum_{\substack{(d, \mathcal{D}) \\ d \leq H^{c/3}}} S(d, \mathcal{D}) \prod_{p \mid d} \mu_p(\msL_p) \prod_{\substack{p \mid d \\ p > C^\kappa}} (1 - m(p))^{-1},
\end{multline}
where $C > 0$ is independent of $H$, $\kappa$ and $\nu$. Indeed if we let $C_1 \geq 2$ denote a valid constant, independent of $H$, $\kappa$ and $\nu$, to which we can apply Theorem \ref{tSieve} in the special case $\kappa = 1$, then we can apply Theorem \ref{tSieve} in the general case $S^\kappa$ with a valid choice of constant being $C_1^\kappa$. The double exponential loss in $\kappa$ in equation \eqref{ePostSieve} then comes from estimating the infinite sum in Theorem \ref{tSieve}.

We next aim for an application of Proposition \ref{prop:gridding} to the inner sum in equation \eqref{ePostSieve}. To this end, we observe that taking $g_i$ as in equation \eqref{eKeyPipeline}, the inner sum of equation \eqref{ePostSieve} is
\begin{multline*}
\sum_{\substack{(d, \mathcal{D}) \\ d \leq H^{c/3}}} S(d, \mathcal{D}) \prod_{p \mid d} \mu_p(\msL_p) \prod_{\substack{p \mid d \\ p > C^\kappa}} (1 - m(p))^{-1} = \\
\sum_{n_1 \cdots n_k \le H^{c/3}} g_1(n_1) \cdots g_k(n_k) \left(\frac{\# \Sel\, M(n_1, \dots, n_k)}{\mcTbnd(M(n_1, \dots, n_k))}\right)^{\kappa}.
\end{multline*}
Set
$$
S'(n_1, \dots, n_k) := \left(\frac{\# \Sel\, M(n_1, \dots, n_k)}{\mcTbnd(M(n_1, \dots, n_k))}\right)^{\kappa}.
$$
We always have the trivial bound
$$
\sum_{n_1 \cdots n_k = n} g_1(n_1) \cdots g_k(n_k) S'(n_1, \dots, n_k) \le \frac{C^{\omega(n) (\kappa + 1)}}{n},
$$
where $C > 1$ does not depend on $H$, $\kappa$ or $\nu$. Hence
$$
\sum_{n_1 \cdots n_k = n} g_1(n_1) \cdots g_k(n_k) S'(n_1, \dots, n_k) \le \prod_{p \le H^{c/3}} \left(1 + \frac{C^{\kappa + 1}}{p} \right) \le (10\log H)^{C^{\kappa + 1}},
$$
where the last inequality follows for $H \ge 25$ by Mertens' theorem. Summing the left hand side of this estimate over $n_1 \dots n_k \le H^{c/3}$ is already enough to prove our main result Theorem \ref{thm:main} for $H \leq \exp^{(3)}(C' \kappa)$ for every fixed $C' > 0$. Thus we will now freely assume that $H > \exp^{(3)}(C' \kappa)$. 

We now apply Proposition \ref{prop:gridding} with $g_1, \dots, g_k$ and $S'$. In order to verify the hypotheses of Proposition \ref{prop:gridding}, we must now exhibit $A > 0$ for which equation \eqref{eq:okay_assu} and \eqref{eq:good_assu} hold. We now apply Theorem \ref{thm:charsum} with the module $M$ in that theorem equal to our $M^{\kappa}$ and note that $\mcTbnd(M) := \mcTbnd(M(\bar{n}(x)))$ is constant on those $x \in X$ with $g(\bar{n}(x)) \neq 0$. It then follows from Theorem \ref{thm:charsum} that the hypotheses \eqref{eq:okay_assu} and \eqref{eq:good_assu} are satisfied with $A = \exp^{(2)}(C\kappa)$ for some sufficiently large $C > 1$ independent of $H$, $\kappa$ and $\nu$. Thus Proposition \ref{prop:gridding} yields the following bound for the inner sum of equation \eqref{ePostSieve}
$$
\exp^{(2)}(C\kappa) \sum_{\substack{n_1 \cdots n_k \le H^{c/3} \\ \mu^2(n_1 \cdots n_k) = 1}} g_1(n_1) \cdots g_k(n_k)
$$
for some $C > 1$ independent of $H$, $\kappa$ and $\nu$, and thus a total upper bound
$$
\# X_H e^{e^{C \kappa}} \prod_{C^\kappa < p \leq H^{c/3}} (1 - m(p))\sum_{\substack{n_1 \cdots n_k \le H^{c/3} \\ \mu^2(n_1 \cdots n_k) = 1}} g_1(n_1) \cdots g_k(n_k).
$$
We now apply Theorem \ref{tSieve2} with the same $\lambda$ and with $S(d, \mathcal{D}) := \tilde{g}(\mathcal{D})^\nu$. Our assumptions on $\tilde{g}$ enforce the validity of \eqref{eLowerS}. We note that Theorem \ref{tSieve2} involves $v$ instead of $c$, but upon lowering $c$ once and for all at the start of the proof, we could have arrived at the upper bound
$$
\# X_H e^{e^{C \kappa}} \prod_{C^\kappa < p \leq H^v} (1 - m(p)) \sum_{\substack{n_1 \cdots n_k \le H^v \\ \mu^2(n_1 \cdots n_k) = 1}} g_1(n_1) \cdots g_k(n_k).
$$
Then an application of Theorem \ref{tSieve2} ends the proof.
\end{proof}
\section{Applications to abelian varieties}
\label{sGeometry}
Our next goal is to generalize Conjecture \ref{conj:ell} to general families of abelian varieties.

\begin{mydef}
\label{def:effective}
Take $\mcX$ to be an integral separated scheme of finite type over $\Z$ such that $X := \mcX_{\QQ}$ is smooth and geometrically irreducible. Fix a height function $h: \mcX(\Z) \to \R_{\ge 0} \cup \{\infty\}$, and take $\mcX_H$ to be the set of points in $\mcX(\Z)$ of height at most $H$ for any $H \ge 1$. We assume that there is some quasi-finite morphism $X \to \mathbb{P}^d_{\QQ}$ such that, taking $h_0$ to be the associated height function, $h_0(x) \le h(x)$ for all $x$ in $\mcX(\Z)$. If $h = h_0$, we call $h$ a \emph{geometric height}.

We say that $(\mcX, h)$ has \emph{effective equidistribution} if there are positive constants $C, c$  such that, for all $H > C$, any positive squarefree integer $a < H^c$ not divisible by a prime $p < C$, and any class $\overline{x}$ in $\mcX(\Z/a^2\Z)$, we have 
\begin{equation}
    \label{eq:eff_equid_X}
\left| \frac{\#\{ x \in \mcX_H\,:\,\, x \equiv \overline{x}\, \text{ mod }\, a^2\}}{\# \mcX_H}\, -\, \prod_{p\mid a} \# \mcX(\Z/p^2\Z)^{-1}\right| \le H^{-c}.
\end{equation}
\end{mydef}

\begin{mydef}[Families of abelian varieties]
\label{defn:family_abelian}
With $\mcX$ as above, take $\mcA \to \mcX$ to be a scheme of finite type. We will assume that there is a positive integer $B$ such that $\mcA_{\Z[1/B]}$ is a group scheme above $\mcX_{\Z[1/B]}$. Taking $\eta$ to be the generic point of $\mcX$, we further assume that $\mcA_{\eta}$ is an abelian variety over the residue field $\kappa(\eta)$ of $\mcX$ at $\eta$.

Fix an algebraic closure $\overline{\kappa(\eta)}$ for $\kappa(\eta)$. Given a finite torsion subgroup $M$ of $\mcA_{\eta}\big(\overline{\kappa(\eta)}\big)$, we call $M$ \emph{rational} if it is stable under $\Gal\big(\overline{\kappa(\eta)}/\kappa(\eta)\big)$. We call it \emph{constant} if there is a field of the form $L\kappa(\eta)$ with $L$ a number field such that every point in $M$ is fixed by the Galois action over $L \kappa(\eta)$.

There is some nonempty open subscheme $U$ of $\mcX$ such that all geometric fibers of $\mcA_U \to U$ are abelian varieties. Given a finite rational torsion subgroup $M$ as above, we may assume $U$ is such that there is a subgroup scheme $\mcM$ of $\mcA_{U}$ such that, for $x \in U(\ovQQ)$, $M_x := (\mcM_x)_{\QQ}$ is identified with $M$ as an abelian group.  We endow the $G_{\QQ}$-module $M_x$ with the local conditions coming from the inclusion $M_x \hookrightarrow \mcA_x(\ovQQ)$.

If $M$ is constant, it has the structure of a $G_{\Q}$-module, and $M$ is isomorphic to $M_x$ as a $G_{\Q}$-module for all $x \in U(\Q)$.
\end{mydef}

\begin{example}
\label{ex:ell_counts}
The equation \eqref{eq:generic_ell_fib} gives an example of a scheme $\mcA$ over 
\[\mcX = \mathbb{A}^n_{\Z} = \text{Spec} \, \Z[u_1, \dots, u_n];\] 
the height $h(b_1, \dots, b_n) = \max(|b_1|^{\gamma_1}, \dots, |b_n|^{\gamma_n})$ corresponds to the geometric height corresponding to the quasi-finite map $\mathbb{A}^n_{\Z} \to \mathbb{P}^n_{\Z}$ given by
\[(b_1, \dots, b_n) \mapsto \left[b_1^{\gamma_1}, \dots, b_n^{\gamma_n}, 1\right].\]
It is straightforward to see that $(\mcX, h)$ has effective equidistribution in this family, and $\mcA/\mcX$ gives an example of a family of abelian varieties satisfying the conditions of Definition \ref{defn:family_abelian}.

Effective equidistribution is known for occasional examples besides affine space. We refer the interested reader to \cite{El-Baz Loughran Sofos} for more details.
\end{example}

We now give a version of Conjecture \ref{conj:ell} that also applies for choices of $\mcA/\mcX$ outside the scope of Example \ref{ex:ell_counts}. We will show that this conjecture implies Conjecture \ref{conj:ell} with Proposition \ref{prop:Tate_alg}.

\begin{conjecture}
\label{conj:AV}
Choose $\mcX$ as above, and choose a geometric height $h$ on $\mcX(\Z)$ such that $(\mcX, h)$ has effective equidistribution. Choose a family $\mcA \to \mcX$ as above, so that $\mcA_{\Z[1/B]}$ is a group scheme over $\mcX_{\Z[1/B]}$ for some $B \ge 1$, and so that the generic fiber of $\mcA \to \mcX$ is an abelian variety. Choose a rational subgroup 
\[M \subseteq \mcA_{\eta}\big(\overline{\kappa(\eta)}\big)\]
 and use it to define a subgroup scheme
\[\mcM \to \mcA_{U} \to U\]
as above, where $U$ is some nonempty open subscheme of $\mcX$. We take $\mcX_H'$ to denote the points in $\mcX_H \subseteq \mcX(\Z)$ whose associated rational point lies in $U(\Q)$.

Then, for $\kappa \ge 0$, there is some $C_{\kappa} > 0$ so that, for $H$ sufficiently large, we have
\[
\sum_{x \in \mcX_H'} \left(\# \Sel\, M_x\right)^{\kappa} \le C_{\kappa} \sum_{x \in \mcX_H'} \mcTbnd(M_x)^{\kappa}\]
and
\[
\limsup_{H \to \infty} \frac{1}{\# \mcX_H'} \sum_{x \in \mcX_H'} \left(\frac{\# \Sel \, M_x}{\mcTbnd(M_x)}\right)^{\kappa} \le C_{\kappa}.\]
\end{conjecture}

\begin{theorem}
\label{thm:AV}
If $M$ is constant, Conjecture \ref{conj:AV} holds with 
\[C_{\kappa} = \exp \exp(C\kappa)\]
for some $C > 0$ not depending on $\kappa \ge 0$. The conclusion remains true even without the assumption that $h$ is geometric.
\end{theorem}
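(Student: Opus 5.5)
The plan is to deduce Theorem \ref{thm:AV} from Theorem \ref{thm:main}. Since $M$ is constant, $M_x \cong M$ as $G_{\QQ}$-modules for all $x \in U(\Q)$, so $\{M_x : x \in \mcX_H'\}$ is a constant-module family with the local conditions $\msL_{xp}$ cut out by the Kummer images $\mcA_x(\Q_p) \to H^1(G_p, M)$. Enlarge $\Vplac_0$ to contain the primes dividing $B|M|$, the primes where $\mcX$, $\mcA$ or $\mcM$ degenerate badly over $\Z$, and the ramified primes of the constancy field $L$. Take the height to be $h$, adjusted by a bounded factor and a power of the bad-reduction conductor so that the lower bound in Definition \ref{defn:effective-constant} holds; by quasi-finiteness and $h_0 \le h$, the sets $\mcX_H'$ are finite. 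The first conclusion of Conjecture \ref{conj:AV} then follows from Remark \ref{rmk:two_mult_weights}, using the weight $\tilde g$ proportional to $\mcT(M_x, T)$ for each $T \subseteq M$ (its local factors are bounded above and below by constants depending only on $|M|$). The second conclusion follows by taking $\tilde g = 1$. In both cases we get $C_\kappa = \exp\exp(C\kappa)$. The geometric hypothesis on $h$ is only used through \eqref{eq:eff_equid_X}, which explains the final sentence of the theorem.

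The work is verifying conditions (1)--(3) of Definition \ref{defn:effective-constant}. The key local fact is that, for $p \notin \Vplac_0$, the subgroup $\msL_{xp}$ depends only on $x$ modulo $p^2$, apart from a set of residue classes of small density which we absorb into $\emptyset$. To prove it, I would use Denef's quantifier elimination and its uniform version due to Pas. The condition ``the cocycle class $\phi$ lies in the image of $\mcA_x(\Q_p)$'' is definable in the Denef--Pas language uniformly in $p$. After quantifier elimination, for all $p$ large, it is a Boolean combination of conditions on angular components and valuations of finitely many polynomials in $x$. Off the locus where some such polynomial has valuation $\ge 2$, the condition depends only on $x \bmod p^2$. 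We set $\mu_p(\msL)$ to be the proportion of classes in $\mcX(\Z/p^2\Z)$ giving $\msL$, and $\mu_p(\emptyset)$ to be the measure of the exceptional locus. Condition (1) then follows from \eqref{eq:eff_equid_X} applied with $a = Q$ by summing over residue classes; there are at most $Q^{O(1)}$ of them, so we shrink $c$.

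For (2) and $\sum_p \mu_p(\emptyset) < \infty$, I would use geometry. $\msL_{xp} \ne H^1_{\text{ur}}$ forces $x \bmod p$ to lie in the bad locus $D = \mcX - U'$, a proper closed subset, so $\mu_p \ll p^{-1}$ by Lang--Weil. The exceptional locus requires either $x \bmod p$ to lie in a codimension $\ge 2$ subset, or $x$ to meet a divisor to order $\ge 2$; both have measure $O(p^{-2})$, which is summable.

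Condition (3) is the step I expect to be the main obstacle. One needs that each weak equivalence class $\mathcal L$ of local conditions has $\sum_{p \le H}\mu_p$ either bounded or $\ge c\log\log H - 1$. I would attack it via Chebotarev. The Pas-definable description shows that, for each component of $D$ and each local type, the density $p\,\mu_p(\msL)$ is governed by point counts mod $p$ on finitely many varieties defined over a fixed number field. Weak equivalence classes are unions of Frobenius conditions on $p$, namely the class of $p$ mod $m$ together with the structure of $G_p/mI_p$ acting on $M$. By Lang--Weil, $p\,\mu_p(\mathcal L)$ equals, up to $O(p^{-1/2})$, a function $f(\Frob_p)$ on the Galois group of a fixed finite extension. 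Hence $\sum_{p\le H}\mu_p = \bar f \log\log H + O(1)$ by Chebotarev. The sum is bounded when the average $\bar f$ vanishes, and it is at least $c\log\log H - 1$ for some $c>0$ otherwise. The delicate point is making the angular-component conditions from Pas compatible with this Frobenius description uniformly in $p$, in particular how the weak-equivalence data (class, spin, and so on) is read off from the residue field.
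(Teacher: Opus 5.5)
\textbf{There is a genuine gap.} Your architecture matches the paper's: pass to quasi-decorated modules with $\emptyset$ at unusable classes, take $\mu_p$ as proportions in $\mcX(\Z/p^2\Z)$, get (1) from \eqref{eq:eff_equid_X}, get (2) from Lang--Weil on the bad locus, and get both conclusions from Remark \ref{rmk:two_mult_weights}.

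\textbf{Condition (3).} You leave this open, and the mechanism you suggest does not deliver it. What is needed is Proposition \ref{prop:geom_nonconstant}: $\sum_{(p,\msL_p)\in\mathcal L}\mu_p(\msL_p)=f(\Frob\,p)\,p^{-1}+O(p^{-3/2})$ for a nonnegative class function $f$ on a fixed Galois group. Lang--Weil for varieties does not give this.
\begin{itemize}
\item Whether $(p,\msL_{xp})$ lies in $\mathcal L$ depends on $\msL_{xp}$, not only on a Frobenius condition on $p$. So you first need that membership in a weak equivalence class is Denef--Pas definable uniformly in $p$. The paper proves this in Lemma \ref{lem:loc_equiv}, realizing $G_p/mI_p$ through the unique Galois extension of $\QQ_p$ of degree $m^2$ and ramification index $m$, and embedding a field that trivializes $M$.
\item After Pas's elimination, the condition is a residue-field formula, possibly with residue-field quantifiers, in $\mathrm{ac}$ and $\mathrm{ord}$ of the $g_i(x)$. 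These depend on $x \bmod p^2$, not on $\FF_p$-points of fixed varieties.
\item The paper transfers to $\FF_p[[t]]$ (Denef--Loeser). There, counts in $\mcX(\FF_p[t]/t^2)$ become counts of $\FF_p$-points of a definable subset of $\FF_p^{2n}$.
\item It then applies the Chatzidakis--van den Dries--Macintyre estimate $|Z_p|=\mu p^{2d-1}+O(p^{2d-3/2})$, with $\mu$ in a finite set and each value cut out by a ring sentence.
\item Finally, Kiefe's theorem turns those sentences into Frobenius conditions in a fixed extension.
\end{itemize}
Only after this does your Chebotarev dichotomy apply. Spin plays no role here; it belongs to the strong equivalence used inside the proof of Theorem \ref{thm:main}.

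\textbf{Local constancy.} If the cocycle $\phi$ in `$\phi\in\msL_{xp}$' is a valued-field variable, Pas's elimination produces conditions on polynomials in $(x,\phi)$ jointly. You therefore do not get that $\msL_{xp}$ is determined by $\mathrm{ord}/\mathrm{ac}$ of finitely many $g_i(x)$. Lemma \ref{lem:logic_constant} fixes this by showing, via Hensel, that the auxiliary choices are encoded by residue-field parameters $\overline z$, so that $x$ is the only valued-field variable.

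\textbf{Exceptional locus.} Discarding everywhere some $g_i(x)$ has valuation $\ge 2$ is too coarse. For non-reduced $g_i$ that locus has density $\asymp p^{-1}$, and then $\sum_p\mu_p(\emptyset)$ diverges. The correct exceptional set is where $\mathrm{ord}$ or $\mathrm{ac}$ of some $g_i$ varies on the fibre over $\bar x$. Lemma \ref{lem:consistent} bounds this by $O(p^{2d-2})$, writing $gs=h^ar$ at each height-one prime.

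\textbf{Height normalization.} You cannot multiply $h$ by a power of the conductor: that changes the sets $\mcX'_H$ being summed and breaks \eqref{eq:eff_equid_X}. You must show that the product of primes with non-unramified $\msL_{xp}$ is $\ll h_0(x)^{O(1)}$. The paper does this using a homogeneous form vanishing on the image of $\mcX\setminus U$ under the spread-out map to $\mathbb{P}^d$. It then replaces $h$ by $(2h)^C$, which only reparametrizes $H$.

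The proof uses only $h_0\le h$ and \eqref{eq:eff_equid_X}, never geometricity of $h$; that is why the theorem's last sentence holds.
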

We prove this in  Section \ref{ssec:stability}.

\subsection{Stability and likelihood of local conditions}
\label{ssec:stability}
We next state a pair of propositions giving us more information about the distribution of the local conditions in Conjecture \ref{conj:AV}. These propositions will be proved in Sections \ref{ssec:AG_input} and \ref{ssec:DenefPas} using a combination of algebraic geometry and model theory. Before we prove these propositions, we will show that they imply the conditions of Theorem \ref{thm:main} in the case when $M$ is constant, allowing us to apply this theorem to prove Theorem \ref{thm:AV}.

We take $\mcX$,  $U$, $\mathcal{A}$, and $M$ as above. 

\begin{mydef}
Given a prime $p$ not dividing $B$ and a point $\overline{x}$ in $\mcX(\Z/p^2\Z)$, we call $\overline{x}$ \emph{very bad} if, for some $x$ in $\mcX(\Z_p)$ projecting to $\overline{x}$, the corresponding point in $X(\QQ_p)$ does not lie in $U(\QQ_p)$. If $\overline{x}$ is not very bad, then every $x$ projecting to $\overline{x}$ corresponds to an abelian variety $\mcA_x/\QQ_p$ and a torsion subgroup scheme $M_x/\QQ_p$ isomorphic as an abelian group to $M$. 

Given $\overline{x}$ that is not very bad, and given $x$ in $\mcX(\Z_p)$ over $\overline{x}$, we define
\[ 
\msL_{xp} = \ker\left(H^1(G_p, M_x) \to H^1(G_p, \mathcal{A}_x\left(\overline{\QQ_p}\right)\right) \quad\text{and}\quad\mcT_{xp} = \frac{\# \msL_{xp}}{\# H^0(G_p, M_x)}.
\]
This definition of the local conditions is synonymous with the one used in Conjecture \ref{conj:AV}.

We say that $\overline{x}$ is \emph{unusable} if it is very bad, or if there are distinct $x, y$ in $\mcX(\Z_p)$ over $\overline{x}$ such that 
\begin{alignat*}{2}
& \mcT_{xp} \ne \mcT_{yp} \quad&&\text{if } M \text{ is nonconstant or}\\
& \msL_{xp} \ne \msL_{yp} \quad&&\text{if } M \text{ is constant.}
\end{alignat*}
\end{mydef}

\begin{proposition}
\label{prop:geom_usable}
There is $C> 0$ determined from $\mcX$, $\mathcal{A}$, and $\mathcal{M}$ such that, for all primes $p > C$,
\[\frac{\# \left \{\overline{x} \in \mcX(\Z/p^2\Z) \,:\,\, \overline{x} \textup{ is unusable}\right\} }{\# \mcX\left(\Z/p^2\Z\right)} \le Cp^{-2}.\]
\end{proposition}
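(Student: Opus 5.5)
We reduce to a statement about the local geometry of $\mcX$ over $\Z_p$ and bound the unusable residue classes by counting points on proper closed subschemes. Let $D \subset X$ be the closed complement of $U$; we may take it to be a proper closed subset containing the locus where $\mcA$ degenerates. Spread $D$, $\mcA$, $\mcM$ and the structure morphisms out over $\Z[1/B']$ for a suitable $B'$. By Lang--Weil, $\#\mcX(\FFF_p) \asymp p^{n}$ with $n = \dim X$. The closure $\mathcal{D}$ of $D$ has $\#\mathcal{D}(\FFF_p) \ll p^{n-1}$. For $p > C$, $\mcX$ is smooth over $\Z_p$ near almost all points, so each smooth $\FFF_p$-point has exactly $p^n$ lifts to $\Z/p^2\Z$. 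The possibly non-smooth points of $\mcX_{\FFF_p}$ lie in a closed subset of dimension at most $n-1$ lying over the non-smooth locus, which is empty for large $p$ because $X$ is smooth. Hence $\#\mcX(\Z/p^2\Z) \asymp p^{2n}$.

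Next we classify classes $\overline{x}$ by their reduction $x_0 \in \mcX(\FFF_p)$. If $x_0 \notin \mathcal{D}(\FFF_p)$, then every lift $x \in \mcX(\Z_p)$ gives an abelian scheme $\mcA_x$ over $\Z_p$ with $\mcM_x$ finite étale. In that case $\msL_{xp}$ is the unramified subgroup, independent of $x$, so $\overline{x}$ is usable. If $x_0$ is a point of $\mathcal{D}$ but the class $\overline{x}$ mod $p^2$ does not lie in $\mathcal{D}(\Z/p^2\Z)$, then every lift $x$ meets $\mathcal{D}$ with multiplicity exactly one. Work locally where $\mathcal{D}$ is cut out, generically, by one equation $f=0$, with $f$ generating the divisor and $\mathcal{D}$ smooth at $x_0$; then $v_p(f(x)) = 1$ for all such lifts. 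The remaining classes lie over the singular or higher-codimension part of $\mathcal{D}$, which has at most $O(p^{n-2})$ points mod $p$ and hence $O(p^{2n-2})$ classes mod $p^2$. The classes in $\mathcal{D}(\Z/p^2\Z)$ over smooth points of $\mathcal{D}$ number about $p^{n-1}\cdot p^{n-1}$. Both contributions are $O(p^{-2})\cdot\#\mcX(\Z/p^2\Z)$, so it suffices to show that the classes lying over smooth points of $\mathcal{D}$ with $v_p(f)=1$ are usable.

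This last claim is the heart of the argument and the main obstacle. For such $\overline{x}$, we must show that $\msL_{xp}$ (or $\mcT_{xp}$, in the nonconstant case) depends only on $\overline{x}$. The plan is to use a local coordinate $t = f$ with $v_p(t)=1$, together with the remaining coordinates in $\Z_p$. The degeneration of $\mcA$ along the smooth divisor is then controlled by tamely ramified data once $p$ is large. After a finite étale base change killing the monodromy, which is tame and of order bounded independently of $p$, the family becomes semistable near $x_0$. Its Néron model and component group then depend only on the residue class of $t/p$ and of the other coordinates. The local condition $\msL_{xp}$ is the image of the Kummer map $\mcA_x(\QQ_p) \to H^1(G_p, M_x)$. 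By Hensel's lemma and the theory of semistable (Raynaud/Tate uniformization) degeneration, this image is determined by $\mcA_x \bmod p^2$, which in turn is determined by $\overline{x}$ together with $v_p(t)=1$.

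To make this uniform in $p$ rigorously, we would rather use Denef--Pas quantifier elimination, as the introduction suggests. The condition ``$\msL_{xp}$ equals a given subgroup'' is definable in the Denef--Pas language, uniformly for $p \gg 1$. Quantifier elimination then reduces it to conditions on angular components and valuations of finitely many polynomials in the coordinates. In our region these valuations are $0$ or $1$, so the conditions depend only on $x \bmod p^2$. The other classes are absorbed into the $O(p^{-2})$ bound above.
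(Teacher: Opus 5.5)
Your counting of the degenerate classes is fine and matches the paper's argument: classes over points off $\mathcal{D}_{\FFF_p}$ are usable by good reduction, and classes inside $\mathcal{D}(\Z/p^2\Z)$ or over the singular or higher-codimension part of $\mathcal{D}_{\FFF_p}$ contribute $O(p^{-2})\cdot\#\mcX(\Z/p^2\Z)$, as in Lemma \ref{lem:verybad}. The gap is in the decisive last step. Quantifier elimination gives finitely many polynomials $g_1,\dots,g_k$ such that $\msL_{xp}$ (or $\mcT_{xp}$) depends on $x$ only through $\text{ord}(g_i(x))$ and $\text{ac}(g_i(x))$. These polynomials have nothing to do with $\mathcal{D}$, so your claim that ``in our region these valuations are $0$ or $1$'' is false. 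Here is why:
\begin{itemize}
\item A $g_i$ can vanish along a horizontal divisor $E\not\subseteq\mathcal{D}$. At classes over points of $E_{\FFF_p}\cap\mathcal{D}_{\FFF_p}$ with $v_p(f)=1$, its valuation can be large and can differ between two lifts of the same class mod $p^2$.
\item Such classes are not rescued by your good-reduction argument, since $x_0\in\mathcal{D}$.
\item Nor are they among the classes you already counted.
\end{itemize}
Likewise a $g_i$ may vanish to order $a\ge 2$ along $\mathcal{D}$, or locally equal $f^a r/s$ with $r$ or $s$ vanishing mod $p$ at $x_0$. When $r,s$ are units at $x_0$, the ord and ac are in fact determined mod $p^2$, but not for the reason you give. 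As written, a set of possibly unusable classes is left uncounted.

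What is missing is exactly the paper's Lemma \ref{lem:consistent}, applied to each $g_i$ separately; note that a discrepancy in ord or ac between two lifts forces $v_p(g_i(x_1))\ge v_p(g_i(x_1)-g_i(x_2))$. The lemma works on the normal affine model $\Spec A$ as follows:
\begin{itemize}
\item For each height-one prime $\mathfrak{p}\ni g_i$, write $g_i s=h^a r$ with $h$ generating $\mathfrak{p}A_{\mathfrak{p}}$ and $r,s\notin\mathfrak{p}$.
\item Discard the codimension-two locus $V(\mathfrak{p},rs)$.
\item At classes over $V(\mathfrak{p})\setminus V(rs)$, two lifts satisfy $v_p(g_i(x_1)-g_i(x_2))\ge a+1$. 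So the bad inequality forces $v_p(h(x_1))\ge 2$, which gives a $\Z/p^2\Z$-point of a reduced codimension-one scheme.
\item Everything excluded amounts to $O(p^{2d-2})$ classes.
\end{itemize}
With this lemma in hand, your decomposition relative to $\mathcal{D}$ is unnecessary beyond the very bad classes. The paper just applies the lemma to every $g_i$ and treats very bad classes with Lemma \ref{lem:verybad}.

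There is a secondary issue for constant $M$. ``$\msL_{xp}$ equals a given subgroup'' cannot be expressed by one formula uniformly in $p$, since $H^1(G_p,M)$ changes with $p$. You need the device of Lemma \ref{lem:logic_constant}: encode the auxiliary choices (the field $L$, the embedding $F\hookrightarrow L_{\text{ur}}$, the identification with $D$) by residue-field parameters $\overline{z}$, and compare $x,y$ via $\forall\overline{z}\,(\Phi(x,\overline{z})\iff\Phi(y,\overline{z}))$. After elimination, the dependence on $x$ is still only through finitely many $g_i$. Drop the semistable-reduction sketch: it is not a proof, and the base change trivializing monodromy along a divisor is ramified, not \'etale.
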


If $\overline{x}$ is usable (i.e.~not unusable), then $\mcT_{xp}$ does not depend on the choice of $x$ above $\overline{x}$. We denote this by $\mcT_{\overline{x} p}$. Similarly, if $M$ is constant and $\overline{x}$ is usable, we may define
\[\msL_{\overline{x} p} = \msL_{x p}\]
independently of the choice of $x$ above $\overline{x}$. 

\begin{proposition}
\label{prop:geom_nonconstant}
Choose a positive rational number $t$ other than $1$. There is a finite Galois extension $L/\QQ$, a constant $C > 0$, and a class function $f: \Gal(L/\QQ)/\sim \to \QQ_{\ge 0}$ so that, for all primes $p > C$,
\[\bigg|f(\textup{Frob }p) p^{-1} - \frac{\# \left\{\overline{x} \in \mcX(\Z/p^2\Z) \,:\,\, \overline{x} \textup{ is usable and } \mcT_{\overline{x} p} = t\right\}}{\# \mcX(\Z/p^2\Z)}  \bigg| \le C p^{-3/2}.\]
If $M$ is constant, take $\Vplac_0$ to be the minimal set of places containing $\infty$, the primes dividing $|M|$, and the primes $p$ where the action of $I_p$ on  $M$ is nontrivial. Take $\mathcal{L}$ to be a  weak equivalence class of local conditions defined with respect to $(M, \Vplac_0)$, in the sense of Definition \ref{defn:equiv_local}. We assume that this weak equivalence class contains $(p, \msL_p)$ with $\msL_p$ not equal to $H^1_{\textup{ur}}(G_p, M)$ for some $p$.

Then we may instead define the class function $f$ and real number $C$ so that, for any sufficiently large prime $p$, we have
\[\Bigg|f(\textup{Frob }p) p^{-1} - \sum_{\substack{\msL_p \textup{ such that} \\ (p, \msL_p) \in \mathcal{L}}} \frac{\# \left \{\overline{x} \in \mcX(\Z/p^2\Z) \,:\,\, \overline{x} \textup{ is usable and } \msL_{\overline{x} p} = \msL_p\right\}}{\# \mcX(\Z/p^2\Z)}  \Bigg| \le C p^{-3/2}.\]
\end{proposition}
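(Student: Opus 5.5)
The plan is to reduce both statements to counting $\mathbb{F}_p$-points on auxiliary varieties, and then let the Lang--Weil estimate together with a Chebotarev-type description of the leading term finish the job. There are three steps.

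\textbf{Step 1: a uniform definable description.} First I would use the Denef--Pas quantifier elimination to show that the relevant local data are definable uniformly in $p$. Concretely, for $x \in \mcX(\Z_p)$ the data are the subgroup $\msL_{xp}$ (when $M$ is constant) or just the ratio $\mcT_{xp}$. For $p$ large, the triple $(\mcA_x, M_x, \msL_{xp})$ is determined by the reduction type of $\mcA_x$. In the constant case $\msL_{xp}$ is determined via $\mathfrak{R}_p$ and the data $(A_{\msL}, R_{\msL}, \Omega_{\msL})$, and these are read off from the component group of the Néron model together with the Frobenius action. The component group in turn is controlled by the valuation of a discriminant-type function $\Delta$ on $\mcX$ and by the angular components of finitely many definable functions.

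Uniform quantifier elimination (Pas) then gives the following: for $p > C$, the set of $x$ with a prescribed value of $\msL_{xp}$ is a finite Boolean combination of conditions of two kinds. The first kind are conditions $\mathrm{ord}_p f_i(x) \in$ (a Presburger set). The second kind are conditions that $\overline{ac}(g_j(x))$ lie in a fixed constructible set over $\mathbb{F}_p$. Here the $f_i$ and $g_j$ are independent of $p$.

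\textbf{Step 2: the unusable set (Proposition \ref{prop:geom_usable}).} A point $\overline{x} \in \mcX(\Z/p^2\Z)$ can only be unusable if it lifts to some $x$ with $\mathrm{ord}_p f_i(x) \ge 2$ for a relevant nonzero $f_i$, or with $x$ reducing into the closed set $Z = \mcX - U$ to second order.

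Consider first points $\overline{x}$ with $\mathrm{ord}_p f \le 1$ at every lift. For these, the fibres over $\overline{x}$ all have the same ord-values and the same angular components. The angular component of a function of order $1$ depends only on $x \bmod p^2$. Hence the local data are constant on such fibres.

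It therefore suffices to bound points whose reduction lies on $\{f = 0\}$ with vanishing to second order. I would do this by passing to the singular locus of $(f)$, which has codimension $\ge 2$ after stratifying by smooth strata, and applying Lang--Weil to strata of codimension $\ge 2$. This gives density $O(p^{-2})$. Points on smooth parts of $\{f=0\}$ with $f \equiv 0 \bmod p^2$ have density $p^{-2}$ directly. The case $\mathrm{ord} \ge 2$ along codimension-one strata needs the most care; I would handle it by the stratification of $\{f=0\}$ together with Hensel lifting.

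\textbf{Step 3: the main density (Proposition \ref{prop:geom_nonconstant}).} Usable $\overline{x}$ with a non-unramified $\msL_p$, or with $\mcT \ne 1$, must have some $\mathrm{ord}_p f_i(x) = 1$. Such points are $\overline{x}$ lying over smooth $\mathbb{F}_p$-points of a divisor component $D$, and there are $\#D(\mathbb{F}_p) \cdot p^{\dim}$ of them, up to $O(p^{-2})$ density.

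The condition that $\msL_{\overline{x}p}$ lie in the weak class $\mathcal{L}$ is a constructible condition on the reduction together with the angular components, and $\mathcal{L}$ is Galois-stable by its definition. So the count becomes the number of $\mathbb{F}_p$-points of finitely many constructible sets $V_k$ defined over $\Z[1/N]$, divided by $\#\mcX(\Z/p^2\Z) = p^{\dim X}\#\mcX(\mathbb{F}_p)$.

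By Lang--Weil, $\#V_k(\mathbb{F}_p) = c_k(p)\, p^{\dim X - 1} + O(p^{\dim X - 3/2})$, where $c_k(p)$ is the number of Frobenius-fixed geometric components of top dimension. Let $L$ be a field over which all these components are defined. Then $c_k(p)$ is a class function of $\mathrm{Frob}\, p$ in $\Gal(L/\QQ)$, and summing these class functions gives the required $f$. The nonconstant case, with $\mcT_{\overline{x}p} = t$, is identical, using definability of $\mcT$ alone.

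\textbf{Main obstacle.} The hardest step is Step 1: extracting a uniform-in-$p$ definable description of $\msL_{xp}$ and verifying that the weak equivalence relation on the resulting data is compatible with the Galois action on components.
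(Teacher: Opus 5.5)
There is a genuine gap in the counting step (Step~3). It traces back to how you state the output of quantifier elimination in Step~1.

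\textbf{The gap.} Pas's theorem eliminates only the quantifiers over the valued field. The residue-field part of the resulting formula is an arbitrary formula in the language of rings, and quantifiers over $\overline{K}$ survive in it. So you do not get ``$\mathrm{ac}(g_j(x))$ lies in a fixed constructible set.''

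Your localization in Step~3 is otherwise reasonable: over a smooth point $\bar x_0$ of a reduced bad divisor $\{h=0\}$ with $\mathrm{ord}_p h(x)=1$, every $\mathrm{ord}_p g_i(x)$ and $\mathrm{ac}(g_i(x))$ is determined by $(\bar x_0,\mathrm{ac}(h(x)))$. But what you then have to count is a \emph{definable} subset of $\FFF_p^{\,n+1}$, not the $\FFF_p$-points of a constructible set, and Lang--Weil does not apply.

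The conclusion you draw from Lang--Weil is in fact false, because it makes $f$ integer-valued. Take Example~\ref{ex:YuI} with $M=\ker\varphi_2$ and $t=2$. Along $(u_1)$, $E$ has reduction I$_2$ and $E_2$ has I$_4$. So $\mcT_{\overline{x}p}=c_p(E_2)/c_p(E)$ is $2$ or $1$ according as the node of $y^2=x^2(x-\bar u_2)$ is split or not, i.e.\ according as $-\bar u_2$ is a square in $\FFF_p$. The other two divisors give ratio $1/2$. Hence the density of $\{\mcT_{\overline{x}p}=2\}$ is $\tfrac12 p^{-1}+O(p^{-2})$ for \emph{every} large $p$, which forces $f\equiv\tfrac12$. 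No count of Frobenius-fixed components of constructible sets can produce this.

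\textbf{What is missing.} You need the uniform point count for definable sets over finite fields due to Chatzidakis--van den Dries--Macintyre. It gives $\mu p^{e}+O(p^{e-1/2})$ with $\mu\in\QQ_{\ge 0}$ taking finitely many values, and each event $\mu=c$ is expressed by a ring sentence $\Phi_c$. You then need that, for large $p$, such a sentence is equivalent over $\FFF_p$ to a Boolean combination of statements ``a fixed integer polynomial has a root in $\FFF_p$'' (Ax, Kiefe). Chebotarev turns this into a condition $\Frob p\in\mathscr{C}$ for a conjugacy-stable $\mathscr{C}\subseteq\Gal(L/\QQ)$.

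This is exactly how the paper finishes. It reaches the residue-field count differently: it transfers to $\FFF_p[[t]]$ (Denef--Loeser) and views $\Z/p^2\Z$-points as a definable subset of $\FFF_p^{\,2n}$. Your direct localization at the smooth locus of the bad divisor would be a fine substitute once the counting step is repaired.

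\textbf{Two smaller corrections.}
First, Step~1 should not rest on the N\'eron-model heuristic, which you have not shown to be definable for a general abelian family. Lemmas~\ref{lem:logic_nonconstant} and~\ref{lem:loc_equiv} already supply the needed Denef--Pas formulas.
Second, in Steps~2--3 you must work with a local generator $h$ of the \emph{reduced} divisor, not with the $f_i$ occurring in the formula. If $f_i=h^2\cdot(\text{unit})$, then $\{\mathrm{ord}_p f_i\ge 2\}$ has density $\asymp p^{-1}$. So neither ``$f\equiv 0 \bmod p^2$ has density $p^{-2}$ on the smooth part'' nor ``some $\mathrm{ord}_p f_i(x)=1$'' holds as stated; compare the factorization $g s_i=h_i^{a_i}r_i$ in Lemma~\ref{lem:consistent}.
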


\begin{proof}[Proof of Theorem \ref{thm:AV}]
Given $x$ in $\mcX_H'$, we adjust the decorated module $M_x = (M, (\msL_{xp})_p)$ to a quasi-decorated module $M'_x$ by replacing $\msL_{xp}$ with the empty set whenever $x$ maps to an unusable element in $\mcX(\Z/p^2\Z)$. 

Recall that we chose a quasi-finite map $X \to \mathbb{P}^d_{\Q}$  whose associated geometric height $h_0$ is a lower bound for $h$. By spreading out, this extends to a quasi-finite morphism $\mcX_{\Z[A^{-1}]} \to \mathbb{P}^d_{\Z[A^{-1}]}$ for some nonzero integer $A$. The image of the complement of $U_{\Z[A^{-1}]}$  under this map is contained in a proper closed subscheme of $\mathbb{P}^d_{\Z[A^{-1}]}$, and the ideal corresponding to this subscheme contains some nonconstant homogeneous polynomial $f$ in $\Z[u_0, \dots, u_d]$. Given $x \in \mcX(\Z)$ and a prime $p$ not dividing $A$, the local conditions group $\msL_{xp}$ is other than the unramified local conditions only when $f$ is divisible by $p$ at the image of $x$. So the product of all such primes is bounded by $C h_0(x)^{\text{deg}\, f}$ for some $C > 1$. Replacing $h$ with $(2h)^C$ if necessary for some $C > 1$, we may then assume that $h(x)$ is always an upper bound for the product of $p$ where $\msL_{xp}$ is not the unramified local conditions.

We will apply Theorem \ref{thm:main} to the quasi-decorated modules $M'_x = (M, (\msL_{xp}')_p)$ as above, with $\Vplac_0$ taken to be a finite set of primes containing $\infty$, all primes dividing $|M|$, all primes $p$ such that $I_p$ acts nontrivially on $M$, and all primes less than the $C$ specified in Definition \ref{def:effective}.

Given a prime $p$ outside $\Vplac_0$ and a subgroup $\msL_p$ of $H^1(G_p, M)$, we take 
\[\mu_p(\msL_p) = \frac{\#\left\{\overline{x} \in \mcX(\Z/p^2\Z)\,:\,\, \overline{x} \text{ is usable and } \msL_{\overline{x} p} = \msL_p\right\}}{\# \mcX\left(\Z/p^2\Z\right)}.\]
We then take 
\[\mu_p(\emptyset) = 1 - \sum_{\msL_p} \mu_p(\msL_p).\]

Then \eqref{eq:eff_equid_X} gives that, for any sufficiently large $H$, for any squarefree integer $a < H^c$ not divisible by any prime in $\Vplac_0$, and for any choice of $\msL_p \in P_p$ for every $p$ dividing $a$, we have
\[\left| \frac{\# \left\{ x \in \mcX_{H}\,:\,\, \msL_{xp}' = \msL_p \text{ for all } p \mid a\right\}}{\mcX_H} - \prod_{p | a} \mu_p(\msL_p)\right|   \le H^{-c} \cdot \prod_{p | a} \#\mcX\left(\Z/p^2\Z\right).\]
By adjusting the constant $c > 0$ as needed, we see that the quasi-decorated modules $M_x$ satisfy the effective equidistribution condition of Definition \ref{defn:effective-constant}, with the condition
\[\sum_p \mu_p(\emptyset) < \infty\]
following from Proposition \ref{prop:geom_usable}.

Both conditions (2) and (3) of Definition \ref{defn:effective-constant} follow from Proposition \ref{prop:geom_nonconstant}. So Theorem \ref{thm:main} may be applied, with \eqref{eq:quasi_realized} allowing us to replace the quasi-decorated modules $M'_x$ with $M_x$. By selecting $\tilde{g}$ as in Remark \ref{rmk:two_mult_weights}, we may show that the two claimed inequalities of Conjecture \ref{conj:AV} hold with $C_{\kappa}$ of the form $\exp \exp(C \kappa)$.
\end{proof}

\subsection{Some algebraic geometry}
Our goal from here to the end of Section \ref{ssec:DenefPas} is to prove Propositions \ref{prop:geom_usable} and \ref{prop:geom_nonconstant}. We first observe that we may assume that $\mcX$ is affine. After all,  we may always choose a set of affine open subschemes $\mcX_1, \dots, \mcX_k$ covering $\mcX$. From the injections $\mcX_i(\Z/p^2\Z)\hookrightarrow \mcX(\Z/p^2\Z)$, we find that, for any subset $Y$ of $\mcX(\Z/p^2\Z)$, we have
\[\left| Y \right| = \sum_{T \subseteq \{1, \dots, k\} } (-1)^{|T|} \cdot\left| Y \cap \left(\cap_{i \in T} \mcX_i\right)(\Z/p^2\Z)\right|\]
by inclusion-exclusion. Then these propositions hold for $\mcX$ if they hold for each $\mcX_i$.

We now collect the lemmas from algebraic geometry that we will need. Take $d$ to be the dimension of $X_{\QQ}$.
\label{ssec:AG_input}
\begin{lemma}
\label{lem:smoothlift}
Given $\mcX$ as above, there is $C > 0$ so, for all primes $p > C$, all $k, m \ge 1$, and any $\overline{x} \in \mcX(\Z/p^k\Z)$, the preimage of $\overline{x}$ in $\mcX(\Z/p^{k+m}\Z)$ has cardinality $p^{md}$.
\end{lemma}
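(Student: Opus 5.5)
The plan is to reduce to the smooth case and then count lifts via Hensel's lemma, working fibrewise over $\Z/p^k\Z$. Since $X = \mcX_{\QQ}$ is smooth of dimension $d$ and $\mcX$ is of finite type over $\Z$, spreading out gives an integer $N \ge 1$ such that $\mcX_{\Z[1/N]} \to \Spec \Z[1/N]$ is smooth of relative dimension $d$. Indeed, the smooth locus of $\mcX \to \Spec \Z$ is open and contains the generic fibre, so its complement is a closed subset whose image in $\Spec \Z$ misses the generic point. As a constructible set, that image is then finite. Take $C$ larger than every prime dividing $N$. By the reduction already made, we may also assume $\mcX$ is affine; alternatively, the count is local, since a point of $\mcX(\Z/p^k\Z)$ factors through an affine open because $\Z/p^k\Z$ is local.

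Fix $p > C$ and $\overline{x} \in \mcX(\Z/p^k\Z)$. The next step is to show that the fibre of $\mcX(\Z/p^{k+1}\Z) \to \mcX(\Z/p^k\Z)$ over $\overline{x}$ has exactly $p^d$ elements. The map $\Z/p^{k+1}\Z \to \Z/p^k\Z$ is a square-zero thickening with kernel $I = p^k\Z/p^{k+1}\Z \cong \FF_p$. Smoothness of $\mcX_{\Z_p}$ gives formal smoothness, so a lift exists. The standard deformation theory then says the set of lifts is a torsor under
\[
\Hom_{\Z/p^k}(\overline{x}^*\Omega_{\mcX/\Z}, I) \cong \Hom_{\FF_p}(\overline{x}_0^*\Omega, \FF_p),
\]
where $\overline{x}_0$ is the reduction mod $p$. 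Because $\Omega$ is locally free of rank $d$ on the smooth locus, this group is $\FF_p^d$, and the fibre has size $p^d$.

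Concretely, in the affine chart one can instead take a local presentation $\mcX \supseteq V = \Spec \Z[t_1,\dots,t_n]/(f_1,\dots,f_{n-d})$ near $\overline{x}_0$, with the Jacobian minor invertible. Then the lifts are the solutions of a linear system over $\FF_p$ of rank $n-d$ in $n$ unknowns, which again gives $p^d$ solutions. This is the explicit Hensel computation; the care needed is to use the same presentation for all $k$, which is legitimate since the Jacobian unit condition depends only on $\overline{x}_0$.

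Finally, induct on $m$. Each point of $\mcX(\Z/p^{k+m}\Z)$ over $\overline{x}$ arises by a chain of single-step lifts, and every intermediate point has exactly $p^d$ lifts. The total is therefore $p^{md}$. The main (mild) obstacle is making $C$ uniform and independent of $k$, $m$, and $\overline{x}$. This is handled because the smoothness and the rank of $\Omega$ are properties of the fixed scheme $\mcX_{\Z[1/N]}$, and the torsor statement holds for every square-zero extension of local Artinian rings in which $p$ is invertible on no factor of $N$.
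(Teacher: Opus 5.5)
Your proof is correct and follows the paper's route: spread out to get smoothness of $\mcX$ over $\Z[1/N]$, then count lifts one step at a time by Hensel's lemma. Your torsor under $\Hom(\overline{x}_0^*\Omega, \FF_p) \cong \FF_p^d$ is the intrinsic form of the paper's kernel-of-the-Jacobian count. You use, without saying so, that $\mcX$ is integral: this is what makes the relative dimension equal to $d$ at every point lying over $p$.
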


\begin{proof}
Since $\mcX_{\Q}$ was smooth, we know that $\mcX_{\Z_p}$ is smooth for all sufficiently large $p$ by spreading out \cite[Theorem 3.2.1]{PoonenRationalPoints}.
% This points to EGA, but c'mon.
The result then follows by Hensel's lemma.
% More information: We can break $\mcX$ into affines of the form R[t1, ..., tn]/(f1, ..., fm) so the t1, ... , tm Jacobian is not divisible by p, see Stacks 01V7 (fi all polynomials with coeffs in Zp). Given a mod p^n point x,  the definition of smooth tells us we have a mod p^{n+1} point y over x (as in, a setting for t1, ..., tn) by Hensel's lemma  (or honestly what I'm writing here). We find the number of mod p^{n+1} adjustments we can make without losing x corresponds to a kernel of the full Jacobian matrix, which has the minimal rank of d.
\end{proof}

By the Lang--Weil bound \cite{LangWeil}, we may conclude that $\mcX(\Z/p^2\Z)$ has cardinality within $Cp^{2d - 1/2}$ of $p^{2d}$ for some $C > 0$ depending only on $\mcX$.

Take $\mathcal{Z}$ to be the closed reduced subscheme of $\mcX$ corresponding to $\mcX \backslash U$. Then, if $x$ is a geometric point of $\mcX$ outside $\mathcal{Z}$, then $\mathcal{A}_x$ is an abelian variety and $M_x$ is isomorphic as an abelian group to $M$. This will be useful for the next two lemmas:

\begin{lemma}
Call $\overline{x}$ in $\mcX(\Z/p^2\Z)$ \emph{bad} if it is very bad or if, for some $x \in \mcX(\Z_p)$ mapping to $\overline{x}$, 
\[\ker\left(H^1(G_p, M_x) \to H^1\left(G_p, \mathcal{A}_x(\overline{\QQ_p})\right)\right) \ne H^1_{\textup{ur}}(G_p, M_x).\]
Then there is $C > 0$ not depending on $p$ such that the number of bad $\overline{x}$ is bounded by $Cp^{2d - 1}$.
\end{lemma}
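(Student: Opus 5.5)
The plan is to show that every bad $\overline{x}$ reduces mod $p$ to a point of a proper closed subscheme of $\mcX_{\FF_p}$. The count then follows from Lang--Weil together with Lemma \ref{lem:smoothlift}.

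The key geometric input is a spreading-out argument. After enlarging $B$ and shrinking $U$ as needed, the following hold for all $p$ not dividing some fixed integer $N$:
\begin{itemize}
\item $\mcA_{U}$ is an abelian scheme over $U_{\Z[1/N]}$, where $U$ is now open in $\mcX_{\Z[1/N]}$ and its complement $\mathcal{Z}$ has relative dimension at most $d-1$ over $\Z[1/N]$;
\item $\mcM$ is a finite étale subgroup scheme of $\mcA_U$;
\item $N$ is divisible by $|M|$.
\end{itemize}
Take $x \in \mcX(\Z_p)$ whose reduction mod $p$ lies in $U(\FF_p)$. Then $x$ factors through $U(\Z_p)$, because $U$ is open and $\mcX_{\Z_p}$ is smooth for $p$ large by Lemma \ref{lem:smoothlift}. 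Hence $\mcA_x$ has good reduction at $p$, and $M_x$ extends to a finite étale group scheme over $\Z_p$ with $p \nmid |M|$. For an abelian variety with good reduction at $p$ and $p\nmid |M|$, the standard fact (via the Néron model and Lang's theorem that $H^1$ of the connected special fibre vanishes over a finite field) is that the image of $\mcA_x(\QQ_p)$ in $H^1(G_p, M_x)$ is exactly $H^1_{\text{ur}}(G_p, M_x)$. So none of these points is bad, nor very bad.

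It follows that a bad $\overline{x}$ must have reduction mod $p$ lying in $\mathcal{Z}(\FF_p)$. Here we use the reduction to affine $\mcX$ made above, and the observation that very bad points, i.e.\ those with a lift outside $U(\QQ_p)$, also reduce into $\mathcal{Z}$.

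Next we count. Since $\mathcal{Z}_{\FF_p}$ has dimension at most $d-1$ with bounded complexity uniformly in $p$, Lang--Weil (or just the trivial bound for the number of points of a variety of bounded degree) gives $\#\mathcal{Z}(\FF_p) \le C p^{d-1}$. By Lemma \ref{lem:smoothlift} with $k=m=1$, each point of $\mcX(\FF_p)$ has exactly $p^d$ preimages in $\mcX(\Z/p^2\Z)$. The number of bad $\overline{x}$ is therefore at most $Cp^{2d-1}$. The finitely many primes dividing $N$, or smaller than the constant of Lemma \ref{lem:smoothlift}, are absorbed by enlarging $C$ using the trivial bound $\#\mcX(\Z/p^2\Z) \le C' p^{2d}$.

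The main obstacle is making the spreading-out step honest. One has to ensure that $\mathcal{Z}$ does not contain a whole fibre $\mcX_{\FF_p}$ for large $p$; this holds because $\mathcal{Z}$ is the closure of a proper closed subset of the generic fibre, after removing finitely many vertical components by inverting $N$. One also needs the uniform fibre-dimension bound $\dim \mathcal{Z}_{\FF_p} \le d-1$, which follows from generic flatness or Chevalley's theorem on fibre dimension. Finally, the identification of the local conditions with unramified cohomology must be checked at points that are only $\Z_p$-integral on $\mcX$. I expect to handle this last point by noting that $\mcA_x$ is the pullback of the abelian scheme along $\Spec \Z_p \to U$.
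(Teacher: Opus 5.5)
Your proposal is correct and is essentially the paper's argument. Every bad $\overline{x}$ reduces into $\mathcal{Z}(\FF_p)$, since away from $\mathcal{Z}$ the fibre has good reduction with $p \nmid |M|$, which forces the local conditions to be unramified. Moreover $\mathcal{Z}_{\FF_p}$ has positive codimension for all but finitely many $p$, and Lang--Weil together with Lemma~\ref{lem:smoothlift} then gives the bound $Cp^{2d-1}$; you simply supply the details the paper leaves implicit.

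One small correction: a point of $\mcX(\Z_p)$ whose reduction lies in $U$ factors through $U$ simply because $\Spec \Z_p$ is local (the preimage of $U$ is an open set containing the closed point). Smoothness of $\mcX_{\Z_p}$ plays no role in that step.
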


\begin{proof}
If $\overline{x}$ is in the image of a bad point, then its image in $\mcX(\FFF_p)$ must lie in $\mathcal{Z}(\FFF_p)$. Since $\mathcal{Z}_{\FFF_p}$ has positive codimension in $\mcX_{\FFF_p}$ for all but finitely many primes $p$, the lemma then follows from the Lang--Weil bound \cite{LangWeil} and Lemma \ref{lem:smoothlift}.
\end{proof}

\begin{lemma}
\label{lem:verybad}
There is $C > 0$ such that, for all $p$, the number of very bad points in $\mcX(\Z/p^2\Z)$ is bounded by $Cp^{2d - 2}$.
\end{lemma}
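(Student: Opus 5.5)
The plan is to bound the number of very bad points by a codimension-two count. First I would show that the set of $x \in \mcX(\Z_p)$ whose generic point lands in $\mathcal{Z}$ is thin enough to be seen modulo $p^2$.

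\emph{Step 1: reduce to counting residues near $\mathcal{Z}$.} A point $\overline{x} \in \mcX(\Z/p^2\Z)$ is very bad exactly when some lift $x \in \mcX(\Z_p)$ has $x_{\QQ_p} \in \mathcal{Z}(\QQ_p)$. Since $\mathcal{Z}$ is closed and $\mcX$ is separated, such an $x$ factors through the scheme-theoretic closure of $\mathcal{Z}_{\Q}$ in $\mcX$. Call this closure $\mathcal{Z}'$; it is a closed subscheme of $\mathcal{Z}$. So $\overline{x}$ lies in the image of $\mathcal{Z}'(\Z_p) \to \mcX(\Z/p^2\Z)$, and this image is contained in the image of $\mathcal{Z}'(\Z/p^2\Z)$. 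It therefore suffices to bound $\#\mathcal{Z}'(\Z/p^2\Z)$, or more precisely the image of $\mathcal{Z}'(\Z_p)$ in $\mcX(\Z/p^2\Z)$, by $Cp^{2d-2}$.

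\emph{Step 2: the count.} I would stratify $\mathcal{Z}'_{\Q}$ into finitely many smooth locally closed pieces $W_j$, each of dimension at most $d-1$, and take their closures $\mathcal{W}_j$ over $\Z[1/N]$. For $p > C$, each $\mathcal{W}_j$ is smooth over $\Z_p$ of relative dimension $e_j \le d-1$ on an open dense part. Applying Lemma \ref{lem:smoothlift} to $\mathcal{W}_j$ together with Lang--Weil gives
\[
\#\,\mathrm{im}\left(\mathcal{W}_j^{\mathrm{sm}}(\Z_p) \to \mathcal{W}_j(\Z/p^2\Z)\right) \le Cp^{2e_j} \le Cp^{2d-2}.
\]
Points of $\mathcal{Z}'(\Z_p)$ whose generic fibre lies in a lower stratum are handled by induction on dimension. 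Each $\Z_p$-point of $\mathcal{Z}'$ has generic point in some $W_j$, so it is a $\Z_p$-point of $\mathcal{W}_j$ whose generic point is smooth. The issue is that its reduction might land in the singular locus of $\mathcal{W}_j$. For such points I would use the general fact that the image of $\mathcal{Y}(\Z_p)$ in $\mathcal{Y}(\Z/p^k\Z)$ has size $O(p^{k\dim \mathcal{Y}_{\Q}})$ uniformly in $p$. This follows for example from the Denef--Pas uniform quantifier elimination, which the paper already invokes for $p$-adic point counts. Alternatively one can use Greenberg-type bounds after embedding $\mathcal{Y}$ in affine space: project generically finitely to $\Aff^{\dim}$ over $\Z[1/N]$, and bound the fibres of the map on $\Z_p$-points modulo $p^2$ by the degree.

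\emph{Step 3: the small primes.} Finitely many small $p$ are absorbed into $C$ trivially, since $\#\mcX(\Z/p^2\Z) \le C'p^{2d}$ bounds everything for bounded $p$.

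The main obstacle is Step 2 at points where the reduction of $\mathcal{Z}'$ is singular. There the naive Hensel count fails, and the mod $p^2$ fibres of $\mathcal{Z}'(\Z/p^2\Z)$ can be large. The fix is to count only the images of genuine $\Z_p$-points, not all mod $p^2$ points. I would first try the projection argument: choose a finite morphism from $\mathcal{Z}'$ to $\Aff^{d-1}_{\Z[1/N]}$ (Noether normalization over $\Z[1/N]$). The image of $\mathcal{Z}'(\Z_p)$ modulo $p^2$ then maps to at most $p^{2(d-1)}$ residues. I would then argue that each fibre contributes $O(1)$ residues, using that the fibres are finite flat of bounded degree and that distinct $\Z_p$-roots in a fibre can collide modulo $p^2$ only in a bounded way.
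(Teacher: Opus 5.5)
\textbf{There is a genuine gap at the step you yourself flag as the main obstacle.}

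Your first option is to invoke a uniform-in-$p$ bound $O(p^{k\dim \mathcal{Y}_{\Q}})$ for the image of $\mathcal{Y}(\Z_p)$ modulo $p^k$. That is an Oesterl\'e-type theorem. It does not follow from Pas's quantifier elimination by itself, and you give no argument or precise reference for it.

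Your second option fails as stated, because it is false that each fibre of a finite projection contributes $O(1)$ residues. Take $\mathcal{Z}' = \{z^2 = y\} \subseteq \Aff^2$ projected to the $y$-line. Over the residue $y \equiv 0 \bmod p^2$ lie the $p$ residues $z \in p\Z/p^2\Z$, and all of them come from genuine $\Z_p$-points $(z, z^2)$. Near the branch locus of a finite projection, fibres over residue classes mod $p^2$ are unbounded. To rescue the argument you would have to show that such fibres sit over a sparse set of $\bar{y}$, which is essentially the codimension count you were trying to avoid.

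\textbf{The missing idea is simpler: above the bad locus, do not count in $\mathcal{Z}'$ at all; count every lift in $\mcX$.}
\begin{itemize}
\item $\mathcal{Z}'_{\Q}$ is reduced of dimension at most $d-1$ in characteristic zero, so its singular locus has dimension at most $d-2$.
\item By spreading out, for large $p$ the non-smooth locus of $\mathcal{Z}'_{\FFF_p}$ has codimension at least $2$ in $\mcX_{\FFF_p}$.
\item Lang--Weil gives $O(p^{d-2})$ such $\FFF_p$-points. By Lemma \ref{lem:smoothlift} each has exactly $p^d$ lifts to $\mcX(\Z/p^2\Z)$, for a total of $O(p^{2d-2})$.
\item Above the smooth locus of $\mathcal{Z}'_{\FFF_p}$, the scheme $\mathcal{Z}'$ is flat over $\Z[1/N]$ and hence smooth over $\Z_p$ there. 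Hensel and Lang--Weil then give $O(p^{d-1}) \cdot p^{d-1}$.
\end{itemize}
So the bound on $\#\mathcal{Z}'(\Z/p^2\Z)$ that you set aside in Step 1 already holds, and this is how the paper argues. The large mod-$p^2$ fibres you worry about have size at most $p^d$ and occur only over a codimension-two set, which exactly compensates at level $p^2$. Your stratification into smooth pieces, the restriction to images of $\Z_p$-points, and the Noether normalization are all unnecessary.
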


\begin{proof}
If $x \in \mcX(\Z_p)$ maps to a very bad point, then it maps to a point in $\mathcal{Z}(\Z/p^2\Z)$. By spreading out, for large enough $p$, $\mathcal{Z}_{\FFF_p}$ is a reduced scheme of codimension at least one in $\mcX_{\FFF_p}$, and whose singular locus has codimension at least $2$ in $\mcX_{\FFF_p}$.

The number of points in $\mcX(\Z/p\Z)$ corresponding to a singular point of $\mathcal{Z}_{\FFF_p}$ can then be bounded by $Cp^{d - 2}$ for large enough $C$, so Lemma \ref{lem:smoothlift} gives that only $Cp^{2d - 2}$ very bad points can map to this singular locus. Meanwhile, the number of $\Z/p^2\Z$ points of $\mathcal{Z}(\Z/p^2\Z)$ mapping to the nonsingular locus of $\mathcal{Z}_{\FFF_p}$ is no more than $C p^{2(d -1)}$ for some $C$ not depending on $p$, as follows from the Lang--Weil bound \cite{LangWeil} and Lemma \ref{lem:smoothlift}.
\end{proof}

Our final lemma will make use of the permissible assumption that $\mcX$ is affine.

\begin{lemma}
\label{lem:consistent}
Take $\mcX = \textup{Spec } A$ to be a affine normal integral scheme over $\Z$ of finite type such that $\mcX_{\QQ}$ is a geometrically irreducible smooth variety. Take $d$ to be the dimension of $\mcX_{\QQ}$, and choose nonzero $g$ in $A$.

For a prime $p$, take $Z_p$ to be the set of $x$ in $\mcX(\Z/p^2\Z)$ such that there are distinct $x_1, x_2$ in $\mcX(\Z_p)$ mapping to $x$ for which the valuation $v_p$ satisfies
\[v_p\left(g(x_1)\right) \ge v_p\left(g(x_1) - g(x_2)\right).\]
There then is some $C > 0$ depending only on $A$ and $g$ such that
\[|Z_p| \le Cp^{2d - 2}.\]
\end{lemma}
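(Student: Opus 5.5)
The plan is to reduce the condition to a statement about points modulo $p^2$ whose reductions mod $p$ lie in a small closed subset, and then count such points with Lang--Weil and Lemma \ref{lem:smoothlift}.

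\textbf{Step 1 (first reduction).} Fix $x \in Z_p$, with distinct lifts $x_1 \ne x_2$ in $\mcX(\Z_p)$. Since both lifts reduce to $x$ mod $p^2$, we have $g(x_1) \equiv g(x_2) \bmod p^2$, so $v_p(g(x_1)-g(x_2)) \ge 2$. Hence $v_p(g(x_1)) \ge 2$, i.e.\ $g(x) \equiv 0 \bmod p^2$. We may discard the finitely many small $p$ by enlarging $C$. For large $p$, Lemma \ref{lem:smoothlift} applies and $\mcX_{\Z_p}$ is smooth. Let $\mathcal{Z}_g = V(g) \subseteq \mcX$.

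\textbf{Step 2 (singular or non-reduced part).} Suppose the reduction $\bar x \in \mcX(\FFF_p)$ is a point where $dg$ vanishes on the tangent space, or where $\mathcal{Z}_{g,\FFF_p}$ is singular or non-reduced. Since $g \neq 0$, $A$ is normal and $\mcX_\QQ$ is smooth and geometrically irreducible, the locus
\[
V(g) \cap \{dg = 0\}
\]
has codimension at least $2$ in $\mcX_\QQ$: it is a proper closed subset of the reduced hypersurface part of $V(g)$, after handling multiple components of $g$ separately. Spreading out keeps this codimension for large $p$. Lang--Weil then bounds the number of such $\bar x$ by $Cp^{d-2}$, and Lemma \ref{lem:smoothlift} gives at most $Cp^{2d-2}$ lifts to $\Z/p^2\Z$. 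Any non-reduced components of $\mathcal{Z}_g$ I would absorb by writing $g$ locally as a product of powers and working with the reduced factors.

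\textbf{Step 3 (smooth part).} Suppose $\bar x$ is a smooth point of $(\mathcal{Z}_g)_{\FFF_p}$ with $dg(\bar x) \neq 0$ mod $p$. In local coordinates $g$ is a coordinate function, so the condition $g(x) \equiv 0 \bmod p^2$ is a linear condition on the fiber of $\mcX(\Z/p^2\Z) \to \mcX(\FFF_p)$ above $\bar x$. That fiber is an affine space of size $p^d$, and the condition cuts it down to $p^{d-1}$ points. Lang--Weil gives about $Cp^{d-1}$ such $\bar x$, so this part contributes $Cp^{2d-2}$. Adding Steps 2 and 3 bounds all $x$ with $g(x) \equiv 0 \bmod p^2$, and Step 1 shows $Z_p$ lies among these, giving $|Z_p| \le Cp^{2d-2}$.

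\textbf{Main obstacle.} The delicate step is Step 2: showing that the locus where $dg$ vanishes along $V(g)$ has codimension $2$ uniformly in $p$. This requires care about powers of irreducible factors, and normality of $A$ is used so that $g$ defines a Weil divisor whose generic points are regular. My first approach would be to replace $g$ by its radical factors on an open cover, where each factor is a local equation of a reduced divisor.
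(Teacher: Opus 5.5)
There is a genuine gap, and it is already in Step 1. You replace $Z_p$ by the larger set of $x \in \mcX(\Z/p^2\Z)$ with $g(x) \equiv 0 \bmod p^2$, but that set need not have size $O(p^{2d-2})$. Take $\mcX = \Aff^d_{\Z}$ and $g = u_1^2$: the condition $u_1(x)^2 \equiv 0 \bmod p^2$ just says $p \mid u_1(x)$, which gives $p^{2d-1}$ points.

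The codimension claim in Step 2 fails for the same reason. Whenever $g$ vanishes to order $a \ge 2$ along some divisor, $dg$ vanishes identically on that component of $V(g)$. Passing to reduced factors afterwards, as you suggest, cannot repair this. If locally $g = h^a \cdot (\text{unit})$ with $a \ge 2$, then $g(x) \equiv 0 \bmod p^2$ only gives $h(x) \equiv 0 \bmod p$, and about $p^{2d-1}$ points of $\mcX(\Z/p^2\Z)$ satisfy that. Yet in the example, points with $v_p(u_1(x)) = 1$ are not in $Z_p$ for $p$ odd: there $v_p(g(x_1)) = 2$, while $v_p(g(x_1)-g(x_2)) \ge 3$. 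The information you discarded in Step 1 is exactly what is needed.

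The missing step, which is how the paper proceeds, is to use the difference condition at the correct order of vanishing.
\begin{itemize}
\item By normality, for each height-one prime $\mfp_i$ containing $g$, choose $h_i$ generating $\mfp_i A_{\mfp_i}$. Then $g s_i = h_i^{a_i} r_i$ with $r_i, s_i \notin \mfp_i$.
\item Points whose reduction mod $p$ lies in $V(\mfp_i) \cap V(r_i s_i)$, a codimension-$2$ locus, contribute $O(p^{2d-2})$ by Lang--Weil and Lemma \ref{lem:smoothlift}.
\item Elsewhere, suppose $x_1 \equiv x_2 \bmod p^2$ and $h_i(x_j) \equiv 0 \bmod p$. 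Then $h_i(x_1)^{a_i} \equiv h_i(x_2)^{a_i} \bmod p^{a_i+1}$, so $v_p(g(x_1) - g(x_2)) \ge a_i + 1$.
\item The defining inequality of $Z_p$ then gives $a_i\, v_p(h_i(x_1)) = v_p(g(x_1)) \ge a_i + 1$. Hence $h_i(x) \equiv 0 \bmod p^2$.
\end{itemize}
So $x$ is a $\Z/p^2\Z$-point of the reduced divisor $\Spec A[(r_is_i)^{-1}]/(h_i)$. Only at this stage does a count like your Steps 2--3, applied to $h_i$ rather than $g$, give $O(p^{2d-2})$.

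Your argument as written is fine when every $a_i = 1$. The lemma, however, allows an arbitrary nonzero $g$.
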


\begin{proof}
Take $\mfp_1, \dots, \mfp_k$ to be the distinct height one primes of $A$ containing $g$. Observe that $\text{Spec}\,(A/\mfp_i \otimes \QQ)$ is of dimension at most $d-1$ for each $i \le k$. (In the case that $\mfp_i$ has nonzero intersection with $\Z$, we note that $A/\mfp_i \otimes \Q$ is the zero ring; we are using the convention that the associated empty scheme has dimension $-\infty$).

For $i \le k$, choose $h_i \in A$ generating $\mfp_i$ in the localization $A_{\mfp_i}$, so that there are $r_i, s_i \in A$ outside $\mfp_i$ and a positive integer $a_i$ such that
\[g s_i = h_i^{a_i}r_i.\]
Then the closed subscheme corresponding to $(\mfp_i, r_is_i)$ has codimension at least $2$. By spreading out, the corresponding subscheme of $\mcX_{\FFF_p}$ has codimension at least $2$ for sufficiently large $p$.

We note that any point $x$ in $Z_p$ must satisfy $v_p(g(x)) \ge 2$, and so must correspond to an $\FFF_p$ point in some $\text{Spec}\, A/\mfp_i$. By the Lang--Weil bound \cite{LangWeil}, the number of such points that also lie in $\text{Spec}\, A/(r_is_i)$ is bounded by $Cp^{d - 2}$ for some $C > 0$ not depending on $p$. The number of $\Z/p^2\Z$ points mapping to such a point is then bounded by $Cp^{2d - 2}$. So it suffices to count the points in $Z_{p}$ where $h_i$ is zero modulo $p$ and where $r_i$ and $s_i$ are nonzero modulo $p$. Call this subset $Z_{p, i}$.

Given $x_1, x_2$ in $\mcX(\Z_p)$ over the same point in $Z_{p, i}$, we have $\frac{r_i(x_1)}{s_i(x_1)}  \equiv \frac{r_i(x_2)}{s_i(x_2)} \text{ mod } p^2$ and $h_i(x_1) \equiv h_i(x_2) \text{ mod } p^2$, so
\begin{alignat*}{2}
g(x_1) &\equiv h_i(x_1)^{a_i} \frac{r_i(x_2)}{s_i(x_2)} &&\text{ mod }p^{a_i+1} \\
&\equiv h_i(x_1)^{a_i-1} h_i(x_2) \frac{r_i(x_2)}{s_i(x_2)} \quad&&\text{ mod } p^{a_i + 1}\\
\dots & \equiv g(x_2) &&\text{ mod } p^{a_i+1}.
\end{alignat*}
That is, $v_p(g(x_1) - g(x_2)) \ge a_i + 1$.
Then $v_p(g(x_1))$ is also at least $a_i+1$, forcing $v_p(h_i(x_1))$ to be at least $2$. So $Z_{p, i}$ corresponds to a set of $\Z/p^2\Z$ points in $\Spec\, B$, where 
\[B = A[(r_is_i)^{-1}]/h_iA[(r_is_i)^{-1}].\]
But $B \otimes \QQ$ is reduced. By spreading out, $B \otimes \FFF_p$ is reduced for all but finitely may $p$. The number of $\Z/p^2\Z$ points in $\text{Spec}\, B$ can then be bounded by $Cp^{2d - 2}$ by the argument of Lemma \ref{lem:verybad}.
\end{proof}

\subsection{The Denef--Pas language}
\label{ssec:DenefPas}
Throughout this subsection, we will assume that $\mcX$ is affine.

We recall the Denef--Pas language $\msL_{\text{PR}}$ defined on \cite[p.~140]{PasCrelle}. This language has three sorts: a valued field $K$, its residue field $\overline{K}$, and a codomain for the valuation map, which we always take to be $\Z \cup \{\infty\}$. A formula in this language is built out of:
\begin{itemize}
\item The constants $0, 1$ and the operations $+, \cdot, -, \div$ for both $K$ and $\overline{K}$.
\item A valuation map $\text{ord}: K \to \Z \cup \{\infty\}$.
\item An angular component map $\text{ac}: K \to \overline{K}$.
\item The constants $0$, $1$ and $\infty$ in $\Z \cup \{\infty\}$ in addition to the operator $+$ and the relationships $\le$ and $\equiv \text{ mod } n$ for arbitrary $n > 1$.
\item Variables and quantifiers of any of the three sorts, and boolean operations on formulae.
\end{itemize}

For any prime $p$, there is a standard structure for this language with $K = \QQ_p$. In this structure, $\overline{K}$ is $\FFF_p$, $\text{ord}: K \to \Z \cup \{\infty\}$ is the standard valuation map, and $\text{ac}$ is defined by
\[\text{ac}(x) = p^{-\text{ord}(x)} \cdot x \,\text{ mod } p\quad\text{ for all } x \ne 0.\]

Since $\mcX$ is affine, we may view $\mcX(\QQ_p)$ as the zero locus of some finite collection of fixed integer polynomials in $n$ variables, and $U$ as an open subscheme of $\mcX$. Then there is a formula $\Phi_0$ in the Denef--Pas language taking $n$ variables from the valued field sort such that, for any $\mathbf{x} \in \QQ_p^n$,
\[\QQ_p \models \Phi_0(\mathbf{x}) \iff \mathbf{x} \in U(\QQ_p).\]

\begin{lemma}
\label{lem:logic_nonconstant}
Choose a positive rational number $t$. There is a formula $\Phi$ in the Denef--Pas language taking $n$ variables from the valued field sort so that, for any sufficiently large prime $p$ and any $\mathbf{x}$ in $\QQ_p^n$, if $\QQ_p \models \Phi_0(\mathbf{x})$, then
\[
\QQ_p \models \Phi(\mathbf{x}) \iff \mcT_{\mathbf{x} p} = t.
\]
\end{lemma}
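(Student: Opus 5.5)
Our plan is to express the condition $\mcT_{\mathbf{x} p} = t$ as a first-order statement about $\QQ_p$-points of auxiliary varieties. Everything is then built from polynomial equations together with the valuation and angular component symbols. We first make the Tamagawa ratio concrete. For $\mathbf{x} \in U(\QQ_p)$, write $A = \mcA_{\mathbf{x}}$, and let $\varphi$ be the isogeny $A \to A/M_{\mathbf{x}}$. The local conditions are $\msL_{\mathbf{x}p} = \ker(H^1(G_p, M_{\mathbf{x}}) \to H^1(G_p, A))$, which by the Kummer sequence is identified with $(A/M_{\mathbf{x}})(\QQ_p)/\varphi(A(\QQ_p))$. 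Hence
\[
\mcT_{\mathbf{x}p} = \frac{\# \big((A/M_{\mathbf{x}})(\QQ_p)/\varphi A(\QQ_p)\big)}{\# M_{\mathbf{x}}(\QQ_p)}.
\]
Since $|M|$ is fixed and $M_{\mathbf{x}}(\QQ_p)$ has order dividing $|M|$, both numerator and denominator lie in a finite list of integers. For $p > |M|$ the numerator is a ratio of Tamagawa numbers $c_p(A/M_{\mathbf x})/c_p(A)$ times $\#M_{\mathbf{x}}(\QQ_p)$. Here we would rather use the cokernel directly, so that no Néron model is needed. It therefore suffices to produce, for each integer $N \le |M|^{\dim}$ (really for each value the two cardinalities can take), formulas expressing that $\# M_{\mathbf{x}}(\QQ_p) = a$ and that $\#\operatorname{coker}(\varphi) = b$. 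We then take $\Phi$ to be the finite disjunction over pairs with $b/a = t$.

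Next we set up the definable families. By spreading out over $U$ (shrinking $U$ and enlarging the bound on $p$), we choose affine embeddings of $\mcA_U$, $\mcM$, and the quotient $\mathcal{A}' := \mcA_U/\mcM$ over $U$. We also fix the isogeny $\varphi: \mcA_U \to \mathcal{A}'$ and the group laws, all given by finitely many fixed integer polynomials; away from an affine cover this is handled by finitely many charts glued by polynomial transition maps. Then "$\mathbf{y}$ is a $\QQ_p$-point of the fiber of $\mcA$ over $\mathbf{x}$", the group law, and "$\mathbf{z} = \varphi(\mathbf{y})$" are quantifier-free field-sort formulas in $(\mathbf{x},\mathbf{y},\mathbf{z})$. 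The statement $\# M_{\mathbf{x}}(\QQ_p) \ge a$ is "there exist $a$ distinct $\QQ_p$-points of $\mcM_{\mathbf{x}}$", which is first order. Similarly, $\#\operatorname{coker}\varphi \ge b$ is "there exist $\mathbf{z}_1, \dots, \mathbf{z}_b \in \mathcal{A}'_{\mathbf{x}}(\QQ_p)$ such that for all $i \ne j$ there is no $\mathbf{y} \in \mcA_{\mathbf{x}}(\QQ_p)$ with $\varphi(\mathbf{y}) = \mathbf{z}_i - \mathbf{z}_j$". Exact equalities follow by combining "$\ge a$" with "not $\ge a+1$". None of this even requires the valuation or angular component symbols; it is already in the ring language, so it lies in the Denef--Pas language. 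We then conjoin with $\Phi_0$ where needed.

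The main obstacle is the uniformity in $p$: the formula must be independent of $p$, and must be correct for all sufficiently large $p$. This rests on the spreading-out step. We need that for $p$ large, the fixed polynomial presentation of $\mathcal{A}'$, $\varphi$, and $\mcM$ over $U_{\QQ}$ really computes the quotient isogeny on every fiber $\mathbf{x} \in U(\QQ_p)$, and that the set-theoretic $\QQ_p$-points of the chart description correctly give the groups. Both are statements about the generic-fiber construction, which is defined over $\QQ$ (indeed over $\Z[1/B']$). Hence they hold for all $\QQ_p$ at once, and only the finitely many primes dividing $B'$ need to be excluded. We also need the Kummer identification of $\msL_{\mathbf{x}p}$ with $\operatorname{coker}(\varphi)$ on $\QQ_p$-points, which is valid for every $p$. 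With these in place the formula $\Phi$ works for all large $p$, as required.
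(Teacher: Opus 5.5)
Your proposal works in outline, but it takes a different route from the paper's proof and needs one repair.

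\textbf{How the routes differ.} The paper never forms the quotient variety. It encodes, in the Denef--Pas language, an explicit tame extension $L/\QQ_p$ of degree $m^2$ and ramification index $m$, where $m=|\mathrm{Aut}(M)|\cdot|M|$; the angular component is used to choose the unramified part and a uniformizer. It then writes down cocycles and coboundaries of $\Gal(L/\QQ_p)$ with values in $M_{\mathbf x}(L)$, together with the kernel of $H^1(\Gal(L/\QQ_p), M_{\mathbf x}(L)) \to H^1(\Gal(L/\QQ_p), \mcA_{\mathbf x}(L))$. An inflation argument for $p>m$ identifies that kernel with $\msL_{\mathbf x p}$.

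You instead use the Kummer sequence for $\varphi\colon \mcA_{\mathbf x}\to\mcA_{\mathbf x}/M_{\mathbf x}$ to identify $\msL_{\mathbf x p}$ with the cokernel of $\varphi$ on $\QQ_p$-points. You then count cosets over $\QQ_p$ itself. This is more elementary: it stays in the ring language, needs no auxiliary extension or inflation lemma, and is correct for every $p$ rather than only large $p$.

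What the paper's route buys is twofold. It realizes the subgroup $\msL_{\mathbf x p}$ itself inside a fixed model of $H^1$. That machinery is reused in Lemmas \ref{lem:loc_equiv} and \ref{lem:logic_constant}, where the cardinality alone is not enough, so your count does not extend to them. It also works with $\mcA$, $\mcM$ and their group laws directly over the given $U$, with no quotient family.

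\textbf{The repair.} That last point is exactly where your write-up breaks. Spreading out from the generic point gives a uniform polynomial presentation of $\mcA_U/\mcM$ and $\varphi$ only over some dense open $U'\subseteq U$. You are not allowed to shrink $U$. The lemma concerns every $\mathbf x$ with $\QQ_p\models\Phi_0(\mathbf x)$, and in the applications $U$ is the open set defining $\mcX'_H$, so it cannot be changed after the fact.

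The fix is Noetherian induction:
\begin{enumerate}
\item Stratify $U_{\QQ}$ into finitely many reduced, locally closed pieces $W_1,\dots,W_r$ defined over $\QQ$.
\item Choose them so that over each $W_i$ the family $\mcA$ is an abelian scheme, $\mcM$ is finite \'etale, and the quotient $\mcA_{W_i}/\mcM_{W_i}$ together with $\varphi$ has a polynomial presentation.
\item Formation of the quotient commutes with base change, so this presentation computes $\mcA_{\mathbf x}/M_{\mathbf x}$ for every $\mathbf x\in W_i(\QQ_p)$.
\item The $W_i$ partition $U_{\QQ}$ set-theoretically, so their $\QQ_p$-points partition $U(\QQ_p)$.
\item Take $\Phi$ to be the disjunction over $i$ of ``$\mathbf x\in W_i$ and your counting formula for $W_i$''.
\end{enumerate}
Separately, enlarging the bound on $p$ is unnecessary: everything happens on $\QQ_p$-points of $\QQ$-schemes.
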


\begin{proof}
We start by considering a general valued field $K$ and residue field $\overline{K}$. Take $m = |\text{Aut}(M)| \cdot |M|$, where $\text{Aut}(M)$ is the set of automorphisms of $M$ as an abelian group.

Firstly, there exists a formula, that given $a_0, \dots, a_{m-1}$ in $K$ of nonnegative valuation, detects whether the ring
\[
\overline{L} = \overline{K}[t]/(t^m + \overline{a}_{m-1}t^{m-1} + \dots + \overline{a}_0)
\]
is a field. Here, $\overline{a}_i$ denotes $\text{ac}(a_i)$ if $a_i$ has zero valuation, and is $0$ otherwise. With these chosen, we take
\[
L_{\text{ur}} = K[t]/(t^m + a_{m-1}t^{m-1} + \dots + a_0).
\]
Note that if $K = \Q_p$, then $a_0, \dots, a_{m-1}$ as above always exist, and $L_{\text{ur}}/K$ is a cyclic Galois extension for any such choice of $a_0, \dots, a_{m-1}$.

Secondly, there exists a formula, that detects whether there exists $b \in K$ of valuation $1$. Then take
\[
L = L_{\text{ur}}[s]/(s^m - b).
\]
If $K = \Q_p$ and if $p > m$, then $L/L_{\text{ur}}$ is a cyclic extension of degree $m$ for all such $b$ and moreover $L$ is the unique Galois extension of $\QQ_p$ of degree $m^2$ whose ramification degree is $m$. 

We think of elements in $L$ as tuples of $m^2$ elements in $K$; arithmetic in $L$ can then be reduced to arithmetic in $K$. Taking $G$ to be the set of $K$-algebra homomorphisms from $L$ to $L$ (so $G$ equals $\Gal(L/K)$ in case $K = \Q_p$), we may record elements of $G$ as $m^2 \times m^2$ matrices with coefficients in $K$ obeying certain conditions in the language of rings.

For $\mathbf{x}$ in $K^n$, the $L$-points of $M_{\mathbf{x}}$ can be specified as tuples valued in $K$ satisfying some formula in the language, with the addition law on the $L$-points also given by a formula; so the set of cocycles in $Z^1(G, M_{\mathbf{x}}(L))$ can be specified in the language, as can the set of coboundaries.

Furthermore, we can characterize the $L$-points of $\mathcal{A}_{\mathbf{x}}$ in terms of a formula in the language, in addition to their addition law. From this, the set
\begin{equation}
\label{eq:loc_cond_model}
\ker\left(H^1(G, M_{\mathbf{x}}(L)) \to H^1(G, \mathcal{A}_{\mathbf{x}}(L))\right)
\end{equation}
may be given as the set satisfying some formula. The set of $\mathbf{x}$ satisfying $\Phi_0$ such that this kernel has a certain size and such that $H^0(G, M_{\mathbf{x}})$ also has a certain size then is given by such a formula.

So $G$ is a quotient of $G_p = \Gal(\overline{\QQ_p}/\QQ_p)$. If $N$ is the kernel of this map, we see that $M_{\mathbf{x}}(\overline{\QQ_p})^N = M_{\mathbf{x}}(L)$ and that $\mathcal{A}_{\mathbf{x}}(L) = \mathcal{A}_{\mathbf{x}}(\overline{\QQ}_p)^N$. If $N_0$ is the normal subgroup fixing $M_{\mathbf{x}}(\overline{\QQ_p})$, then we see that the maximal abelian quotient of $N$ of exponent dividing $|M_{\mathbf{x}}|$ is a quotient of $N/N_0$ for $p$ larger than $m$ by the definition of $m$ and $L$. This implies that the restriction map $H^1(G_p, M_{\mathbf{x}}) \to H^1(N, M_{\mathbf{x}})$ is zero. So the inflation corresponding to $G_p \to G$  defines an isomorphism
\[H^1\big(G, \,M_{\mathbf{x}}(L)\big) \isoarrow H^1(G_p, M_{\mathbf{x}}).\]
This inflation also gives an injection
\[H^1(G, \mathcal{A}_{\mathbf{x}}(L)) \hookrightarrow H^1(G_p, \mathcal{A}_{\mathbf{x}}(\overline{\QQ_p})),\]
so the local conditions for $M_{\mathbf{x}}$ at $p$ are identified with \eqref{eq:loc_cond_model}.

Our above work then gives a formula in the language characterizing the $\mathbf{x}$ satisfying $\Phi_0(\mathbf{x})$ such that $\mcT_{\mathbf{x} p}$ takes a given value for all sufficiently large $p$, as claimed. 
\end{proof}

We will also need a version of this lemma for weak equivalence classes:

\begin{lemma}
\label{lem:loc_equiv}
If $M$ is constant, and if $\mathcal{L}$ is a weak equivalence class of local conditions of $M$ in the sense of Definition \ref{defn:equiv_local}, there is a Denef--Pas formula $\Phi(\mathbf{x})$ taking $n$ variables from the valued field sort so that, for all sufficiently large $p$ and all $\mathbf{x}$ in $\QQ_p^n$, $\Phi(\mathbf{x})$ holds if and only if the image $\overline{x}$ of $\mathbf{x}$ is usable and $(p, \msL_{\overline{x} p})$ lies in $\mathcal{L}$.
\end{lemma}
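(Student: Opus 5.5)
The plan is to build on the proof of Lemma \ref{lem:logic_nonconstant}, which already gives, for $K = \QQ_p$ with $p$ large, a definable model of $H^1(G_p, M_{\mathbf{x}})$ as $H^1(G, M_{\mathbf{x}}(L))$ with $G = \Gal(L/\QQ_p)$. It also gives a formula cutting out the subgroup \eqref{eq:loc_cond_model}, which is identified with $\msL_{\mathbf{x}p}$. Since $M$ is constant, $M_{\mathbf{x}}$ is isomorphic to the fixed $G_{\QQ}$-module $M$ for all $\mathbf{x}$ satisfying $\Phi_0$. For $p$ outside $\Vplac_0$ and $p > m$, the group $G$ is then a quotient of $G_p/mI_p \cong \widehat{\Z} \ltimes \Z/m\Z$ through which the action on $M$ factors.

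The first step is to express membership of $(p, \msL_{\mathbf{x}p})$ in $\mathcal{L}$ as a first-order condition. Weak equivalence classes are finite in number and depend only on finite data. These data are: the element of $\Gal(K_M/\QQ)$ (up to conjugacy) by which Frobenius acts on $M$, which is a function of $p$ mod $m$ and a Chebotarev-type condition; and the subgroup $\msL'_p$ of $H^1(G_p/mI_p, M)$, which is determined up to the transport $(\iota,\sigma)$. Fix a finite list of representatives $(\Gamma_j, \msL_j)$. Here $\Gamma_j$ is the finite quotient $\widehat{\Z}/N \ltimes \Z/m\Z$ with a specified action on the abstract group $M$, and $\msL_j \subseteq H^1(\Gamma_j, M)$. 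The formula $\Phi$ will be a finite disjunction over $j$. Each disjunct says that there exist a generator $F$ (a Frobenius lift) and a generator $\tau$ of inertia in the definable group $G$, and a group isomorphism $\beta: M \to M_{\mathbf{x}}(L)$ given by finitely many $L$-points. These must satisfy two conditions. First, the actions of $F$ and $\tau$ on $M_{\mathbf{x}}(L)$ match the action in $\Gamma_j$ via $\beta$. Second, the induced isomorphism on $H^1$, made explicit by evaluating cocycles on the finitely many words in $F,\tau$, carries $\msL_j$ onto the definable set \eqref{eq:loc_cond_model}. All quantifiers range over tuples from $K$ of bounded length, and $M$, $H^1(\Gamma_j, M)$ are finite explicit objects. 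So this is a formula in the Denef--Pas language, in fact in the ring language on $K$.

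The main obstacle is the part of weak equivalence that involves a \emph{global} element $\sigma \in G_{\QQ}$ and the class of $p$ relative to $K_M$: whether $F$ acts as a prescribed conjugacy class must be compared with the fixed $G_{\QQ}$-structure of $M$, not just with some abstract action. I would handle this by noting that, since $M_{\mathbf{x}} \cong M$ as $G_{\QQ}$-modules, the local action on $M_{\mathbf{x}}(L)$ is automatically the restriction of a global one. It therefore suffices to require that $\beta$ intertwine $F$ with some element of $\Gal(K_M/\QQ)$ acting on $M$; this is one more finite disjunction. Existence of such $\beta,\sigma$ is exactly the condition in Definition \ref{defn:equiv_local}, so no global arithmetic enters the formula. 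A subsidiary issue is that the choice of $F$ is only determined modulo inertia and $\tau$ only up to powers. This is harmless because we quantify existentially over all generators, and an isomorphism $\iota$ is allowed to absorb these choices.

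Finally, usability must be encoded. By Definition, $\overline{x}$ is usable iff it is not very bad and $\msL_{\mathbf{y}p}$ is constant for $\mathbf{y}$ in the residue class of $\mathbf{x}$ modulo $p^2$. The condition $\mathbf{y} \equiv \mathbf{x} \bmod p^2$ is expressible as $\text{ord}(y_i - x_i) \ge 1+1$. Not very bad is then $\forall \mathbf{y}\,(\mathbf{y}\equiv\mathbf{x} \wedge \mathbf{y}\in\mcX \Rightarrow \Phi_0(\mathbf{y}))$. Constancy is $\forall \mathbf{y},\mathbf{z}$ in the class, with the two definable subgroups of $H^1$ corresponding under the canonical identification, since $M$ is constant. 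Conjoining this with the disjunction above gives $\Phi$, valid for all $p$ larger than $m$ and outside $\Vplac_0$.
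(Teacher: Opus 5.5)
Your architecture matches the paper's. You model $H^1(G_p,M_{\mathbf{x}})$ as $H^1(\Gal(L/\QQ_p),M_{\mathbf{x}}(L))$ via Lemma~\ref{lem:logic_nonconstant}, then take a finite disjunction with existential quantification over identifications. However, the step you single out as the main obstacle is not resolved, and as written your formula cuts out the wrong set.

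Each disjunct quantifies over an \emph{arbitrary} group isomorphism $\beta\colon M\to M_{\mathbf{x}}(L)$ compatible with $F,\tau$. It therefore recognizes $\bigl(\Gal(L/\QQ_p),M_{\mathbf{x}}(L),\msL_{\mathbf{x}p}\bigr)$ only up to all of $\mathrm{Aut}(M)$. Definition~\ref{defn:equiv_local}, by contrast, lets the module identification move only by the image of $G_{\QQ}$ in $\mathrm{Aut}(M)$. Your extra requirement that $\beta$ intertwine $F$ with some element of $\Gal(K_M/\QQ)$ constrains the Frobenius action, not $\beta$ modulo the image of $G_{\QQ}$. So the claim that existence of such $\beta$ ``is exactly the condition in Definition~\ref{defn:equiv_local}'' is false.

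Concretely, take $M=\Z/3\Z\oplus\mu_3$ as in Example~\ref{ex:full_3tor} and $p\equiv 1 \bmod 3$.
\begin{itemize}
\item $G_p$ acts trivially on $M$, so your extra condition is vacuous, and swapping the two summands is an admissible $\beta$.
\item This swap exchanges the subgroups $H^1(G_p,\Z/3\Z)$ and $H^1(G_p,\mu_3)$ of $H^1(G_p,M)$. Both occur in that family, as Kummer images at split multiplicative primes whose canonical subgroup is the respective summand.
\item They are not weakly equivalent: every $\sigma\in G_{\QQ}$ preserves both summands. Indeed they give the values $3$ and $1/3$ for $\mcT_p$ of the $G_{\QQ}$-submodule $\Z/3\Z$.
\end{itemize}
So your $\Phi$ defines a union of weak equivalence classes. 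That is not enough to verify condition~(3) of Definition~\ref{defn:effective-constant} through Proposition~\ref{prop:geom_nonconstant}, which needs each class separately.

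The missing ingredient is tying the identification to the global structure of $M$, which is what the paper does. Fix a number field $F$ over which the points of $M$ are rational; here $M$ is viewed as a finite \'etale $\QQ$-group scheme with which each $M_{\mathbf{x}}$ is identified. Then quantify over embeddings $F\hookrightarrow L_{\mathrm{ur}}$, i.e.\ over roots $\theta_p\in L_{\mathrm{ur}}$ of the minimal polynomial of a primitive element. Substituting $\theta_p$ into coordinates identifies $M(F)$ with $M(L)$.
\begin{itemize}
\item These identifications form a single $\Gal(F/\QQ)$-torsor, which is exactly the freedom of $\sigma$ in Definition~\ref{defn:equiv_local}.
\item The transported Frobenius then automatically acts through $\Gal(F/\QQ)$.
\item The existentially chosen isomorphism $\Gal(L/\QQ_p)\cong D$ accounts for $\iota$, and a union over classes of $p \bmod m$ handles the dependence of $D$ on $p$.
\end{itemize}
If you restrict $\beta$ to the maps arising this way, which is a definable condition, the rest of your argument goes through.

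Your encoding of usability is fine once you also require $\mathbf{x},\mathbf{y},\mathbf{z}$ to be $\Z_p$-points of $\mcX$. The paper instead imposes usability separately in the proof of Proposition~\ref{prop:geom_nonconstant}.
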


\begin{proof}
We fix a finite extension $F$ of $\QQ$ such that $G_F$ acts trivially on $M$. Fix an integer $a$ coprime to $m$. For every prime equal to $a$ mod $m$ that is unramified in $F/\QQ$, we encode a local conditions subgroup as a subgroup of 
\[H^1(D, M(F))\quad\text{with}\quad D = I \rtimes \Z/m\Z,\,\, I = \Z/m\Z,\]
where the action on $I$ is given by 
\[\sigma \tau \sigma^{-1} = \tau^a\] for some generating $\sigma$ in $D/I$ and any $\tau$ in $I$. To make this encoding, we define $L$ and $L_{\text{ur}}$ above $\QQ_p$ as in Lemma \ref{lem:logic_nonconstant}, we choose an identification $\Gal(L/\QQ_p)  \isoarrow D$ taking $\Gal(L/L_{\text{ur}})$ onto $I$, and we choose an embedding $F \hookrightarrow L_{\text{ur}}$ in order to consider the points in $M(L) = M(L_{\text{ur}})$ as points in $M(F)$.

Given any subgroup $\msL$ of $H^1(D, M(F))$, there is a formula $\Phi(\mathbf{x})$ such that $\QQ_p \models \Phi(\mathbf{x})$ exactly when $\QQ_p \models \Phi_0(\mathbf{x})$ and the above procedure can be used to identify the local conditions 
\[\msL_{\mathbf{x} p} \subseteq H^1(\Gal(L/\QQ_p), M(L))\]
with $\msL$ for some choice of $\Gal(L/\QQ_p) \isoarrow D$ and $F \hookrightarrow L_{\text{ur}}$. After taking a union over mod $m$ classes as necessary, we may characterize weak equivalence classes with Denef--Pas formulae.
\end{proof}

In the case where we are focused on a single prime $p$, we can modify this lemma to exert a little more control on the local conditions. 

\begin{lemma}
\label{lem:logic_constant}
If $M$ is constant, there is a formula $\Phi(\mathbf{x}, \overline{z})$ in the Denef--Pas language, with $\mathbf{x}$ in $K^n$ and $\overline{z}$  in some power of $\overline{K}$ such that, for sufficiently large $p$ and $\mathbf{x}, \mathbf{y} \in \QQ_p^n$ satisfying $\Phi_0$, we have
\[
\QQ_p \models \forall \overline{z} \left( \Phi\left(\mathbf{x}, \overline{z}\right) \iff \Phi\left(\mathbf{y}, \overline{z}\right)\right) \quad\textup{if and only if}\quad \msL_{\mathbf{x} p} = \msL_{\mathbf{y} p}.
\]
\end{lemma}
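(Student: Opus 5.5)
We build on the encoding from the proofs of Lemma \ref{lem:logic_nonconstant} and Lemma \ref{lem:loc_equiv}. Let $m = |\text{Aut}(M)|\cdot|M|$. For $p$ large, $L_{\text{ur}}$ is defined by a monic polynomial with coefficients $a_0,\dots,a_{m-1}$ whose reduction is irreducible over $\overline{K}$. Set $L = L_{\text{ur}}[s]/(s^m - b)$ with $\text{ord}(b)=1$; this $L$ is the degree $m^2$ Galois extension of $\QQ_p$ with ramification index $m$. Inflation then identifies $\msL_{\mathbf{x}p}$ with the kernel \eqref{eq:loc_cond_model} inside $H^1(\Gal(L/\QQ_p), M(L))$.

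The difficulty is to compare $\msL_{\mathbf{x}p}$ and $\msL_{\mathbf{y}p}$ as subgroups of one fixed group $H^1(G_p, M)$. Definable choices such as $b$, the $a_i$, or an embedding $F \hookrightarrow L_{\text{ur}}$ are not canonical, so the formula must avoid depending on them uncontrollably. The plan is to let the parameter $\overline{z}$ in the residue sort fix these choices up to the ambiguity that matters. Three steps:

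\begin{enumerate}
\item Fix $F/\QQ$ with $G_F$ acting trivially on $M$. Let $\overline{z}$ consist of: the residues $\overline{a}_0,\dots,\overline{a}_{m-1}$, which determine $L_{\text{ur}}$ together with a chosen root, hence a Frobenius and an embedding $F \hookrightarrow L_{\text{ur}}$ by Hensel lifting; the angular component $\text{ac}(b)$, which with the valuation condition determines $L$ and a generator of tame inertia once a root of $s^m=b$ is fixed; and finally a tuple $\overline{w}$ encoding a cocycle class in $H^1(D, M(F))$ as a finite list of residue elements. The last encoding is possible since $M(F)$ and $D$ are finite, fixed abelian data, so $\overline{w}$ ranges over a fixed finite set realized by tuples of $0$'s and $1$'s in $\overline{K}$.

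\item Define $\Phi(\mathbf{x}, \overline{z})$ to say: the data in $\overline{z}$ are valid, and there exist $a_i \in K$ with $\text{ac}(a_i)=\overline{a}_i$ and $b \in K$ with $\text{ord}(b)=1$, $\text{ac}(b)$ as prescribed, such that, under the resulting identification $\Gal(L/K)\isoarrow D$ and $F\hookrightarrow L_{\text{ur}}$, the class encoded by $\overline{w}$ lies in \eqref{eq:loc_cond_model} for $\mathbf{x}$. The key check, for $p$ large, is that the lifts do not matter. Different lifts $a_i$ with the same residues give the same unramified extension, and the root is determined by its residue via Hensel. Different $b$ with equal $\text{ord}$ and $\text{ac}$ differ by a principal unit, which is an $m$-th power when $p\nmid m$, so $L$ and the tame generator agree. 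Hence the identification $H^1(G_p,M)\cong H^1(D,M(F))$ depends only on $\overline{z}$, and this identification is the same for $\mathbf{x}$ and $\mathbf{y}$.

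\item Conclude both directions. If $\msL_{\mathbf{x}p}=\msL_{\mathbf{y}p}$, then for each valid $\overline{z}$ both sides transport to the same subgroup of $H^1(D,M(F))$, so $\Phi(\mathbf{x},\overline{z}) \iff \Phi(\mathbf{y},\overline{z})$. Conversely, fix one valid choice of the first part of $\overline{z}$, which exists for large $p$. Letting $\overline{w}$ range then recovers the transported subgroup, so equality of truth values forces equality of the transported subgroups. Since the transport is an isomorphism, this gives $\msL_{\mathbf{x}p}=\msL_{\mathbf{y}p}$.
\end{enumerate}

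The main obstacle is step 2, verifying that the identification is genuinely independent of non-residue choices. Two points need care. First, the Frobenius normalization: the chosen root of the residue polynomial pins down Frobenius as the $p$-power map on that root. Second, the trivialization of $\mu_m$ relevant to the tame generator: since $\mu_m\subseteq L_{\text{ur}}$, the tame generator is pinned by a fixed root of $s^m=b$. Making both explicit in the first-order formula is the delicate part. Once they are handled, the rest is routine first-order bookkeeping, as in Lemma \ref{lem:logic_nonconstant}.
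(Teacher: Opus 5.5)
Your proposal follows the paper's proof. The choices of $L$, of the embedding $F\hookrightarrow L_{\text{ur}}$, and of $\Gal(L/\QQ_p)\isoarrow D$ are recorded by residue-field data (angular components). Hensel's lemma with $p\nmid m$ then shows that the image of $\msL_{\mathbf{x}p}$ in $H^1(D, M(F))$ depends only on that data, which is what allows $\Phi(\mathbf{x},\overline{z})$ to be built. You spell out the class parameter $\overline{w}$ and the two implications more explicitly than the paper does.

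One correction to your step 2. A fixed root $s$ of $s^m=b$ does not by itself determine a generator $\tau$ of $\Gal(L/L_{\text{ur}})$, nor a lift of Frobenius. So $\overline{z}$ must also record the residue of the primitive $m$-th root of unity $\zeta$ with $\tau(s)=\zeta s$, and you should use the Frobenius lift fixing $s$. The paper achieves this by encoding each $\sigma$ through the residue of $\sigma(s)/s$. Without this, the identification with $H^1(D,M(F))$ is not well defined, and your converse direction does not go through.
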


\begin{proof}
In the proof of Lemma \ref{lem:loc_equiv}, we made a number of choices. Our goal is to show that these choices may be encoded by a tuple of elements in $\FFF_p$.

First, we chose the field $L$. This corresponds to a tuple $(a_0, \dots, a_{m-1}, b)$ of elements in $\Z_p$ obeying some conditions.

We also chose an embedding of $F$ into $L_{\text{ur}}$. Writing $F = \Q[\theta]$, this corresponds to a choice of $\theta_p$ in $L_{\text{ur}}$ satisfying the characteristic polynomial of $\theta$.

Finally, we fixed an isomorphism $\Gal(L/\Q_p) \isoarrow D$. An element $\sigma$ in the domain of this map can be specified by its action on the residue field of $L_{\text{ur}}$ and by the image of $\sigma(s)/s$ in this residue field. This defines an encoding $\tau_L: \Gal(L/\Q_p) \to \FFF_p^{m^2 + m}$, and the isomorphism with $D$ may be encoded as a map from the image of this map to $D$.

Suppose we choose a tuple $(a_0', \dots, a'_{m-1}, b', \theta'_p)$ satisfying the conditions above such that $\text{ac}(a_0') = \text{ac}(a_0')$, etc. Take $L'$ to be the extension of $\Q_p$ corresponding to this tuple. The residue field of $L$ is canonically identified with the residue field of $L'$. By Hensel's lemma, there is a unique isomorphism $\iota: L \to L'$ that acts as the identity on the residue field and which satisfies $v_p(\iota(s) - s) \ge 2$. Also by Hensel's lemma, we find that $\theta_p$ must equal $\theta'_p$ for all large enough $p$.

With this setup, we have commutative triangles
\[\begin{tikzcd} \Gal(L/\Q_p) \arrow{r}{\tau_L} \arrow{d}{\sim} & \FFF_p^{m^2 + m} \\ \Gal(L'/\Q_p) \arrow[ur, "\tau_{L'}"'] & \end{tikzcd}\quad\text{and}\qquad \begin{tikzcd} F \arrow{r}{\theta \mapsto \theta_p} \arrow[rd, "\theta \mapsto \theta'_p"'] & L \arrow{d}{\sim} \\ & L' \end{tikzcd},\]
and we are left with a diagram
\[\begin{tikzcd} H^1(\Gal(L/\Q_p), M(L)) \arrow{d}{\sim} \arrow{r} &H^1(D, M(F))\\ H^1(\Gal(L'/\Q_p), M(L')) \arrow{ur} &\end{tikzcd}.\]
The local conditions over $L$ and over $L'$ for a given $\mathbf{x}$ then have equal images in $H^1(D, M(F))$. In other words, for a fixed $\mathbf{x}$, the corresponding image of local conditions in $H^1(D, M(F))$ is determined by the angular components of  $a_0, \dots, a_{m-1}, b, f$. This lets us construct the formula $\Phi(\mathbf{x},  \overline{z})$.
\end{proof}

\begin{proof}[Proof of Proposition \ref{prop:geom_usable}]
For $p$ large enough, we may assume by \cite{PasCrelle} that the formulae $\Phi$ defined in Lemmas \ref{lem:logic_nonconstant} and \ref{lem:logic_constant} have no quantifiers over the valued field sort. We then can find a finite collection of polynomials $g_1, \dots, g_k$ in $\Z[x_1, \dots, x_n]$ such  that, in the formula $\Phi(\mathbf{x})$ or $\Phi\left(\mathbf{x}, \overline{z}\right)$, any appearance of $\mathbf{x}$ is contained in a subterm of the form $\text{ord}(g_i(\mathbf{x}))$ or $\text{ac}(g_i(\mathbf{x}))$ for some $i$.

We may handle the very bad $\overline{x}$ using Lemma \ref{lem:verybad}. If $p$ is large enough and $\overline{x} \in \mcX(\Z/p^2\Z)$ is unusable but not very bad, then, for some $g_i$ and some $\mathbf{x}$, $\mathbf{y}$ above $\overline{x}$, we have
\[\text{ord}(g_i(\mathbf{x})) \ne \text{ord}(g_i(\mathbf{y})) \quad\text{or}\quad \text{ac}(g_i(\mathbf{x})) \ne \text{ac}(g_i(\mathbf{y})).\]
In either case, we must have
\[\text{ord}(g_i(\mathbf{x})) \ge \text{ord}(g_i(\mathbf{x}) - g_i(\mathbf{y})),\]
and Lemma \ref{lem:consistent} gives the proposition.
\end{proof}

\begin{proof}[Proof of Proposition \ref{prop:geom_nonconstant}]
Take $\Phi$ to be the formula constructed in Lemma \ref{lem:logic_nonconstant} or \ref{lem:loc_equiv}. Take $Z_p$ to be the set of usable points in $\mcX(\Z/p^2\Z)$ in the image of some $\mathbf{x}$ satisfying $\Phi$. By \cite[Theorems 8.3.1 and 8.3.2]{Denef Loeser}, $|Z_p|$ is equal to the number of points in $\mcX(\FFF_p[t]/t^2)$ in the image of some $\mathbf{x} \in \mcX(\FFF_p[[t]])$ satisfying $\Phi(\mathbf{x})$ for all sufficiently large $p$.

By writing every element of $\FF_p[t]/t^2$ as $a + bt$ with $a, b \in \FF_p$, there then is a Denef--Pas formula $\Phi'(\overline{x})$ on the general valued field $K = \overline{K}[[t]]$, where $\overline{x}$ is valued in $\overline{K}^{2n}$, such that, for all sufficiently large $p$, $|Z_p|$ equals the number of points in $\FFF_p^{2n}$ satisfying $\Phi'$. Applying Pas's theorem \cite{PasCrelle}, we may assume that $\Phi'$ has no variables in $K$. Furthermore, since points satisfying $\Phi'$ correspond to bad points, we may assume that there is a closed subscheme $V$ of $\mathbb{A}_{\Z}^{2n}$ such that $V_{\QQ}$ has dimension at most $2d - 1$ and such that, for all sufficiently large $p$, $\Phi'(\overline{x})$ is not satisfied for any $\overline{x} \in \FFF_p^{2n}$ not in $V_{\FFF_p}$.

Then \cite{CDM} gives that there is nonnegative rational $\mu$ so
\[\left|\left|Z_p\right| - \mu p^{2d - 1} \right| \le Cp^{2d - 3/2}, \]
with $\mu$ taking one of finitely many values. Whether it takes a given value $c$ is given by a formulae $\Phi_c$ in the language of rings, with $\Phi_c$ holding over $\FFF_p$ if and only if $\mu = c$ in the above formula for large enough $p$ \cite[(2), p. 108]{CDM}.

By \cite[Theorem 2]{Kiefe}, we may suppose $\Phi_c$ is a boolean combination of terms of the form
\[\phi(a_0, \dots, a_m) := \left(\exists x: a_m x^m + \dots + a_0 = 0\right), \]
where $m$ is a positive integer and $a_0, \dots, a_m$ are fixed integers. We then find that there is a Galois extension $L/\Q$ and a conjugacy-invariant subset $\mathscr{C}$ of $\Gal(L/\Q)$ such that
\[\FFF_p \models \Phi_c \iff \text{Frob}\, p \in \mathscr{C}\]
for all large enough $p$. 
\end{proof}

\subsection{Controlling the Tamagawa ratio}
\label{ssec:Tamagawa_AV}
Suppose we are in the situation of Conjecture \ref{conj:AV}, except that we don't necessarily assume that $h$ is geometric. Our goal now is to understand the sum $\sum_{x \in \mcX_{H}'} \mcTbnd(M_x)^{\kappa}$.

Take 
\[G_0 =\Gal\left(\overline{\kappa(\eta)}/\kappa(\eta)\right),\]
where $\kappa(\eta)$ is the residue field at the generic point of $\mcX$. Then $M$ is a $G_0$-module.  For $x$ in $U(\QQ)$, we define the \emph{geometric Tamagawa bound} by
\[\mcT_{\text{g-bnd}}(M_x) = \max_{T \subseteq M} \mcT(M_x, T_x),\]
where the maximum is over all $G_0$-submodules $T$ of $M$. The difference between this and $\mcTbnd(M_x)$ is that we do not consider $G_{\QQ}$-submodules of $M_x$ that do not come from $G_0$-submodules of $M$.

Our first proposition shows that this restriction does not substantially change the Tamagawa bound in families.

\begin{proposition}
\label{prop:Serre_thin}
There is $c > 0$ such that, for any $\kappa \ge 0$, we have
\[\frac{1}{\# \mcX_H'}\sum_{x \in \mcX_H'} \left(\mcTbnd(M_x)^{\kappa} - \mcT_{\textup{g-bnd}}(M_x)^{\kappa}\right) \le \exp\left(- c \frac{\log H}{\log \log H}\right) \]
for all sufficiently large $H$.
\end{proposition}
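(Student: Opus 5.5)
We plan to show that the set of $x \in \mcX_H'$ for which $\mcTbnd(M_x) \ne \mcT_{\text{g-bnd}}(M_x)$ is very small, and then to control the size of the difference on that set. The difference is nonzero only when the $G_{\QQ}$-module $M_x$ has a submodule $T$ that does not come from a $G_0$-submodule of $M$. In that case the image of $G_{\QQ}$ acting on $M_x$ is a proper subgroup of the image of $G_0$ in $\text{Aut}(M)$. This is a specialization-of-Galois-image statement. The family $\mcM \to U$ gives a finite étale cover $\mathcal{Y} \to U$, namely the torsor trivializing $\mcM$, with Galois group $G = \text{im}(G_0 \to \text{Aut}(M))$. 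For each proper subgroup $G' \subsetneq G$ that arises as the image at $x$, the point $x$ lifts to a rational point of the intermediate cover $\mathcal{Y}/G'$. Since there are finitely many $G'$, the exceptional $x$ lie in a finite union of thin sets of type II in Serre's sense.

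We expect the main obstacle to be counting integral points in these thin sets with the required saving $\exp(-c\log H/\log\log H)$, and doing so uniformly for our (not necessarily geometric) height and general $\mcX$. We would use the effective equidistribution \eqref{eq:eff_equid_X} together with a large-sieve or Selberg-type argument in the style of Serre/Cohen. Take a positive proportion of primes $p$ (by Chebotarev applied to the covers) for which the fiber of $\mathcal{Y}/G' \to \mcX$ over $\FFF_p$ misses a positive proportion $\delta$ of $\mcX(\FFF_p)$, by Lang--Weil. This restricts $x$ modulo $p$, hence modulo $p^2$, to avoid a set of density $\ge \delta$. Then apply the upper-bound sieve Theorem \ref{tComb} with sifting level $z$ and $D = H^{c}$. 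The sieve needs only primes up to $z$ with product at most $H^c$. Taking $z \approx c\log H$, which allows $\asymp \log H/\log\log H$ sifting primes, gives a bound of the shape $\#\mcX_H \cdot \exp(-c'\log H/\log\log H)$. Alternatively, we could apply Theorem \ref{tSieve} style estimates with $\omega$ bounded primes; the sieve-with-remainder approach seems cleanest.

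We then bound the size of $\mcTbnd(M_x)^{\kappa}$ on the exceptional set. Each local factor $\mcT_v$ is at most $|M|^{C}$ and is nontrivial only at bad primes. The number of bad primes is $\ll \log H/\log\log H$ because their product is at most $h(x)\le H$. This gives only $\mcTbnd(M_x)^{\kappa} \le \exp(C\kappa\log H/\log\log H)$, which would swamp the saving. Instead we apply Hölder (Cauchy--Schwarz). We bound the exceptional proportion, times the square root of the average of $\mcTbnd(M_x)^{2\kappa}$. That average is controlled by Theorem \ref{tSieve}-type bounds: a polylog in $H$, using $\sum_x C^{\kappa\omega(n(x))} \ll \#\mcX_H(\log H)^{C^{\kappa}}$ via \eqref{eLevel}. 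A polylog is absorbed by the saving after shrinking $c$. The difference $\mcTbnd^{\kappa}-\mcT_{\text{g-bnd}}^{\kappa}$ is nonnegative and bounded by $\mcTbnd^{\kappa}$, so this completes the argument for $H$ large in terms of $\kappa$.
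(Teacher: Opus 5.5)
Your proposal is correct and follows the paper's proof: exceptional $x$ lie in finitely many type II thin sets (the paper uses, for each non-$G_0$-stable subgroup $T$, the cover attached to its field of definition, which is your $\mathcal{Y}/\mathrm{Stab}_G(T)$); Serre's Chebotarev/Lang--Weil argument combined with effective equidistribution over the primes up to a small multiple of $\log H$ gives the $\exp(-c\log H/\log\log H)$ saving; and Cauchy--Schwarz against the polylogarithmic $2\kappa$-moment bound from Theorem \ref{tSieve} finishes the argument. The only cosmetic difference is that the paper applies \eqref{eq:eff_equid_X} directly to the single modulus formed by the sifting primes rather than invoking Theorem \ref{tComb}. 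Since that product is below $H^{c}$, the two amount to the same thing: in Theorem \ref{tComb} you would take $K=\prod_{p<z}(1-\rho(p))^{-1}$, which is harmless because $s=\log D/\log z$ is then a large multiple of $z/\log z$.
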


\begin{proof}
Take $T$ to be a subgroup of $M$ that is not closed under $G_0$. Take $L$ to be the minimal extension of $\kappa(\eta)$ such that $T$ is sent to itself under $G_L$. Then $L/\kappa(\eta)$ is a finite extension. By spreading out, we may choose a dense open set $V$ of $\mcX$ and a finite map $\mathcal{Y} \to V$, where $\mathcal{Y}$ is an irreducible separated scheme whose generic point  has residue field identified with $L$. We may  assume $\mathcal{Y}$ is reduced.

There is a dense open subscheme $W$ of $V$ such that, if $x$ is an integer point of $\mcX$ whose corresponding $\QQ$ point lies in $W(\QQ)$, then $T_x$ is a $G_{\QQ}$-submodule of $M$ exactly when $x$ is in the image of $\mathcal{Y}(\QQ)$. For $p$ larger than some fixed $C_0$ not depending on $x$, this implies that the image of $x$ under the reduction map to $\mcX(\FFF_p)$ lies in the image of the map $\mathcal{Y}(\FFF_p) \to \mcX(\FFF_p)$.

Following Serre's arguments for thin sets of type II \cite{Serre}, we see that there is some $c > 0$ so that, for all sufficiently large $H_0$,
\[\# \big\{p \le H_0\,:\,\, p \text{ prime and } \# \text{image}(\mathcal{Y}(\FFF_p)) \le (1 - c) \cdot \#\mcX(\FFF_p)\big\} \ge \frac{ cH_0}{\log H_0}.\]
By applying effective equidistribution to the product of primes in a set of this form with $H_0$ a small multiple of  $\log H $, we find that
\[\frac{\#\{x \in \mcX_H'\,:\,\, x \in \text{image}(\mathcal{Y}(\FFF_p)) \text{ for all } p > C_0\}}{\#\mcX_H'} \le \exp\left( -c_1 \frac{\log H}{\log \log H}\right) \]
for some $c_1 > 0$ not depending on $H$ and for all sufficiently large $H$.

We similarly may use Serre's arguments for thin sets of type I \cite{Serre} to show that the proportion of $x \in \mcX'_H$ whose $\QQ$ point lies outside $W$ is also at most $\exp\left(-c_1 \frac{\log H}{\log \log H}\right)$. So
\begin{equation}
\label{eq:thinset}
\frac{\#\{x \in \mcX_H'\,:\,\, T_x \text{ is a }G_{\QQ}\text{-submodule}\}}{\#\mcX_H'} \le 2 \exp\left(-c_1 \frac{\log H}{\log \log H}\right)
\end{equation}
for $H$ sufficiently large. Applying Theorem \ref{tSieve} gives that
\[\frac{1}{\# \mcX'_H} \sum_{x \in \mcX'_H} \mcTbnd(M_x)^{2 \kappa} \le (\log H)^C\]
for large enough $H$, where $C > 0$ depends on $\kappa$ but not on $H$. The result follows from this inequality and \eqref{eq:thinset} by the Cauchy--Schwarz inequality.
\end{proof}

Now take $T$ to be a $G_0$-submodule of $M$. For each prime $p$ and every $x$ in $\mcX_H'$, $\mcT_p(T_x)$ is a fraction of the form $a/b$, where $a$ and $b$ are divisors of $|T|$. Given a rational number $t$ of the form $a/b$ with $a$ and $b$ divisors of $|T|$ besides $1$, Proposition \ref{prop:geom_nonconstant} gives that there is a finite Galois extension $L_t/\QQ$ and a class function $f_t: \Gal(L_t/\QQ){/\sim} \to \QQ_{\ge 0}$ such that, for all sufficiently large primes $p$, among the usable $\overline{x}$ in $\mcX(\Z/p^2\Z)$, the proportion of $\overline{x}$ such that 
\[\mcT_p(T_x) = t\text{ for any/every }  x \text{ over } \overline{x}\]
is $f_t(\text{Frob}\, p) p^{-1} + \mathcal{O}(p^{-3/2})$.

From this observation, we may prove the following:

\begin{proposition}
\label{prop:beta}
For $t$ as above, take $\gamma(t, T)$ to equal the average of $f_t$ over $\Gal(L_t/\QQ)$. Then there is $C > 0$ so, for $\kappa \ge 0$, we have
\[
\exp \exp (C\kappa)^{-1} (\log H)^{\beta(T, \kappa)}\, \le\, \frac{1}{\#\mcX_{H}'} \sum_{x \in \mcX_H'} \mcT(M_x, T_x)^{\kappa} \,\le\, \exp \exp (C\kappa) (\log H)^{\beta(T, \kappa)} 
\]
for all sufficiently large $H$, where
\[
\beta(T, \kappa) = \sum_t \gamma(t, T)(t^{\kappa} - 1).
\]
Here, the sum is over all rational numbers of the form $a/b$ with $a, b$ divisors of $|T|$.
\end{proposition}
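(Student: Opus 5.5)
The plan is to write $\mcT(M_x, T_x)^{\kappa}$ as a product of local factors and then apply the two sieve theorems, Theorem \ref{tSieve} and Theorem \ref{tSieve2}, to the multiplicative weight $S(n,\mathcal{D}) = \prod_{p\mid n} \mcT_p(T_x)^{\kappa}$. By definition,
\[
\mcT(M_x,T_x) = \frac{\# H^0(G_\Q, M_x)}{\# H^0(G_\Q,T_x^\vee)\cdot \# H^0(G_\Q, M_x/T_x)}\prod_v \mcT_v(T_x).
\]
The global $H^0$ terms and the factors at places in a fixed finite set $\Vplac_0$ (containing $\infty$ and all small primes) are bounded above and below by $|M|^{O(1)}$. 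Their $\kappa$-th power therefore costs at most $e^{O(\kappa)}$, which is acceptable. Away from $\Vplac_0$, the factor $\mcT_p(T_x)$ lies in $\{a/b : a,b \mid |T|\}$. It equals $1$ whenever $\msL_{xp}$ is unramified, since then the pullback to $T$ is $H^1_{\text{ur}}(G_p,T)$.

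As in the proof of Theorem \ref{thm:AV}, I replace the local data at unusable residue classes by a placeholder. I attach to each $x$ the pair $\lambda(x) = (n, (t_p)_{p\mid n})$, where $n$ is the product of primes $p \notin \Vplac_0$ at which $\overline{x}$ mod $p^2$ is either unusable or has $\mcT_p(T_x)\neq 1$, and $t_p$ records the value of $\mcT_p(T_x)$, or ``unusable''. At unusable primes I bound the local factor above by $|T|$ and below by $|T|^{-1}$. The density of such primes is $O(p^{-2})$ by Proposition \ref{prop:geom_usable}, so the upper and lower versions differ by a bounded factor $\prod_p(1+O(|T|^{\kappa}p^{-2}))\le e^{O(|T|^{\kappa})}$. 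I treat this crude $\kappa$-dependence the same way Theorem \ref{thm:main} treats its constants, aiming for $\exp\exp(C\kappa)$.

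The level of distribution \eqref{eLevel} follows from effective equidistribution \eqref{eq:eff_equid_X}, with $\mu(p,t) = f_t(\Frob\,p)p^{-1}+O(p^{-3/2})$ by Proposition \ref{prop:geom_nonconstant}. The weight $S$ is multiplicative with values in $[|T|^{-\kappa},|T|^{\kappa}]$, so both submultiplicativity and \eqref{eLowerS} hold with $C_1 = |T|^{\kappa}\cdot O(1)$.

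Theorems \ref{tSieve} and \ref{tSieve2} then bound the average, above and below, up to constants of shape $\exp\exp(C\kappa)$, by
\[
\prod_{2C_1^2<p\le H^{\delta}}(1-m(p))\sum_{d\le H^{\delta}} S(d)\rho(d),
\]
with $\delta = c/3$ or $v$ respectively. Next I will show this quantity is $\asymp (\log H)^{\beta(T,\kappa)}$. The key input is the Euler product
\[
\prod_{p\le H^\delta}\Bigl(1-m(p)\Bigr)\Bigl(1+\sum_t t^\kappa\mu(p,t)\Bigr)\approx\exp\Bigl(\sum_{p\le H^\delta}\sum_t (t^{\kappa}-1)\frac{f_t(\Frob\,p)}{p}\Bigr).
\]
By Chebotarev, $\sum_{p\le y} f_t(\Frob\,p)/p = \gamma(t,T)\log\log y + O(1)$, so the exponent is $\beta(T,\kappa)\log\log H + O_\kappa(1)$. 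The $O_\kappa(1)$ involves constants like $e^{C_1}$, which is again of doubly exponential size in $\kappa$. Replacing $H^\delta$ by $H$ changes $\log\log$ only by $O(\log(1/\delta)) = O(\log\kappa)$, so the factor $(\log H)$ versus $(\log H^\delta)$ costs $\delta^{-\beta}$, which is $\exp\exp(C\kappa)$ at worst.

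The step needing the most care is the passage from the truncated sum $\sum_{d\le H^\delta}S(d)\rho(d)$ to the full Euler product. The upper bound is trivial, by extending the sum to all $d$ with $P^+(d)\le H^\delta$. For the lower bound, I would apply Lemma \ref{lNT} to $F(d)=S(d)\rho(d)$ with $x = z = H^\delta$. This shows the tail beyond $H^\delta$ is at most a constant fraction of the smooth sum, via the rearrangement trick \eqref{eRearrangeTrick}, with constants $(CC_1)^{C_1}e^{O(C_1)}$. Tracking these constants uniformly in $\kappa$ so that they stay within $\exp\exp(C\kappa)$ is the main bookkeeping obstacle. A secondary point is handling small primes $p\le 2C_1^2$, which are excluded from $\rho$ and from the product; I will absorb them into $\Vplac_0$-type bounded factors.
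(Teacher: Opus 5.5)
Your proposal is correct and follows the paper's route. The paper's proof only says that Theorems \ref{tSieve} and \ref{tSieve2} reduce the average to a sum of a multiplicative function, which is then evaluated as in \cite[Appendix A]{FI}; your argument fills this in, using Lemma \ref{lNT} together with Chebotarev in Mertens form for that last step instead of the citation. One harmless slip: $\delta = v \asymp c\,C_1^{-2}$ with $C_1 \approx |T|^{\kappa}$, so $\log(1/\delta)$ is $O(\kappa)$ rather than $O(\log\kappa)$, but $\delta^{-|\beta(T,\kappa)|} \le \exp(O(\kappa |T|^{\kappa}))$ still fits inside $\exp\exp(C\kappa)$.
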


\begin{proof}
By Theorems \ref{tSieve} and \ref{tSieve2}, this reduces to a sum of multiplicative functions. This sum may be handled as in \cite[Appendix A]{FI}.
\end{proof}

This result and Proposition \ref{prop:Serre_thin} together imply
$$
% \label{eq:g-bnd_asymp}
\frac{1}{\#\mcX_{H}'} \sum_{x \in \mcX_H'} \mcTbnd(M_x)^{\kappa} \asymp (\log H)^{\beta(\kappa)},
$$
where $\beta(\kappa)$ is the maximum of the $\beta(T, \kappa)$ over all $G_0$-submodules $T$ of $M$. If $M$ is constant, Theorem \ref{thm:AV} then gives
$$
%\label{eq:AV_b}
\frac{1}{\#\mcX_{H}'} \sum_{x \in \mcX_H'} (\# \Sel\, M_x)^{\kappa} \asymp (\log H)^{\beta(\kappa)}.
$$
Our main conjecture for abelian varieties is that this remains true even when $M$ is nonconstant.

We will give two general situations where it is possible to calculate $\beta(\kappa)$ explicitly.

\begin{theorem}
%\label{thm:isogenyless}
Choose a prime $\ell$ such that the $\ell$-torsion finite group scheme $\mcA_{\eta}[\ell]$ over the generic point $\eta$ of $\mcX$ has no proper nonzero rational subgroup. Then there is some $C > 0$ such that, for all $\kappa \ge 0$, we have
\[
\frac{1}{\# \mcX_H'} \sum_{x \in \mcX_H'} \mcTbnd(\mcA_x[\ell])^{\kappa} \le \exp \exp(C\kappa).
\]
In the case that $\mcA_{\eta}[\ell]$ is constant, we then also have
\[
\frac{1}{\# \mcX_H'} \sum_{x \in \mcX_H'} (\#\Sel_{\ell} \, \mcA_x)^{\kappa} \le \exp \exp(C\kappa).
\]
\end{theorem}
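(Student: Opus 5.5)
The plan is to reduce the first bound to Propositions \ref{prop:Serre_thin} and \ref{prop:beta}, and then deduce the second from Theorem \ref{thm:AV}.

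\textbf{Reduction to the two extreme submodules.} Since $\mcA_\eta[\ell]$ has no proper nonzero rational subgroup, its only $G_0$-submodules are $T = 0$ and $T = M := \mcA_\eta[\ell]$. By Proposition \ref{prop:Serre_thin}, it therefore suffices to bound the average of $\mcT(M_x, 0)^\kappa$ and of $\mcT(M_x, M_x)^\kappa$ separately. Indeed, $\mcT_{\textup{g-bnd}}(M_x)^\kappa$ is at most the sum of these two quantities, and the thin-set error is $o(1)$.

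\textbf{The trivial submodule.} For $T = 0$, the quantity $\mcT(M_x, 0)$ is $\#H^0(G_\Q, M_x)/\#H^0(G_\Q, M_x)$. This is bounded by $|M|$ and involves no local factors, so its contribution is trivially $\le |M|^\kappa$.

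\textbf{The full module.} For $T = M$, Proposition \ref{prop:beta} gives the average as $\le \exp\exp(C\kappa)(\log H)^{\beta(M,\kappa)}$, where $\beta(M,\kappa) = \sum_t \gamma(t,M)(t^\kappa-1)$. The key input is that for an abelian variety the local Tamagawa factor of the full $\ell$-torsion is at most $1$ at good primes $p \ne \ell$. Concretely, $\msL_p$ is the image of the Kummer map $\mcA_x(\Q_p)/\ell$, and
\[\#\mcA_x(\Q_p)/\ell = \#\mcA_x(\Q_p)[\ell]\]
for $p \nmid \ell$, since $\mcA_x(\Q_p)$ is, up to finite index, $\Z_p^g$ times a finite group. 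Hence $\mcT_p(M_x) = 1$ for all $p \nmid \ell\infty$. So for every $t \ne 1$ we have $\gamma(t,M) = 0$, that is, $f_t$ vanishes, and $\beta(M,\kappa) = 0$. The finitely many primes in $\Vplac_0$ contribute at most a constant to the power $\kappa$.

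\textbf{Constant case.} If $\mcA_\eta[\ell]$ is constant, Theorem \ref{thm:AV} gives
\[\sum (\#\Sel_\ell)^\kappa \le \exp\exp(C\kappa)\sum \mcTbnd^\kappa.\]
Combining this with the first bound, and enlarging $C$, yields the second claim.

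\textbf{Expected obstacle.} The main point needing care is the identity $\mcT_p(M_x) = 1$ for $p \ne \ell$, which is the standard statement that $\#A(\Q_p)/\ell = \#A(\Q_p)[\ell]$ away from $\ell$. One must also check that the constants in Proposition \ref{prop:beta} depend on $\kappa$ only doubly exponentially.
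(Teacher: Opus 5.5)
Your proposal is correct and is essentially the paper's argument. Only $0$ and $M$ are $G_0$-submodules, and the Kummer sequence gives $\mcT_p(M_x)=1$ for every prime $p\neq\ell$; Proposition \ref{prop:Serre_thin} then passes from $\mcT_{\textup{g-bnd}}$ to $\mcTbnd$, and Theorem \ref{thm:AV} gives the Selmer bound. The one difference is that the paper never invokes Proposition \ref{prop:beta}: it observes directly that $\mcT(M_x,M_x)$ is bounded uniformly in $x$, since only the places $\ell$ and $\infty$ and the global $H^0$ factor can differ from $1$, so that term contributes at most $C_0^{\kappa}$ (and $\mcT(M_x,0)$ is exactly $1$).
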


\begin{proof}
With $M = \mcA_{\eta}[\ell]$, the only $G_0$-submodules of $M$ are the trivial submodule, which has Tamagawa bound $1$, and the module itself. But we have $\mcT_p(M_x) = 1$ for all primes $p$ other than $\ell$, as follows from the exact sequence 
\[
0 \to \mcA_x[\ell](\QQ_p) \to \mcA_x(\QQ_p) \xrightarrow{\,\,\cdot \ell\,\,} \mcA_x(\QQ_p) \to \msL_{xp} \to 0.
\] 
Then the Tamagawa bound $\mcT(M_x, M_x)$ is also bounded, and the first result follows from Proposition \ref{prop:Serre_thin}. The second then follows from Theorem \ref{thm:AV}.
\end{proof}

We also give a more explicit form of $\beta(\kappa)$ in the case that $\mcA$ is generically an elliptic curve over affine space.

\begin{mydef}
\label{defn:Tate_alg}
Take $\mcX = \text{Spec}\, \Z[t_1, \dots, t_n]$, and choose $a, b \in \Z[t_1, \dots, t_n]$ such that $\Delta = 4a^3 + 27b^2$ is nonzero. Take $\mcA$ to be the scheme
\[y^2 = x^3 + ax + b.\]
This defines an elliptic curve at all geometric points of $\mcX$ where $\Delta$ is nonzero.

Taking $\eta$ to be the generic point of $\mcX$, $\mcA_{\eta}$ is an elliptic curve over the function field $\kappa(\eta)$. We write this curve as $E$.  Choose a prime $\ell$, and choose an isogeny $\lambda: \mcA_{\eta} \to \mcA'_{\eta}$ of degree $\ell$ defined over $\kappa(\eta)$. This may be given in the form
\[E': y^2 = x^3 + a'x + b',\]
where $a'$ and $b'$ are polynomials in $\Z[t_1, \dots, t_n]$ such that $\Delta' = 4{a'}^3 + 27{b'}^2$ is nonzero. This formula then also gives a scheme $\mcA'$ over $\mcX$ which is an elliptic fibration on the open set where $\Delta'$ is nonzero and whose generic fiber over $\mcX$ is $\mcA'_{\eta}$.

Write $\Delta$ in the form $d h_1^{a_1} \dots h_k^{a_k}$, where the $h_i$ are primitive irreducible polynomials that are nonconstant and pairwise coprime, where $d$ is a nonzero integer, and where the $a_i$ are positive integers. Take $Z_i$ to be the closed subscheme of $\mcX$ corresponding to $h_i$.

Given $i \le k$, we take $\widehat{\mcO}(Z_i)$ to be the completion of $\Z[x_1, \dots, x_n]$ at the prime ideal $(h_i)$, and take $\kappa(Z_i)$ to be the residue field of this domain. We may speak of the reduction type for the N\'{e}ron model for $E$ over $\widehat{\mcO}(Z_i)$, and we may calculate this reduction type by Tate's algorithm.

For each $i \le k$, we define rational numbers $\omega_{-1}(Z_i)$ and $\omega_1(Z_i)$ as follows:

\begin{enumerate}
\item Suppose $E$ has reduction of type I${}_{\nu}$ at $Z_i$ for some $\nu > 0$; that is, suppose $E$ has multiplicative reduction. Take I${}_{\nu'}$ to be the reduction type for $E'$ at $Z_i$. Take $\beta = 1/2$ if the reduction of $E$ is nonsplit and either $\ell \ne 2$ or both $\nu$ and $\nu'$ are even, and take $\beta = 1$ otherwise. We then define
\[\left(\omega_{-1}(Z_i), \,\omega_{1}(Z_i)\right) = \begin{cases} (\beta, 0) &\text{ if } \nu = \ell \nu' \\ (0, \beta) &\text{ if } \nu' = \ell\nu. \end{cases}\]

\item Suppose $\ell = 3$, and suppose $E$ has reduction type IV or IV${}^*$ at $Z_i$ and that $-3$ is not a square in $\kappa(Z_i)$. Take $c(E)$ to be the Tamagawa number of $E$ over $Z_i$; this is the number of rational components of multiplicity one in the special fiber of the N\'{e}ron model. Similarly define $c(E')$.

Then we define
\[\left(\omega_{-1}(Z_i),\, \omega_{1}(Z_i)\right) = \begin{cases} (1/2,\, 0) &\text{ if } c(E) = 3 \text{ and } c(E') = 1  \\ (0, \,1/2) &\text{ if } c(E) = 1 \text{ and } c(E') = 3   \\ (1/4, \,1/4) &\text{ otherwise.}\end{cases}\]
\item Suppose $\ell = 2$, and suppose $E$ has reduction type I${}^*_{\nu}$ at $Z_i$ for some $\nu \ge 0$. We suppose that there is a quadratic extension $L$ of $\kappa(Z_i)$ such that the number of $L$-rational components of multiplicity $1$ in the special fiber of the N\'{e}ron model for  $E$ differs from the same count for $E'$.

 We then define
\[\left(\omega_{-1}(Z_i),\, \omega_{1}(Z_i)\right)  = \begin{cases} (1/2,\, 0) &\text{ if } c(E) = 4\text{ and } c(E') = 2 \\ (0, \,1/2) &\text{ if } c(E) = 2 \text{ and } c(E') = 4 \\ (1/4, \,1/4) &\text{ otherwise.}\end{cases}\]
\item In all other cases, take $\omega_{-1}(Z_i) = \omega_1(Z_i) = 0$. 
\end{enumerate}
\end{mydef}

\begin{proposition}
\label{prop:Tate_alg}
Take $T$ to be the kernel of the degree $\ell$-isogeny $\lambda: E \to E'$ considered above over the field $\kappa(\eta)$. Then the exponent $\beta(T, \kappa)$ appearing in Proposition \ref{prop:beta} is given by
\[\beta(T, \kappa) = \sum_{i= 1}^k \omega_1(Z_i) \left(\ell^{\kappa} - 1\right) + \omega_{-1}(Z_i)\left( \ell^{-\kappa} - 1\right).\]
\end{proposition}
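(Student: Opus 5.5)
Since $T = \ker\lambda$ has order $\ell$, the only values $t = \mcT_p(T_x)$ can take are $\ell$, $1$, $\ell^{-1}$. Hence $\beta(T,\kappa) = \gamma(\ell,T)(\ell^\kappa-1) + \gamma(\ell^{-1},T)(\ell^{-\kappa}-1)$. It therefore suffices to show
\[
\gamma(\ell^{\pm1},T) = \sum_{i=1}^k \omega_{\pm1}(Z_i).
\]
Recall that $\gamma(t,T)$ is the average over Frobenius of the density $p\cdot\mathrm{Prob}(\mcT_p(T_x)=t)$.

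The first step is to make $\mcT_p$ explicit. By the standard computation of Cassels, for $p \nmid \ell$ the local Tamagawa ratio of the $\lambda$-isogeny Selmer conditions is $\mcT_p(T_x) = c_p(E'_x)/c_p(E_x)$, up to the normalization convention (which is fixed by comparing signs in case 1). So $\mcT_p \ne 1$ forces bad reduction at $p$, hence $p \mid h_i(x)$ for some $i$. By Lemma \ref{lem:consistent} and the Lang--Weil bound, the points with $p^2 \mid h_i(x)$, or with $p$ dividing two distinct $h_i$, contribute $O(p^{-2})$. The density of $x$ with $p \,\|\, h_i(x)$ is $N_i(p)/p + O(p^{-3/2})$, where $N_i(p)$ is the number of geometric components of $Z_i \bmod p$ defined over $\FFF_p$; its Chebotarev average is the number of geometrically irreducible components, which equals $1$ after the usual normalization.

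The key input is that when $p \,\|\, h_i(x)$, the Tate algorithm for $E_x$ over $\Z_p$ reproduces the Tate algorithm for $E$ over the DVR $\widehat{\mcO}(Z_i)$. The reduction type (I$_\nu$, IV, IV$^*$, I$^*_\nu$) is unchanged. The residue-field data (split versus nonsplit, whether $-3$ is a square, the relevant quadratic field $L$) specializes to the reduction mod $p$ of the corresponding function on $Z_i$. One proves this by specializing Weierstrass models transversally; Proposition \ref{prop:geom_nonconstant} gives the Frobenius-class-function structure.

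Next, compute the Chebotarev average of the probability that $c_p(E'_x)/c_p(E_x)=\ell^{\pm1}$ in each case.
\begin{itemize}
\item Multiplicative type I$_\nu$: in the split case the ratio is $\nu'/\nu = \ell^{\pm1}$. In the nonsplit case $c_p \in \{1,2\}$, so the ratio is $1$ unless $\ell = 2$ with $\nu,\nu'$ of different parity. Split and nonsplit each occur with density $1/2$ when the splitting field is a nontrivial quadratic extension, which gives the value $\beta \in \{1/2,1\}$ in Definition \ref{defn:Tate_alg}. If the reduction is split over $\kappa(Z_i)$ itself, it is split for all $p$, giving $1$.
\item Type IV/IV$^*$ with $\ell=3$: $c_p \in \{1,3\}$ according to whether $-3$, or the relevant coefficient, is a square mod $p$. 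Averaging over Frobenius gives $1/2$ or $1/4$, depending on whether the two Tamagawa-number conditions are correlated; this matches case 2.
\item Type I$^*_\nu$ with $\ell=2$: $c_p \in \{2,4\}$ is determined by the splitting of a quadratic (or quartic) resolvent, and the same reasoning yields case 3.
\end{itemize}
All other reduction types give ratio $1$ for $p \nmid 6\ell$. The reason is that the isogeny preserves the component group up to order prime to $\ell$, or $c_p$ is prime to $\ell$ (for example types II, III, IV$^*$ when $\ell \ge 5$, and additive types with potentially good reduction). Summing over $i$ gives the formula.

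The main obstacle is the uniform specialization statement: that the Néron component data of $E_x$ at $p \,\|\, h_i(x)$ is governed by the generic-over-$Z_i$ Tate algorithm, with residue-field conditions becoming Frobenius conditions. A second difficulty is the careful bookkeeping in the correlated IV/I$^*$ cases that yields exactly $1/4$.
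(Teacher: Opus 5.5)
Your proposal is correct and follows essentially the same route as the paper. Both arguments identify $\mcT_p(T_x)$ with $c_p(E'_x)/c_p(E_x)$ for $p \neq \ell$, discard a density-$O(p^{-2})$ set of exceptional points (points lying on two $Z_i$'s, or where the specialized Kodaira type differs from the generic type on $Z_i$), and then read off the Frobenian densities case by case from Lang--Weil applied to degree-$2$ covers of $Z_i$ that encode split/nonsplit reduction and the rationality of the multiplicity-one components.

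There are two small points to tighten. First, $p \,\|\, h_i(x)$ alone does not force the specialized Tate algorithm to agree with the generic one. For instance, generic I$_2$ can specialize to type II on a sublocus of $Z_i$, so you must also excise a codimension-two sublocus of $Z_i$; this costs only $O(p^{-2})$, exactly as in the paper's treatment of exceptional points. Second, in the IV/IV$^*$ case you should state explicitly the fact, also used by the paper, that the fields of rationality of the components for $E$ and $E'$ coincide exactly when $\mu_3 \subseteq \kappa(Z_i)$. Both the vanishing contribution when $-3$ is a square in $\kappa(Z_i)$ and the value $1/4$ otherwise rest on this.
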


\begin{proof}
Fix a dense open set $W$ of $\mcX$ such that $\lambda_W: \mcA_W \to \mcA_W'$ is a degree $\ell$-isogeny of elliptic curves on every geometric fiber of $W$. For any $x$ in $W(\QQ)$, we may then consider $\ker \, \lambda_x$ as a Galois module decorated with the local conditions coming from its inclusion into $\mcA_x$. For a given prime $p$, $\msL_{xp}$ is identified with the cokernel of $\mcA_x(\QQ_p) \to \mcA'_x(\QQ_p)$, so \cite[Lemma 4.2]{DokDok} gives
\[
\mcT_p(\ker\, \lambda_x) = \frac{c_p(\mcA'_x)}{c_p(\mcA_x)}
\]
so long as $p \ne \ell$, where $c_p$ denotes the Tamagawa number over $\QQ_p$ for an elliptic curve.

Now choose a point $x \in \mcX(\Z)$ whose associated $\QQ$ point lies in $W$. If $\mcA_x$ has bad reduction at $p$, then the $\FFF_p$ point corresponding to $x$ must lie in $Z_i(\FFF_p)$ for some $i \le k$.

For a prime $p$, call a point $x$ in $\mcX(\Z_p)$ \emph{exceptional} if the corresponding $\FFF_p$ point lies in $Z_i$, but the reduction type (Kodaira symbol) of $\mcA_x$ over $\Z_p$ does not match that of $\mcA$ over $Z_i$. By Tate's algorithm, there is some polynomial $h$ in $\Z[t_1, \dots, t_n]$ not depending on $p$ such that, for any exceptional $x$, $v_p(h(x))$ is larger than the valuation of $h$ at $Z_i$. As in Lemma \ref{lem:verybad}, we find that the image of the exceptional points in $\mcX(\Z/p^2\Z)$ has size at most $Cp^{2n - 2}$, where $C >0$ does not depend on $p$. This also bounds the number of $\Z/p^2\Z$ points whose corresponding $\FFF_p$ points lie in the intersection of any two distinct $Z_i$.

Our proof relies on the Lang--Weil estimate, whose form we now recall. A \emph{Frobenian function of average one} will be a class function $f: \Gal(K/\QQ) \to \Q_{ \ge 0}$ defined on the Galois group of a finite Galois extension $K$ of $\QQ$ whose average is $1$. The Lang--Weil bound and Lemma \ref{lem:smoothlift} gives that
\[\left|\# Z_i(\Z/p^2\Z) - f(\Frob\, p) p^{2n-1}\right| = \mathcal{O}(p^{2n- 3/2})\]
for some Frobenian function of average $1$, where the implicit constant does not depend on $p$.

Now suppose $L$ is some quadratic extension of $\kappa(Z_i)$. Then we may choose some dense open subscheme $V_i$ of $Z_i$ and a degree $2$ map $W_i \to V_i$ from another separated scheme whose generic fiber corresponds to $L/\kappa(Z_i)$. Lang--Weil gives that $W_i(\FFF_p)$ has size $p^{n-1}$ times a Frobenian function of average one, up to manageable error. This implies that the image of $W_i(\FFF_p)$ in $V_i(\FFF_p)$ has size $\tfrac{1}{2}p^{n-1}$ times a Frobenian function of average one up to manageable error.

But if an unexceptional point $x \in \mcX(\Z_p)$ has its corresponding $\FFF_p$ point $\overline{x}$ lying in $V_i$, we find that $L_{\overline{x}}/\FFF_p$ is a nontrivial extension exactly when $\overline{x}$ lies outside the image of $W_i(\FFF_p)$. We will repeatedly use this fact to control Tamagawa ratios.

Suppose to start that $E$ has multiplicative reduction at $Z_i$. If $E$ has split multiplicative reduction at $Z_i$, and if $x$ is an unexceptional $\Z_p$ point that reduces mod $p$ to a point in $V_i$, then $\mcA_x$ has split multiplicative reduction at $p$. So the number of points in $Z_i(\Z/p^2\Z)$ that correspond to split multiplicative reduction is $f(\Frob\, p)p^{2n-1} + \mathcal{O}(p^{n-3/2})$, where $f$ is Frobenian of average $1$.

If $E$ has nonsplit multiplicative reduction at $Z_i$, then by Tate's algorithm there is a minimal quadratic extension $L/\kappa(Z_i)$ over which the reduction of $E$ becomes split. By the above argument for degree $2$ covers, we find that the number of points  $\mcX(\Z/p^2\Z)$ reducing mod $p$ to $Z_i$ corresponding to split multiplicative reduction is $\tfrac{1}{2}p^{2n - 1}$ times a Frobenian function of average $1$, up to manageable error. This is also the form of the count for points of nonsplit reduction.

From Proposition \ref{prop:beta} and \cite[Table 1]{DokDok}, this accounts for the contribution of $Z_i$ to $\beta(T, \kappa)$ if $E$ has multiplicative reduction. We now continue to some other types that may have nontrivial Tamagawa ratio.

Next suppose $E$ has reduction type IV or IV${}^*$ at $Z_i$, that $\ell = 3$, and that $\kappa(Z_i)$ does not contain $\mu_3$. Take $L$ to be the minimal extension of $\kappa(Z_i)$ so that the three multiplicity one components of the special fiber of the N\'{e}ron model for $E$ are rational over $L$, and define $L'$ similarly for $E'$. These fields have degree at most $2$. If they are equal, as happens exactly when $\mu_3$ is contained in $\kappa(Z_i)$, then the Tamagawa numbers $c_p(\mcA_x)$ and $c_p(\mcA'_x)$ are equal for $x$ reducing mod $p$ to $Z_i$, outside a set whose image in $\mcX(\Z/p^2\Z)$ is negligible.

If $L$ and $L'$ are not equal, then we find that $c_p(\mcA_x)$ and $c_p(\mcA'_x)$ are unequal for a set of $x$ whose image in $\mcX(\Z/p^2\Z)$ is proportional $p^{2n-1}$ times some Frobenian function. The cases $L = \kappa(Z_i)$ and $L' = \kappa(Z_i)$, which correspond to $c(E) = 3$ and $c(E') = 3$ respectively, are handled as for the nonsplit multiplicative reduction case.

This leaves the case that $c(E) = c(E') = 1$. For this case, we apply the degree $2$-cover argument above to the three distinct quadratic extensions $\kappa(Z_i)(\mu_3)$, $L$, and $L'$. Taken together, these give that the set of points with $c_p(\mcA_x) = 3$ and $c_p(\mcA'_x) = 1$ has image in $\mcX(\Z/p^2\Z)$ of size $\tfrac{1}{4}p^{2n-1}$ times a Frobenian function of average $1$, up to manageable error. The same form of count gives the number with $c_p(\mcA_x) = 1$ and $c_p(\mcA'_x)=3$. This accounts for the value of $\omega_1$ and $\omega_{-1}$ in the IV and IV${}^*$ cases.

Finally, we have the case I${}^*_{\nu}$ with $\ell = 2$, which is entirely analogous to the case IV/IV${}^*$. This exhausts the reduction types for $Z_i$ such that the Tamagawa ratio can be anything other than $1$ for $p$ not equal to $\ell$, and the result follows.
\end{proof}

\begin{proof}[Proof of Theorem \ref{tGeom1}]
Applying Theorem \ref{thm:AV} as in Example \ref{ex:ell_counts}, we have
\[\frac{1}{\#\mcA_{\le H}}\sum_{E \in \mcA_{\le H}} (\# \Sel_{\ell}\, E)^{\kappa} \asymp \frac{1}{\#\mcA_{\le H}}\sum_{E \in \mcA_{\le H}} \mcTbnd(E[\ell])^{\kappa}.\]
The result then follows from Propositions \ref{prop:Serre_thin}, \ref{prop:beta} and \ref{prop:Tate_alg}.
\end{proof}
\section{Examples}
\label{sExa}
We now give some applications of Theorem \ref{thm:main} and of the theory for geometric families of abelian varieties developed in the previous section. In particular, we show that Theorems \ref{thm:quadtwist} and \ref{tGeom2} follow from Theorem \ref{thm:AV} in Section \ref{sGeom} (see respectively Example \ref{ex:abelian_twist} and Example \ref{ex:full_3tor}). We then prove Theorem \ref{tClass} in Section \ref{sClass} and Theorem \ref{t3Selmer} in Section \ref{s3Selmer}.

\subsection{Examples from algebraic geometry}
\label{sGeom}

\begin{example}
\label{ex:par_2tor}
Consider the elliptic fibration
\[\mcA: y^2 = x(x^2 + u_1x + u_2)\]
over the open subscheme of $\mcX = \Spec\,\Z[u_1, u_2]$ given by $u_2(u_1^2 - 4u_2) \ne 0$. The points on $\mcX$ correspond to pairs of integers $(a_1, a_2)$, and we define a height by 
\[
h(a_1, a_2) = \max\big(a_1^2, |a_2|\big).
\]
The generic fiber of $\mcA$ over $\mcX$ is an elliptic curve $E$ over $\QQ(u_1, u_2)$. There is a unique degree $2$-isogeny $\lambda: E \to E_0$ of elliptic curves over $\QQ(u_1, u_2)$. The codomain of this isogeny is
\[E_0 = x(x^2 - 2u_1x + (u_1^2 - 4u_2)).\]
We summarize the reduction types of $E$ and $E_0$ in Table \ref{tab:par_2tor}.

\begin{table}[ht]
    \centering
    \begin{tabular}{c|c | c }
        Divisor & $E$ & $E_0$ \\\hline
     $(u_2)$ & I${}_{2}$ nonsplit & I${}_1$ \\
     $(u_1^2 - 4u_2) $  & I${}_1$ nonsplit  & I${}_2$ 
    \end{tabular}
    \caption{Reduction types for Example \ref{ex:par_2tor}}
    \label{tab:par_2tor}
\end{table}
Applying Proposition \ref{prop:Tate_alg} gives
\[\frac{1}{\# \mcX_H'} \sum_{x \in \mcX_H'} \mcTbnd(\mcA_x[2])^{\kappa} \asymp (\log H)^{\beta(\kappa)}\]
with 
\begin{equation}
    \label{eq:p_2_tor_beta}
    \beta(\kappa) = 2^{\kappa} + 2^{-\kappa} - 2
\end{equation}
for all $\kappa \ge 0$.

Taking $\lambda': E_0 \to E$ to be the dual isogeny to $\lambda$, we have an exact sequence
\[\Sel\, \ker \lambda_x \to \Sel_2 \mcA_x \to \Sel\, \ker \lambda'_x,\]
and we also have
\[\frac{1}{\# \mcX_{H}'} \sum_{x \in \mcX_H'} \mcTbnd(\ker \, \lambda_x \oplus \ker \,\lambda'_x)^{\kappa} \asymp (\log H)^{\beta(\kappa)},\]
with $\beta(\kappa)$ defined as before. The family of decorated Galois modules $\ker \lambda_x \oplus \ker \lambda'_x$ satisfy the conditions of Theorem \ref{thm:AV}, so we may conclude
\[\frac{1}{\# \mcX_{H}'} \sum_{x \in \mcX_H'} (\#\Sel_2\, \mcA_x)^{\kappa} \asymp (\log H)^{\beta(\kappa)},\]
with $\beta(\kappa)$ given by \eqref{eq:p_2_tor_beta}.

This gives an example where Theorem \ref{thm:AV} is applicable even when the $\ell$-torsion is not a constant Galois module. An obvious deficiency of this approach is that we cannot show that the moments of $\# \Sel_2 \,\mcA_x/\mcTbnd(\mcA_x[2])$ are finite.

The distribution of $\mcTbnd(\mcA_x[2])$ in this family was first given by Klagsbrun and Lemke Oliver in \cite{KLOTama}. The average size of $\# \Sel_3 \,\mcA_x$ in this family was found by Bhargava--Ho \cite{BH}.
\end{example}

\begin{example}
\label{ex:YuI}
Consider the elliptic fibration
\[\mcA: y^2 = x(x - u_1)(x - u_2)\]
over the open subscheme of $\mcX = \Spec\, \Z[u_1, u_2]$ given by $2u_1u_2(u_1 - u_2) \ne 0$. Given $(a_1, a_2)$ an integer point of $\mcX$, we take $h(a_1, a_2) = \max(|a_1|, |a_2|)$. 

\begin{table}[!ht]
    \centering
    \begin{tabular}{c|c | c |c | c}
        Divisor & $E$ & $E_1$ & $E_2$ & $E_3$ \\\hline
     $(u_1)$ & I${}_{2}$ nonsplit & I${}_1$ & I${}_4$ & I${}_1$ \\
     $(u_2) $  & I${}_2$ nonsplit  & I${}_4$ & I${}_1$ & I${}_1$\\
     $(u_1 - u_2)$ & I${}_2$ nonsplit & I${}_1$ & I${}_1$ & I${}_4$
    \end{tabular}
    \caption{Reduction types for Example \ref{ex:YuI}}
    \label{tab:YuI}
\end{table}

The generic fiber is an elliptic curve $E$ over $\QQ(u_1, u_2)$ with three distinct isogenies of degree $2$.  Take $\varphi_1, \varphi_2, \varphi_3$ to be the degree $2$-isogenies with nontrivial kernel point given by  $(u_1, 0)$, $(u_2, 0)$, and $(0, 0)$, respectively, and take $E_i$ to be the codomain of the isogeny $\varphi_i$. Applying Tate's algorithm gives the reduction types in Table \ref{tab:YuI}. So Proposition \ref{prop:Tate_alg} and Theorem \ref{thm:AV} give
\[\frac{1}{\# \mcX_{H}'} \sum_{x \in \mcX_H'} (\#\Sel_2\, \mcA_x)^{\kappa} \asymp \frac{1}{\# \mcX_{H}'} \sum_{x \in \mcX_H'} \mcTbnd(\mcA_x[2])^{\kappa} \asymp (\log H)^{\beta(\kappa)}\]
with
\[\beta(\kappa) = \max\left(0, \, 2^{-\kappa + 1} + 2^{\kappa - 1} - \frac{5}{2}\right).\]

So, as $E$ varies among elliptic curves with full rational $2$-torsion ordered by na\"{i}ve height, the average of $(\#\Sel_2 \, E)^{\kappa}$ is bounded for $\kappa \le 2$ and unbounded for $\kappa > 2$.

In the case $\kappa = 1$, this recovers the main result of \cite{YuI}.
\end{example}
\begin{example}
\label{ex:YuII}
Take $b$ to be a fixed nonzero integer, and consider the elliptic fibration
\[\mcA: y^2 = x(x-b)(x-u)\]
above the subset of $\mcX = \Spec \,\Z[u]$ given by $2bu(u-b) \ne 0$. We define the height of a point $a$ in $\mcX(\Z)$ to be its absolute value.
\begin{table}[!ht]
    \centering
    \begin{tabular}{c|c | c |c | c}
       Divisor & $E$ & $E_1$ & $E_2$ & $E_3$ \\\hline
     $(u)$ & I${}_{2}$,$\,\,$ split iff $-b$ is square & I${}_4$ & I${}_1$ & I${}_1$ \\
     $(u - b)$ & I${}_{2}$, $\,\,$ split iff $b$ is square & I${}_1$ & I${}_1$ & I${}_4$
    \end{tabular}
    \caption{Reduction types for Example \ref{ex:YuII}} 
    \label{tab:YuII}
\end{table}

Take $E$ to be the corresponding elliptic curve over $\Q(u)$. Take $\varphi_1$, $\varphi_2$, $\varphi_3$ to be the degree $2$-isogenies from $E$ with nontrivial kernel point given by $(b, 0)$, $(u, 0)$, and $(0, 0)$, respectively, and take $E_i$ to be the codomain of $\varphi_i$. The reduction types are given as in Table \ref{tab:YuII}. So Proposition \ref{prop:Tate_alg} and Theorem \ref{thm:AV} give
\[\frac{1}{\# \mcX_{H}'} \sum_{x \in \mcX_H'} (\#\Sel_2\, \mcA_x)^{\kappa} \asymp \frac{1}{\# \mcX_{H}'} \sum_{x \in \mcX_H'} \mcTbnd(\mcA_x[2])^{\kappa} \asymp (\log H)^{\beta(\kappa)}\]
with
\[\beta(\kappa) =\begin{cases}  \max\left(0, \, 2^{-\kappa} + 2^{\kappa} -  2 \right) &\text{ if } b \text{ or } -b \text{ is square} \\ \max\left(0, \, 2^{-\kappa} + 2^{\kappa - 1} -  3/2 \right) &\text{ otherwise.}\end{cases}\]

If neither $b$ nor $-b$ is square, $\beta(\kappa)$ is $0$ for $\kappa$ in the range $[0, 1]$, so $2$-Selmer moments are bounded precisely for $\kappa$ in this range. If either $b$ or $-b$ is a square, then $2$-Selmer moments are unbounded for $\kappa > 0$.

In the case that $\kappa = 1$, this recovers the main result of \cite{YuII}.
\end{example}

\begin{example}
Consider the elliptic fibration
\[\mcA: y^2 = x^3 + u\]
above the open subscheme of $\mcX = \Spec\, \Z[u]$ given by $6u \ne 0$. This is a sextic twist family.
We again define the height of a given $a$ in $\mcX(\Z)$ by its absolute value.

Taking $E$ to be the generic fiber of $\mcA$, we have that $E$ is an elliptic curve over $\QQ(u)$. There is a unique rational isogeny $\varphi$ of degree $3$, from $E$ to $E_0: y^2 = x^3 - 27u$.

The curve $E$ has reduction type II at $(u)$, so, despite the existence of the isogeny $\varphi: E \to E_0$, Proposition \ref{prop:Tate_alg} gives
\[\frac{1}{\# \mcX_H'} \sum_{x \in \mcX_H'} \mcTbnd(\mcA_x[3])^{\kappa} \asymp 1.\]
Conjecture \ref{conj:AV} then claims that $\Sel_3\, \mcA_x$ has bounded $\kappa$ moment for any $\kappa \ge 0$. Bounding $\# \Sel_3\, \mcA_x$ by two isogeny Selmer groups as in Example \ref{ex:par_2tor} and applying the Cauchy--Schwarz inequality to a result of Bhargava--Elkies--Shnidman \cite{BES} shows that this holds for all $\kappa \le 1/2$.
\end{example}

\begin{example}
\label{ex:cubic_twist}
Choose a nonzero integer $b$, and consider the elliptic fibration
\[\mcA: y^2 = x^3 + bu^2\]
above the open subscheme of $\mcX = \Spec\, \Z[u]$ given by $6bu \ne 0$. This is a cubic twist family.

\begin{table}[ht]
    \centering
    \begin{tabular}{c|c | c }
        Divisor & $E$ & $E_0$ \\\hline
     $(u)$ & IV, $c = 3$ iff $b$ is square  & IV, $c = 3$ iff $-3b$ is square
    \end{tabular}
    \caption{Reduction types for Example \ref{ex:cubic_twist}}
    \label{tab:cubic_twist}
\end{table}
Taking $E$ to be the generic fiber of this fibration, we have a unique degree $3$-isogeny $\varphi: E \to E_0$ with $E_0$ given by $y^2 = x^3 - 27bu^2$. We summarize the reduction type and Tamagawa numbers in Table \ref{tab:cubic_twist}.

We then have

\[\frac{1}{\# \mcX_{H}'} \sum_{x \in \mcX_H'} \mcTbnd(\mcA_x[3])^{\kappa} \asymp (\log H)^{\beta(\kappa)}\]
with
\[\beta(\kappa) = \begin{cases} 0 &\text{ if } b \text{ is square}\\ \tfrac{1}{2}(3^{\kappa} - 1) &\text{ if } -3b \text{ is square}\\ \tfrac{1}{4}(3^{\kappa} + 3^{-\kappa} - 2) &\text{ otherwise}.\end{cases} \]
The same trick as Example \ref{ex:par_2tor} gives the somewhat unsatisfying result
\[\frac{1}{\# \mcX_{H}'} \sum_{x \in \mcX_H'} (\#\Sel_3\, \mcA_x)^{\kappa} \asymp (\log H)^{\beta(\kappa)} \quad\text{if } \,b \,\text{ is not square}.\]
However, unlike Example \ref{ex:par_2tor}, there are known tools for proving more satisfying distributional results for the $3$-Selmer groups in this family. See \cite{KoymansSmith} for more details.
\end{example}

\begin{example}
Consider the elliptic fibration
\[\mcA: y^2 + u_1xy + u_2y = x^3\]
above the open subscheme of $\mcX = \Spec\, \Z[u_1, u_2]$ given by $3u_2(u_1^3 - 27u_2) \ne 0$. This is the family of elliptic curves with a marked $3$-torsion point, at $(0, 0)$. The height of a given $(a_1, a_2)$ in $\mcX(\Z)$ is given by $\max(|a_1|^3, |a_2|)$.

Taking $E$ to be the generic fiber of $\mcA$ above $\mcX$, $E$ has a unique rational degree $3$-isogeny $\varphi$, to 
\[E_0: y^2 +u_1xy + u_2y = x^3  - 5u_1u_2 x - (u_1^3u_2 + 7u_2^2);\]
see \cite{Hadano}. We summarize the reduction types in Table \ref{tab:3tor}.

\begin{table}[ht]
    \centering
    \begin{tabular}{c|c | c }
        Divisor & $E$ & $E_0$ \\\hline
     $(u_2)$ & I${}_3$ split   & I${}_1$ split \\ $(u_1^3 - 27u_2)$ & I${}_1$ nonsplit  & I${}_3$ nonsplit
    \end{tabular}
    \caption{Reduction types for Example \ref{ex:cubic_twist}}
    \label{tab:3tor}
\end{table}

Then Proposition \ref{prop:Tate_alg} gives
\[\frac{1}{\# \mcX_H'} \sum_{x \in \mcX_H'} \mcTbnd(\mcA_x[3])^{\kappa} \asymp (\log H)^{\beta(\kappa)}\]
with
\[\beta(\kappa) = \max\left(0,\,\tfrac{1}{2}3^{\kappa} + 3^{-\kappa} - \tfrac{3}{2}\right).\]
So $(\#\Sel_3\, \mcA_x)^{\kappa}$ has unbounded average if  $\kappa > \log_3 2$. Conjecture \ref{conj:AV} would give that these moments are bounded for $\kappa \le \log_3 2$.

In particular, $\Sel_3 \, \mcA_x$ has unbounded average size. This may be compared to the $2$-Selmer groups in this family, which have average size at most $3$ \cite{BH}.
\end{example}

\begin{example}
\label{ex:full_3tor}
Consider the elliptic fibration
\[\mcA: y^2 + 3u_1xy + (u_1^3 + u_2^3)y = x^3\]
above the open subscheme of $\mcX = \Spec\, \Z[u_1, u_2]$ given by $3u_2(u_1 + u_2)(u_1^2 - u_1u_2 + u_2^2) \ne 0$. This is the family of elliptic curves over $\QQ$ with $3$-torsion isomorphic to $\Z/3\Z \oplus \mu_3$. The height of a given $(a_1, a_2)$ in $\mcX(\Z)$ will be taken to be $\max(|a_1|, |a_2|)$. 

\begin{table}[!ht]
    \centering
    \begin{tabular}{c|c | c |c }
        Divisor & $E$ & $E_0$ & $E_1$  \\\hline
     $(u_2)$ & I${}_{3}$ nonsplit & I${}_9$ & I${}_1$  \\
     $(u_1 + u_2) $  & I${}_3$ split  & I${}_1$ & I${}_9$\\
     $(u_1^2 - u_1u_2 + u_2^2)$ & I${}_3$ split & I${}_1$ & I${}_1$
    \end{tabular}
    \caption{Reduction types for Example \ref{ex:full_3tor}}
    \label{tab:full_3tor}
\end{table}

Taking $E$ to be the generic fiber of $\mcA$, $E$ has two distinct rational degree $3$-isogenies, one with kernel isomorphic to $\Z/3\Z$ and one with kernel isomorphic to $\mu_3$. Take $E_0$ to be the codomain of the former, and $E_1$ of the latter. We summarize reduction types in Table \ref{tab:full_3tor}.

Then Theorem \ref{thm:AV} and Proposition \ref{prop:Tate_alg} give
\[\frac{1}{\# \mcX'_H} \sum_{x \in \mcX'_H} \left(\#\Sel_3\,\mcA_x\right)^{\kappa} \asymp (\log H)^{\beta(\kappa)}\]
with
\[\beta(\kappa) = \max\left(0, 3^{\kappa} + \tfrac{3}{2} 3^{-\kappa} - \tfrac{5}{2}\right).\]
So $(\# \Sel_3 \,\mcA_x)^{\kappa}$ has bounded average exactly when $\kappa \le \log_3 3/2 \approx .369$. This agrees with work of Phillips, who shows that the average size of the $3$-Selmer group is unbounded in this family \cite{Phillips}. 

At the same time, if $T$ is the kernel of either rational degree $3$-isogeny, we have $\beta(\kappa, T) < 0$ for $\kappa = 0.3$. By Markov's inequality and Proposition \ref{prop:beta}, we get that there is $C > 0$ so that
\[\lim_{H \to \infty} \frac{1}{\# \mcX'_H} \#\{x \in \mcX'_H\,:\,\, \mcTbnd(\mcA_x[3]) \ge C\} = 0.\]
So Theorem \ref{tGeom2} is a consequence of Theorem \ref{thm:AV}.
\end{example}

\begin{example}
Choose integers $a$ and $b$ so $4a^3 + 27b^2$ is nonzero. Then we have an elliptic fibration 
\[
\mcA: y^2 = x^3 + u^2 ax + u^3b
\]
over the open subscheme of $\mcX = \Spec\, \Z[u]$ given by $2(4a^3 + 27b^2)u \ne 0$. This is the quadratic twist family of the curve $y^2 = x^3 + ax + b$.

In this case, the generic fiber of $\mcA$ has reduction type I${}_0^*$ at $(u)$, so $\mcTbnd(\mcA_x[\ell])$ can only have unbounded moments in the case $\ell = 2$ by Proposition \ref{prop:Tate_alg}. In this case, Theorem \ref{thm:AV} gives
\[\frac{1}{\# \mcX_H'} \sum_{x \in \mcX_H'} (\# \Sel_2\, \mcA_x)^{\kappa} \asymp (\log H)^{\beta(\kappa)},\]
where $\beta(\kappa) = 0$ unless $E := \mcA_\QQ$ has a unique rational degree $2$-isogeny; write $E_0$ for the codomain of this isogeny. In this case, 
\[
\beta(\kappa) = 
\begin{cases} 
0 &\text{ if } E[2] \cong E_0[2] \text{ over } \QQ(u) \\ 
\tfrac{1}{2}(2^{\kappa}  - 1) &\text{ if } E_0(\QQ(u))[2] \cong (\Z/2\Z)^2 \\ 
\tfrac{1}{4}(2^{\kappa} + 2^{-\kappa} - 2) &\text{ otherwise.}
\end{cases}
\]
This result was essentially already known. The Tamagawa bounds in this family have been understood for many years \cite{Xiong, Klagsbrun}, with finer distributional results established in \cite{HB, Kane, Smi22b, Smi25}.

In the case that $\ell \ne 2$, we know that the expected size of $\mcTbnd(\mcA_x)^{\kappa}$ is bounded as $x$ varies for any $\kappa$. By Conjecture \ref{conj:AV}, we then predict
\[\frac{1}{\# \mcX'_H} \sum_{x \in \mcX'_H} \left(\Sel_{\ell}\,\mcA_x\right)^{\kappa} \asymp 1\]
for $\ell$ odd. The only case where this is known is when $\mcA_1$ has a rational $3$-isogeny and $\kappa \le 1/2$, due to work of Bhargava--Klagsbrun--Lemke Oliver--Shnidman \cite{BKLOS}.

More generally, if $Q$ is a nonzero polynomial in $\Q[u_1, \dots, u_n]$, we may consider the elliptic fibration
\begin{equation}
\label{eq:general_q_twist_ell}
\mcA: y^2 = x^3 + Q^2ax + Q^3b.
\end{equation}
For this fibration, we again have that the $\mcTbnd(\mcA_x[\ell])$ have bounded moments as $x$ varies through $\mcX(\Z)$ whenever $\ell$ is odd.

In particular, suppose we have an elliptic curve $E$ above $\Q(u_1, \dots, u_n)$ with a degree $\ell$-isogeny for $\ell \ne 2, 3, 5, 7, 13$. This corresponds to a birational map from
$\Aff^n_{\Q}$ to  the modular curve $X_0(\ell)$ \cite[Ch 8]{Katz Mazur}, which must then be constant since $X_0(\ell)$ has positive genus. Then $E$ can be written as the generic fiber of an elliptic fibration of the form \eqref{eq:general_q_twist_ell}. So Conjecture \ref{conj:AV} implies the following:

\begin{conjecture}
Take $\mcA$ to be an arbitrary elliptic fibration above a dense open subscheme of $\mcX = \Spec\, \Z[u_1, \dots, u_n]$. Choose a prime $\ell$ other than $2, 3, 5, 7, 13$. Then for any $\kappa \ge 0$,
\[\frac{1}{\# \mcX'_H} \sum_{x \in \mcX'_H} \left(\#\Sel_{\ell}\, \mcA_x\right)^{\kappa} \asymp 1.\]
\end{conjecture}

We note that unbounded Selmer moments have been found for $\ell$ equal to any of $2, 3, 5, 7, 13$ \cite{Chan Verzobio}, so the condition of the conjecture is necessary.
\end{example}

\begin{example}
\label{ex:abelian_twist}
Our next example will be constructed to prove Theorem \ref{thm:quadtwist}. Take $A$ to be an abelian variety over a number field $F$, and take $P$ to be a nonzero polynomial in $F[u_1, \dots, u_n]$. Then we may define an abelian variety $\widetilde{A}$ over $F(u_1, \dots, u_n)$ as the quadratic twist of $A \times_F F(u_1, \dots, u_n)$ with respect to the quadratic character associated to $F(u_1, \dots, u_n)(\sqrt{P})$.

We then may choose an abelian fibration $\mcA$ over an open subset of $\mcX = \Spec\, \Z[u_1, \dots, u_n]$ such that the generic fiber $\mcA_{\eta}$ of $\mcA$ is identified with the Weil restriction of $\widetilde{A}$ to $\Q(u_1, \dots, u_n)$. Then there is some dense open subscheme of $\mcX$ so, for any rational point $\mbb$ in the subscheme, $\mcA_{\mbb}$ is identified with the Weil restriction of $A^{P(\mbb)}/F$ to $\QQ$.

We note that, for any $k \ge 0$, there is a number field $L$ such that $\mcA_{\eta}[2^k]$ carries the sign action corresponding to $L(u_1, \dots, u_n)(\sqrt{P})/L(u_1, \dots, u_n)$.

By a result of Faltings \cite{Faltings}, there are finitely many nonisomorphic abelian varieties $B$ isogenous to $\mcA_{\eta}$ over $\QQ(u_1, \dots, u_n)$. Write $B_1, \dots, B_k$ for this collection.

By our above observation on the $2^k$-torsion, we find there is some number field $L$ such that the points in $(B_1 \oplus \dots \oplus B_k)[2]$ are rational over $L(u_1, \dots, u_n)$. Then Theorem \ref{thm:AV} applies to any elliptic fibration $\mcB_1 \oplus \dots \oplus \mcB_k$ above an open set $W$ of $\mcX$ with this generic fiber. This gives
\begin{equation}
\label{eq:quad_twist_prefavored}
\frac{1}{\# \mcX'_H}\sum_{x \in \mcX'_H} \max_{ i \le k} \left(\frac{\#\Sel_2 \, \mcB_{ix}}{\mcTbnd(\mcB_{ix}[2])}\right)^{\kappa} \le \exp \exp(C\kappa) 
\end{equation}
for some $C \ge 0$ not depending on $\kappa$, and for all sufficiently large $H$.

We now claim that, for any $x \in \mcX'_H$,
\begin{equation}
\label{eq:favored}
\min_{i \le k} \mcTbnd(\mcB_{ix}) \ll 1.
\end{equation}
After all, if $T$ is a submodule of $\mcB_{ix}[2]$, then $\mcB_{ix}/T$ is isomorphic to $\mcB_{jx}$ for some $j \le k$, and we get
\[\mcT(T_x) \asymp \prod_{p \text{ of bad reduction for } \mcA_{x} } \frac{\#\mcB_{jx}(\Q_p)[2]}{\#\mcB_{ix}(\Q_p)[2]}.\]
From this observation, if $i$ is chosen so the product $\prod_{p \text{ bad}} \# \mcB_{ix}(\Q_p)[2]$ is maximized, we find that $\mcTbnd(\mcB_{ix}) \ll 1$, giving \eqref{eq:favored}. This and \eqref{eq:quad_twist_prefavored} imply
\begin{equation}
\label{eq:best_Bi}
\frac{1}{\# \mcX'_H}\sum_{x \in \mcX'_H} \min_{ i \le k} \left(\#\Sel_2 \, \mcB_{ix}\right)^{\kappa} \le \exp \exp(C\kappa)
\end{equation}
for some $C > 0$ not depending on $\kappa$.

Since $\# \Sel_2\, \mcB_{ix}$ is always at least $2^{\text{rank}(A^{P(x)}/F)}$, \eqref{eq:best_Bi} implies \eqref{eq:quadtwist_moments}. With this proved, Markov's inequality allows us to bound the left hand side of \eqref{eq:quadtwist_ranks} by $e^{-\kappa r} \exp \exp(C\kappa)$ for any $\kappa \ge 0$; taking $\kappa = C^{-1}\log r$ then gives the second part of the theorem.
\end{example}

\subsection{Application to class groups}
\label{sClass}
We now prove Theorem \ref{tClass}.

\begin{proof}[Proof of Theorem \ref{tClass}]
Fix an integer $k \geq 2$ and fix a squarefree polynomial $P(u) \in \Z[u]$. If $P(u)$ is constant, then Theorem \ref{tClass} is trivial. Henceforth we assume that the degree of $P(u)$ is at least $1$. 

We will now estimate
$$
\sum_{\substack{1 \leq b \leq H \\ P(b) \neq 0}} \# \mathrm{Cl}(\Q(\sqrt{P(b)}))[2^k] \leq \sum_{\substack{1 \leq b \leq H \\ P(b) \neq 0}} \# 2\mathrm{Cl}(\Q(\sqrt{P(b)}))[4]^{k - 1} \cdot 2^{\omega(P(b))},
$$
where we used the bound $\# 2\mathrm{Cl}(\Q(\sqrt{P(b)}))[2^k] \leq \# 2\mathrm{Cl}(\Q(\sqrt{P(b)}))[4]^{k - 1}$ and Gauss genus theory $\# \mathrm{Cl}(\Q(\sqrt{P(b)}))[2] \leq 2^{\omega(P(b))}$. We take
$$
X := \{b \in \Z_{\geq 1} : P(b) \neq 0\}
$$
and $h: X \rightarrow \mathbb{R}_{\geq 0}$ to be $h(b) := 2|P(b)|$. To each $b \in X$, we associate the quasi-decorated module $M_b := (\mathbb{F}_2, (\msL_{bp})_p)$, where
$$
\msL_{bp} =
\begin{cases}
H^1(G_p, \mathbb{F}_2) &\text{if } p \in \{2, \infty\} \\
H^1_{\text{ur}}(G_p, \mathbb{F}_2) &\text{if } p \nmid P(b) \\
\langle \chi_{P(b)} \rangle &\text{if } p \mid P(b), p^2 \nmid P(b) \\
\emptyset &\text{if } p^2 \mid P(b).
\end{cases}
$$
Then we have 
$$
\# 2\mathrm{Cl}(\Q(\sqrt{P(b)}))[4] \leq \# \Sel \, M_b, \quad \quad 1 \ll \mcTbnd(M_b) \ll 1.
$$
We now apply Theorem \ref{thm:main} with $\kappa = k - 1$ and $\tilde{g}_p(\msL) = 2^{1/(k - 1)}$ if $\msL \neq H^1_{\text{ur}}(G_p, \mathbb{F}_2)$. The set of weak equivalence classes are $\{0, H^1_{\text{ur}}(G_p, \FF_2), H^1(G_p, \FF_2), \{\langle \chi_p \rangle, \langle \chi_{\epsilon p} \rangle\}\}$, where $\epsilon$ is a nonsquare unit in $\Q_p^\ast$. It is then straightforward to verify the hypotheses in Definition \ref{defn:effective-constant}. Write $\delta_P(p)$ for the density of $b$ such that $P(b) \equiv 0 \bmod p$. Then we have
\begin{align}
\sum_{\substack{1 \leq b \leq H \\ P(b) \neq 0}} \# 2\mathrm{Cl}(\Q(\sqrt{P(b)}))[4]^{k - 1} \cdot 2^{\omega(P(b))} 
&\ll_k \sum_{\substack{1 \leq b \leq H \\ P(b) \neq 0}} \left(\frac{\# \Sel \, M_b}{\mcTbnd(M_b)}\right)^{k - 1} \cdot \tilde{g}(\msL_b)^{k - 1} \nonumber \\
&\ll_{k, P} H \prod_{p \leq H} (1 - \delta_P(p)) (1 + 2\delta_P(p)) \nonumber \\
&\ll_{k, P} \prod_{p \leq H} (1 + \delta_P(p)). \label{ePreLandau}
\end{align}
If $p$ does not divide the product of the pairwise resultants of the irreducible factors $f_1, \dots, f_r$ of our polynomial $P = f_1 \cdot \ldots \cdot f_r$, then we have the identity
$$
\delta_P(p) = \sum_{i = 1}^r \delta_{f_i}(p).
$$
By the Landau prime ideal theorem applied to the number field $K_f := \Q[t]/f(t)$, we see that the average of $\delta_f(p)$ over the primes $p$ is equal to $1$. Hence equation \eqref{ePreLandau} is at most $\ll_{k, P} H (\log H)^r$, as desired.
\end{proof}

\subsection{\texorpdfstring{Application to $2\phi$-Selmer groups}{Application to 2phi-Selmer groups}}
\label{s3Selmer}
Finally, we prove Theorem \ref{t3Selmer}. Since the lower bound is trivial, it suffices to establish the upper bound in Theorem \ref{t3Selmer}. We note that an asymptotic for the average $\phi$-Selmer group follows from the works \cite{BKLOS, BSW} and an asymptotic for the average $2$-Selmer group follows from \cite{Smi22a, Smi22b} under mild conditions. Informally speaking, Theorem \ref{t3Selmer} shows that $\Sel_2$ and $\Sel_{\phi_t}$ are uncorrelated in a weak sense.

\begin{proof}[Proof of Theorem \ref{t3Selmer}]
We write $\chi_d: G_\Q \rightarrow \mu_2$ for the quadratic character attached to a class $d \in \Q^\ast/\Q^{\ast 2} \cong \{\text{squarefree integers}\}$ by Kummer theory. Since $A[\phi]$ is a Galois module of size equal to $3$, we must have $A[\phi] \cong \mathbb{F}_3(\chi_d)$ for some squarefree integer $d$. Define $\Vplac_0$ to be the union of the rational places dividing $6d\infty$ and the places of bad reduction of $A$. We take $M : = A[2]$, we take
$$
X := \{(t, \psi) : t \text{ squarefree}, \, \psi \in \Sel_{\phi_t} \, A^t\},
$$
and we take $h: X \rightarrow \R_{\geq 0}$ to be the height function given by $h(t, \psi) := |t| \prod_{p \in \Vplac_0 - \{\infty\}} p$. To each pair $x = (t, \psi) \in X$, we attach the decorated module
$$
M_x := A^t[2]
$$
endowed with the usual $2$-Selmer local conditions $(\msL_{tp})_p$ (which depend only on $t$!). We now apply Theorem \ref{thm:main} with $\tilde{g}$ identically equal to $1$. We claim that
\begin{equation}
\label{eTamaComp}
\mcTbnd(A^t[2]) = 1.
\end{equation}
Recall that $A^t[2] = A[2]$ is irreducible by assumption. Moreover, taking cohomology of
$$
0 \rightarrow A^t[2] \rightarrow A^t \xrightarrow{\cdot 2} A^t \rightarrow 0
$$
shows that $\# \delta(A^t(\Q_p)/2A^t(\Q_p)) = \#H^0(G_p, A^t[2])$, where $\delta: A^{t}(\Q_p) \to H^1(G_p, A^t[2])$ is the connecting map associated to this exact sequence. So the Tamagawa ratio $\mcT(A^t[2], A^t[2])$ is $1$, and thus $\mcTbnd(A^t[2]) = 1$ by irreducibility. We have now established the claim \eqref{eTamaComp}.

Thus, by equation \eqref{eTamaComp}, if $\{M_x : x \in X\}$ is a constant-module family with effectively equidistributed local conditions as in Definition \ref{defn:effective-constant}, then Theorem \ref{thm:main} implies the existence of $C, C' > 0$ such that
$$
\sum_{\substack{|t| \leq H \\ t \textup{ sqf.}}} \# \Sel_{2\phi_t} \, A^t = \sum_{x \in X_H} \# \Sel \, M_x \leq C \sum_{x \in X_H} 1 \leq C'H,
$$
as desired (indeed, we will soon see that in fact $\sum_{x \in X_H} 1 \sim cH$ for some $c > 0$). Hence it remains to check the three conditions in Definition \ref{defn:effective-constant}. 

We start with (1). Let $Q \leq H^c$ (for any fixed $c < 1/2$) be squarefree and coprime to the places in $\Vplac_0$, and let $(\msL_p)_{p \mid Q}$ with each $\msL_p \subseteq H^1(G_p, A[2])$. Our task is to give a good asymptotic formula for
\begin{equation}
\label{ePhiDistr}
\sum_{\substack{|t| \leq H, \ t \textup{ sqf.} \\ \msL_{tp} = \msL_p \text{ for all } p \mid Q}} \# \Sel_{\phi_t} \, A^t.
\end{equation}
Since $A^t$ is determined as an abelian variety over $\Q_p$ by the restriction of $t$ in $\Q_p^\ast/\Q_p^{\ast 2}$, the condition $\msL_{tp} = \msL_p$ is completely determined by the class of $t$ in $\Q_p^\ast/\Q_p^{\ast 2}$.

We shall now reduce the task \eqref{ePhiDistr} to counting certain cubic fields, for which we will ultimately rely on the counting result in \cite[Theorem 1.3]{BTT}. In order to do so, we start by recalling that $A[\phi] \cong \mathbb{F}_3(\chi_d)$ and hence $A^t[\phi_t] \cong \mathbb{F}_3(\chi_{dt})$. Now let $p \not \in \Vplac_0$ with $p \mid t$. We know that $I_p$ acts trivially on $\mathbb{F}_3(\chi_d)$, and therefore any generator of $I_p$ acts by inversion on $\mathbb{F}_3(\chi_{dt})$. Using the inflation--restriction exact sequence
$$
0 \rightarrow H^1(\hat{\Z}, \mathbb{F}_3(\chi_{dt})^{I_p}) \rightarrow H^1(G_p, \mathbb{F}_3(\chi_{dt})) \rightarrow H^1(I_p, \mathbb{F}_3(\chi_{dt}))^{\langle \Frob_p \rangle} \rightarrow 0,
$$
we conclude that $H^1(G_p, \mathbb{F}_3(\chi_{dt})) = 0$. This yields
$$
\delta((A')^t(\Q_p)/\phi_t(A^t(\Q_p))) \subseteq H^1(G_p, \mathbb{F}_3(\chi_{dt})) = 0,
$$
where $\delta$ is the connecting map associated to the exact sequence
$$
0 \rightarrow A^t[\phi_t] \rightarrow A^t \xrightarrow{\phi_t} (A')^t \rightarrow 0.
$$
Clearly, we also have $\delta((A')^t(\Q_p)/\phi_t(A^t(\Q_p))) = H^1_{\text{ur}}(G_p, \mathbb{F}_3(\chi_{dt}))$ for $p \not \in \Vplac_0 \cup \{q \mid t\}$. We conclude that for $p \not \in \Vplac_0$
\begin{equation}
\label{eLocalIsogenySelmer}
\delta((A')^t(\Q_p)/\phi_t(A^t(\Q_p))) =
\begin{cases}
0 &\text{if } p \mid t \\
H^1_{\text{ur}}(G_p, \mathbb{F}_3(\chi_{dt})) &\text{if } p \nmid t.
\end{cases}
\end{equation}
If we further fix the finitely many classes of $t$ in $\Q_p^\ast/\Q_p^{\ast 2}$ at the places $p \in \Vplac_0$, then the isomorphism class of the abelian variety $A^t/\Q_p$ is fixed for all $p \in \Vplac_0$. After fixing $t$ in this way, it follows from equation \eqref{eLocalIsogenySelmer} that $\# \Sel_{\phi_t} \, A^t$ counts the number of cubic fields $L$ with resolvent field $\Q(\sqrt{dt})$ with given local conditions at $\Vplac_0$ and the prime divisors of $Q$ and satisfying the condition that $p \mid \Delta_L$ implies $p \in \Vplac_0$ or $p^2 \nmid \Delta_L$. Therefore the desired effective distribution for equation \eqref{ePhiDistr} is a consequence of \cite[Theorem 1.3]{BTT}.

Part (2) now follows from the explicit formula for the leading constant in \cite[Theorem 1.3]{BTT}. Part (3) is a consequence of the fact that for all $p \not \in \Vplac_0$ and all $x = (t, \psi) \in X$ with $p \mid t$, the local conditions $\msL_{px}$ of the module $M_x = A^t[2]$ are determined by the splitting of $p$ in the extension $\Q(A[4])/\Q$.
\end{proof}

\end{document}